\newcommand{\ifundef}[1]{\expandafter\ifx\csname#1\endcsname\relax}
\DeclareMathAlphabet{\mathbbe}{U}{bbold}{m}{n}
\def\re@DeclareMathSymbol#1#2#3#4{%
    \let#1=\undefined
    \DeclareMathSymbol{#1}{#2}{#3}{#4}}
  \DeclareSymbolFont{tcSyC}{U}{txsyc}{m}{n}
  \re@DeclareMathSymbol{\Top}{\mathord}{tcSyC}{120}
  \re@DeclareMathSymbol{\Bot}{\mathord}{tcSyC}{121}
  \DeclareFontFamily{U}{MnSymbolC}{}
  \DeclareSymbolFont{mnSyC}{U}{MnSymbolC}{m}{n}
  \DeclareFontShape{U}{MnSymbolC}{m}{n}{
      <-6>  MnSymbolC5
     <6-7>  MnSymbolC6
     <7-8>  MnSymbolC7
     <8-9>  MnSymbolC8
     <9-10> MnSymbolC9
    <10-12> MnSymbolC10
    <12->   MnSymbolC12}{}
  \DeclareFontShape{U}{MnSymbolC}{b}{n}{
      <-6>  MnSymbolC-Bold5
     <6-7>  MnSymbolC-Bold6
     <7-8>  MnSymbolC-Bold7
     <8-9>  MnSymbolC-Bold8
     <9-10> MnSymbolC-Bold9
    <10-12> MnSymbolC-Bold10
    <12->   MnSymbolC-Bold12}{}
  \re@DeclareMathSymbol{\righthalfcup}{\mathord}{mnSyC}{184}
  \re@DeclareMathSymbol{\lefthalfcap}{\mathord}{mnSyC}{185}
\DeclareFontFamily{U}{MnSymbolA}{}
\DeclareSymbolFont{mnSyA}{U}{MnSymbolA}{m}{n}
\DeclareFontShape{U}{MnSymbolA}{m}{n}{
    <-6>  MnSymbolA5
   <6-7>  MnSymbolA6
   <7-8>  MnSymbolA7
   <8-9>  MnSymbolA8
   <9-10> MnSymbolA9
  <10-12> MnSymbolA10
  <12->   MnSymbolA12}{}
\DeclareFontShape{U}{MnSymbolA}{b}{n}{
    <-6>  MnSymbolA-Bold5
   <6-7>  MnSymbolA-Bold6
   <7-8>  MnSymbolA-Bold7
   <8-9>  MnSymbolA-Bold8
   <9-10> MnSymbolA-Bold9
  <10-12> MnSymbolA-Bold10
  <12->   MnSymbolA-Bold12}{}
\re@DeclareMathSymbol{\twoheadedswarrow}{\mathord}{mnSyA}{30}
\newcommand{\mlaux}[3]{\setbox0=\hbox{$\mathsurround=0pt #2{#3}$}%
  \dimen0=\dp0\advance\dimen0 by \ht0\lower#1\dimen0\box0}
\newcommand{\makellapm}[2]{\hbox to 0pt{\hss$\mathsurround=0pt #1{#2}$}}
\newcommand{\makerlapm}[2]{\hbox to 0pt{$\mathsurround=0pt #1{#2}$\hss}}
\newcommand{\makelapm}[2]{\hbox to 0pt{\hss$\mathsurround=0pt #1{#2}$\hss}}
\newcommand{\makeushort}[3]{%
	\setbox0=\hbox{$\mathsurround=0pt #2{#3}$}%
	\hbox to 1\wd0{\hss\underbar{\hbox to #1\wd0{\hss\box0\hss}}\hss}}
\def\makebigger#1#2#3{\scalebox{#1}{$\mathsurround=0pt #2{#3}$}}
\def\bigger#1#2{{\relax\mathpalette{\makebigger{#1}}{#2}}}
\def\scaleuphalf{1.0954}
\def\scaleupone{1.2}
\newcommand{\op}{^{\mathord{\text{\rm op}}}}
\newcommand{\co}{^{\mathord{\text{\rm co}}}}
\newcommand{\mapcat}{^\cattwo}
\renewcommand{\th}{^{\text{th}}}
\newcommand{\defeq}{\mathrel{:=}}
\def\newmop{\@ifstar{\@newmop m}{\@newmop o}}
\def\@newmop#1{\@ifnextchar[{\@@newmop #1}{\@@@newmop #1}}
\def\@@newmop#1[#2]{\@declmathop #1#2}
\def\@@@newmop#1#2{\expandafter\@declmathop\expandafter #1\csname #2\endcsname{#2}}
\newcommand{\comma}{\mathbin{\downarrow}}
\newcommand{\rotatemath}[2]{\rotatebox[origin=c]{180}{$\m@th #1{#2}$}}
\newcommand{\pocorner}{\hbox to 8pt{{\vrule height8pt depth0pt width0.5pt}%
    \vbox to 8pt{{\hrule height0.5pt width7.5pt depth0pt}\vfill}}}
\newcommand{\poexcursion}{\save[]-<15pt,-15pt>*{\pocorner}\restore}
\newcommand{\pbcorner}{\vbox to 0pt{\kern 4pt\hbox to 0pt{\kern 4pt%
      \vbox{{\hrule height0.5pt width7.5pt depth0pt}}%
      {\vrule height8pt depth0pt width0.5pt}\hss}\vss}}
\newcommand{\pbexcursion}{\save[]+<5pt,-5pt>*{\pbcorner}\restore}
\newcommand{\leib}[1]{\mathbin{\widehat{#1}}}
\newcommand{\category}[1]{\underline{\smash[b]{\text{\rm{#1}}}}}
\newcommand{\tcat}{\lcat}
\newcommand{\lcat}{\mathcal}
\newcommand{\scat}{\mathbf}
\newcommand{\stcat}{\scat}
\newcommand{\catthree}{{\bigger{1.12}{\mathbbe{3}}}}
\newcommand{\cattwo}{{\bigger{1.12}{\mathbbe{2}}}}
\newcommand{\catone}{{\bigger{1.16}{\mathbbe{1}}}}
\newcommand{\iso}{{\bigger\scaleuphalf{\mathbb{I}}}}
\def\Del@Sym{{\bigger\scaleuphalf{\mathbbe{\Delta}}}}
\def\del@fn{\futurelet\del@next}
\def\del@dn{\def\del@next}
\def\parsedel@{%
  \ifx +\del@next \del@dn+{\Del@Sym_{\mathord{+}}}%
  \else \del@dn {\del@fn\parsedel@@}%
  \fi\del@next}
\def\parsedel@@{%
  \ifx\space@\del@next \expandafter\del@dn\space{\del@fn\parsedel@@}%
  \else\ifx [\del@next \del@dn[{\del@fn\parsedel@@@}%
  \else\ifx _\del@next \del@dn{\Delta}%
  \else\ifx ^\del@next \del@dn{\Delta}%
  \else \del@dn{\Del@Sym}%
  \fi\fi\fi\fi\del@next}
\def\parsedel@@@{%
  \ifx\space@\del@next \expandafter\del@dn\space{\del@fn\parsedel@@@}%
  \else\ifx t\del@next \del@dn t{\Del@Sym_\infty\del@fn\parsedel@@@@}%
  \else\ifx b\del@next \del@dn b{\Del@Sym_{-\infty}\del@fn\parsedel@@@@}%
  \else \del@dn{\errmessage{unexpected modifier}}%
  \fi\fi\fi\del@next}
\def\parsedel@@@@{%
  \ifx\space@\del@next \expandafter\del@dn\space{\del@fn\parsedel@@@@}%
  \else\ifx ]\del@next \del@dn]{}%
  \else \del@dn{\errmessage{expecting close of option block}}%
  \fi\fi\del@next}
\def\Del{\del@fn\parsedel@}
\newcommand{\Horn}{\Lambda}
\newcommand{\Set}{\category{Set}}
\newcommand{\Cat}{\category{Cat}}
\newcommand{\eCat}[1]{#1\text{-}\Cat} 
\newcommand{\sCat}{\sSet\text{-}\Cat}
\newcommand{\sSet}{\category{sSet}}
\newcommand{\qCat}{\category{qCat}}
\newcommand{\Adj}{\category{Adj}}
\newcommand{\Mnd}{\category{Mnd}}
\newcommand{\asSet}{\sSet_{\mathord{+}}}
\newcommand{\msSet}{\category{msSet}} 
\newcommand{\twoCat}{\eCat{2}}
\newcommand{\pbshape}{{\mathord{\bigger\scaleupone\righthalfcup}}}
\newcommand{\poshape}{{\mathord{\bigger\scaleupone\lefthalfcap}}}
\newcommand{\face}{\delta}
\newcommand{\degen}{\sigma}
\newcommand{\aug}{\iota}
\newcommand{\tdegen}{\varsigma}
\newcommand{\fbv}[1]{\{{#1}\}}
\newcommand{\join}{\star}
\newcommand{\fatjoin}{\mathbin\diamond}
\newcommand{\fatslice}{\mathbin{\mkern-1mu{/}\mkern-5mu{/}\mkern-1mu}}
\newcommand{\fatslicel}[2]{\vphantom{#2}^{{#1}\fatslice{}}\mkern-2mu{#2}}
\newcommand{\fatslicer}[2]{{#1\mkern-1mu}_{{}\fatslice{#2}}}
\newcommand{\slice}{/}
\newcommand{\slicel}[2]{\vphantom{#2}^{{#1}\slice{}}\mkern-2mu{#2}}
\newcommand{\slicer}[2]{{#1\mkern-1mu}_{{}\slice{#2}}}
\newcommand{\nrv}{N}
\newcommand{\ho}{h}
\newcommand{\nrvhc}{\nrv}
\newcommand{\gC}{\mathfrak{C}}
\newcommand{\boundary}{\partial}
\def\reedyfilt#1_#2{#1_{\leq #2}}
\newcommand{\mclass}{\mathcal}
\newcommand{\cpts}{\pi_0}
\def\makeslashed#1#2#3#4#5{#1{\mathpalette{\sla@{#2}{#3}{#4}}{#5}}}
\def\@mathlower#1#2#3{\setbox0=\hbox{$\m@th#2#3$}\lower#1\ht0\box0}
\def\mathlower#1#2{\mathpalette{\@mathlower{#1}}{#2}}
\newcommand{\inc}{\hookrightarrow}
\newcommand{\tcof}{\hookrightarrow}
\newcommand{\tfib}{\twoheadrightarrow}
\newcommand{\trvfib}{\xtwoheadrightarrow{\smash{\mathlower{1.2}{\sim}}}}
\newcommand{\To}{\Rightarrow}
\def\tens@fn{\futurelet\tens@next}
\def\tens@dn{\def\tens@nextcont}
\newtoks\tens@toks
\def\addtotens@toks#1{\tens@toks=\expandafter{\the\tens@toks#1}}
\def\parsetens@@{%
    \ifx\space@\tens@next \expandafter\tens@dn\space{\tens@fn\parsetens@@}%
    \else\ifx ^\tens@next \tens@dn ^##1{\parsetens@procsep^\addtotens@toks{##1}%
      \tens@fn\parsetens@@}%
    \else\ifx _\tens@next \tens@dn _##1{\parsetens@procsep_\addtotens@toks{##1}%
      \tens@fn\parsetens@@}%
    \else\tens@dn{\ifx *\tens@last \else\addtotens@toks\egroup\fi\the\tens@toks}%
    \fi\fi\fi\tens@nextcont}
\def\parsetens@procsep#1{%
  \ifx *\tens@last \addtotens@toks{#1}\addtotens@toks\bgroup%
  \else\ifx \tens@last\tens@next \addtotens@toks,%
  \else \addtotens@toks\egroup\addtotens@toks\bgroup%
    \addtotens@toks\egroup\addtotens@toks{#1}\addtotens@toks\bgroup%
  \fi\fi\let\tens@last\tens@next}
\newcommand{\tn}[1]{\let\tens@last=*\tens@toks={#1}\tens@fn\parsetens@@}
\def\adjdisplay#1-|#2:#3->#4.{{%
    \xymatrix@R=0em@!C=2.5em{%
      *+[l]{#3} \ar@/_0.55pc/[rr]_-{#2} & {\bot} &
      *+[r]{#4}\ar@/_0.55pc/[ll]_-{#1}}}}
\def\adjdisplaytwo#1-|#2:#3->#4.{{%
\xymatrix@=1.2em{
      {#3}\ar@/_1.5ex/[rr]_-{#2}^-{}="one"
      & & {#4}
      \ar@/_1.5ex/[ll]_-{#1}^-{}="two" 
      \ar@{}"one";"two"|{\bot}
    }}}
\def\tripleadjdisplay#1-|#2-|#3:#4->#5.{{%
\xymatrix@=2.4em{ 
{#4}\ar[r]|{#2} &
{#5} \ar@/_3ex/[l]_{#1}^{\bot} \ar@/^3ex/[l]_{\bot}^{#3}}
}}
\def\adjinline#1-|#2:#3->#4.{{#1}\dashv{#2}:#3\to #4}
\newcommand{\pent}[1]{
  \xybox{
    \POS (0,-15)*+{\a}="0", 
         (-14,-5)*+{\b}="1", 
         (-9,12)*+{\c}="2", 
         (9,12)*+{\d}="3", 
         (14,-5)*+{\e}="4"
    \POS"0" \ar "1"^{\labelstyle \ab}|{}="01"
    \POS"1" \ar "2"^{\labelstyle \bc}|{}="12"
    \POS"2" \ar "3"^{\labelstyle \cd}|{}="23"
    \POS"3" \ar "4"^{\labelstyle \de}|{}="34"
    \POS"0" \ar "4"_{\labelstyle \ae}|{}="04"
    \ifcase #1
    \POS"0" \ar "2"|{\labelstyle \ac}="02"
    \POS"0" \ar "3"|{\labelstyle \ad}="03"
    \POS"02";"1"**{}, ?(0.3) \ar@{=>} ?(0.7)^{\labelstyle \abc}
    \POS"03";"2"**{}, ?(0.25) \ar@{=>} ?(0.5)_{\labelstyle \acd}
    \POS"04";"3"**{}, ?(0.2) \ar@{=>} ?(0.4)_{\labelstyle \ade}
    \or
    \POS"1" \ar "3"|{\labelstyle \bd}="13"
    \POS"1" \ar "4"|{\labelstyle \be}="14"
    \POS"13";"2"**{}, ?(0.3) \ar@{=>} ?(0.7)_{\labelstyle \bcd}
    \POS"14";"3"**{}, ?(0.25) \ar@{=>} ?(0.5)_{\labelstyle \bde}
    \POS"04";"1"**{}, ?(0.25) \ar@{=>} ?(0.5)_{\labelstyle \abe}
    \or
    \POS"2" \ar "4"|{\labelstyle \ce}="24"
    \POS"0" \ar "2"|{\labelstyle \ac}="02"
    \POS"02";"1"**{}, ?(0.3) \ar@{=>} ?(0.7)^{\labelstyle \abc}
    \POS"04";"2"**{}, ?(0.2) \ar@{=>} ?(0.35)_{\labelstyle \ace}
    \POS"24";"3"**{}, ?(0.2) \ar@{=>} ?(0.6)^{\labelstyle \cde}
    \or
    \POS"1" \ar "3"|{\labelstyle \bd}="13"
    \POS"0" \ar "3"|{\labelstyle \ad}="03"
    \POS"04";"3"**{}, ?(0.2) \ar@{=>} ?(0.4)_{\labelstyle \ade}
    \POS"13";"2"**{}, ?(0.3) \ar@{=>} ?(0.7)_{\labelstyle \bcd}
    \POS"03";"1"**{}, ?(0.25) \ar@{=>} ?(0.5)^{\labelstyle \abd}
    \or
    \POS"2" \ar "4"|{\labelstyle \ce}="24"
    \POS"1" \ar "4"|{\labelstyle \be}="14"
    \POS"24";"3"**{}, ?(0.2) \ar@{=>} ?(0.6)^{\labelstyle \cde}
    \POS"04";"1"**{}, ?(0.25) \ar@{=>} ?(0.5)_{\labelstyle \abe}
    \POS"14";"2"**{}, ?(0.25) \ar@{=>} ?(0.5)^{\labelstyle \bce}
    \else\fi
  }
}
\newcommand{\pentofpent}[1]{
  \def\baselen{#1}
  \begin{xy}
    0;<\baselen,0mm>:
    *{\xybox{
        \POS(0,-4)*[o]{\pent 0}="zero"
        \POS(16,40)*[o]{\pent 3}="three"
        \POS(72,40)*[o]{\pent 1}="one"
        \POS(88,-4)*[o]{\pent 4}="four"
        \POS(44,-36)*[o]{\pent 2}="two"
        \ar@<1ex>"zero";"three"^-{\objectstyle\abcd}
        \ar@<1ex>"three";"one"^-{\objectstyle\abde}
        \ar@<1ex>"one";"four"^-{\objectstyle\bcde}
        \ar@<-1ex>"zero";"two"_-{\objectstyle\acde}
        \ar@<-1ex>"two";"four"_-{\objectstyle\abce}
        \ar@{=>}(44,-5);(44,+15)^{\objectstyle\abcde}
     }}
  \end{xy}
}
\setlist{}
\theoremstyle{plain}
\newtheorem{thm}{Theorem}[subsection]
\newtheorem{lem}[thm]{Lemma}
\newtheorem{cor}[thm]{Corollary}
\newtheorem{prop}[thm]{Proposition}
\theoremstyle{definition}
\newtheorem{defn}[thm]{Definition}
\newtheorem{ex}[thm]{Example}
\newtheorem{ntn}[thm]{Notation}
\theoremstyle{remark}
\newtheorem{obs}[thm]{Observation}
\newtheorem{rec}[thm]{Recall}
\newtheorem{rmk}[thm]{Remark}
\let\c@equation\c@thm
\numberwithin{equation}{subsection}
\newcommand{\refI}[1]{I.\ref*{found:#1}}
\title{The 2-category theory of quasi-categories}
\date{$6^{\text{th}}$ May 2015}
\author[Riehl]{Emily Riehl}
\address{
  Department of Mathematics \\
  Harvard University \\
  Cambridge, MA 02138\\
  USA
}
\email{eriehl@math.harvard.edu}
\author[Verity]{Dominic Verity}
\address{
  Centre of Australian Category Theory \\
  Macquarie University \\
  NSW 2109 \\
  Australia
}
\email{dominic.verity@mq.edu.au}
\subjclass[2010]{%
  Primary  18G55, 55U35, 55U40; %
  Secondary 18A05, 18D20, 18G30, 55U10
}
\begin{document}
  
  \ifpdf
  \DeclareGraphicsExtensions{.pdf, .jpg, .tif}
  \else
  \DeclareGraphicsExtensions{.eps, .jpg}
  \fi

\begin{abstract}
In this paper we re-develop the foundations of the category theory of quasi-categories (also called $\infty$-categories) using 2-category theory. We show that Joyal's strict 2-category of quasi-categories admits certain weak 2-limits, among them weak comma objects. We use these comma quasi-categories to encode universal properties relevant to limits, colimits, and adjunctions and prove the expected theorems relating these notions. These universal properties have an alternate form as absolute lifting diagrams in the 2-category, which we show are determined pointwise by the existence of certain initial or terminal vertices, allowing for the easy production of examples.

All the quasi-categorical notions introduced here are equivalent to the established ones but our proofs are independent and more ``formal''. In particular, these results generalise immediately to model categories enriched over quasi-categories.
\end{abstract}

\maketitle
\tableofcontents


\section{Introduction}

Quasi-categories, also called $\infty$-categories, were introduced by J.~Michael Boardman and Rainer Vogt under the name ``weak Kan complexes'' in their book \cite{Boardman:1973xo}. Their aim was to describe the weak composition structure enjoyed by homotopy coherent natural transformations between homotopy coherent diagrams. Other examples of quasi-categories include ordinary categories (via the nerve functor) and topological spaces (via the total singular complex functor), which are Kan complexes: quasi-categories in which every 1-morphism is invertible. Topological and simplicial (model) categories also have associated quasi-categories (via the homotopy coherent nerve). Quasi-categories provide a convenient model for $(\infty,1)$-categories: categories weakly enriched in $\infty$-groupoids or topological spaces. Following the program of Boardman and Vogt, many homotopy coherent structures naturally organise themselves into a quasi-category.

For this reason, it is desirable to extend the definitions and theorems of ordinary category theory into the $(\infty,1)$-categorical and specifically into the quasi-categorical context. As categories form a full subcategory of quasi-categories, a principle guiding the quasi-categorical definitions is that these should restrict to the classically understood categorical concepts on this full subcategory. In this way, we think of quasi-category theory as an extension of category theory---and indeed use the same notion for a category and the quasi-category formed by its nerve.

There has been significant work (particularly if measured by page count) towards the development of the category theory of quasi-categories, the most well-known being the articles and unpublished manuscripts of Andr\'{e} Joyal \cite{Joyal:2002:QuasiCategories,Joyal:2007kk,Joyal:2008tq} and the books of Jacob Lurie \cite{Lurie:2009fk,Lurie:2012uq}. Other early work includes the PhD thesis of Joshua Nichols-Barrer \cite{NicholsBarrer:2007oq}.  More recent foundational developments are contained in work of David Gepner and Rune Haugseng \cite{GepnerHaugseng:2013ec}, partially joint with Thomas Nikolaus \cite{GepnerHaugsengNikolaus:2015lc}. Applications of quasi-category theory, for instance to derived algebraic geometry, are already too numerous to mention individually.

Our project is to provide a second generation, {\em formal\/} category theory of quasi-categories, developed from the ground up. Each definition given here is equivalent to the established one, 
but we find our development to be more intuitive and the proofs to be simpler. Our hope is that this self-contained account will be more approachable to the outsider hoping to better understand the foundations of the quasi-category theory he or she may wish to use.

In this paper, we use 2-category theory to develop the category theory of quasi-categories.  The starting point is a (strict) 2-category of quasi-categories $\qCat_2$ defined as a quotient of the simplicially enriched category of quasi-categories $\qCat_\infty$.  The underlying category of both enriched categories is the usual category of quasi-categories and simplicial maps, here called simply ``functors''. We translate simplicial universal properties into 2-categorical ones: for instance, the simplicially enriched universal properties of finite products and the hom-spaces between quasi-categories imply that the 2-category $\qCat_2$ is cartesian closed. Importantly, equivalences in the 2-category $\qCat_2$ are precisely the (weak) equivalences of quasi-categories introduced by Joyal, which means that this 2-category appropriately captures the homotopy theory of quasi-categories.

Aside from finite products, $\qCat_2$ admits few strict 2-limits. However, it admits several important weak 2-limits of a sufficiently strict variety with which to develop formal category theory. Weak 2-limits in $\qCat_2$ are not unique up to isomorphism; rather their universal properties characterise these objects up to equivalence, exactly as one would expect in the $(\infty,1)$-categorical context. We show that $\qCat_2$ admits weak cotensors by categories freely generated by a graph (including, in particular, the walking arrow) and weak comma objects, which we use  to encode the universal properties associated to limits, colimits, adjunctions, and so forth.

A complementary paper \cite{RiehlVerity:2012hc} will showcase a corresponding ``internal'' approach to this theory. The basic observation is that the simplicial category of quasi-categories $\qCat_\infty$ is closed under the formation of weighted limits whose weights are projectively cofibrant simplicial functors. Examples include Bousfield-Kan style homotopy limits and a variety of weighted limits relating to homotopy coherent adjunctions.

In \cite{RiehlVerity:2012hc}, we show that any adjunction of quasi-categories can be extended to a {\em homotopy coherent adjunction}, by which we mean a simplicial functor whose domain is a particular cofibrant simplicial category that we describe in great detail. Unlike previous renditions of coherent adjunction data, our formulation is symmetric: in particular, a homotopy coherent adjunction restricts to a homotopy coherent monad and to a homotopy coherent comonad on the two quasi-categories under consideration. As a consequence of its cofibrancy, various weights extracted from the free homotopy coherent adjunction are projectively cofibrant simplicial functors. We use these to define the quasi-category of algebras associated to a homotopy coherent monad and provide a formal proof of the monadicity theorem of Jon Beck. More details can be found there.

\subsection{A generalisation}

In hopes that our proofs would be  more readily absorbed in familiar language, we have neglected to state our results in their most general setting, referencing only the simplicially enriched full subcategory of quasi-categories $\qCat_\infty$. Nonetheless, a key motivation for our project is that our proofs apply to more general settings which are also of interest.  

Consider a Quillen model category that is enriched as a model category relative to the Joyal model structure on simplicial sets and in which every fibrant object is also cofibrant. Then its full simplicial subcategory of fibrant objects is what we call an $\infty$-\emph{cosmos}; a simple list of axioms, weaker than the model category axioms, will be described in a future paper. Weak equivalences and fibrations between fibrant objects will play the role of the equivalences and isofibrations here. Examples of Quillen model categories which satisfy these conditions include Joyal's model category of quasi-categories and any model category of complete Segal spaces in a suitably well behaved model category. The canonical example \cite{Joyal:2007kk,Rezk:2001sf} is certainly included under this heading but we have in mind more general ``Rezk spaces'' as well. Given a well-behaved model category $\lcat{M}$, the localisation of the Reedy model structure on the category $\lcat{M}^{\Del\op}$ whose fibrant objects are complete Segal objects is enriched as a model category over the Joyal model structure on simplicial sets. All of the definitions that are stated and theorems that are proven here apply representably to any $\infty$-cosmos, being a simplicial category whose hom-spaces are quasi-categories and whose quotient 2-category admits the same weak 2-limits utilised here.

\subsection{Outline}

Our approach to the foundations of quasi-category theory is independent of the existing developments with one exception: we accept as previously proven the Joyal model structure for quasi-categories on simplicial sets and the model structure for naturally marked quasi-categories on marked simplicial sets. So that a reader can begin his or her acquaintance with the subject by reading this paper, we begin with a comprehensive background review in section \ref{sec:background}, where we also establish our notational conventions. 

In section \ref{sec:twocat}, we introduce the 2-category of quasi-categories $\qCat_2$ and investigate its basic properties. Of primary importance is the particular notion of weak 2-limit introduced here. Following \cite{Kelly:1989fk}, a strict 2-limit can be defined representably: the hom-categories mapping into the 2-limit are required to be naturally isomorphic to the corresponding 2-limit of hom-categories formed in $\Cat$. In our context, there is a canonical functor from the former category to the latter but it is not an isomorphism. Rather it is what we term a {\em smothering functor}: surjective on objects, full, and conservative. We develop the basic theory of these weak 2-limits and prove that $\qCat_2$ admits certain weak cotensors, weak 2-pullbacks, and weak comma objects.

In section \ref{sec:qcatadj}, we begin to develop the formal category theory of quasi-categories by introducing adjunctions between quasi-categories, which are defined simply to be adjunctions in the 2-category $\qCat_2$; this definition was first considered by Joyal. It follows immediately that adjunctions are preserved by pre- and post-composition, since these define 2-functors on $\qCat_2$. Any equivalence of quasi-categories extends to an adjoint equivalence, and that any adjunction between Kan complexes is automatically an adjoint equivalence. We describe an alternate form of the universal property of an adjunction which will be a key ingredient in the proof of the main existence theorem of \cite{RiehlVerity:2012hc}. Finally, we show that many of our adjunctions are in fact {\em fibred}, meaning that they are also adjunctions in the 2-category obtained as a quotient of the simplicial category of isofibrations over a fixed quasi-category. Any map between the base quasi-categories defines a pullback 2-functor, which then preserves fibred equivalences, fibred adjunctions, and so forth.

In section \ref{sec:limits}, we define limits and colimits in a quasi-category in terms of absolute right and left lifting diagrams in $\qCat_2$. A key technical theorem provides an equivalent definition as a fibred equivalence of comma quasi-categories. We prove the expected results relating limits and colimits to adjunctions: that right adjoints preserve limits, that limits of a fixed shape can be encoded as adjoints to constant diagram functors provided these exist, that limits and limit cones assemble into right Kan extensions along the join functor, and so on. As an application of these general results, we give a quick proof that any quasi-category admitting pullbacks, pushouts, and a zero object has a ``loops--suspension'' adjunction. This forms the basis for the notion of a {\em stable\/} quasi-category.

We conclude section \ref{sec:limits} with an example particularly well suited to our 2-categorical approach that will reappear in the proof of the monadicity theorem in \cite{RiehlVerity:2012hc}: generalising a classical result from simplicial homotopy theory, we show that if a simplicial object in a quasi-category admits an augmentation and ``extra degeneracies'', then the augmentation is its quasi-categorical colimit and also encodes the canonical colimit cone. Our proof is entirely 2-categorical. There exists an absolute left extension diagram in $\Cat$ involving $\Del$ and related categories and furthermore this 2-universal property is witnessed equationally by various adjunctions. Such universal properties are preserved by any 2-functor---for instance, homming into a quasi-category---and the result follows immediately.

Having established the importance of absolute lifting diagrams, which characterise limits, colimits, and adjunctions in the quasi-categorical context, it is important to develop tools which can be used to show that such diagrams exist in $\qCat_2$. This is the aim of section \ref{sec:pointwise}. In this section,  we show that a cospan $B \xrightarrow{f} A \xleftarrow{g} C$ admits an absolute right lifting of $g$ along $f$ if and only if for each object $c \in C$, the slice (or comma) quasi-category from $f$ down to $gc$ has a terminal object. In practice, this ``pointwise'' universal property is much easier to check than the global one encoded by the absolute lifting diagram. 

To illustrate, we use this theorem to show that any simplicial Quillen adjunction between simplicial model categories defines an adjunction of quasi-categories. The proof of this result is more subtle than one might suppose. The quasi-category associated to a simplicial model category is defined by applying the homotopy coherent nerve to the subcategory of fibrant-cofibrant objects---in general, the mapping spaces between arbitrary objects need not have the ``correct'' homotopy type. On account of this restriction, the point-set level left and right adjoints do not directly descend to functors between these quasi-categories so the quasi-categorical adjunction must be defined in some other way.

We conclude this paper with a technical appendix proving that the comma quasi-categories used here are equivalent to the slice quasi-categories introduced by Joyal \cite{Joyal:2002:QuasiCategories}. It follows that the categorical definitions introduced in this paper coincide with the definitions found in the existing literature.

\subsection{Acknowledgments}

During the preparation of this work the authors were supported by DARPA through the AFOSR grant number HR0011-10-1-0054-DOD35CAP and by the Australian Research Council through Discovery grant number DP1094883. The first-named author was also supported by an NSF postdoctoral research fellowship DMS-1103790. A careful reading by an anonymous referee has lead to numerous improvements in the exposition throughout this article. We would also like to extend personal thanks to Mike Hopkins without whose support and encouragement this work would not exist.


  \section{Background on quasi-categories}\label{sec:background}

	We start by reviewing some basic concepts and notations. 

  \begin{obs}[size]\label{obs:size-conventions}
    In this paper matters of size will not be of great importance. However, for definiteness we shall adopt the usual conceit of assuming that we have fixed an inaccessible cardinal which then determines a corresponding Grothendieck universe, members of which will be called {\em sets\/}; we refer to everything else as {\em classes}.  A category is {\em small\/} if it has sets of objects and arrows; a category is {\em locally small\/} if each of its hom-sets is small. We shall write $\Set$ to denote the large and locally small category of all sets and functions between them.

    When discussing the existence of limits and colimits we shall implicitly assume that these are indexed by small categories. Correspondingly, completeness and cocompletess properties will implicitly reference the existence of small limits and small colimits.
     \end{obs}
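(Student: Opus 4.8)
This ``statement'' is a declaration of conventions rather than a proposition, so in the strict sense there is nothing to prove: the definitions of \emph{small} and \emph{locally small} category, and the convention that (co)completeness henceforth refers to small-(co)completeness, carry no content once the word \emph{set} has been pinned down. What does merit a word of justification is the set-theoretic scaffolding it invokes --- the passage from an inaccessible cardinal to a Grothendieck universe --- and that is where a ``proof'' would be directed.

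The plan is as follows. Fix an inaccessible cardinal $\kappa$ and take the ambient universe to be the stage $V_\kappa = \{\, x : \mathrm{rank}(x) < \kappa \,\}$ of the cumulative hierarchy. I would then verify the Grothendieck universe axioms one at a time: transitivity and closure under subsets drop straight out of the rank function; closure under (ordered and unordered) pairs and under unions needs only that $\kappa$ is a limit ordinal; closure under power sets is exactly the strong-limit condition that $2^{\lambda} < \kappa$ whenever $\lambda < \kappa$; and closure under $\kappa$-indexed families --- if $x_i \in V_\kappa$ for all $i \in I$ with $\lvert I \rvert < \kappa$, then $\bigcup_{i} x_i \in V_\kappa$ --- is precisely regularity of $\kappa$. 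With $V_\kappa$ in hand, its members are christened \emph{sets}, anything not a member of $V_\kappa$ is a \emph{class}, $\Set$ is the (large, locally small) category of elements of $V_\kappa$ and all functions between them, and every remaining clause of the observation becomes a definition with nothing left to check.

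The only obstacle worth naming is foundational, not mathematical: the existence of an inaccessible cardinal is not a theorem of $\mathrm{ZFC}$, so granting it is a genuine strengthening of the ambient axioms --- the ``usual conceit'' the observation itself flags. I would dispatch this concern by remarking that nothing in the sequel depends on the particular universe chosen: every size constraint invoked later could be rephrased as relative smallness with respect to some sufficiently large regular cardinal, so a reader who declines the universe loses only a uniform vocabulary, not a single result.
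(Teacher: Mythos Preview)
Your assessment is correct: this observation is a declaration of conventions, not a proposition, and the paper offers no proof whatsoever --- it simply states the conventions and moves on. Your additional sketch verifying that $V_\kappa$ satisfies the Grothendieck universe axioms is mathematically sound but goes beyond anything the paper does; there is nothing to compare against because the paper treats the entire passage as definitional.
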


    \subsection{Some standard simplicial notation}\label{subsect:simplicial.notation}

    \begin{ntn}[simplicial operators]\label{ntn:simp.op}
        As usual, we let $\Del+$ denote the algebraists' (skeletal) category of all finite ordinals and order preserving maps between them and let $\Del$ denote the topologists' full subcategory of non-zero ordinals. Following tradition, we write $[n]$ for the ordinal $n+1$ as an object of $\Del+$ and refer to arrows of $\Del+$ as {\em simplicial operators}.  We will generally use lower case Greek letters $\alpha,\beta,\gamma\colon[m]\to[n]$ to denote simplicial operators. We will also use the following standard notation and nomenclature throughout:
        \begin{itemize}
            \item The injective maps in $\Del+$ are called {\em face operators}. For each $j\in[n]$,  $\face_n^j\colon[n-1]\to[n]$ denotes the {\em elementary face operator\/} distinguished by the fact that its image does not contain the integer $j$.
            \item The surjective maps in $\Del+$ are called {\em degeneracy operators}. For each $j\in[n]$, we write $\degen_n^j\colon[n+1]\to[n]$ to denote the {\em elementary degeneracy operator} determined by the property that two integers in its domain map to the integer $j$ in its codomain.
        \end{itemize}
        Unless doing so would introduce an ambiguity, we tend to reduce notational clutter by dropping the subscripts of these elementary operators.
    \end{ntn}

    \begin{ntn}[(augmented) simplicial sets]\label{ntn:simplicial-sets}
        Let $\sSet$ denote the functor category $\Set^{\Del\op}$,  the category of all {\em simplicial sets\/} and {\em simplicial maps\/} between them. 

        If $X$ is a simplicial set then $X_n$ will denote its value at the object $[n]\in\Del$, called its set of $n$-simplices, and if $f\colon X\to Y$ is a simplicial map then $f_n\colon X_n\to Y_n$ denotes its component at $[n]\in\Del$.

        It is common to think of simplicial sets as being right $\Del$-sets and use the (right) action notation $x\cdot\alpha$ to denote the element of $X_n$ obtained by applying the image under $X$ of a simplicial operator $\alpha \colon [n] \to [m]$ to an element $x\in X_m$. Exploiting this notation,  the functoriality of a simplicial set $X$ may be expressed in terms of the familiar action axioms $(x\cdot\alpha)\cdot\beta = x\cdot(\alpha\circ\beta)$ and $x\cdot\id = x$ and the naturality of a simplicial map $f\colon X\to Y$ corresponds to the action preservation identity $f(x\cdot\alpha)=f(x)\cdot\alpha$. 

A subset $Y\subseteq X$ of a simplicial set $X$ is said to be a {\em simplicial subset\/} of $X$ if it is closed  under  right action by all simplicial operators. If $S$ is a subset of $X$ then there is a smallest simplicial subset of $X$ containing $S$, the simplicial subset of $X$ {\em generated by\/} $S$.

We adopt the same notational conventions for {\em augmented simplicial sets}, objects of the functor category $\Set^{\Del+\op}$, which we denote by $\asSet$.
    \end{ntn}

    \begin{rec}[augmentation]\label{rec:augmentation}
       There is a canonical forgetful functor $\asSet\to\sSet$ constructed by pre-composition with the inclusion functor $\Del\inc\Del+$. Rather than give this functor a name, we prefer instead to allow context to determine whether an augmented simplicial set should be regarded as being a simplicial set by forgetting its augmentation. 

        Left and right Kan extension along $\Del\inc\Del+$ provides left and right adjoints to this forgetful functor, both of which are fully faithful. The left adjoint gives a simplicial set $X$ the {\em initial augmentation\/} $X\to\cpts X$ by its set of path components. The right adjoint gives $X$ the {\em terminal augmentation\/} $X\to {*}$ by the singleton set. We say that an augmented simplicial set is {\em initially\/} (resp.~{\em terminally}) {\em augmented\/} if the counit (resp.~unit) of the appropriate adjunction is  an isomorphism.

        Each $(-1)$-simplex $x$ in an augmented simplicial set $X$ is associated with a terminally augmented sub-simplicial set consisting of those simplices whose $(-1)$-face is $x$. These components are mutually disjoint and their disjoint union is the whole of $X$, providing a canonical decomposition of $X$ as a disjoint union of terminally augmented simplicial sets.
    \end{rec}

    \begin{ntn}[some important (augmented) simplicial sets]

        We fix notation for some important (augmented) simplicial sets. 
        \begin{itemize}
            \item The {\em standard $n$-simplex\/} $\Del^n$ is defined to be the contravariant representable on the ordinal $[n]\in\Del+$. In other words, $\Del^n_m$ is the set of simplicial operators $\alpha\colon[m]\to[n]$ which are acted upon by pre-composition.
            \item The {\em boundary\/} of the standard $n$-simplex $\boundary\Del^n$ is defined to be the simplicial subset of $\Del^n$ consisting of those simplicial operators which are not degeneracy operators. This is the simplicial subset of $\Del^n$ generated by the set of its $(n-1)$-dimensional faces.
            \item The {\em $(n,k)$-horn\/} $\Horn^{n,k}$ (for $n\in\mathbb{N}$ and $0\leq k\leq n$) is the simplicial subset of $\Del^n$ generated by the set $\{\face^i_n\mid 0\leq i\leq n \text{ and } i\neq k\}$ of $(n-1)$-dimensional faces.  Alternatively, we can describe $\Horn^{n,k}$ as the simplicial subset of those simplicial operators $\alpha\colon[m]\to[n]$ for which $\im(\alpha)\cup\{k\}\neq[n]$.
            \item We say that $\Horn^{n,k}$ is an {\em inner horn\/} if $0<k<n$; if $k=0$ or $k=n$, it is an {\em outer horn}.
        \end{itemize}

We have overloaded our notation above to refer interchangeably to objects of $\sSet$ or $\asSet$. There is no ambiguity since in each case the underlying simplicial set of one of these objects in $\asSet$ is  the corresponding object in $\sSet$. As an augmented simplicial set each of the objects above is terminally augmented.

        When $\alpha\colon [n]\to [m]$ is a simplicial operator we use the same symbol to denote the corresponding simplicial map $\alpha\colon\Del^n\to\Del^m$ which acts by post-composing with $\alpha$. In particular, $\face^j_n\colon\Del^{n-1}\to\Del^n$, $\degen^j_n\colon\Del^{n+1}\to\Del^n$,  $\tdegen_n\colon\Del^n\to\Del^0$, and $\aug_n:\Del^{-1}\to\Del^n$ denote the simplicial maps corresponding to the simplicial operators introduced in~\ref{ntn:simp.op} above.
    \end{ntn}

    \begin{ntn}[faces of \protect{$\Del^n$}]\label{ntn:faces-by-vertices}
It is useful to identify a non-degenerate simplex in the standard $n$-simplex $\Delta^n$ simply by naming its vertices. We use the notation $\fbv{v_0,v_1,v_2,...,v_m}$ to denote the simplicial operator $[m]\to [n]$ which maps $i\in[m]$ to $v_i\in[n]$. Let $\Del^{\fbv{v_0,v_1,...,v_m}}$ denote the smallest simplicial subset of $\Del^n$ which contains the face $\fbv{v_0,v_1,...,v_m}$.

    \end{ntn}
    
    \begin{ntn}[internal hom]\label{ntn:simplicial-hom-space}
    Like any presheaf category, the category of simplicial sets is cartesian closed. We write $Y^X$ for the exponential, equivalently the {\em internal hom\/} or simply {\em hom-space}, from $X$ to $Y$. By the defining adjunction and the Yoneda lemma, an $n$-simplex in $Y^X$ is a simplicial map $X \times \Del^n \to Y$. Its faces and degeneracies are computed by pre-composing with the appropriate maps between the representables.
    \end{ntn}
    
    \subsection{Quasi-categories}

    \begin{defn}[quasi-categories]
      A {\em quasi-category} is a simplicial set $A$ which possesses the {\em right lifting property\/} with respect to all {\em inner horn inclusions\/} $\Horn^{n,k}\inc\Del^n$ ($n \geq 2$, $0<k<n$). A simplicial map between quasi-categories will be called a {\em functor}. We write $\qCat$ for the full subcategory of $\sSet$ consisting of the quasi-categories and functors.
    \end{defn}

    \begin{rec}[the homotopy category]\label{rec:hty-category}
      Let $\Cat$ denote the category of all small categories and functors between them. There is an adjunction
      \begin{equation*}
        \adjdisplay \ho-|\nrv:\Cat->\sSet. 
      \end{equation*}
      given by the nerve construction and its left adjoint. Since the nerve construction is fully faithful, we typically regard $\Cat$ as being a full subcategory of $\sSet$ and elide explicit mention of the functor $\nrv$. The nerve of any category is a quasi-category, so we may equally well regard $\Cat$ as being a reflective full subcategory of $\qCat$.

 When $A$ is a quasi-category, $\ho{A}$ is sensibly called its {\em homotopy category}; it has:
 \begin{itemize}
 \item \textbf{objects} the 0-simplices of $A$,
\item \textbf{arrows} equivalence classes of 1-simplices of $A$ which share the same boundaries, and
\item \textbf{composition} determined by the property that $k = g f$ in $\ho{A}$ if and only if there exists a 2-simplex $a$ in $A$ with $a\cdot\face^0=g$, $a\cdot\face^2=f$ and $a\cdot\face^1=k$.
\end{itemize}
See, e.g., \cite[\S 1.2.3]{Lurie:2009fk}. To emphasise the analogy with categories, we draw a 1-simplex $f$ of $A$ as an arrow with domain $f \cdot \face^1$ and codomain $f \cdot \face^0$. With these conventions, a 2-simplex $a$ of $A$ witnessing the identity $k = g f$ in $\ho{A}$ takes the form:
  \begin{equation*}
  \xymatrix{ & \cdot \ar[dr]^g \ar@{}[d]|(.6){a} \\ \cdot \ar[ur]^f \ar[rr]_k & & \cdot}    
\end{equation*} 
      
  Identity arrows in $\ho{A}$ are represented by degenerate 1-simplices. Hence, the composition axiom defines what it means for a parallel pair of 1-simplices  $f,f' \colon x \to y$  to represent the same morphism in $\ho{A}$: this is the case if and only if there exist 2-simplices of each of  (equivalently, any one of) the following forms
  \begin{equation}\label{eq:homotopy-of-1-simplices} \xymatrix{ & y \ar[dr]^{y\cdot \degen^0} & & & y \ar[dr]^{y\cdot\degen^0}& & & x \ar[dr]^f & & & x \ar[dr]^{f'} \\ x \ar[ur]^f \ar[rr]_{f'} & & y & x \ar[ur]^{f'} \ar[rr]_f & & y & x \ar[ur]^{x \cdot\degen^0} \ar[rr]_{f'} & & y & x \ar[ur]^{x\cdot\degen^0} \ar[rr]_f & & y}\end{equation} 
    In this case, we say that $f$ and $f'$ are {\em homotopic relative to their boundary}.
      
Both of the functors $\ho$ and $\nrv$ are {\em cartesian}, preserving all finite products; see \cite[B.0.15]{Joyal:2008tq} or \cite[18.1.1]{Riehl:2014kx}.    
    \end{rec}

\begin{ntn}
 Let $\catone$, $\cattwo$, or $\catthree$  denote the one-point \fbox{$\bullet$}, \emph{generic arrow} \fbox{$\bullet\to\bullet$}, and \emph{generic composed pair} \fbox{$\bullet\to\bullet\to\bullet$} \emph{categories} respectively. Under our identification of categories with their nerves, these categories are identified with the standard simplices $\Del^0$, $\Del^1$, and $\Del^2$ respectively.
\end{ntn}

The terms {\em model category\/} and {\em model structure\/}  refer to closed model structures in the sense of Quillen~\cite{Quillen:1967:Model}.

    \begin{rec}[the model category of quasi-categories]\label{rec:qmc-quasicat}
    The quasi-categories are precisely the fibrant-cofibrant objects in a combinatorial model structure on simplicial sets due to Joyal, a proof of which can be found in \cite[\S 6.5]{Verity:2007:wcs1}.
       For our purposes here, it will be enough to recall that Joyal's model structure is completely determined by the fact that it has: 
      \begin{itemize}
      \item {\em weak equivalences\/}, which are  those simplicial maps $w\colon X\to Y$ for which each functor $\ho(A^w)\colon\ho(A^Y)\to\ho(A^X)$ is an equivalence of categories for all quasi-categories $A$,
      \item {\em cofibrations\/}, which are simply the injective simplicial maps. In particular all objects are cofibrant in this model structure, and
      \item {\em fibrations between fibrant objects\/}, which are those functors of quasi-categories which possess the right lifting property with respect to:
      \begin{itemize}
      \item all inner horn inclusions $\Horn^{n,k}\inc\Del^n$ ($n\geq 2$, $0<k<n$), and
      \item (either one of) the monomorphisms $\Del^0\inc \iso$, where $\iso$ denotes the {\em generic isomorphism category\/} $\bullet\cong\bullet$.
      \end{itemize}
      To emphasise the analogy with 1-category theory, we call the fibrations between fibrant objects {\em isofibrations}.
      \end{itemize}
    \end{rec}

The Joyal model structure for quasi-categories is \emph{cartesian}, the meaning of which requires the following construction.

\begin{rec}[Leibniz constructions]\label{rec:leibniz}
If we are given a bifunctor
$ \otimes \colon \lcat{K} \times \lcat{L} \to \lcat{M}$ 
  whose codomain  possesses all pushouts, then the {\em Leibniz construction\/} provides us with a bifunctor $ \leib\otimes \colon \lcat{K}\mapcat \times \lcat{L}\mapcat \to \lcat{M}\mapcat$
  between arrow categories, which carries a pair of objects $f \in \lcat{K}\mapcat$ and $g \in \lcat{L}\mapcat$ to an object $f\leib\otimes g \in \lcat{M}\mapcat$ defined to be the map induced by the universal property of the pushout in the following diagram:
  \begin{equation}
  \xymatrix@=2em{
    {K\otimes L} \ar[d]_{K\otimes g} \ar[r]^{f\otimes L} &
    {K'\otimes L} \ar[d] \ar@/^4ex/[ddr]^{K'\otimes g} & \\
    {K\otimes L'} \ar[r] \ar@/_4ex/[rrd]_{f\otimes L'} &
    {(K'\otimes L) \cup_{K\otimes L} (K\otimes L') } \poexcursion
    \ar@{-->}[dr]_{f\leib\otimes g} & \\
    & & {K'\otimes L'}
  }
  \end{equation}
  The action of this functor on the arrows of $\lcat{K}\mapcat$ and $\lcat{L}\mapcat$ is the canonical one induced by the functoriality of $\otimes$ and the universal property of the pushout in the diagram above. In the case where the bifunctor $\otimes$ defines a monoidal product, the Leibniz bifunctor $\leib\otimes$ is frequently called the {\em pushout product}.
In the context of a bifunctor $\hom \colon \lcat{K}\op \times \lcat{L} \to \lcat{M}$, the dual construction, defined using pullbacks in $\lcat{M}$, is preferred.  We refer the reader to \cite[\S\ref*{reedy:sec:Leibniz-Reedy}]{RiehlVerity:2013kx} for a full account of this construction and its properties. 
\end{rec}

\begin{rec}[cartesian model categories]\label{rec:cart-modcat}
The cartesianness of the Joyal model structure may be formulated in the following equivalent forms:
  \begin{enumerate}
    \item If $i\colon X\tcof Y$ and $j\colon U\tcof V$ are both cofibrations (monomorphisms) then so is their {\em Leibniz product\/} $i\leib\times j\colon (Y\times U)\cup_{X\times U} (X\times V)\tcof (Y\times V)$. Furthermore, if $i$  or $j$ is a trivial cofibration then so is $i\leib\times j$.
    \item If $i\colon X\tcof Y$ is a cofibration (monomorphism) and $p\colon A\tfib B$ is a fibration then their {\em Leibniz hom\/} $\leib\hom(i,p)\colon A^Y\tfib B^Y\times_{B^X} A^X$ is also a fibration. Furthermore, if $i$ is a trivial cofibration or $p$ is a trivial fibration then $\leib\hom(i,p)$ is also a trivial fibration.
  \end{enumerate}
  In particular, if $A$ is a quasi-category then we may apply the second of these formulations to the unique isofibration $!\colon A\to 1$ and monomorphisms $\emptyset\inc X$ and $i\colon X\inc Y$ to show that $A^X$ is again a quasi-category and that the pre-composition functor $A^i\colon A^Y\tfib A^X$ is an isofibration. 
\end{rec}

\begin{obs}[closure properties of isofibrations]\label{obs:isofibration-closure}
As a consequence of \ref{rec:qmc-quasicat} and \ref{rec:cart-modcat}, the isofibrations enjoy the following closure properties:
\begin{itemize}
\item  The isofibrations are closed under products, pullbacks, retracts, and transfinite limits of towers (as fibrations between fibrant objects).
\item The isofibrations are also closed under the Leibniz hom $\leib\hom(i,-)$ for any monomorphism $i$ and, in particular, under exponentiation $(-)^X$ for any simplicial set $X$ (as fibrations between fibrant objects in a cartesian model category).
\end{itemize}
\end{obs}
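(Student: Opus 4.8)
The plan is to reduce every assertion to the identification, recorded in \ref{rec:qmc-quasicat}, of the isofibrations with the fibrations between fibrant objects of the Joyal model structure, so that each clause follows from a standard closure property of the fibrations in a model category together with a short check that the limit objects involved remain fibrant (i.e.\ remain quasi-categories).

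For the first clause, I would recall that in any model category the fibrations are precisely the maps with the right lifting property against the trivial cofibrations, so they form a class closed under pullback, retract, small products, and transfinite limits of towers. It then remains to see that the constructed objects are again quasi-categories: a small product of quasi-categories is a quasi-category, a retract of a quasi-category is a quasi-category, and for a pullback of an isofibration $p\colon A\tfib B$ along a functor $C\to B$ between quasi-categories the projection $A\times_B C\to C$ is a fibration, so the composite $A\times_B C\to C\to 1$ exhibits $A\times_B C$ as fibrant; similarly the limit of a tower of isofibrations is fibrant because it admits a fibration to the (fibrant) initial stage. In each case the resulting map is then a fibration between fibrant objects, that is, an isofibration. (For towers the claim is to be read as: the limit of a tower $\cdots\tfib A_1\tfib A_0$ of isofibrations is a quasi-category and each projection $\lim_n A_n\to A_k$ is an isofibration.)

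For the second clause I would invoke the cartesianness of the Joyal model structure in the form \ref{rec:cart-modcat}(2): for a monomorphism $i\colon X\inc Y$ and an isofibration $p\colon A\tfib B$ the Leibniz hom $\leib\hom(i,p)\colon A^Y\to B^Y\times_{B^X}A^X$ is a fibration. To upgrade this to an isofibration I would check that its domain and codomain are quasi-categories: $A^X$, $A^Y$, $B^X$, $B^Y$ are quasi-categories by applying \ref{rec:cart-modcat}(2) to $!\colon A\to 1$ (resp.\ $B\to 1$) and the monomorphisms $\emptyset\inc X$, $\emptyset\inc Y$, exactly as observed at the end of \ref{rec:cart-modcat}, and then $B^Y\times_{B^X}A^X$ is a quasi-category by the pullback case just treated. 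Specialising to $i=(\emptyset\inc X)$, the map $\leib\hom(i,p)$ becomes $p^X\colon A^X\to B^X$, which yields closure under $(-)^X$ as the stated special case.

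There is essentially no obstacle here; the only points needing a moment's attention are that the parenthetical ``as fibrations between fibrant objects'' genuinely requires the separate fibrancy verifications above and does not follow formally from closure of the fibrations alone, and that one must keep the indices straight in the Leibniz-hom pullback so that both the general-monomorphism statement and its exponential special case drop out of the same argument.
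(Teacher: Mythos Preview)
Your proposal is correct and is precisely the argument the paper has in mind: the observation is stated without proof, merely pointing to \ref{rec:qmc-quasicat} and \ref{rec:cart-modcat} as its justification, and your unpacking of how the closure properties follow from those recollections is exactly the intended reasoning.
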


\subsection{Isomorphisms and marked simplicial sets}

  \begin{defn}[isomorphisms in quasi-categories]\label{defn:equivalences} 
    When $A$ is a quasi-category, we say that a 1-simplex $a\in A_1$ is an {\em isomorphism\/} if and only if the corresponding arrow of its homotopy category $\ho{A}$ is an isomorphism in the usual sense.
\end{defn}

Others use the term ``equivalences'' for the isomorphisms in a quasi-category, but we believe our terminology is less ambiguous: no stricter notion of isomorphism exists. 

When working with isomorphisms in quasi-categories, it will sometimes be convenient to work in the category of {\em marked simplicial sets\/} as defined by Lurie~\cite{Lurie:2009fk}.

    \begin{defn}[marked simplicial sets]
      A {\em marked simplicial set} $X$ is a simplicial set equipped with a specified subset of \emph{marked} $1$-simplices $mX\subseteq X_1$ containing all the degenerate 1-simplices. A map of marked simplicial sets is a map of underlying simplicial sets that carries marked 1-simplices to marked 1-simplices. While the category $\msSet$ of marked simplicial sets is not quite as well behaved as $\sSet$ it is nevertheless a \emph{quasitopos}, which implies that it is complete, cocomplete, and (locally) cartesian closed (see \cite[Observation 11]{Verity:2007:wcs1} and \cite{Street:2003:WomCats}).

      The functor $\msSet\to\sSet$ which forgets markings has both a left and a right adjoint. This left adjoint, dubbed {\em flat\/} by Lurie, makes a simplicial set $X$ into a marked simplicial set $X^\flat$ by giving it the minimal marking in which only the degenerate $1$-simplices are marked. Conversely, this right adjoint, which Lurie calls {\em sharp}, makes $X$ into a marked simplicial set $X^\sharp$ by giving it the maximal marking in which all $1$-simplices are marked. If $X$ is already a marked simplicial set then we will use the notation $X^\flat$ and $X^\sharp$ for the marked simplicial sets obtained by applying the flat or sharp construction (respectively) to the underlying simplicial set of $X$.

      In general, we will identify simplicial sets with their minimally marked variants, allowing us to extend the notation introduced above to the marked context. Any variation to this rule will be commented upon as we go along.
    \end{defn}
    
        \begin{rmk}[stratified simplicial sets]
Earlier authors, including Roberts~\cite{Roberts:1978:Complicial}, Street~\cite{Street:1987:Oriental}, and Verity~\cite{Verity:2008sr,Verity:2007:wcs1}, have studied a more general notion of {\em stratification}. A {\em stratified simplicial set\/} is again a simplicial set $X$ equipped with a specified subset of simplices which, in that context, are said to be {\em thin}. A stratification may contain simplices of arbitrary dimension and it must again contain all degenerate simplices. Stratifications are used to build structures called {\em complicial sets\/}, which model homotopy coherent higher categories in much the way that quasi-categories model homotopy coherent categories.
    \end{rmk}

    \begin{rec}[products and exponentiation]\label{rec:marked-prod-exp}
The product in $\msSet$ of marked simplicial sets $X$ and $Y$ is formed by taking the product of underlying simplicial sets and marking those $1$-simplices $(x,y)\in X\times Y$ which have $x$ marked in $X$ and $y$ marked in $Y$.

      An exponential (internal hom) $Y^X$ in marked simplicial sets has $n$-simplices which correspond to maps $k\colon X\times\Del^n\to Y$ of marked simplicial sets and has marked $1$-simplices those $k$ which extend along the canonical inclusion $X\times\Del^1\inc X\times(\Del^1)^\sharp$ to give a (uniquely determined) map $k'$ \[\xymatrix{ X \times \Del^1 \ar[r]^-k \ar@{_(->}[d] & Y \\ X \times (\Del^1)^\sharp \ar[ur]_-{k'}}\] That is, a marked 1-simplex in $Y^X$ is a map $k'\colon X \times (\Del^1)^\sharp \to Y$ of marked simplicial sets; see \cite[\S 3.1.3]{Lurie:2009fk}.
     The only $1$-simplices which are not marked in $X\times\Del^1$ but are marked in $X\times(\Del^1)^\sharp$ are pairs of the form $(x,\id_{[1]})$ in which $x$ is marked in $X$. It follows that a marked simplicial map $k\colon X\times\Del^1\to Y$ extends along $X\times\Del^1\inc X\times(\Del^1)^\sharp$, and thus represents a marked $1$-simplex in $Y^X$, if and only if for all marked $1$-simplices $x$ in $X$ the $1$-simplex $k(x,\id_{[1]})$ is marked in $Y$.
    \end{rec}

    \begin{rec}[isomorphisms and markings]\label{rmk:equiv-markings}
A quasi-category $A$ becomes a marked simplicial set $A^\natural$ with the {\em natural marking}, under which a 1-simplex is marked if and only if it is an isomorphism. When we regard an object as being a quasi-category in the marked setting we will always assume that it carries the natural marking without comment. A functor $f\colon A\to B$ between quasi-categories automatically preserves natural markings simply because the corresponding functor $\ho(f)\colon\ho{A}\to\ho{B}$ preserves isomorphisms.
\end{rec}

\begin{ntn}
      In this context it is useful to adopt the special marking convention for horns ($n \geq 1$, $0\leq k \leq n$) under which we
      \begin{itemize}
      \item write $\Del^{n:k}$ for the marked simplicial set obtained from the standard minimally marked simplex $\Del^n$ by also marking the edge $\fbv{0,1}$ in the case $k=0$ and marking the edge $\fbv{n-1,n}$ in the case $k=n$,
      \item inherit the marking of the horn $\Horn^{n,k}$ from that of $\Del^{n:k}$, and
      \item use $\Horn^{n,k}\inc\Del^{n:k}$ to denote the marked inclusion of this horn into its corresponding specially marked simplex.
      \end{itemize}
\end{ntn}

      Using these conventions we may recast Joyal's ``special horn filler'' result \cite[1.3]{Joyal:2002:QuasiCategories} simply as follows.

\begin{prop}[Joyal]\label{prop:joyal-special-horn} A naturally marked quasi-category has the right lifting property with respect to all marked horn inclusions $\Horn^{n,k}\inc\Del^{n:k}$, for $n\geq 1$ and $0\leq k \leq n$.
\end{prop}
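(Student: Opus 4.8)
The plan is to settle this lifting problem by a case analysis on $n$ and $k$, in each case peeling off the markings first and then reducing to a statement about the underlying simplicial sets.

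First I would unwind what a marked lifting problem against $\Horn^{n,k}\inc\Del^{n:k}$ asks for. Since maps into $A^\natural$ are unconstrained by the codomain, such a problem is simply an extension problem, and a solution is an ordinary simplicial filler $\Del^n\to A$ of the given horn whose restriction to every marked edge of $\Del^{n:k}$ is an isomorphism of $A$. The only marked edge of $\Del^{n:k}$ that is not degenerate is $\fbv{0,1}$ (when $k=0$) or $\fbv{n-1,n}$ (when $k=n$); and for $n\geq 2$ this edge already lies in $\Horn^{n,k}$, being a $1$-dimensional face of $\face^j$ for a suitable $j\neq k$ (take $j=n$ when $k=0$ and $j=0$ when $k=n$). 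Consequently, for $n\geq 2$ a filler is constrained on that edge only by data already carried by the given horn map, so whenever the horn map is itself a marked map the filler is automatically a marked lift, and the marked lifting problem collapses to an ordinary one.

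Next I would run through the cases. For an inner horn ($n\geq 2$, $0<k<n$) we have $\Del^{n:k}=\Del^n$ and the required lift is precisely the defining right lifting property of the quasi-category $A$. For an outer horn with $n\geq 2$---so $k=0$ or $k=n$---the marked map hypothesis says exactly that the critical edge $\fbv{0,1}$, respectively $\fbv{n-1,n}$, is sent to an isomorphism of $A$; Joyal's special outer horn filler \cite[1.3]{Joyal:2002:QuasiCategories} then supplies a simplicial filler, which by the previous paragraph is automatically a marked lift. Finally, for $n=1$ (with $k=0$ or $k=1$) the horn $\Horn^{1,k}$ is a single vertex $a$ of $A$ and $\Del^{1:k}$ is $\Del^1$ with its unique nondegenerate edge marked; I would fill with the degenerate edge $a\cdot\degen^0$, which represents an identity in $\ho{A}$ and hence is an isomorphism, so that it is marked as required.

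The only substantive input is Joyal's special outer horn filler, which the paper takes as given along with the rest of the Joyal model structure; everything else is bookkeeping. The closest thing to an obstacle is the verification, made in the second paragraph, that for $n\geq 2$ the critical marked edge of $\Del^{n:k}$ genuinely belongs to the horn, so that the fillers produced are never subject to a marking constraint not already met by the horn map; once that observation is in hand, the three cases close immediately.
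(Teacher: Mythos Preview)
Your proof is correct. The paper does not actually supply a proof of this proposition: it simply presents the statement as a recasting of Joyal's special horn filler result \cite[1.3]{Joyal:2002:QuasiCategories}, attributing it to Joyal without further argument. Your case analysis makes explicit exactly why that recasting works---the key observation being that for $n\geq 2$ the critical marked edge already lies in the horn, so the marked lifting problem reduces to the underlying simplicial one---and handles the $n=1$ case and inner horns separately; this is precisely the unpacking the paper leaves implicit.
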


 An important corollary is that a Kan complex is precisely a quasi-category in which every 1-simplex is an isomorphism \cite[1.4]{Joyal:2002:QuasiCategories}.

    \begin{rec}[the model structure of naturally marked quasi-categories]\label{rec:qmc-quasi-marked}
There is a model structure on the category of marked simplicial sets whose fibrant-cofibrant objects are precisely the naturally marked quasi-categories (see Lurie~\cite[\S 3.1]{Lurie:2009fk} or Verity~\cite[\S 6.5]{Verity:2007:wcs1}). This model category is  combinatorial and cartesian and is completely characterised by the fact that it has:
      \begin{itemize}
        \item {\em weak equivalences\/} which are those maps $w\colon X\to Y$ of marked simplicial sets for which $\ho(A^w)\colon\ho(A^Y)\to\ho(A^X)$ is an equivalence of categories for all (naturally marked) quasi-categories $A$,
        \item {\em cofibrations\/} which are simply the injective maps of marked simplicial sets, and
        \item {\em fibrations between fibrant objects\/} which are the isofibrations of naturally marked quasi-categories.
      \end{itemize}
      Here, the exponential $A^X$ is the internal hom in the category of marked simplicial sets $\msSet$. The functor $\ho\colon\msSet\to\Cat$ is the left adjoint to the nerve functor $\nrv\colon\Cat\to\msSet$ which carries a category $\scat{C}$ to the marked simplicial set whose underlying simplicial set is the usual nerve and in which a $1$-simplex is marked if and only if it is an isomorphism in $\scat{C} \cong \ho{\scat{C}}$. The left adjoint $h$ sends a marked simplicial set $X$ to the localisation of its homotopy category $hX$ at the set of marked edges. Note that in the case of a naturally marked quasi-category $A^\natural$, $h(A^\natural) = hA$, the usual homotopy category of the quasi-category.

      By \cite[7.14]{Joyal:2007kk}, a cofibration is a weak equivalence if and only if it has the left lifting property with respect to the fibrations between fibrant objects.
        In particular, in this model structure all of the special marked horn inclusions $\Horn^{n,k}\inc\Del^{n:k}$ ($n \geq 1$, $0\leq k\leq n$) and the inclusion $(\Del^1)^\sharp\inc\iso$ of the marked 1-simplex into the naturally marked isomorphism category are trivial cofibrations (see \cite[B.10, B.15]{DuggerSpivak:2011ms}). This proves that an isomorphism $\Del^1 \to A$ in a quasi-category may always be extended to a functor $\iso \to A$ \cite[1.6]{Joyal:2002:QuasiCategories}.
\end{rec}

  \begin{obs}[natural markings, internal homs, and products]\label{obs:nat-mark-homs}
The product of two naturally marked quasi-categories is again a naturally marked quasi-category.
By cartesianness of the marked model structure, if $A$ is a naturally marked quasi-category and $X$ is any marked simplicial set then the exponential $A^X$ is again a naturally marked quasi-category.    In summary, the fully faithful natural marking functor $\natural\colon\qCat\to\msSet$ is a cartesian closed functor, in the sense that it preserves products and internal homs.
    \end{obs}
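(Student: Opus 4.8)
The plan is to establish the three assertions in turn, using only the two cartesianness facts already recorded: that $\ho$ and $\nrv$ are cartesian (Recollection~\ref{rec:hty-category}), and that the model structure for naturally marked quasi-categories is cartesian with fibrant objects exactly the naturally marked quasi-categories (Recollection~\ref{rec:qmc-quasi-marked}). For the product of naturally marked quasi-categories $A^\natural$ and $B^\natural$, the underlying simplicial set $A\times B$ is a quasi-category because isofibrations are closed under products (Observation~\ref{obs:isofibration-closure}), so it only remains to check that the product marking of Recollection~\ref{rec:marked-prod-exp} agrees with the natural marking. Since $\ho(A\times B)\cong\ho A\times\ho B$, a morphism $(f,g)$ of this product category is invertible if and only if $f$ and $g$ are each invertible; hence a $1$-simplex $(a,b)$ of $A\times B$ is an isomorphism precisely when $a$ and $b$ are, which is exactly the product marking. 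Thus $(A\times B)^\natural = A^\natural\times B^\natural$.

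For the exponential, let $A$ be a naturally marked quasi-category and $X$ an arbitrary marked simplicial set. Then $A$ is fibrant and $X$ is cofibrant, since $\emptyset\inc X$ is a monomorphism and hence every object of $\msSet$ is cofibrant. Applying the Leibniz-hom formulation of cartesianness (as in Recollection~\ref{rec:cart-modcat}, now in the marked model structure) to the monomorphism $\emptyset\inc X$ and the isofibration $A\tfib 1$ shows that $A^X\to 1^X\times_{1^\emptyset}A^\emptyset\cong 1$ is a fibration; that is, $A^X$ is fibrant, hence a naturally marked quasi-category. In particular the marking that Recollection~\ref{rec:marked-prod-exp} puts on $A^X$ coincides with the natural marking of its underlying quasi-category.

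It remains to see that $\natural\colon\qCat\to\msSet$ is cartesian closed. It is fully faithful because, by Recollection~\ref{rmk:equiv-markings}, every functor between quasi-categories preserves natural markings, so a map $A^\natural\to B^\natural$ of marked simplicial sets is exactly a simplicial map $A\to B$, i.e.\ a morphism of $\qCat$. Preservation of products was proved above. For internal homs, fix quasi-categories $A$ and $B$; the previous paragraph shows $(B^\natural)^{A^\natural}$ is a naturally marked quasi-category, so it suffices to identify its underlying simplicial set with $B^A$. An $n$-simplex of $(B^\natural)^{A^\natural}$ is a marked map $A^\natural\times\Del^n\to B^\natural$, equivalently a simplicial map $k\colon A\times\Del^n\to B$ carrying the marked edges of $A^\natural\times\Del^n$ to isomorphisms of $B$; but a marked edge of $A^\natural\times\Del^n$ has an isomorphism of $A$ in its first coordinate and a degenerate edge in its second, hence is an isomorphism of the quasi-category $A\times\Del^n$, so $k$ preserves it automatically. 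Thus the simplices of $(B^\natural)^{A^\natural}$ are precisely those of $B^A$, with matching faces and degeneracies, whence $(B^\natural)^{A^\natural}=(B^A)^\natural$.

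No step is genuinely hard: the whole content lies in the two marking-bookkeeping points --- that the natural marking of a product of quasi-categories is the product of the natural markings, and that the marked exponential $(B^\natural)^{A^\natural}$ has underlying simplicial set $B^A$ --- and both of these reduce immediately to the observation that an edge of a product of quasi-categories is an isomorphism exactly when each of its coordinates is, which itself is a formal consequence of the cartesianness of $\ho$. The only thing to be careful about is applying the identification ``fibrant in $\msSet$ $=$ naturally marked quasi-category'' correctly, so that the abstract fibrancy of $A^X$ really does pin down its set of marked edges.
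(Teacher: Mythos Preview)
Your argument is correct and follows exactly the line the paper indicates: the observation in the paper carries no separate proof beyond the phrase ``by cartesianness of the marked model structure,'' and you have simply unpacked that justification in detail, together with the straightforward marking bookkeeping for products and internal homs. There is nothing to add.
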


  The content of observation \ref{obs:nat-mark-homs} is  more profound than one might initially suspect. It might be summarised by the slogan ``a natural transformation of functors is an isomorphism if and only if it is a {\em pointwise isomorphism}''. The precise meaning of this slogan is encoded in the following result.

            \begin{lem}[pointwise isomorphisms are isomorphisms]\label{lem:pointwise-equiv}
Let $X$ be a marked simplicial set and let $A$ be a naturally marked quasi-category.  A $1$-simplex $k\colon X\times\Del^1\to A$ is marked in $A^X$ if and only if for all $0$-simplices $x$ in $X$ the $1$-simplex $k(x\cdot\degen^0,\id_{[1]})$ is marked in $A$.
       \end{lem}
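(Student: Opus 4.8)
The plan is to reduce the statement, via~\ref{rec:marked-prod-exp}, to a question about isomorphisms in $A$ that is then settled by the fact that isomorphisms compose. Recall from there that a map $k\colon X\times\Del^1\to A$ of marked simplicial sets represents a marked $1$-simplex of $A^X$ if and only if it extends along $X\times\Del^1\inc X\times(\Del^1)^\sharp$, which in turn happens exactly when $k(z,\id_{[1]})$ is marked --- that is, an isomorphism --- in $A$ for every marked $1$-simplex $z$ of $X$. Granting this, the forward implication of the lemma is immediate, since each $x\cdot\degen^0$ is a degenerate, hence marked, $1$-simplex of $X$ and so is among the $z$'s being tested. All the work is therefore in the converse: deducing that $k(z,\id_{[1]})$ is an isomorphism for \emph{every} marked $z$, knowing only that this holds for the degenerate ones.

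For the converse I would fix a marked $1$-simplex $z$ of $X$, regarded as a map $z\colon\Del^1\to X$, with endpoints $x_0=z\cdot\face^1$ and $x_1=z\cdot\face^0$, and restrict $k$ along $z\times\id_{\Del^1}\colon\Del^1\times\Del^1\to X\times\Del^1$ to obtain a map $\ell\colon\Del^1\times\Del^1\to A$ out of the square. The next step is to read off, using the description of the product marking in~\ref{rec:marked-prod-exp}, the images under $\ell$ of the five nondegenerate edges of the square: the two ``horizontal'' edges go to the $1$-simplices $k(z,i\cdot\degen^0)$ for $i=0,1$, with a degenerate edge of $\Del^1$ in the second coordinate, and these are marked in $A$ since $z$ is marked in $X$ and $k$ is a map of marked simplicial sets; the two ``vertical'' edges go to $k(x_0\cdot\degen^0,\id_{[1]})$ and $k(x_1\cdot\degen^0,\id_{[1]})$, which are marked by hypothesis; and the diagonal goes to $k(z,\id_{[1]})$, the edge we want to show is marked. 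Restricting $\ell$ along the nondegenerate $2$-simplex of $\Del^1\times\Del^1$ spanning the vertices that map to $(0,0)$, $(1,0)$, $(1,1)$ and applying the composition criterion for $\ho A$ from~\ref{rec:hty-category} then exhibits the homotopy class of $k(z,\id_{[1]})$ as the composite $[k(x_1\cdot\degen^0,\id_{[1]})]\circ[k(z,0\cdot\degen^0)]$ in $\ho A$; a composite of isomorphisms is an isomorphism, so $k(z,\id_{[1]})$ is marked in $A$, and since $z$ was arbitrary, $k$ is marked in $A^X$.

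I do not expect a genuine obstacle here: the lemma is essentially the remark that isomorphisms are closed under composition, so the only care required is bookkeeping --- correctly identifying which edges of $X\times\Del^1$ are marked in the product marking (hence which of the square's four boundary edges come for free and which require the hypothesis), and keeping the elementary-face conventions of~\ref{rec:hty-category} straight when applying the $\ho A$ composition criterion. If one prefers to minimise this bookkeeping, a clean alternative is to first classify $z$ as a map of marked simplicial sets $(\Del^1)^\sharp\to X$ and pull $k$ back along it to a $1$-simplex of $A^{(\Del^1)^\sharp}$, reducing the whole lemma to the single universal case $X=(\Del^1)^\sharp$, in which the square argument above is at its simplest.
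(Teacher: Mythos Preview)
Your proposal is correct and follows essentially the same approach as the paper: both reduce via \ref{rec:marked-prod-exp} to showing that $k(z,\id_{[1]})$ is an isomorphism for each marked edge $z$, and both do this by exhibiting a single $2$-simplex that displays this edge as a composite of one edge marked by hypothesis (the degenerate-vertex component) and one marked via the product marking (the marked-$z$ component). The only cosmetic differences are that the paper writes down the $2$-simplex $(f\cdot\degen^0,\degen^1)$ directly in $X\times\Del^1$ rather than first restricting to the square, and it uses the other triangle of the square (factoring through $(x_0,1)$ rather than $(x_1,0)$); neither changes the substance.
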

Here is the intuition for this result. The component of a map $k \colon X \times \Del^1 \to A$ at a 1-simplex $f \colon a \to b$ in $X$      is a diagram $\Del^1 \times \Del^1 \to A$ \begin{equation}\label{eq:pointwise-equivalence-square}\xymatrix@=35pt{ \cdot \ar[d]_{k(f,\face^1)} \ar[r]^{k(a,\id_{[1]})} \ar[dr]|{k(f,\id_{[1]})} & \cdot \ar[d]^{k(f,\face^0)} \\ \cdot \ar[r]_{k(b,\id_{[1]})} & \cdot}\end{equation} If $f$ is marked and $k$ is a marked map, then the verticals are marked in $A$. If $A$ is a naturally marked quasi-category, then if the horizontals, the ``components'' of $k$, are marked, then so is the diagonal edge, simply because isomorphisms compose. If this is the case for all marked 1-simplices $f$, then $k$ is marked in $A^X$ by the definition of the internal hom in $\msSet$.
               
      \begin{proof}
As recalled in~\ref{rec:marked-prod-exp}, $k$ is a marked $1$-simplex in $A^X$ if and only if $k(f,\id_{[1]})$ is marked in $A$ for all marked edges $f$ of $X$. In particular, the edges $k(x\cdot\degen^0,\id_{[1]})$ are necessarily marked in $A$ if $k$ is marked in $A^X$. We show that this condition is sufficient to detect the marked edges $k \in (A^X)_1$.

 The $2$-simplex $(f\cdot\degen^0,\degen^1)$ of $X\times\Del^1$ can be drawn as follows:
        \[\left(    \vcenter{
        \xymatrix{ & \cdot \ar[dr]^{f} \ar@{}[d]|(.6){f \cdot \degen^0} \\ \cdot \ar@{=}[ur]^{(f \cdot \face^1)\cdot \degen^0} \ar[rr]_{f} & & \cdot} }\quad,\quad   \vcenter{
        \xymatrix{ & \cdot \ar@{=}[dr]^{\degen^0} \ar@{}[d]|(.6){\degen^1} \\ \cdot \ar[ur]^{{\id_{[1]}}} \ar[rr]_{\id_{[1]}} & & \cdot} } \right) \]
Applying $k$, the $2$-simplex $k(f\cdot\degen^0,\degen^1)$ of $A$ witnesses the fact that $k(f,\id_{[1]})$ is a composite of $k(f,\degen^0)$ and $k((f\cdot\face^1)\cdot\degen^0,\id_{[1]})$. 
        
        Now when $f$ is marked in $X$, the edge $(f,\degen^0)$ is marked in $X\times \Del^1$, so it follows that $k(f,\degen^0)$ is marked in $A$. By assumption the $1$-simplex $k((f\cdot\face^1)\cdot\degen^0,\id_{[1]})$ is also marked in $A$. The isomorphisms, that is to say naturally marked $1$-simplices, compose in $A$ simply because isomorphisms compose in the category $\ho{A}$, so it follows that $k(f,\id_{[1]})$ is marked in $A$. 
    \end{proof}

Recall that the marked edges in a naturally marked quasi-category are precisely the isomorphisms. Reinterpetting Lemma \ref{lem:pointwise-equiv} in the unmarked context, we have proven:

\begin{cor}\label{cor:pointwise-equiv}
For any quasi-category $A$ and simplicial set $X$, an edge $k \in (A^X)_1$ is an isomorphism if and only if each of its components $k(x) \in A_1$, defined by evaluating at each vertex $x \in X_0$, are isomorphisms.
\end{cor}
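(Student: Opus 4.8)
The plan is to obtain this by reading off Lemma~\ref{lem:pointwise-equiv} across the forgetful functor $\msSet\to\sSet$. First I would regard $X$ as the minimally marked simplicial set $X^\flat$ and $A$ as the naturally marked quasi-category $A^\natural$. Since the product $X^\flat\times\Del^n$ has precisely the degenerate $1$-simplices marked, a map of marked simplicial sets $X^\flat\times\Del^n\to A^\natural$ is exactly the same datum as a simplicial map $X\times\Del^n\to A$; so by the description of the internal hom in \ref{rec:marked-prod-exp}, the underlying simplicial set of the marked exponential $(A^\natural)^{X^\flat}$ is precisely the unmarked exponential $A^X$.

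Next I would invoke observation \ref{obs:nat-mark-homs}, which guarantees that $(A^\natural)^{X^\flat}$ is itself a naturally marked quasi-category; by the definition of the natural marking recalled in \ref{rmk:equiv-markings}, its marked $1$-simplices are therefore exactly the isomorphisms of the quasi-category $A^X$. Thus the assertion ``$k\in(A^X)_1$ is an isomorphism'' coincides, on the nose, with the assertion ``$k$ is marked in $(A^\natural)^{X^\flat}$''.

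It then remains only to apply Lemma~\ref{lem:pointwise-equiv} with $X^\flat$ and $A^\natural$ in the roles of the marked simplicial set and the naturally marked quasi-category: it says that $k$ is marked in $(A^\natural)^{X^\flat}$ if and only if, for every $0$-simplex $x$ of $X$, the edge $k(x\cdot\degen^0,\id_{[1]})$ is marked in $A$. Unwinding the defining adjunction for the internal hom, $k(x\cdot\degen^0,\id_{[1]})$ is exactly the component of $k$ obtained by restricting $k\colon X\times\Del^1\to A$ along $\{x\}\times\Del^1\hookrightarrow X\times\Del^1$ — that is, it is $k(x)$ — and ``marked in $A$'' is synonymous with ``an isomorphism in $A$''. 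Combining these three identifications yields the stated equivalence.

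I do not expect a genuine obstacle here: all of the real work, namely the argument that the pointwise condition at vertices already suffices to detect markedness over \emph{all} marked edges of the domain (using that isomorphisms compose in $\ho{A}$), is packaged into Lemma~\ref{lem:pointwise-equiv}. The only points requiring a little care are that one must use the minimal marking $X^\flat$, so that the underlying exponential comes out to be $A^X$ rather than a proper submarking, and the routine bookkeeping that identifies the ``component $k(x)$'' of the statement with the simplex $k(x\cdot\degen^0,\id_{[1]})$ that appears in the lemma.
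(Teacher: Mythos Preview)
Your proposal is correct and is precisely the argument the paper intends: the paper's ``proof'' is simply the sentence preceding the corollary, which says this is a reinterpretation of Lemma~\ref{lem:pointwise-equiv} in the unmarked context using that marked edges in a naturally marked quasi-category are exactly the isomorphisms. You have faithfully unpacked that one-line remark, including the identification of $(A^\natural)^{X^\flat}$ with $A^X$ via Observation~\ref{obs:nat-mark-homs} and the translation of the pointwise condition.
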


  \begin{obs}\label{obs:marked-arrow-subcat}
    If $A$ is a naturally marked quasi-category then pre-composition by the inclusion $\Del^1\inc(\Del^1)^\sharp$ gives rise to an inclusion $A^{(\Del^1)^\sharp}\inc A^{\Del^1}$ of naturally marked quasi-categories. Taking transposes, we see that Lemma \ref{lem:pointwise-equiv} may be recast as saying that a marked simplicial map $k\colon X\to A^{\Del^1}$ has a (necessarily unique) lift as the dotted arrow in
    \begin{equation*}
      \xymatrix@=2em{
        & {A^{(\Del^1)^\sharp}}\ar@{u(->}[d] \\
        {X}\ar[r]_{k}\ar@{-->}[ur] &
        {A^{\Del^1}}
      }
    \end{equation*}
    if and only if $k$ maps each 0-simplex $x\in X$ to an object $k(x)\in A^{\Del^1}$ which corresponds to a marked arrow of $A$. In other words, the map $A^{(\Del^1)^\sharp}\inc A^{\Del^1}$ is a fully faithful inclusion which identifies $A^{(\Del^1)^\sharp}$ with the full sub-quasi-category of $A^{\Del^1}$ whose objects are the isomorphisms in $A$.
  \end{obs}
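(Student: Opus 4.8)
The plan is to transpose the lifting problem across the exponential adjunction $(-)\times Y\dashv(-)^Y$ on marked simplicial sets and then recognise the resulting extension problem as a direct instance of Lemma~\ref{lem:pointwise-equiv}.

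First I would record the easy structural facts. By Observation~\ref{obs:nat-mark-homs} both $A^{(\Del^1)^\sharp}$ and $A^{\Del^1}$ are naturally marked quasi-categories. The comparison map $A^{(\Del^1)^\sharp}\to A^{\Del^1}$ is induced by precomposition with $\Del^1\inc(\Del^1)^\sharp$, a map of marked simplicial sets which is the identity on underlying simplicial sets; hence an $n$-simplex of $A^{(\Del^1)^\sharp}$ is nothing but an $n$-simplex of $A^{\Del^1}$ — that is, a simplicial map $\Del^1\times\Del^n\to A$ — subject to an additional marking condition, so the comparison map is in each degree the inclusion of a subset and in particular a monomorphism. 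This already supplies the uniqueness clause of the statement: any lift through it is unique.

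Next I would carry out the transposition. A marked map $k\colon X\to A^{\Del^1}$ transposes to a marked map $\hat k\colon X\times\Del^1\to A$, and $k$ factors through $A^{(\Del^1)^\sharp}$ precisely when $\hat k$ extends along the inclusion $X\times\Del^1\inc X\times(\Del^1)^\sharp$. Since $\Del^1$ and $(\Del^1)^\sharp$ have the same underlying simplicial set, so do $X\times\Del^1$ and $X\times(\Del^1)^\sharp$; thus such an extension is unique and exists if and only if the simplicial map $\hat k$ already carries every marked edge of $X\times(\Del^1)^\sharp$ to an isomorphism of $A$. The marked edges of $X\times(\Del^1)^\sharp$ that are not already marked in $X\times\Del^1$ are exactly the pairs $(f,\id_{[1]})$ with $f$ a marked edge of $X$, so — $\hat k$ being a marked map out of $X\times\Del^1$ already — the condition reduces to requiring that $\hat k(f,\id_{[1]})$ be an isomorphism of $A$ for every marked edge $f$ of $X$. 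By the description of the internal hom in~\ref{rec:marked-prod-exp} this says precisely that $\hat k$ represents a marked $1$-simplex of $A^X$, and by Lemma~\ref{lem:pointwise-equiv} this in turn holds if and only if $\hat k(x\cdot\degen^0,\id_{[1]})$ is an isomorphism of $A$ for every $0$-simplex $x$ of $X$. Since $\hat k(x\cdot\degen^0,\id_{[1]})$ is the $1$-simplex of $A$ to which $k(x)\in(A^{\Del^1})_0$ corresponds, this is exactly the asserted criterion.

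Finally, specialising this equivalence to $X=\Del^n$ with its minimal marking shows that the $n$-simplices of $A^{(\Del^1)^\sharp}$ are precisely those $n$-simplices of $A^{\Del^1}$ all of whose vertices are isomorphisms of $A$; hence the comparison map identifies $A^{(\Del^1)^\sharp}$ with the full sub-simplicial-set of $A^{\Del^1}$ spanned by the $0$-simplices that correspond to isomorphisms of $A$, which is a quasi-category by Observation~\ref{obs:nat-mark-homs}. That its natural marking agrees with the one inherited from $A^{\Del^1}$ follows once more from~\ref{rec:marked-prod-exp} and Lemma~\ref{lem:pointwise-equiv}, which together identify the marked $1$-simplices of either exponential as those whose two endpoint components are isomorphisms of $A$. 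I expect the only genuinely fiddly step to be the bookkeeping of product markings — identifying which edges of $X\times(\Del^1)^\sharp$ are newly marked, and seeing that the resulting requirement collapses, via Lemma~\ref{lem:pointwise-equiv}, from a statement about all marked edges of $X$ to one about the vertices of $X$; everything else is the formal behaviour of the exponential adjunction and of a full subcategory inclusion.
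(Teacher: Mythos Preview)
Your proposal is correct and follows essentially the same approach as the paper: the observation in the paper is not given a separate proof but is presented as an immediate reformulation of Lemma~\ref{lem:pointwise-equiv} obtained by transposing across the exponential adjunction, and you have simply carried out that transposition explicitly and carefully. Your additional remarks on monicity of the comparison map and on the specialisation to $X=\Del^n$ are sound elaborations of what the paper leaves implicit.
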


\subsection{Join and slice}\label{subsec:join}

Particularly to facilitate comparisons between our development of the theory of quasi-categories, using the enriched category theories of 2-categories and simplicial categories, and the more traditional accounts following Joyal and Lurie, we review Joyal's slice and join constructions, introduced in \cite{Joyal:2002:QuasiCategories}. Unlike in the classical treatments, these technical combinatorial details will be of secondary importance for us, and for that reason, we encourage the reader to skip this section upon first reading, referring back only as necessary. A more leisurely account of the combinatorial work reviewed here can be found in an earlier version of this paper \cite[\S A]{RiehlVerity:2015tt-v3}.

  \begin{defn}[joins and d{\'e}calage]\label{defn:join-dec} The algebraists' skeletal category $\Del+$ of all finite ordinals and order preserving maps supports a canonical strict (non-symmetric) monoidal structure $(\Del+,\oplus,[-1])$ in which $\oplus$ denotes the {\em ordinal sum\/} given 
  \begin{itemize}
    \item for objects $[n],[m]\in\Del+$ by $[n]\oplus[m] \defeq [n+m+1]$,
    \item for arrows $\alpha\colon[n]\to[n'], \beta\colon[m]\to[m']$ by $\alpha\oplus\beta\colon[n+m+1]\to[n'+m'+1]$ defined by
  \begin{equation*}
    \alpha\oplus\beta(i) =
    \begin{cases}
    \alpha(i)& \text{if $i\leq n$,} \\
    \beta(i-n-1) + n' + 1& \text{otherwise.}
    \end{cases}
  \end{equation*}
  \end{itemize}
By Day convolution, this bifunctor extends to a (non-symmetric) monoidal closed structure $(\asSet, \join, \Del^{-1}, \dec_l, \dec_r)$ on the category of augmented simplicial sets. Here the monoidal operation $\join$ is known as the {\em simplicial join\/} and its closures $\dec_l$ and $\dec_r$ are known as the {\em left and right d{\'e}calage constructions}, respectively.   To fix handedness, we declare that for each augmented simplicial set $X$ the functor $\dec_l(X,{-})$ (resp.\ $\dec_r(X,{-})$) is right adjoint to $X\join{-}$ (resp.\ ${-}\join X$).

The join $X\join Y$ of augmented simplicial sets $X$ and $Y$ may be described explicitly as follows:
  \begin{itemize}
    \item it has simplices pairs $(x,y) \in (X\join Y)_{r+s+1}$ with $x\in X_r$, $y\in Y_s$,
    \item if $(x,y)$ is a simplex of $X\join Y$ with $x \in X_r$ and $y \in Y_s$ and $\alpha\colon[n]\to[r+s+1]$ is a simplicial operator in $\Del+$, then $\alpha$ may be uniquely decomposed as $\alpha=\alpha_1\join\alpha_2$ with $\alpha_1\colon[n_1]\to[r]$ and $\alpha_2\colon[n_2]\to[s]$, and we define $(x,y)\cdot\alpha\defeq (x\cdot\alpha_1,y\cdot\alpha_2)$. 
  \end{itemize}
If $f\colon X\to X'$ and $g\colon Y\to Y'$ are simplicial maps then the simplicial map $f\join g\colon X\join Y\to X'\join Y'$  carries the simplex $(x,y)\in X\join Y$ to the simplex $(f(x),g(y))\in X'\join Y'$. 
  \end{defn}

  \begin{defn}[d{\'e}calage and slices]\label{defn:slices}
The d{\'e}calage functors can be used to define Joyal's \emph{slice} construction for a map $f\colon X\to A$ of simplicial. 
Fixing a simplicial set $X$ and identifying the category $\sSet$  of simplicial sets with the full subcategory of terminally augmented simplicial sets in $\asSet$, we define a functor
    \begin{equation*}
      {-}\mathbin{\bar\join} X\colon \xy<0em,0em>*+{\sSet}\ar <6em,0em>*+{X\slice\sSet}\endxy\mkern30mu
      (\text{resp.\ } 
      X\mathbin{\bar\join}{-}\colon \xy<0em,0em>*+{\sSet}\ar <6em,0em>*+{X\slice\sSet}\endxy)
    \end{equation*}
    which carries a simplicial set $Y\in\sSet$ to the object ${*}\join X\colon X\cong\Del^{-1}\join X\to Y\join X$ (resp.\ $X\join{*}\colon X\cong X\join\Del^{-1} \to X\join Y$) induced by the map ${*}\colon\Del^{-1}\to Y$ corresponding to the unique $(-1)$-simplex of $Y$. This functor preserves all colimits, and thus admits a right adjoint that we now describe explicitly.
    
Observe that the $(-1)$-dimensional simplices of $\dec_r(X, A)$ (resp.\ $\dec_l(X,A)$) are in bijective correspondence with simplicial maps $f\colon X\to A$. So if we are given such a simplicial map we may, by recollection \ref{rec:augmentation}, extract the component of $\dec_r(X,A)$ (resp.\ $\dec_l(X,A)$) consisting of those simplices whose $(-1)$-face is $f$, which we denote by $\slicer{A}{f}$ (resp.\ $\slicel{f}{A}$) and call the {\em slice of $A$ over (resp.\ under) $f$}. Now it is a matter of an easy calculation to demonstrate directly that $\slicer{A}{f}$ (resp. $\slicel{f}{A}$) has the universal property required of the right adjoint to ${-}\bar\join X$ (resp.\ $X\bar\join {-}$) at the object $f\colon X\to A$ of $X\slice\sSet$.

    In other words, these d{\'e}calages admit the following canonical decompositions as disjoint unions of (terminally augmented) slices: 
    \begin{equation*}
      \dec_r(X,A)=\bigsqcup_{f\colon X\to A} (\slicer{A}{f})\mkern30mu
      \dec_l(X,A)=\bigsqcup_{f\colon X\to A} (\slicel{f}{A})
    \end{equation*}

    We think of the slice $\slicel{f}{A}$ as being the simplicial set of {\em cones under the diagram $f$\/} and we think of the dual slice $\slicer{A}{f}$ as being the simplicial set of {\em cones over the diagram $f$}.
  \end{defn}
  
  \begin{obs}[slices of quasi-categories]\label{obs:slice-and-qcats}
    A direct computation from the explicit description of the join construction given above demonstrates that the Leibniz join (see recollection~\ref{rec:leibniz}) of a horn and a boundary $(\Horn^{n,k}\inc\Del^n)\leib\join(\boundary\Del^m\inc\Del^m)$ is again isomorphic to a single horn $\Horn^{n+m+1,k}\inc\Del^{n+m+1}$. Dually the Leibniz join $(\boundary\Del^n\inc\Del^n)\leib\join(\Horn^{m,k}\inc\Del^m)$ is isomorphic to the single horn $\Horn^{n+m+1,n+k+1}\inc\Del^{n+m+1}$. 

    Combining these computations with the properties of the Leibniz construction developed in \cite[\S\ref*{reedy:sec:Leibniz-Reedy}]{RiehlVerity:2013kx}, we may show that an augmented simplicial set $A$ has the right lifting property with respect to all (inner) horn inclusions then so do the left and right d{\'e}calages $\dec_l(X,A)$ and $\dec_r(X,A)$ for any augmented simplicial set $X$. In particular, this tells us that if $f\colon X\to A$ is any map of simplicial sets and $A$ is a quasi-category then the slices $\slicel{f}{A}$ and $\slicer{A}{f}$ are also quasi-categories. 

    Working in the marked context, we may extend this result to Leibniz joins with specially marked outer horns. That then allows us to prove that if $p\colon A\to B$ is an isofibration of quasi-categories and $f\colon X\to A$ is any simplicial map then the induced simplicial maps $p\colon \slicer{A}{f}\to \slicer{B}{pf}$ and $p\colon \slicel{f}{A}\to \slicel{pf}{B}$ are also isofibrations of quasi-categories.
  \end{obs}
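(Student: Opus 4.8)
The plan is to prove the three assertions in order. The two horn identifications are a direct unwinding of the explicit formula for the join, and the statements about d\'ecalages, slices, and isofibrations then follow formally by combining the adjunctions $X\join-\dashv\dec_l(X,-)$ and $-\join X\dashv\dec_r(X,-)$ with the colimit-preservation and closure properties of the Leibniz construction recorded in \cite[\S\ref*{reedy:sec:Leibniz-Reedy}]{RiehlVerity:2013kx}. The only genuinely fiddly part is the bookkeeping of special markings in the last step.

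For the first assertion I would work under the isomorphism $\Del^n\join\Del^m\cong\Del^{n+m+1}$ coming from $[n]\oplus[m]=[n+m+1]$, under which the front $n+1$ vertices of $\Del^{n+m+1}$ are the vertices of $\Del^n$ and the remaining $m+1$ are those of $\Del^m$, so that a simplex $\beta\colon[p]\to[n+m+1]$ corresponds to the pair $(\beta_1,\beta_2)$ obtained by cutting along this partition. The domain of $(\Horn^{n,k}\inc\Del^n)\leib\join(\boundary\Del^m\inc\Del^m)$ is the union $(\Del^n\join\boundary\Del^m)\cup(\Horn^{n,k}\join\Del^m)$ inside $\Del^n\join\Del^m$, and a simplex $(\beta_1,\beta_2)$ fails to lie in this subobject exactly when $\beta_2$ is surjective onto $[m]$ and $\im(\beta_1)\cup\{k\}=[n]$; since $k\le n$ this is precisely the condition $\im(\beta)\cup\{k\}=[n+m+1]$, i.e.\ that $(\beta_1,\beta_2)$ lies outside $\Horn^{n+m+1,k}$. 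Hence the two subobjects of $\Del^{n+m+1}$ agree and the Leibniz join is the inclusion $\Horn^{n+m+1,k}\inc\Del^{n+m+1}$; the second assertion is the mirror image, the distinguished vertex $k$ of the right-hand $\Del^m$ now sitting at position $n+k+1$. An inner $k$ produces an inner horn on the nose, since then $0<k<n<n+m+1$ (resp.\ $0<n+k+1<n+m+1$).

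Granting this, the d\'ecalage statement is formal. By the defining adjunction $\dec_r(X,A)$ has the right lifting property against $j\colon U\inc V$ if and only if $A$ does against $j\join\id_X$; since joining with the initial object annihilates, $j\join\id_X$ is exactly $j\leib\join(\emptyset\inc X)$. Writing $\emptyset\inc X$ as the transfinite composite of pushouts of boundary inclusions $\boundary\Del^m\inc\Del^m$ given by its skeletal filtration, and using that $\leib\join$ preserves colimits in each variable, $(\Horn^{n,k}\inc\Del^n)\join\id_X$ is a relative cell complex built from the maps $(\Horn^{n,k}\inc\Del^n)\leib\join(\boundary\Del^m\inc\Del^m)\cong\Horn^{n+m+1,k}\inc\Del^{n+m+1}$. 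If $A$ has the right lifting property against all (inner) horn inclusions it thus has it against all these cell complexes, so $\dec_r(X,A)$, and dually $\dec_l(X,A)$ via the companion adjunction and the second computation, has the right lifting property against all (inner) horn inclusions. Finally $\slicer{A}{f}$ is one of the terminally augmented summands in the canonical decomposition $\dec_r(X,A)=\bigsqcup_{g}\slicer{A}{g}$; a horn $\Horn^{n,k}$ with $n\ge 2$ is connected, so any lifting problem against $\Horn^{n,k}\inc\Del^n$ takes place inside a single summand, and each $\slicer{A}{f}$ therefore inherits the relevant lifting property and is a quasi-category whenever $A$ is.

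The last assertion refines this to isofibrations and is where the work lies. One replaces $A,B$ by their natural markings and uses that $p$ is an isofibration precisely when $p^\natural$ has the right lifting property against the (specially marked) horn inclusions $\Horn^{n,k}\inc\Del^{n:k}$, for $n\ge1$ and $0\le k\le n$ --- this already subsumes the condition on $(\Del^1)^\sharp\inc\iso$, since that map is itself a countable composite of pushouts of specially marked \emph{outer} horn inclusions (the standard analysis of $\iso$ underlying \ref{rec:qmc-quasi-marked}). Via the marked slice adjunction --- noting that $X\subseteq U\join X\subseteq V\join X$, so that the constraint over the base $f$ is automatic --- a lifting problem for $p_*\colon\slicer{A}{f}\to\slicer{B}{pf}$ against such a $j$ transposes to one for $p$ against $j\leib\join(\emptyset\inc X^\flat)$, which by the same cellular argument reduces to the single Leibniz joins $j\leib\join(\boundary\Del^m\inc\Del^m)^\flat$. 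By the marked refinement of the two horn computations these are trivial cofibrations of marked simplicial sets in all cases (with $N=n+m+1$): an inner horn inclusion when $k$ is inner; the specially marked outer horn inclusion $\Horn^{N,0}\inc\Del^{N:0}$ when $k=0$; and, when $k=n$ (or dually), the inner horn inclusion $\Horn^{N,n}\inc\Del^N$ decorated with the extra marked edge $\{n-1,n\}$, which is still a trivial cofibration because that edge already lies in the horn, so any filler of the underlying inner horn automatically respects the markings. Hence $p$, being a fibration between fibrant objects, admits the required lifts, so $p_*$ has the right lifting property against all (specially marked) horn inclusions and is therefore an isofibration; the statement for $\slicel{pf}{B}$ follows by the dual of the second computation. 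I expect this last paragraph --- in particular the marking bookkeeping in the $k=n$ case and the reduction of the $(\Del^1)^\sharp\inc\iso$ condition to outer horns --- to be the main obstacle; once the two horn identifications are in place the unmarked skeleton of the argument is purely formal.
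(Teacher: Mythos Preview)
Your proof follows exactly the approach sketched in the paper's observation, and the horn identifications, the unmarked d\'ecalage/slice argument, and the marked Leibniz join case analysis are all correct.

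There is, however, one genuine gap in the isofibration step. Your reduction ``$p_*$ is an isofibration iff $p_*^\natural$ has the right lifting property against all marked horn inclusions'' is justified by the claim that $(\Del^1)^\sharp\inc\iso^\natural$ is a transfinite composite of pushouts of specially marked outer horn inclusions. This is false. To reach $\iso^\natural$ from $(\Del^1)^\sharp$ one must at some point adjoin the inverse edge $e'\colon 1\to 0$ with \emph{both} endpoints prescribed; but a pushout along $\Horn^{1,k}\inc\Del^{1:k}$ fixes only one endpoint and creates a fresh vertex for the other, while a pushout along a higher marked horn $\Horn^{n,k}\inc\Del^{n:k}$ produces its ``new'' edge $\{1,2\}$ or $\{0,1\}$ \emph{unmarked} (only the special edge $\{0,1\}$ resp.\ $\{n-1,n\}$ is marked in $\Del^{n:k}$). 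So any such cell complex either has the wrong $1$-skeleton or leaves the inverse edge unmarked, and no further horn pushout can mark an existing edge. This is, incidentally, why Recollection~\ref{rec:qmc-quasi-marked} lists $(\Del^1)^\sharp\inc\iso$ \emph{separately} from the marked horns.

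The conclusion you want is nonetheless true, just for a different reason. What your Leibniz computation does establish is that $p_*$ is an inner fibration and that $p_*^\natural$ has the right lifting property against $\Horn^{1,0}\inc\Del^{1:0}$, i.e.\ that every isomorphism downstairs with chosen source lifts to an \emph{isomorphism} upstairs (the lift lands in the natural marking). For an inner fibration between quasi-categories this iso-lifting condition is equivalent to being an isofibration; this is Joyal's characterisation of categorical fibrations between quasi-categories (see, e.g., \cite[2.4.6.5]{Lurie:2009fk}). Citing that in place of your cellular decomposition closes the gap, and you may then drop the separate analysis of the higher marked outer horns, since only the $n=1$ case is needed.
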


A variant of the join and slice constructions, also due to Joyal, is more closely related to the enriched categorical comma constructions that we will use here.

 \begin{defn}[fat join]\label{def:fat-join}
    We define the {\em fat join} of two simplicial sets $X$ and $Y$ to be the simplicial set $X\fatjoin Y$ constructed by means of the following pushout:
    \begin{equation}\label{eq:fat-join-def}
      \xymatrix@R=2em@C=4em{
        {({X}\times{Y})\sqcup({X}\times{Y})}\ar[r]^-{\pi_X\sqcup\pi_Y}
        \ar[d]_{\langle{X}\times\face^1\times{Y},
          {X}\times\face^0\times{Y}\rangle} &
        {{X}\sqcup{Y}}\ar[d] \\
        {{X}\times\Del^1\times{Y}} \ar[r] &
        {{X}\fatjoin{Y}}\poexcursion
      }
    \end{equation}
    We may extend this construction to simplicial maps in the obvious way to give us a bifunctor $\fatjoin\colon\sSet\times\sSet\to\sSet$, and it is clear that this preserves connected colimits in each variable. It does not preserve all colimits  because the coproduct bifunctor $\sqcup$ (as used in the top right hand corner of the defining pushout above) fails to preserve coproducts  in each variable (while it does preserve connected colimits). In particular, a fat join of a simplicial set $X$ with the empty simplicial set, rather than being empty, is isomorphic  to $X$ itself.


The fat join of two non-empty simplicial sets $X$ and $Y$ may be described more concretely as the simplicial set obtained by taking the quotient of ${X}\times\Del^1\times{Y}$ under the simplicial congruence relating the pairs of $r$-simplices
    \begin{equation}\label{eq:fat-join-cong}
    (x,0,y)\sim (x,0,y')\quad \text{and}\quad (x,1,y) \sim (x',1,y) 
   \end{equation}
   where $0$ and $1$ denote the constant operators $[r]\to[1]$. We use square bracketed triples $[x,\beta,y]_\sim$ to denote equivalence classes under $\sim$. 
  \end{defn}

  \begin{defn}[fat slice]\label{defn:fat-slices}
    Replaying Joyal's slice construction of Definition~\ref{defn:slices}, if $X$ is a simplicial set, we may use the fat join to construct a functor
    \begin{equation*}
      {-}\mathbin{\bar\fatjoin} X\colon \xy<0em,0em>*+{\sSet}\ar <6em,0em>*+{X\slice\sSet}\endxy\mkern30mu
      (\text{resp.\ } 
      X\mathbin{\bar\fatjoin}{-}\colon \xy<0em,0em>*+{\sSet}\ar <6em,0em>*+{X\slice\sSet}\endxy)
    \end{equation*}
    which carries a simplicial set $Y\in\sSet$ to the object ${*}\fatjoin X\colon X\cong\Del^{-1}\fatjoin X\to Y\fatjoin X$ (resp.\ $X\fatjoin{*}\colon X\cong X\fatjoin\Del^{-1} \to X\fatjoin Y$). These functors admit right adjoints whose value at an object $f\colon X\to A$ of $X\slice\sSet$ is denoted $\fatslicer{A}{f}$ (resp.\ $\fatslicel{f}{A}$) and is called the {\em fat slice of $A$ over} (resp.\ \emph{under}) $f$.
    \end{defn}

  \begin{obs}[comparing join constructions]
    When $\beta\colon [n]\to [1]$ is a simplicial operator let $\hat{n}_\beta$ denote the largest integer in the set $\{-1\}\cup\{i\in[n]\mid \beta(i)=0\}$  and let $\check{n}_\beta=n-1-\hat{n}_\beta$. Define an associated pair $\hat\beta\colon[\hat{n}_\beta]\to[n]$ and $\check\beta\colon[\check{n}_\beta]\to[n]$ of simplicial face operators in $\Del+$ by $\hat\beta(i) = i$ for all $i\in[\hat{n}_\beta]$ and $\check\beta(j)=j+\hat{n}_\beta + 1$ for all $j\in[\check{n}_\beta]$. 
    
    Now if $X$ and $Y$ are (terminally augmented) simplicial sets we may define a map $\bar{s}^{X,Y}$ which carries an $n$-simplex $(x,\beta,y)$ of $X\times \Del^1\times Y$ to the $n$-simplex $(x\cdot\hat\beta,y\cdot\check\beta)$ of $X\join Y$. A straightforward calculation demonstrates that this map commutes with the simplicial actions on these sets and is thus a simplicial map. Furthermore, the family of simplicial maps $\bar{s}^{X,Y}\colon X\times \Del^1\times Y \to X\join Y$ is natural in $X$ and $Y$.

    Of course, since $X$ and $Y$ are terminally augmented, we also have canonical maps $l^{X,Y}\colon X\cong X\join\Del^{-1}\to X\join Y$ and $r^{X,Y}\colon Y\cong\Del^{-1}\join Y\to X\join Y$ and we may assemble all these maps together into a commutative square 
    \begin{equation}\label{eq:join-comp-def}
      \xymatrix@R=2em@C=4em{
        {({X}\times{Y})\sqcup({X}\times{Y})}\ar[r]^-{\pi_X\sqcup\pi_Y}
        \ar[d]_{\langle{X}\times\face^1\times{Y},
          {X}\times\face^0\times{Y}\rangle} &
        {{X}\sqcup{Y}}\ar[d]^{\langle l^{X,Y}, r^{X,Y}\rangle} \\
        {{X}\times\Del^1\times{Y}} \ar[r]_{\bar{s}^{X,Y}} &
        {{X}\join{Y}}
      }
    \end{equation}
    whose maps are all natural in $X$ and $Y$. Using the defining universal property of fat join, as given in~\eqref{eq:fat-join-def}, these squares induce maps $s^{X,Y}\colon X\fatjoin Y\to X\join Y$ which are again natural in $X$ and $Y$. Should we so wish, we may now take suitable coproducts of these maps to canonically extend this family of simplicial maps to a natural transformation between the extended fat join and join bifunctors on augmented simplicial sets.

    More explicitly, if $n,m\geq 0$, then $\bar{s}^{n,m}\colon\Del^n\times\Del^1\times\Del^m\to\Del^{n+m+1}$ is the unique simplicial map determined by the (order preserving) action on vertices given by:
    \begin{equation}\label{eq:tnm-def}
      \bar{s}^{n,m}(i,j,k) = 
      \begin{cases}
        i & \text{if $j=0$, and} \\
        k+n+1 & \text{if $j=1$.}
      \end{cases}
    \end{equation}
    This takes simplices related under the congruence defined in~\eqref{eq:fat-join-def} of Definition~\ref{def:fat-join} to the same simplex and thus induces a unique map $s^{n,m}\colon\Del^n\fatjoin\Del^m\to\Del^n\join\Del^m$ on the quotient simplicial set.
  \end{obs}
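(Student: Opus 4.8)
The plan is to establish, in order, that $\bar{s}^{X,Y}$ is a well-defined simplicial map natural in $X$ and $Y$, that the square \eqref{eq:join-comp-def} commutes, and that the resulting map $s^{X,Y}$ has the stated naturality and explicit vertex description.

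First I would check that $\bar{s}^{X,Y}$ respects the right simplicial actions. Fix an $n$-simplex $(x,\beta,y)$ of $X\times\Del^1\times Y$ and an operator $\alpha\colon[p]\to[n]$, and write $\gamma\defeq\beta\circ\alpha\colon[p]\to[1]$, so that $(x,\beta,y)\cdot\alpha=(x\cdot\alpha,\gamma,y\cdot\alpha)$. Applying $\bar{s}^{X,Y}$ and using the action axiom $(x\cdot\alpha)\cdot\hat\gamma=x\cdot(\alpha\circ\hat\gamma)$ gives $\bigl(x\cdot(\alpha\circ\hat\gamma),\,y\cdot(\alpha\circ\check\gamma)\bigr)$. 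On the other side, $\bar{s}^{X,Y}(x,\beta,y)=(x\cdot\hat\beta,y\cdot\check\beta)$ is a simplex of $X\join Y$ whose first component lies in degree $\hat{n}_\beta$, so by the explicit description of the join action in Definition~\ref{defn:join-dec} the operator $\alpha$ must be split as $\alpha=\alpha_1\join\alpha_2$ at the break-point $\hat{n}_\beta$, yielding $(x\cdot(\hat\beta\circ\alpha_1),\,y\cdot(\check\beta\circ\alpha_2))$. The heart of the calculation is that, because $\alpha$ is order preserving and $\hat{n}_\gamma$ is the largest index that $\gamma=\beta\circ\alpha$ sends to $0$, the break-point of this join decomposition is exactly $\hat{n}_\gamma$; a one-line comparison of the two order-preserving face maps then gives $\alpha\circ\hat\gamma=\hat\beta\circ\alpha_1$ and $\alpha\circ\check\gamma=\check\beta\circ\alpha_2$, so the two expressions coincide. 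Naturality in $X$ and $Y$ is then immediate, since for simplicial maps $f,g$ one has $f(x\cdot\hat\beta)=f(x)\cdot\hat\beta$ and $g(y\cdot\check\beta)=g(y)\cdot\check\beta$, which is precisely the statement that $\bar{s}$ commutes with $f\times\id\times g$ and $f\join g$.

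Next I would verify that \eqref{eq:join-comp-def} commutes by evaluating both composites on each of the two copies of $X\times Y$. The left-hand vertical inserts the constant operator $0\colon[n]\to[1]$ on the first copy and $1$ on the second. For the constant operator at $0$ one computes $\hat{n}_\beta=n$ and $\check{n}_\beta=-1$, so $\hat\beta=\id$ and $\bar{s}^{X,Y}(x,0,y)=(x,\,y\cdot\check\beta)$, where $y\cdot\check\beta$ is the unique $(-1)$-simplex of the terminally augmented $Y$; this is exactly $l^{X,Y}(x)$. Symmetrically, the constant operator at $1$ gives $\hat{n}_\beta=-1$, $\check{n}_\beta=n$, and $\bar{s}^{X,Y}(x,1,y)=(x\cdot\hat\beta,\,y)=r^{X,Y}(y)$. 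Hence both composites equal $\langle l^{X,Y},r^{X,Y}\rangle\circ(\pi_X\sqcup\pi_Y)$, and naturality of the square follows from that of $\bar{s}^{X,Y}$, $l^{X,Y}$, and $r^{X,Y}$.

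With the square established, the map $s^{X,Y}\colon X\fatjoin Y\to X\join Y$ is induced by the universal property of the defining pushout \eqref{eq:fat-join-def}; uniqueness of the induced map together with the naturality of the square's data makes $s^{X,Y}$ natural in $X$ and $Y$, and the passage to augmented simplicial sets is obtained by coproducts over the canonical decomposition of recollection~\ref{rec:augmentation}. For the explicit description, take $X=\Del^n$ and $Y=\Del^m$: since all objects in sight are nerves of posets, $\bar{s}^{n,m}$ is determined by its order-preserving action on vertices, and specialising the formula above to a vertex $(i,j,k)$---where $\beta$ is the vertex $j\colon[0]\to[1]$---recovers $\bar{s}^{n,m}(i,j,k)=i$ for $j=0$ and $=k+n+1$ for $j=1$, as in \eqref{eq:tnm-def}. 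As this value is independent of $k$ when $j=0$ and of $i$ when $j=1$, it identifies the pairs related by the congruence \eqref{eq:fat-join-cong}, so $\bar{s}^{n,m}$ descends to the quotient $\Del^n\fatjoin\Del^m$. The only genuine obstacle is the compatibility in the first step between the join splitting $\alpha=\alpha_1\join\alpha_2$ (governed by the degree $\hat{n}_\beta$) and the hat/check decomposition of $\gamma=\beta\circ\alpha$; once that is pinned down, everything else is routine bookkeeping.
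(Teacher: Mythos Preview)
Your proposal is correct and carries out precisely the ``straightforward calculation'' that the paper alludes to but omits entirely. The paper treats this observation as essentially self-evident, asserting without detail that $\bar{s}^{X,Y}$ is simplicial, that the square commutes, and that the explicit vertex formula holds; your verification of each of these---particularly the key compatibility $\alpha\circ\hat\gamma=\hat\beta\circ\alpha_1$ and $\alpha\circ\check\gamma=\check\beta\circ\alpha_2$ between the join decomposition of $\alpha$ at the break-point $\hat{n}_\beta$ and the hat/check decomposition of $\gamma=\beta\circ\alpha$---is exactly the computation one must do, and you have identified correctly that this is the only non-trivial step.
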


  \begin{prop}\label{prop:join-fatjoin-equiv}
    For all simplicial sets $X$ and $Y$, the  map $s^{X,Y}\colon X\fatjoin Y\to X\join Y$ is a weak equivalence in the Joyal model structure.
  \end{prop}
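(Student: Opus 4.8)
The plan is to prove the (stronger) assertion that every $s^{X,Y}$ is an \emph{isomorphism}; being invertible, it is in particular a weak equivalence in the Joyal model structure. The observation driving everything is that, although neither $\fatjoin$ nor $\join$ is cocontinuous as a bifunctor on $\sSet$, each becomes cocontinuous in either variable once one remembers the evident structure map: by Definitions~\ref{defn:slices} and~\ref{defn:fat-slices}, for every simplicial set $X$ the functors $X\mathbin{\bar\fatjoin}{-}$ and $X\mathbin{\bar\join}{-}$ from $\sSet$ to $X\slice\sSet$ admit right adjoints, and so do ${-}\mathbin{\bar\fatjoin}X$ and ${-}\mathbin{\bar\join}X$ into $X\slice\sSet$. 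Since $\sSet$ is the free cocompletion of $\Del$, a natural transformation between cocontinuous functors out of $\sSet$ is invertible as soon as it is invertible on every representable (every simplicial set being a colimit of representables, which these functors preserve). So it suffices to check that $s^{X,Y}$ is an isomorphism when $X$ and $Y$ are standard simplices.

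For that base case I would analyse $\Del^n\fatjoin\Del^m$ directly from its defining pushout. The congruence presenting it as a quotient of $\Del^n\times\Del^1\times\Del^m$ collapses a simplex $(x,\beta,y)$ whose middle coordinate $\beta\colon[r]\to[1]$ is constantly $0$ (resp.\ constantly $1$) onto the corresponding simplex of $\Del^n$ (resp.\ of $\Del^m$); tracing the generated congruence through the simplicial action then shows that for $\beta$ with ``jump point'' $k$ the simplex $(x,\beta,y)$ is identified with the pair formed by the front face of $x$ on its first $k$ vertices and the complementary back face of $y$, and becomes degenerate exactly when one of these two faces is. Hence the non-degenerate $r$-simplices of $\Del^n\fatjoin\Del^m$ are indexed by pairs consisting of a non-degenerate simplex of $\Del^n$ and one of $\Del^m$ --- either allowed to be ``absent'', of formal dimension $-1$ --- whose dimensions sum to $r-1$, which is precisely the indexing set for the non-degenerate $r$-simplices of $\Del^n\join\Del^m=\Del^{n+m+1}$. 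Inspecting the formula~\eqref{eq:tnm-def} for $\bar{s}^{n,m}$ shows that $s^{n,m}$ is exactly this bijection and --- the crucial point --- sends non-degenerate simplices to non-degenerate simplices, so by uniqueness of Eilenberg--Zilber decompositions it is an isomorphism.

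With the representable case in hand I would bootstrap one variable at a time. Fixing $n$, the maps $s^{\Del^n,Y}$ assemble into a natural transformation of the cocontinuous functors $\Del^n\mathbin{\bar\fatjoin}{-}$ and $\Del^n\mathbin{\bar\join}{-}$ valued in $\Del^n\slice\sSet$; compatibility with the coslice structure comes down to $s^{\Del^n,\emptyset}=\id_{\Del^n}$, which is immediate from the construction of $s$ out of the square~\eqref{eq:join-comp-def}. Being invertible at every representable $\Del^m$, this transformation is invertible at every $Y$. Running the same argument in the first variable --- now using that ${-}\mathbin{\bar\fatjoin}Y$ and ${-}\mathbin{\bar\join}Y$ are cocontinuous into $Y\slice\sSet$ --- upgrades this to invertibility of $s^{X,Y}$ for all $X$ and $Y$. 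The only genuinely delicate step is the base case: getting the description of the congruence on $\Del^n\times\Del^1\times\Del^m$ right, and in particular verifying that $s^{n,m}$ never carries a non-degenerate simplex to a degenerate one, is where the real content lies; everything downstream is formal manipulation of colimits.
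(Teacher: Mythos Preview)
Your proposal has a genuine gap: the stronger claim that $s^{X,Y}$ is an isomorphism is false, already in the base case. Take $X=\Del^1$ and $Y=\Del^0$. Then $X\times\Del^1\times Y\cong\Del^1\times\Del^1$, and the defining pushout~\eqref{eq:fat-join-def} collapses the edge $\Del^1\times\{1\}$ to a point while leaving $\Del^1\times\{0\}$ alone. In the resulting quotient the two $1$-simplices $(\text{const}_0,\id_{[1]})$ and $(\id_{[1]},\id_{[1]})$---both running from $(0,0)$ to the collapsed vertex---remain \emph{distinct}: neither lies in the collapsed subspace, and the congruence~\eqref{eq:fat-join-cong} only relates simplices whose middle coordinate is constant. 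Hence $\Del^1\fatjoin\Del^0$ has four non-degenerate edges, whereas $\Del^1\join\Del^0\cong\Del^2$ has three. The map $s^{1,0}$ sends both of those parallel edges to the edge $\fbv{0,2}$ of $\Del^2$, so it is not injective.

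The error lies in your congruence analysis: you assert that a simplex $(x,\beta,y)$ with a non-constant $\beta$ is identified with the pair formed by the appropriate front face of $x$ and back face of $y$. That is exactly what $\bar s^{X,Y}$ does to it, but the congruence on $X\times\Del^1\times Y$ is generated only by identifications among simplices whose middle coordinate is \emph{constant}, and closure under the simplicial action does not propagate those identifications to simplices with varying $\beta$. In effect, you have described the image under $s^{X,Y}$, not the equivalence class in $X\fatjoin Y$. The bootstrap via cocontinuity of ${-}\mathbin{\bar\fatjoin}Y$ and $X\mathbin{\bar\fatjoin}{-}$ is fine, but since the base case fails there is nothing to bootstrap from.

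The paper does not give a self-contained argument here; it defers to \cite[4.2.1.2]{Lurie:2009fk} or to an earlier draft. Those proofs establish only a weak equivalence---for instance by an anodyne argument, or by showing that $A^{s^{X,Y}}$ is a categorical equivalence for every quasi-category $A$---and do not attempt to produce an isomorphism.
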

\begin{proof} For proof see \cite[4.2.1.2]{Lurie:2009fk} or \cite[A.4.11]{RiehlVerity:2015tt-v3}.

\end{proof}

  \begin{lem}\label{lem:slices-quillen}
 For any simplicial set $X$, the slice and fat slice adjunctions
    \begin{align*}
  		\adjdisplay \textstyle X\bar\join{-}-|{} :X\slice\sSet->\sSet. & 
      \mkern20mu & 
  		\adjdisplay \textstyle {-}\bar\join X-|{} :X\slice\sSet->\sSet. \\
  		\adjdisplay \textstyle X\bar\fatjoin{-}-|{}:X\slice\sSet->\sSet. & 
      \mkern20mu & 
  		\adjdisplay \textstyle {-}\bar\fatjoin X-|{}:X\slice\sSet->\sSet.
    \end{align*}
    of Definitions~\ref{defn:slices} and~\ref{defn:fat-slices} are Quillen adjunctions with respect to the Joyal model structure on $\sSet$ and the corresponding sliced model structure on $X\slice\sSet$.
  \end{lem}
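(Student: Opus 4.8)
The plan is to verify that each of the four left adjoints is left Quillen. Since the cofibrations, fibrations, and weak equivalences of the sliced model structure on $X\slice\sSet$ are all created by the forgetful functor to $\sSet$, this reduces to showing that the underlying functors $X\star{-}$, ${-}\star X$, $X\fatjoin{-}$ and ${-}\fatjoin X$ on simplicial sets send monomorphisms to monomorphisms and Joyal trivial cofibrations to Joyal trivial cofibrations. I would write out the case $X\bar\join{-}\dashv\slicel{-}{-}$ in full; the handedness variant ${-}\bar\join X\dashv\slicer{-}{-}$ is entirely analogous, using the corresponding statements of Observation~\ref{obs:slice-and-qcats} for the over-slice $\slicer{-}{-}$, and the two fat adjunctions are reduced to the thin ones at the end. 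Preservation of cofibrations is immediate from the explicit descriptions in Definitions~\ref{defn:join-dec} and~\ref{def:fat-join}: the join is injective on simplices in each variable, and the quotient presentation of the fat join shows that it too is injective in each variable.

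The heart of the matter is preservation of trivial cofibrations. Here I would use the standard characterisation of the Joyal model structure --- the unmarked counterpart of the fact recorded in~\ref{rec:qmc-quasi-marked}, see \cite[7.14]{Joyal:2007kk} --- that a monomorphism of simplicial sets is a Joyal weak equivalence precisely when it has the left lifting property against every isofibration of quasi-categories. So let $j\colon C\to D$ be a Joyal trivial cofibration; it then suffices to solve every lifting problem of $X\bar\join j$ against an isofibration $p\colon A\tfib B$ of quasi-categories. The point is that any such lifting problem automatically lives over $X$: restricting the top and bottom legs along the structure maps $X\to X\star C$ and $X\to X\star D$ of $X\bar\join C$ and $X\bar\join D$ equips $A$ and $B$ with maps from $X$ making $p$ a morphism of $X\slice\sSet$, so that the square promotes to a lifting problem of $X\bar\join j$ against $p\colon(X\xrightarrow{f}A)\to(X\xrightarrow{pf}B)$ in $X\slice\sSet$. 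Transposing this across the adjunction $X\bar\join{-}\dashv\slicel{-}{-}$ of Definition~\ref{defn:slices} turns it into a lifting problem of $j$ against the map $\slicel{f}{A}\to\slicel{pf}{B}$ induced by $p$, which Observation~\ref{obs:slice-and-qcats} identifies as again an isofibration of quasi-categories; since $j$ is a Joyal trivial cofibration the lift exists, and transposing back solves the original problem. Hence $X\bar\join j$ has the left lifting property against all isofibrations of quasi-categories and is therefore a Joyal trivial cofibration, so $X\bar\join{-}$ is left Quillen.

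For the fat slice adjunctions $X\bar\fatjoin{-}\dashv\fatslicel{-}{-}$ and ${-}\bar\fatjoin X\dashv\fatslicer{-}{-}$ I would not reprove a fat analogue of Observation~\ref{obs:slice-and-qcats}, but instead transfer the result from the thin case using Proposition~\ref{prop:join-fatjoin-equiv}. The comparison $s^{X,Y}\colon X\fatjoin Y\to X\star Y$ is natural in $Y$ and is a Joyal weak equivalence, so $s^{X,-}$ is a natural weak equivalence from $X\bar\fatjoin{-}$ to $X\bar\join{-}$ as functors $\sSet\to X\slice\sSet$. Given a Joyal trivial cofibration $j$, the map $X\bar\fatjoin j$ is a cofibration by the first paragraph, and two-out-of-three applied to the naturality square for $s^{X,-}$ --- whose horizontal legs $s^{X,C}$ and $s^{X,D}$ are weak equivalences and whose other vertical $X\bar\join j$ is a weak equivalence by the second paragraph --- shows that $X\bar\fatjoin j$ is a weak equivalence, hence a trivial cofibration. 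Thus $X\bar\fatjoin{-}$ is left Quillen, and symmetrically so is ${-}\bar\fatjoin X$.

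I expect the single genuinely non-formal step to be the reduction in the second paragraph: recognising that a lifting problem for $X\bar\join j$ against an isofibration is forced to factor through $X$, which is exactly what licenses the use of the join--slice adjunction and thereby lets the real content of Observation~\ref{obs:slice-and-qcats} do the work. Everything else is formal manipulation of adjunctions and the model-category axioms.
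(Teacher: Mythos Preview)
Your argument is correct. The route differs from the paper's in two places, both worth noting.

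For the thin join adjunctions, the paper invokes Joyal's criterion \cite[7.15]{Joyal:2007kk}: it suffices that the left adjoint preserve cofibrations and the right adjoint preserve fibrations between fibrant objects. The second condition is exactly the statement of Observation~\ref{obs:slice-and-qcats} that $p\colon\slicel{f}{A}\to\slicel{pf}{B}$ is an isofibration whenever $p\colon A\tfib B$ is one. You instead verify directly that the left adjoint preserves trivial cofibrations, but your transposition argument simply unwinds the standard adjunction--lifting correspondence and lands on precisely the same fact about slices. So the content is identical; you have just reproved on the page what the cited criterion packages.

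For the fat join adjunctions, the approaches genuinely diverge. The paper forward-references Lemma~\ref{lem:comma-obj-maps} to show directly that fat slices of isofibrations are isofibrations, whereas you reduce to the thin case via the natural weak equivalence $s^{X,-}$ of Proposition~\ref{prop:join-fatjoin-equiv} and the two-out-of-three property. Your route is self-contained at this point in the exposition and avoids the forward reference; the paper's route has the advantage of establishing the isofibration property of fat slices as a standalone fact, which is used again later.
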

  
  \begin{proof}
    By~\cite[7.15]{Joyal:2007kk} it is enough to check that in each of these adjunctions the left adjoint preserves cofibrations and the right adjoint preserves fibrations between fibrant objects. From the explicit descriptions of the join and fat join, it is not difficult to see that the left adjoints preserve monomorphisms of simplicial sets. Observations~\ref{obs:slice-and-qcats} tells us that if $p \colon A \to B$ is an isofibration of quasi-categories and $f \colon X \to A$ is any simplicial map, then the induced simplicial maps $p\colon \slicer{A}{f}\to \slicer{B}{pf}$ and $p\colon \slicel{f}{A}\to \slicel{pf}{B}$ are also isofibrations of quasi-categories. The corresponding result for fat slices is a special case of Lemma \ref{lem:comma-obj-maps} below.
  \end{proof}

  Finally, we arrive at the advertised comparison result relating the slice and fat slice constructions.
  
  \begin{prop}[slices and fat slices of a quasi-category are equivalent]\label{prop:slice-fatslice-equiv}
    Suppose that $X$ is any simplicial set, that $\sSet$ carries the Joyal model structure, and that $X\slice\sSet$ carries the associated sliced model structure. Then the comparison maps $s^{X,Y}\colon X\fatjoin Y\to X\join Y$ furnish us with natural transformations $s^{X,{-}}\colon X\bar\fatjoin{-}\to X\bar\join{-}$ and $s^{{-},X}\colon {-}\bar\fatjoin X\to {-}\bar\join X$ which are pointwise weak equivalences. Furthermore, these induce natural transformations  on corresponding right adjoints, whose components $e_l^f\colon \slicel{f}{A}\to \fatslicel{f}{A} $ and $e_r^f\colon \slicer{A}{f}\to \fatslicer{A}{f}$ at an object $f\colon X\to A$ of $X\slice\sSet$ are equivalences of quasi-categories whenever $A$ is a quasi-category.
  \end{prop}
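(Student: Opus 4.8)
The plan is to reduce everything to the Quillen adjunctions of Lemma~\ref{lem:slices-quillen}, the weak equivalence of Proposition~\ref{prop:join-fatjoin-equiv}, and the standard mate calculus. The first assertion is immediate: the maps $s^{X,Y}$ are natural in both variables and, by inspection of the square~\eqref{eq:join-comp-def}, are compatible with the coprojections of $X$, so they assemble into natural transformations $s^{X,-}\colon X\bar\fatjoin{-}\To X\bar\join{-}$ and $s^{-,X}\colon{-}\bar\fatjoin X\To{-}\bar\join X$ of functors $\sSet\to X\slice\sSet$; since the forgetful functor $X\slice\sSet\to\sSet$ creates weak equivalences, Proposition~\ref{prop:join-fatjoin-equiv} says precisely that each component is a weak equivalence.

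For the second assertion, write $F\defeq X\bar\fatjoin{-}$ and $G\defeq X\bar\join{-}$, with right adjoints $U$ and $V$ given by $U(f\colon X\to A)=\fatslicel{f}{A}$ and $V(f\colon X\to A)=\slicel{f}{A}$; by Lemma~\ref{lem:slices-quillen} these are Quillen adjunctions for the Joyal structure on $\sSet$ and the sliced structure on $X\slice\sSet$. The mate of $s^{X,-}\colon F\To G$ is a natural transformation $V\To U$ whose component at $f\colon X\to A$ is the map $e_l^f\colon\slicel{f}{A}\to\fatslicel{f}{A}$, and I claim it is a weak equivalence whenever $f\colon X\to A$ is fibrant in $X\slice\sSet$, i.e.\ whenever $A$ is a quasi-category. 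Fix such a $D\defeq(f\colon X\to A)$ and an arbitrary $C\in\sSet$. Since every simplicial set is cofibrant and the left Quillen functors $F,G$ preserve cofibrant objects, $FC$ and $GC$ are cofibrant in $X\slice\sSet$; and $UD=\fatslicel{f}{A}$, $VD=\slicel{f}{A}$ are quasi-categories (the latter by Observation~\ref{obs:slice-and-qcats}, the former since right Quillen functors preserve fibrant objects). Under the adjunction bijections $\sSet(C,VD)\cong(X\slice\sSet)(GC,D)$ and $\sSet(C,UD)\cong(X\slice\sSet)(FC,D)$, postcomposition with $e_l^f$ corresponds to precomposition along $s^{X,C}\colon FC\to GC$ — this is the defining property of the mate — and passing to homotopy classes (legitimate as $C,FC,GC$ are cofibrant and $D,UD,VD$ fibrant) this becomes precomposition with $[s^{X,C}]$, a bijection because $s^{X,C}$, being a weak equivalence between cofibrant objects, is invertible in $\Ho(X\slice\sSet)$. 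Thus $\Ho(\sSet)(C,\slicel{f}{A})\to\Ho(\sSet)(C,\fatslicel{f}{A})$ is bijective for every $C$, so by the Yoneda lemma in $\Ho(\sSet)$ the map $e_l^f$ is an isomorphism there, hence a Joyal weak equivalence; being a map between quasi-categories it is an equivalence of quasi-categories. The argument for $e_r^f$ is identical, using $s^{-,X}$ and the right-handed slices.

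The only non-formal ingredient is Proposition~\ref{prop:join-fatjoin-equiv} itself, which is cited; everything else is bookkeeping with the two Quillen adjunctions and the mate square. The step that wants a little care is the passage to homotopy classes, which needs the ordinary and fat slices of a quasi-category to be quasi-categories — Observation~\ref{obs:slice-and-qcats} for the former, and for the latter the comma-object lemma invoked in the proof of Lemma~\ref{lem:slices-quillen} — and it also needs the mate square to commute, which is the definition of the mate. I anticipate no substantive obstacle beyond keeping the handedness and the variances straight.
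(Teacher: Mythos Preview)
Your proof is correct and follows essentially the same route as the paper: both reduce to Proposition~\ref{prop:join-fatjoin-equiv} for the left adjoints and then invoke the Quillen adjunctions of Lemma~\ref{lem:slices-quillen} to transfer the result to the right adjoints. The paper simply cites the standard model-categorical fact (Hovey~1.4.4) that a natural transformation between left Quillen functors is a weak equivalence on cofibrant objects if and only if its mate between the right Quillen functors is a weak equivalence on fibrant objects, whereas you have unpacked the proof of that fact inline via the mate calculus and Yoneda in the homotopy category.
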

  
  \begin{proof}
    The assertions involving left adjoints were proven in Proposition~\ref{prop:join-fatjoin-equiv}. The Quillen adjunctions established in Lemma~\ref{lem:slices-quillen} allow us to apply the standard result in model category theory~\cite[1.4.4]{Hovey:1999fk} that a natural transformation between left Quillen functors has components which are weak equivalences at each cofibrant object (which fact we have already established) if and only if the  induced natural transformation between the corresponding right Quillen functors has components which are weak equivalences at each fibrant object. Now simply observe that an object $f\colon X\to A$ is fibrant in $X\slice\sSet$ if and only if $A$ is a quasi-category. 
  \end{proof}
  
  \begin{rmk}\label{rmk:map-slices}
    Suppose that $f\colon B\to A$ and $g\colon C\to A$ are two simplicial maps. We generalise our slice and fat slice notation by using $\slicer{g}{f}$, $\fatslicer{g}{f}$, $\slicel{f}{g}$ and $\fatslicel{f}{g}$ to denote the objects constructed in the following pullback diagrams
    \begin{equation}
      \xymatrix@=2em{
        {\slicer{g}{f}} \pbexcursion\ar[r]\ar[d] & {\slicer{A}{f}}\ar[d]^\pi \\
        {C}\ar[r]_g & {A}
      }
      \mkern50mu
      \xymatrix@=2em{
        {\fatslicer{g}{f}} \pbexcursion\ar[r]\ar[d] & {\fatslicer{A}{f}}\ar[d]^\pi \\
        {C}\ar[r]_g & {A}
      }
      \mkern50mu
      \xymatrix@=2em{
        {\slicel{f}{g}} \pbexcursion\ar[r]\ar[d] & {\slicel{f}{A}}\ar[d]^\pi \\
        {C}\ar[r]_g & {A}
      }
      \mkern50mu
      \xymatrix@=2em{
        {\fatslicel{f}{g}} \pbexcursion\ar[r]\ar[d] & {\fatslicel{f}{A}}\ar[d]^\pi \\
        {C}\ar[r]_g & {A}
      }
    \end{equation}
    in which the maps labelled $\pi$ denote the various canonical projection maps. We call these the slices and fat slices of $g$ over and under $f$ respectively. 
     We have isomorphisms $\slicer{g\op}{f\op} \cong (\slicel{f}{g})\op$ and $\fatslicer{g\op}{f\op} \cong (\fatslicel{f}{g})\op$. 
    
    When $A$ is a quasi-cat\-e\-go\-ry the projection maps $\pi$ are all isofibrations that commute with the comparison equivalences $e_l^f\colon \slicel{f}{A}\to \fatslicel{f}{A}$ and $e_r^f\colon \slicer{A}{f}\to \fatslicer{A}{f}$ of Proposition \ref{prop:join-fatjoin-equiv}. These maps are equivalences  of fibrant objects in the sliced Joyal model structure on $\sSet\slice A$. Pullback along any map in a model category is always a right Quillen functor of sliced model structures, so Ken Brown's lemma tells us that the pullbacks are again equivalences.
  \end{rmk}


  \section{The 2-category of quasi-categories}\label{sec:twocat}

  The full subcategory $\qCat$ of quasi-categories and functors is closed in $\sSet$ under products and internal homs. 
   It follows that $\qCat$ is cartesian closed and that it becomes a full simplicial sub-category of $\sSet$ under its usual self enrichment. We denote this self-enriched category of quasi-categories, whose simplicial hom-spaces are given by exponentiation, by $\qCat_\infty$.

  In this section, we study a corresponding (strict) 2-category of quasi-categories $\qCat_2$ first introduced by Joyal \cite{Joyal:2008tq}. This should be thought of as being a kind of quotient of $\qCat_\infty$ whose 2-cells (1-arrows in the hom-spaces) are replaced by homotopy classes of such and in which higher dimensional information in the hom-spaces is discarded.  At first blush, it might seem that such a process would destroy far too much information to be of any great use. However, much of this paper is devoted to showing, perhaps quite surprisingly, that we may develop a great deal of the elementary category theory of quasi-categories within the 2-category $\qCat_2$ alone. Our first step in this direction will be to recognise that much of this category theory may be encoded in the weak 2-universal properties of certain constructions in this 2-category.

  In this section, we introduce the 2-category $\qCat_2$ of quasi-categories and establish a few of its basic properties. In particular, we define a particular notion of weak 2-limit appropriate to this context and show that $\qCat_2$ admits certain weak 2-limit constructions. In later sections, we use the structures introduced here to transport classical categorical proofs into the quasi-categorical context. 

	\subsection{Relating 2-categories and simplicially enriched categories}
	
\begin{ntn}[simplicial categories and 2-categories]
  The category of simplicial sets $\sSet$ is complete, cocomplete, and cartesian closed, so in particular it supports a well developed enriched category theory. We refer to $\sSet$-enriched categories simply as {\em simplicial categories\/} and the enriched functors between them as {\em simplicial functors}.

In a simplicial category $\tcat{C}$, we call the $n$-simplices of one of its simplicial hom-spaces $\tcat{C}(A,B)$ its {\em $n$-arrows\/} from $A$ to $B$. The composition operation of $\tcat{C}$ restricts to make the graph of the objects and $n$-arrows of $\tcat{C}$ into a category which we shall call $\tcat{C}_n$, for which $\tcat{C}_n(A,B) = \tcat{C}(A,B)_n$. Furthermore, if $\alpha\colon[n]\to[m]$ is a simplicial operator then its action on arrows gives rise to an identity-on-objects functor $\tcat{C}_m\to\tcat{C}_n$.

The category of all (small) categories $\Cat$ is also complete, cocomplete, and cartesian closed, so it too supports an enriched category theory. We refer to $\Cat$-enriched categories as {\em 2-categories\/} and the enriched functors between then as {\em 2-functors}. In a 2-category $\tcat{C}$, we follow convention and refer to its objects as {\em 0-cells}, the objects in its hom-categories as {\em 1-cells}, and the arrows in its hom-categories as {\em 2-cells}. 

  We refer the reader to Kelly's canonical tome~\cite{Kelly:2005:ECT} for the standard exposition of the yoga of enriched category theory. We also strongly recommend Kelly and Street~\cite{kelly.street:2} and Kelly~\cite{Kelly:1989fk} as elementary introductions to 2-categories and their attendant 2-limit notions. In particular, we encourage the reader to familiarise him- or herself with the rubric of pasting composition discussed in~\cite{kelly.street:2}.
\end{ntn}

  Recollection~\ref{rec:hty-category} reminds us that $\Cat$ may be regarded as a reflective subcategory of $\sSet$, or indeed $\qCat$, via the adjunction $\ho \dashv \nrv$: the natural map $X \to hX$ is an isomorphism if and only if $X$ is (the nerve of) a category.  The fact that $h\colon \sSet\to \Cat$ preserves binary products implies, and in fact is equivalent to, the observation that if $C$ is a category and $X$ is a simplicial set then their internal hom $C^X$ in $\sSet$ is again a category. The proof in \cite[B.0.16]{Joyal:2008tq} is as follows: there is a canonical map of simplicial sets $C^{hX} \to C^X$. Fixing $X$ and varying $C$ these maps define the components of a natural transformation between two right adjoints $\Cat\to\sSet$. This map is invertible because the transposed natural transformation $h(X\times Y) \to hX \times hY$ is an isomorphism.

Recollection~\ref{rec:cart-modcat} tells us the corresponding result for quasi-categories, this being that internal homs whose target objects are quasi-categories are themselves quasi-categories. In particular, it follows that each of the categories $\Cat$, $\qCat$, and $\sSet$ is cartesian closed and that the various inclusions of one into another preserve finite products and internal homs. In particular, we may regard the self-enriched categories $\Cat$ and $\qCat$ as being full simplicial subcategories of $\sSet$ under its self enrichment.  We write $\Cat_2$ for this 2-category of categories, regarded as a full subcategory of $\qCat_\infty$.

\begin{obs}\label{obs:simp-to-2-cats}
Using the fact that $\ho$ and $\nrv$ both preserve finite products, we may construct an induced adjunction
	\begin{equation*}
		\adjdisplay \ho_*-|\nrv_*:\twoCat->\sCat. 
	\end{equation*}
	between the categories of 2-categories and simplicial categories respectively. The functors in this adjunction are obtained by applying $\nrv$ and $\ho$  to the hom-objects of an enriched category on one side of this adjunction to obtain a corresponding enriched category on the other side. Here again $\nrv_*$ is fully faithful, so it is natural to regard $\twoCat$ as being a reflective full subcategory both of $\sCat$ and of its full subcategory $\qCat$--$\Cat$ of categories enriched in quasi-categories. Indeed, for our purposes here it suffices to consider the restricted adjunction \[ \adjdisplay \ho_* -| \nrv_* : \twoCat -> \eCat\qCat.\] 

Given a quasi-categorically enriched category $\tcat{C}$, the 2-category $\ho_*\tcat{C}$ is a quotient of sorts. The underlying unenriched categories of $\tcat{C}$ and $\ho_*\tcat{C}$ coincide, but 2-cells in $\ho_*\tcat{C}$ are homotopy classes of 1-arrows in $\tcat{C}$. These homotopy classes are defined using relations witnessed by the 2-arrows. All higher dimensional cells are discarded. On regarding $\ho_*\tcat{C}$ as a simplicially enriched category we see that the unit of the adjunction $\ho_*\dashv\nrv_*$ provides us with a canonical simplicial quotient functor $\tcat{C}\to\ho_*\tcat{C}$.
\end{obs}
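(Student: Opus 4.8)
The plan is to read this off from the standard change-of-enriching-base machinery. The one substantive input, recalled in~\ref{rec:hty-category}, is that $\ho$ and $\nrv$ each preserve finite products; this makes both of them strong monoidal functors between the cartesian monoidal categories $(\sSet,\times,\Del^0)$ and $(\Cat,\times,\catone)$, and, since $\ho$ is the strong monoidal left adjoint, makes $\ho\dashv\nrv$ a monoidal adjunction. I would first make the induced functors explicit. For a simplicial category $\tcat C$, let $\ho_*\tcat C$ carry the objects of $\tcat C$ and the hom-categories $(\ho_*\tcat C)(A,B)\defeq\ho(\tcat C(A,B))$, with composition the image under $\ho$ of the composition map of $\tcat C$ precomposed with the canonical isomorphism $\ho(\tcat C(B,C))\times\ho(\tcat C(A,B))\cong\ho(\tcat C(B,C)\times\tcat C(A,B))$, and with identity $1$-cells $\catone\cong\ho(\Del^0)\to\ho(\tcat C(A,A))$. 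Applying the product-preserving functor $\ho$ to the associativity and unit axioms of $\tcat C$ gives those of $\ho_*\tcat C$, so it really is a $2$-category; its functoriality in $\tcat C$ and the claim that $\ho_*\colon\sCat\to\twoCat$ is a functor are a direct diagram chase. The dual construction using product-preservation of $\nrv$ defines $\nrv_*\colon\twoCat\to\sCat$, and since $\nrv\scat C$ is a quasi-category for any category $\scat C$, this $\nrv_*$ in fact factors through $\eCat\qCat\subseteq\sCat$.

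Next I would obtain the adjunction $\ho_*\dashv\nrv_*$ by transporting the unit and counit of $\ho\dashv\nrv$ hom-objectwise: the counit $\counit\colon\ho_*\nrv_*\mathcal A\to\mathcal A$ is the identity on objects and has $(A,B)$-component the counit $\ho(\nrv\mathcal A(A,B))\to\mathcal A(A,B)$, while the unit $\unit\colon\tcat C\to\nrv_*\ho_*\tcat C$ is assembled analogously from the units $\tcat C(A,B)\to\nrv\ho(\tcat C(A,B))$. Checking that these are enriched functors and satisfy the triangle identities reduces, using the strong monoidality of $\ho$ and $\nrv$ so that the coherence cells are carried along, to the corresponding statements for $\ho\dashv\nrv$; this is just the assertion that a monoidal adjunction lifts to an adjunction after change of base. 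Fully faithfulness of $\nrv_*$ is then immediate: $\nrv$ is fully faithful, so the counit of $\ho\dashv\nrv$ is invertible, hence so is each component $\ho(\nrv\mathcal A(A,B))\to\mathcal A(A,B)$, hence the counit of $\ho_*\dashv\nrv_*$ is an isomorphism of $2$-categories, and a right adjoint with invertible counit is fully faithful. Restricting along $\eCat\qCat\hookrightarrow\sCat$---through which $\nrv_*$ factors, and which $\ho_*$ maps back into $\twoCat$ since $\ho X$ is always a category---yields the advertised adjunction $\ho_*\dashv\nrv_*\colon\twoCat\to\eCat\qCat$; and, read through the inclusion $\twoCat\hookrightarrow\eCat\qCat$, the unit component at a quasi-categorically enriched $\tcat C$ is the promised canonical simplicial quotient functor $\tcat C\to\ho_*\tcat C$.

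The point requiring care---rather than a genuine obstacle---is the first one: because $\ho$ is a \emph{left} adjoint its preservation of finite products is not automatic and must be imported as stated in~\ref{rec:hty-category}. Once that is available, the well-definedness of composition in $\ho_*\tcat C$, the strong monoidality, the lifted adjunction, and the fully faithfulness of $\nrv_*$ are all formal.
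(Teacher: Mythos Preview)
Your proposal is correct and follows precisely the approach the paper has in mind: change of base along the product-preserving (hence cartesian monoidal) adjunction $\ho\dashv\nrv$, with fully faithfulness of $\nrv_*$ deduced from invertibility of the counit. The paper treats this as a standard observation rather than giving a formal proof, so your write-up is considerably more detailed than what appears there, but the underlying argument is the same.
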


  Our identification of categories with their nerves also leads us to regard 2-categories as certain special kinds of simplicial categories. Under this identification, a 1-cell (resp.\ 2-cell) in a 2-category can equally well be regarded as being a 0-arrow (resp.\ 1-arrow) in the corresponding simplicial category.  

\subsection{The 2-category of quasi-categories}

\begin{defn}[the 2-category of quasi-categories]\label{def:qCat-2}
  In particular, applying the functor $\ho_*$ to the quasi-categorically enriched category $\qCat_\infty$, we obtain an associated 2-category $\qCat_2 := h_*\qCat_\infty$ whose hom-categories are given by
  \begin{equation}
    \label{eq:qCat2homdefn} \hom'(A,B) \defeq \ho(B^A).
  \end{equation} 
Using the  description of $h$ given in \ref{rec:hty-category}, we find that the objects of $\qCat_2$ are quasi-categories; the 1-cells are maps of quasi-categories, which we have agreed to call \emph{functors}; and the 2-cells, which we shall call \emph{natural transformations}, are certain homotopy classes of 1-simplices in the internal hom $B^A$. 

  More explicitly, a 2-cell $f\Rightarrow g$ between parallel functors $f,g \colon A \rightrightarrows B$  is an equivalence class represented by a simplicial map $\alpha\colon A\times \Del^1\to B$ making the following diagram \[ \vcenter{ \xymatrix{ A \times \Del^0 \cong A \ar[dr]^f \ar[d]_{A\times\face^1} \\ A \times \Del^1 \ar[r]^-\alpha & B \\ A \times \Del^0 \cong A \ar[ur]_g \ar[u]^{A\times\face^0}}}\] commute. The displayed map $\alpha$ is a 1-simplex in $B^A$ from the vertex $f$ to the vertex $g$. Two such 1-simplices represent the same 2-cell if and only if they are connected by a homotopy (in the sense of \eqref{eq:homotopy-of-1-simplices}) which fixes their common domain $f$ and  codomain $g$. 

We adopt common 2-categorical notation, writing $\alpha\colon f\Rightarrow g$ to denote a 2-cell of $\qCat_2$ which is represented by a simplicial map $\alpha\colon A\times \Del^1\to B$. So if $\alpha,\beta\colon f\Rightarrow g$ are two such represented 2-cells then when we write $\alpha=\beta$ we will not mean that any two particular representing maps $\alpha,\beta\colon A\times\Del^1\to B$ are literally equal but instead that they are appropriately homotopic.

  The 2-category $\qCat_2$ and the simplicial category $\qCat_\infty$ both have the same underlying ordinary category $\qCat$. Furthermore, we know that if $A$ and $B$ are both categories regarded as quasi-categories (via the nerve functor) then $B^A\in \qCat$ is also a category and so $B^A\cong \ho(B^A)$. This in turn implies that the full sub-2-category of $\qCat_2$ spanned by the categories is itself equivalent to $\Cat_2$; we shall identify these from here on. 
  
  The fact that the homotopy category functor $\ho$ preserves finite products allows us to canonically enrich it to a simplicial functor $\ho\colon\qCat_\infty\to\Cat_2$. Specifically we take its action on the hom-space $B^A$ to be the map obtained as the adjoint transpose of the composite $\ho(B^A)\times\ho(A)\cong\ho(B^A\times A)\stackrel{\ho(\ev)}\longrightarrow\ho(B)$.
\end{defn}

\begin{obs}[pointwise isomorphisms are isomorphisms (reprise)]\label{obs:pointwise-iso-reprise}
  We say that a 2-cell $\alpha\colon f\Rightarrow g\colon A\to B$ of $\qCat_2$ is a {\em pointwise isomorphism\/} if and only if for all functors $a\colon \Del^0\to A$ (objects of $A$) the whiskered composite 2-cell $\alpha a \colon fa\Rightarrow ga\colon\Del^0\to B$ is an isomorphism in $\hom'(\Del^0, B) = \ho{B}$. Using this notion, Corollary~\ref{cor:pointwise-equiv} may be recast to posit that $\alpha$ is a pointwise isomorphism in $\qCat_2$ if and only if it is a genuine isomorphism in $\hom'(A,B)=\ho(B^A)$.
\end{obs}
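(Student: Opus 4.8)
The plan is to read the statement off from Corollary~\ref{cor:pointwise-equiv} after unwinding the definitions involved. Since $\hom'(A,B)$ is by construction the homotopy category $\ho(B^A)$ of the quasi-category $B^A$ (which is a quasi-category by Recollection~\ref{rec:cart-modcat}), a 2-cell $\alpha\colon f\Rightarrow g$ is an isomorphism in $\hom'(A,B)$ precisely when, in the sense of Definition~\ref{defn:equivalences}, a representing 1-simplex $\alpha\in(B^A)_1$ is an isomorphism in $B^A$; this is a property of the homotopy class, so it is immaterial which representative we pick.

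Next I would match up the two ``pointwise'' conditions. Applying Corollary~\ref{cor:pointwise-equiv} with the quasi-category $B$ in the role of its $A$ and the simplicial set $A$ in the role of its $X$, the 1-simplex $\alpha\in(B^A)_1$ is an isomorphism in $B^A$ if and only if each of its components at the vertices $a\in A_0$ is an isomorphism in $B$. Transposing $\alpha$ to a simplicial map $A\times\Del^1\to B$ as in Definition~\ref{def:qCat-2}, the component at $a\in A_0$ is the 1-simplex obtained by restricting along $a\times\Del^1\colon\Del^0\times\Del^1\to A\times\Del^1$, which is exactly a representative of the whiskered composite 2-cell $\alpha a\colon fa\Rightarrow ga\colon\Del^0\to B$. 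By Definition~\ref{defn:equivalences} again, this 1-simplex is an isomorphism in $B$ if and only if $\alpha a$ is an isomorphism in $\hom'(\Del^0,B)=\ho B$. Thus the condition produced by Corollary~\ref{cor:pointwise-equiv} is precisely the defining condition for $\alpha$ to be a pointwise isomorphism, and combining this with the equivalence of the first paragraph finishes the argument.

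There is no real obstacle here: all the homotopical content has already been isolated in Lemma~\ref{lem:pointwise-equiv} and Corollary~\ref{cor:pointwise-equiv}. The only point requiring care is the translation between the three ways of presenting the same data---a 2-cell of $\qCat_2$, a 1-simplex of the internal hom $B^A$, and the family of its whiskerings by the objects of $A$---together with the verification that the homotopy-rel-boundary relation defining equality of 2-cells in $\qCat_2$ is exactly the equivalence relation already implicit in the cited statements.
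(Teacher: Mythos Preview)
Your proposal is correct and matches the paper's approach: the paper offers no separate proof for this observation, simply stating that it is a recasting of Corollary~\ref{cor:pointwise-equiv} in the 2-categorical language of $\qCat_2$. Your write-up just makes explicit the dictionary between the 1-simplex-in-$B^A$ formulation of that corollary and the 2-cell-and-whiskering formulation here, which is exactly the translation the paper leaves implicit.
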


Since $\qCat_\infty$ is the self enrichment of $\qCat$ under its cartesian product,  it is cartesian closed as a quasi-categorically enriched category. We now show that the 2-category $\qCat_2$ inherits the corresponding property:

\begin{prop}\label{prop:qcat2closed} $\qCat_2$ is cartesian closed as a 2-category.
\end{prop}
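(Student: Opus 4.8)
The plan is to deduce cartesian closure of $\qCat_2$ from that of the quasi-categorically enriched category $\qCat_\infty$, transporting the structure along the change-of-base 2-functor $\ho_*$ of Observation~\ref{obs:simp-to-2-cats}. Since $\ho$ preserves finite products, $\ho_*\colon\eCat{\qCat}\to\twoCat$ is a genuine 2-functor; it is the identity on underlying $1$-categories, and $\qCat_2 = \ho_*\qCat_\infty$ by definition. Two standard facts drive the argument (see \cite{Kelly:2005:ECT}): a $2$-functor preserves adjunctions in the sense of adjunctions internal to a $2$-category, which in particular covers enriched adjunctions; and a product-preserving functor, applied to hom-objects, carries enriched products to enriched products. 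I would therefore isolate the two halves of cartesian closure — existence of finite products, and existence of the exponential right adjoints — and treat each with one of these facts.

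For the products, $\qCat_2$ has the same underlying category as $\qCat$, so I would just check that the cartesian products of quasi-categories are $2$-categorical products in $\qCat_2$. The terminal object is $\catone = \Del^0$, since $\hom'(C,\catone) = \ho(\catone^C)\cong\ho(\catone) = \catone$ for every quasi-category $C$. For binary products, cartesian closure of $\sSet$ gives a natural isomorphism $(A\times B)^C\cong A^C\times B^C$ of quasi-categories, and applying $\ho$ — which preserves products — yields a natural isomorphism of hom-categories $\hom'(C,A\times B)\cong\hom'(C,A)\times\hom'(C,B)$, which is exactly the required weak (in fact strict) $2$-universal property.

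For the exponentials, fixing a quasi-category $A$, cartesian closure of $\qCat_\infty$ says that $-\times A\dashv (-)^A$ is an adjunction in the $2$-category $\eCat{\qCat}$, with unit and counit the $\qCat$-natural transformations $B\to (B\times A)^A$ and $B^A\times A\to B$. Applying the $2$-functor $\ho_*$ to this internal adjunction produces an adjunction in $\twoCat$, i.e.\ a $2$-adjunction $\ho_*(-\times A)\dashv\ho_*((-)^A)$ on $\qCat_2=\ho_*\qCat_\infty$, where $\ho_*(-\times A)$ is the product $2$-functor of the previous paragraph and $\ho_*((-)^A)$ is the exponential $(-)^A$, computed as in $\sSet$. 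Unwinding, the defining natural isomorphism of hom-categories is $\hom'(C\times A, B) = \ho\bigl(B^{C\times A}\bigr)\cong\ho\bigl((B^A)^C\bigr) = \hom'(C, B^A)$, obtained by applying $\ho$ to the simplicial exponential isomorphism. The one point I expect to need care is confirming that all of these isomorphisms are $\Cat$-natural ($2$-natural), not merely naturally isomorphic as ordinary functors; this is automatic, since each is produced by applying the product-preserving functor $\ho$ to already simplicially-enriched structure maps — equivalently, it is precisely what the $2$-functoriality of change of base guarantees. Combining the two halves then shows that $\qCat_2$ has finite products and, for each object $A$, a right $2$-adjoint to $-\times A$, i.e.\ it is cartesian closed.
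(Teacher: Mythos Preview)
Your proposal is correct and follows essentially the same approach as the paper: both arguments verify the three required natural isomorphisms of hom-categories by applying the product-preserving functor $\ho$ to the corresponding simplicially-enriched isomorphisms $(\Del^0)^A\cong\Del^0$, $(B\times C)^A\cong B^A\times C^A$, and $C^{A\times B}\cong(C^B)^A$. Your additional framing of the exponential case via ``$\ho_*$ is a 2-functor and hence preserves the internal adjunction $-\times A\dashv(-)^A$'' is a pleasant conceptual gloss, but unwinds to exactly the explicit isomorphism the paper writes down.
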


\begin{proof}
We show that the terminal object, binary products, and internal hom of the quasi-categorically enriched category $\qCat_\infty$ possess the corresponding 2-categorical universal properties. Specifically, we need to demonstrate the existence of canonical isomorphisms
\begin{align*}
  \hom'(A, \Del^0) &\cong \catone \\
  \hom'(A, B\times C) &\cong \hom'(A, B)\times\hom'(A,C) \\
  \hom'(A, C^B) &\cong \hom'(A\times B, C)
\end{align*}
of categories which are natural in all variables.

To establish each of these we simply apply the homotopy category functor $\ho$ to translate the corresponding $\qCat$-enriched universal properties to $\Cat$-enriched ones, as expressed in terms of the hom-categories defined in~\eqref{eq:qCat2homdefn}. 

Because $\Delta^0$ is a terminal object in the simplicially enriched sense, i.e., because $(\Delta^0)^A \cong \Delta^0$, it is also terminal in the 2-categorical sense: applying $\ho$, the canonical isomorphism
  \[
    \hom'(A,\Del^0) = h( (\Delta^0)^A) \cong h( \Delta^0) \cong \catone
  \] 
 asserts that the hom-category from $A$ to $\Delta^0$ is the terminal category. 

  In a similar fashion, since $\qCat_\infty$ is cartesian closed we know that $B \times C$ is a simplicially enriched product, as expressed by the canonical isomorphisms $(B \times C)^A \cong B^A \times C^A$. Applying $\ho$ we get:
\begin{align*}
    \hom'(A, B\times C) = h((B \times C)^A) & \cong h(B^A \times C^A) \\
    &\cong h(B^A) \times h(C^A) = \hom'(A,B)\times\hom'(A,C).
\end{align*} 

  Finally, the cartesian closure of $\qCat_\infty$  gives rise to isomorphisms $(C^B)^A\cong C^{A \times B}$, to which we may apply the homotopy category functor $\ho$ to obtain the isomorphism 
  \[ 
    \hom'(A\times B, C) = h(C^{A \times B}) \cong h((C^B)^A) = \hom'(A,C^B)
  \] 
  which says that $C^B$ defines an internal hom for the 2-category $\qCat_2$.
\end{proof}

As for any cartesian closed 2-category, the exponential defines a 2-functor $\qCat_2\op \times \qCat_2 \to \qCat_2$.

\begin{defn}[the 2-category of all simplicial sets]\label{defn:2-cat.of.all.simpsets}
  The category $\sSet$ of all simplicial sets is cartesian closed, so we can apply the functor $\ho_*\colon\sCat\to\twoCat$ to its self-enrichment. This provides us with a 2-category $\sSet_2 \defeq \ho_*\sSet$ of all simplicial sets, which has $\qCat_2$ as a full sub-2-category. On occasion, we make slightly implicit use of this larger 2-category. However, we generally choose not to distinguish it notationally from $\qCat_2$, leaving whatever disambiguation is required to the context. 
\end{defn}

\begin{rmk}\label{rmk:exp2functor} 
Exponentiation in the cartesian closed simplicial category $\sSet$ restricts to a simplicial cotensor functor $\sSet\op\times\qCat_\infty\to\qCat_\infty$.

Proposition~\ref{prop:qcat2closed} extends immediately to show that the 2-category of all simplicial sets is again cartesian closed as a 2-category.  Applying $\ho_*$, we obtain a 2-functor $\sSet_2\op\times\qCat_2\to\qCat_2$.  In particular, it follows that exponentiation by any simplicial set $X$ defines a 2-functor $(-)^X \colon \qCat_2 \to \qCat_2$.
  \end{rmk}

\begin{defn}[equivalences in 2-categories]
A 1-cell $u\colon A\to B$ in a 2-category $\tcat{C}$ is an {\em equivalence\/} if and only if there exists a 1-cell $v\colon B\to A$, called its {\em equivalence inverse}, and a pair of 2-isomorphisms $uv\cong\id_B$ and $vu\cong\id_A$. 
\end{defn}

The equivalences of a 2-category $\tcat{C}$ are preserved by all 2-functors since they are defined by 2-equational conditions. Consequently, if $u\colon A\to B$ is an equivalence in $\tcat{C}$ then, applying the representable 2-functor $\tcat{C}(X,-)$,  the functor $\tcat{C}(X,u)\colon\tcat{C}(X,A)\to \tcat{C}(X,B)$ is an equivalence of hom-categories. A basic 2-categorical fact, whose proof is left to the reader, is that these \emph{representably-defined equivalences} are necessarily equivalences in $\tcat{C}$.

\begin{lem}\label{lem:2-cat.equivs}
A 1-cell $u\colon A\to B$ in a 2-category $\tcat{C}$ is an equivalence if and only if $\tcat{C}(X,u)\colon\tcat{C}(X,A)\to \tcat{C}(X,B)$ is an equivalence of hom-categories for all objects $X \in \tcat{C}$. 
\end{lem}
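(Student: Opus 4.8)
The forward implication is already essentially recorded in the discussion preceding the statement: an equivalence in a 2-category is, by definition, a 1-cell equipped with purely equational data (a chosen inverse together with two invertible 2-cells satisfying no further conditions), and such data is transported by any 2-functor to data of the same shape. Applying the representable 2-functors $\tcat{C}(X,-)\colon\tcat{C}\to\Cat$ therefore shows that $\tcat{C}(X,u)$ is an equivalence of categories whenever $u$ is an equivalence. So the content of the lemma is the converse, which I would argue as follows.

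Assume that $\tcat{C}(X,u)$ is an equivalence of categories for every object $X$. The plan is to build the equivalence inverse and its structural 2-isomorphisms by instantiating the hypothesis at two carefully chosen objects. First, take $X=B$: the functor $\tcat{C}(B,u)=u\circ(-)\colon\tcat{C}(B,A)\to\tcat{C}(B,B)$ is in particular essentially surjective, so there is a 1-cell $v\colon B\to A$ together with an invertible 2-cell $\beta\colon uv\cong\id_B$. This $v$ is the candidate equivalence inverse, and it only remains to produce an invertible 2-cell $vu\cong\id_A$.

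For that, I would instantiate the hypothesis at $X=A$, so that $\tcat{C}(A,u)=u\circ(-)\colon\tcat{C}(A,A)\to\tcat{C}(A,B)$ is fully faithful. Whiskering $\beta$ on the right by $u$ yields an invertible 2-cell $\beta u\colon uvu\cong u$ in $\tcat{C}(A,B)$; but $uvu=\tcat{C}(A,u)(vu)$ and $u=\tcat{C}(A,u)(\id_A)$, so $\beta u$ is an isomorphism between the images under the fully faithful functor $\tcat{C}(A,u)$ of the 1-cells $vu$ and $\id_A$ of $\tcat{C}(A,A)$. Since a fully faithful functor reflects isomorphisms, there is a (unique) invertible 2-cell $vu\cong\id_A$ whose image is $\beta u$. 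Together with $\beta$ this exhibits $u$ as an equivalence with inverse $v$.

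There is essentially no obstacle here; the only points requiring a modicum of care are bookkeeping ones: choosing the right representable in each half of the argument ($X=B$ for the essential surjectivity that produces the inverse, $X=A$ for the fullness and faithfulness that produces the remaining 2-isomorphism), and keeping in mind that $\tcat{C}(X,u)$ acts on 1-cells by post-composition with $u$ and on 2-cells by whiskering, so that the invertible 2-cell $\beta u$ genuinely lies in the image of $\tcat{C}(A,u)$. (One could instead run the second half using $X=B$ again, comparing $vuv$ with $v$ via faithfulness, but the $X=A$ argument above is the cleanest.)
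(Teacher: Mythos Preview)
Your proof is correct and is exactly the standard argument one would expect here. The paper does not actually supply a proof of this lemma---it explicitly leaves it to the reader---so there is nothing to compare against beyond noting that your approach (essential surjectivity at $X=B$ to produce the candidate inverse $v$, then full faithfulness at $X=A$ to lift $\beta u\colon uvu\cong u$ to the required $vu\cong\id_A$) is the canonical one.
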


  Our central thesis is that the category theory of quasi-categories developed by Joyal, Lurie, and others is captured by $\qCat_2$. For this, it is essential that the standard notion of equivalence of quasi-categories---weak equivalence in the Joyal model structure---is encoded in the 2-category.

  To that end, observe that the description of the weak equivalences given in \ref{rec:qmc-quasicat} may be recast in our 2-categorical framework: by definition, a simplicial map $u\colon X\to Y$ is a weak equivalence in Joyal's model structure if and only if for all quasi-categories $A$ the functor $\hom'(u,A)\colon\hom'(Y,A)\to\hom'(X,A)$ is an equivalence of hom-categories.

  Combining this description with Proposition \ref{prop:qcat2closed} and Lemma \ref{lem:2-cat.equivs} we obtain the following straightforward results:

\begin{prop}\label{prop:equivsareequivs} A functor between quasi-categories is a weak equivalence in the Joyal model structure if and only if it is an equivalence in the 2-category $\qCat_2$.
\end{prop}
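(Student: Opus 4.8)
The plan is to chain together the ``representable'' recasting of the Joyal weak equivalences recorded immediately above with (the contravariant form of) Lemma~\ref{lem:2-cat.equivs}. Recall that recasting: a functor $u\colon X\to Y$ of quasi-categories is a weak equivalence in the Joyal model structure if and only if, for every quasi-category $A$, the functor $\hom'(u,A)\colon\hom'(Y,A)\to\hom'(X,A)$---equivalently $\ho(A^u)\colon\ho(A^Y)\to\ho(A^X)$---is an equivalence of categories. This is just an unpacking of the description of the weak equivalences in recollection~\ref{rec:qmc-quasicat}, using that $\hom'(-,A)=\ho(A^{(-)})$ is the representable $\Cat$-valued $2$-functor of the cartesian closed $2$-category $\qCat_2$ of Proposition~\ref{prop:qcat2closed}.

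Next I would record the contravariant counterpart of Lemma~\ref{lem:2-cat.equivs}. An equivalence in a $2$-category is specified by purely $2$-equational data---an inverse $1$-cell together with two $2$-isomorphisms---and reversing the direction of $1$-cells carries such data to data of exactly the same shape; hence a $1$-cell is an equivalence in $\qCat_2$ if and only if it is an equivalence in the opposite $2$-category $\qCat_2\op$. Since $\qCat_2\op(X,Y)=\hom'(Y,X)$ as categories, applying Lemma~\ref{lem:2-cat.equivs} to $\qCat_2\op$ gives: $u\colon X\to Y$ is an equivalence in $\qCat_2$ if and only if $\hom'(u,A)\colon\hom'(Y,A)\to\hom'(X,A)$ is an equivalence of categories for every object $A$ of $\qCat_2$, that is, for every quasi-category $A$. (One could just as well inline the usual argument that a representably-defined equivalence is an equivalence: evaluating at $A=X$ produces a $1$-cell $v\colon Y\to X$ with $vu\cong\id_X$, and then evaluating at $A=Y$, together with the full faithfulness of $\hom'(u,Y)$, promotes the consequence $uvu\cong u$ to $uv\cong\id_Y$.)

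Finally I would compare the two criteria: they are word for word the same statement, the class of test objects in each being precisely the quasi-categories, so the proposition follows. There is no genuine obstacle here; the only thing demanding a little care is the $2$-categorical bookkeeping---checking that it is the \emph{contravariant} hom $\hom'(u,-)$ that governs equivalences once one passes to $\qCat_2\op$, and that the two families of test objects really do coincide.
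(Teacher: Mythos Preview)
Your proposal is correct and follows essentially the same approach as the paper's proof: both identify the Joyal weak equivalences between quasi-categories as the representably defined equivalences in $\qCat_2\op$ (via the contravariant hom $\hom'(u,A)$), invoke the self-duality of the notion of equivalence in a 2-category, and then appeal to Lemma~\ref{lem:2-cat.equivs}. The paper compresses this into two sentences, but the logical content is identical to yours.
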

\begin{proof}
The weak equivalences between quasi-categories are the representably defined equivalences in the dual 2-category $\qCat_2\op$. Equivalence in a 2-category is a self dual notion, so these coincide with the equivalences in $\qCat_2$.
\end{proof}

\begin{prop}\label{prop:equivsareequivs2}
  A simplicial map $u\colon X\to Y$ is a weak equivalence in the Joyal model structure if and only if for all quasi-categories $A$ the pre-composition functor $A^u\colon A^Y\to A^X$ is an equivalence in the 2-category $\qCat_2$.
\end{prop}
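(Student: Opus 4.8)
The plan is to reduce the claim to the defining property of the Joyal weak equivalences recalled in~\ref{rec:qmc-quasicat}, together with the cartesian closure of $\qCat_2$ (Proposition~\ref{prop:qcat2closed}) and the criterion for equivalences in a $2$-category (Lemma~\ref{lem:2-cat.equivs}). Note that Proposition~\ref{prop:equivsareequivs} does not apply directly, since $X$ and $Y$ need not be quasi-categories; instead I would work throughout from the fact that $u\colon X\to Y$ is a weak equivalence precisely when $\hom'(u,A)=\ho(A^u)\colon\ho(A^Y)\to\ho(A^X)$ is an equivalence of categories for every quasi-category $A$.

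For the ``only if'' direction, assume $u$ is a weak equivalence and fix a quasi-category $A$. To show that $A^u\colon A^Y\to A^X$ is an equivalence in $\qCat_2$, by Lemma~\ref{lem:2-cat.equivs} it suffices to check that $\hom'(B,A^u)$ is an equivalence of categories for every quasi-category $B$. Cartesian closure supplies a natural isomorphism $(A^Y)^B\cong A^{B\times Y}\cong (A^B)^Y$ identifying $\hom'(B,A^u)$ with $\hom'(u,A^B)\colon\hom'(Y,A^B)\to\hom'(X,A^B)$. Since $A^B$ is again a quasi-category (Recollection~\ref{rec:cart-modcat}) and $u$ is a weak equivalence, $\hom'(u,A^B)$ is an equivalence of categories, which completes this direction.

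For the ``if'' direction, I would instantiate the hypothesis at $B=\Del^0$. Equivalences in a $2$-category are preserved by every $2$-functor, so applying the representable $2$-functor $\hom'(\Del^0,-)\colon\qCat_2\to\Cat$ to the equivalence $A^u$ shows that $\hom'(\Del^0,A^u)$ is an equivalence of categories; under the canonical isomorphism $(-)^{\Del^0}\cong\id$ this functor is $\ho(A^u)=\hom'(u,A)$. As $A$ ranges over all quasi-categories, \ref{rec:qmc-quasicat} then gives that $u$ is a weak equivalence in the Joyal model structure.

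The argument is mostly bookkeeping, and the one point I would take care to verify is that the natural isomorphism $(A^Y)^B\cong(A^B)^Y$ is compatible with the functor $A^u$, so that it really does carry $\hom'(B,A^u)$ to $\hom'(u,A^B)$; the other input worth flagging is that $A^B$ is a quasi-category even when $B$ is merely a simplicial set, since this is exactly what lets the definition of weak equivalence be reapplied.
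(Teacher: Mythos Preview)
Your proof is correct and follows essentially the same approach as the paper: both use Lemma~\ref{lem:2-cat.equivs} to reduce to checking that $\hom'(B,A^u)$ is an equivalence of categories, use cartesian closure to identify this with $\hom'(u,A^B)$, and conclude from the fact that $A^B$ is again a quasi-category. The paper compresses both directions into a single biconditional observation (the class $\{A^B\}$ coincides with the class of all quasi-categories since $A\cong A^{\Del^0}$), whereas you treat them separately and phrase the converse via the representable $2$-functor $\hom'(\Del^0,-)$, but the underlying argument is identical.
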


\begin{proof}
  By Lemma~\ref{lem:2-cat.equivs}, $A^u\colon A^Y\to A^X$ is an equivalence in $\qCat_2$ if and only if for all quasi-categories $B$ the functor $\hom'(B,A^u) \colon \hom'(B,A^Y) \to \hom'(B,A^X)$ is an equivalence of hom-categories. Taking duals, $\hom'(B,A^u)$ is isomorphic to $\hom'(u,A^B) \colon \hom'(Y,A^B) \to \hom'(X,A^B)$. Hence, it suffices to show that  $u\colon X\to Y$ is a weak equivalence in Joyal's model structure if and only if $\hom'(u, A^B)$ is an equivalence of hom-categories for all quasi-categories $A$ and $B$, which is the case because $B^A$ is again a quasi-category.
\end{proof}

\subsection{Weak 2-limits}\label{subsec:weak-2-limits}

Finite products aside, the 2-category $\qCat_2$ has few 2-limits. However, we shall soon discover that it has a number of important \emph{weak 2-limits} whose universal properties will be repeatedly exploited in the remainder of this paper.

\begin{defn}[smothering functors]\label{defn:smothering}
  A functor between categories is {\em smothering\/} if and only if it is surjective on objects, full, and conservative (reflects isomorphisms). Equivalently, a functor is smothering if and only if it possesses the right lifting property with respect to the set of functors
  \[ \left\{ \vcenter{\xymatrix@C=5pt{ \emptyset \ar@{u(->}[d] \\ \bullet }},  \vcenter{\xymatrix@C=5pt{ \bullet &  \ar@{u(->}[d] & \bullet  \\ \bullet \ar[rr] & {~} &  \bullet }} , \vcenter{\xymatrix@C=5pt{ \bullet \ar[rr] & \ar@{u(->}[d] & \bullet \\ \bullet \ar[rr] & {~} & \bullet \ar@<.8ex>[ll] }}  \right\}  = \left\{ \vcenter{\xymatrix@C=5pt{ \emptyset \ar@{u(->}[d] \\ \catone }},  \vcenter{\xymatrix@C=5pt{   \catone\sqcup\catone \ar@{u(->}[d]   \\   \cattwo  }} , \vcenter{\xymatrix@C=5pt{ \cattwo \ar@{u(->}[d]   \\  \iso }}  \right\}  \]
  Consequently, the class of smothering functors contains all surjective equivalences and is closed under composition, retract, and pullback along arbitrary functors. By composing lifting problems, we see that all smothering functors are isofibrations, in the sense that they have the right lifting property with respect to either inclusion $\catone\inc \iso$. It is easily checked that if $f$ is a functor which is surjective on objects and arrows, as is true for a smothering functor, and a composite $gf$ is smothering, then so is the functor $g$.
\end{defn}

The following very simple lemma will be of significant utility later on.

\begin{lem}[fibres of smothering functors]\label{lem:smothering}
  Each fibre of a smothering functor is a non-empty connected groupoid. 
\end{lem}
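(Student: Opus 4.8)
The plan is to read off each of the three asserted properties of a fibre directly from the corresponding defining property of a smothering functor $p\colon\lcat{E}\to\lcat{B}$. Fix an object $b\in\lcat{B}$ and let $\lcat{E}_b$ denote its fibre: the (generally non-full) subcategory of $\lcat{E}$ whose objects are those $e$ with $p(e)=b$ and whose arrows are those $f$ with $p(f)=\id_b$.

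First, non-emptiness of $\lcat{E}_b$ is precisely the surjectivity of $p$ on objects: simply choose any $e\in\lcat{E}$ with $p(e)=b$.

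Next, to see that $\lcat{E}_b$ is a groupoid, I would take an arbitrary arrow $f\colon e\to e'$ of $\lcat{E}_b$, so that $p(f)=\id_b$; since $p$ is conservative and $\id_b$ is invertible, $f$ is an isomorphism in $\lcat{E}$. The one point that needs a moment's care is that the inverse lies back in the fibre, but this is automatic, as $p(f^{-1})=p(f)^{-1}=\id_b$. Hence every arrow of $\lcat{E}_b$ is an isomorphism in $\lcat{E}_b$.

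Finally, connectedness follows from fullness. Given two objects $e,e'$ of $\lcat{E}_b$ we have $p(e)=p(e')=b$, so fullness of $p$ lifts the arrow $\id_b\colon p(e)\to p(e')$ to an arrow $f\colon e\to e'$ of $\lcat{E}$ with $p(f)=\id_b$, that is, to an arrow of $\lcat{E}_b$; thus any two objects of the fibre are joined by an arrow of the fibre. There is no genuine obstacle in this argument: the only thing to keep straight is the bookkeeping that matches ``surjective on objects'' with non-emptiness, ``conservative'' with the groupoid property (including the stability of inverses under $p$), and ``full'' with connectedness.
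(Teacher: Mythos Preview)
Your proof is correct and follows essentially the same approach as the paper: matching surjectivity on objects with non-emptiness, fullness with connectedness, and conservativity with the groupoid property. The only differences are cosmetic---you handle the three properties in a slightly different order and add the small remark that the inverse of a fibre morphism lies in the fibre, which the paper leaves implicit.
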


\begin{proof}
  Suppose that $f\colon A\to B$ is a smothering functor. The fact that it is surjective on objects implies immediately that its fibres are non-empty. Furthermore, if $a$ and $a'$ are both objects of $A$ in the fibre of $f$ over some object $b$ in $B$,  then the fullness of $f$ implies that we may find an arrow $\tau\colon a\to a'$ in $A$ with $f(\tau)=\id_b$, thus demonstrating that the fibres are  connected. Finally, if $\tau\colon a\to a'$ is an arrow of $A$ which lies in the fibre of $f$ over $b$, in other words if $f(\tau) = \id_b$, then by conservativity of $f$ we know that $\tau$ is an isomorphism. Hence, these fibres are groupoids. 
\end{proof}

  We have chosen the term smothering here to evoke the image that these are surjective covering functors in quite a strong sense.  Of course, we have placed our tongues firmly in our cheeks while introducing this nomenclature. Smothering functors can fruitfully be thought of as being a certain variety of weak surjective equivalences.

We weaken the standard theory of {\em weighted 2-limits\/} (see e.g., \cite{Kelly:1989fk}) as follows. 

\begin{defn}[weak 2-limits in a 2-category]\label{defn:weak2limit}
  Suppose that $\stcat{A}$ is a small 2-category, that $D \colon \stcat{A} \to \tcat{C}$ is a diagram in a 2-category $\tcat{C}$, and that $W\colon\stcat{A}\to\Cat_2$ is a 2-functor, which we shall refer to as a {\em weight}. If $P$ is an object in $\tcat{C}$ then a {\em cone with summit $P$ over $D$ weighted by\/} $W$ is a 2-natural transformation $c\colon W\To \tcat{C}(P,D(-))$. 

  For each object $K$ of $\tcat{C}$, composition with such a cone induces a functor
  \begin{equation}\label{eq:cone-induced}
    c_K\colon\tcat{C}(K,P)\longrightarrow \lim(W, \tcat{C}(K,D(-)))\cong\int_{a\in\stcat{A}} \tcat{C}(K,D(a))^{W(a)}
  \end{equation}
  where the expression on the right denotes the usual category of 2-natural transformations from W to the 2-functor $\tcat{C}(K,D(-))$, the 2-limit of $\tcat{C}(K,D(-))$  weighted by $W$. The family of maps \eqref{eq:cone-induced} is 2-natural in $K$.

  We say that the cone $c$ displays $P$ as a {\em weak $2$-limit of $D$ weighted by\/} $W$ if and only if the map in \eqref{eq:cone-induced} is a smothering functor for all objects $K\in\tcat{C}$.
\end{defn}

  While we feel obliged to give the last definition in its full, slightly unsightly, generality. However, the reader need not become an expert in the technology of weighted 2-limits in order to read the rest of the paper. We shall only work with certain simple varieties of weak 2-limits in $\qCat_2$, whose weak 2-universal properties we shall describe explicitly.

The fact that the fibres of a smothering functor are connected groupoids is the key ingredient in the proof of the following lemma.

\begin{lem}\label{lem:unique-weak-2-limits} Weak 2-limits are unique up to equivalence: the summits of any two weak 2-limit over a common diagram with a fixed weight are equivalent via an equivalence that commutes with the legs of the limit cones.
\end{lem}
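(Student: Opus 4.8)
The plan is a standard Yoneda-style argument that leverages only two features of a smothering functor: that it is surjective on objects, and that (by Lemma~\ref{lem:smothering}) each of its fibres is a non-empty connected groupoid. Suppose $c\colon W\To\tcat{C}(P,D(-))$ and $c'\colon W\To\tcat{C}(P',D(-))$ are two cones that each display a weak $2$-limit of $D$ weighted by $W$. Observe first that, by definition of cone, $c$ is an object of $\lim(W,\tcat{C}(P,D(-)))$ and likewise $c'$ is an object of $\lim(W,\tcat{C}(P',D(-)))$.

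First I would construct the comparison $1$-cells. Instantiating the weak $2$-limit property of $P'$ at the object $K=P$, the functor $c'_P\colon\tcat{C}(P,P')\to\lim(W,\tcat{C}(P,D(-)))$ of~\eqref{eq:cone-induced} is smothering, hence surjective on objects, so there is a $1$-cell $u\colon P\to P'$ with $c'_P(u)=c$. Unwinding the definition of $c'_P$, this says precisely that whiskering the cone $c'$ by $u$ recovers $c$, i.e.\ that $u$ commutes with the legs of the two limit cones. Symmetrically, using the weak $2$-limit property of $P$ at $K=P'$ and surjectivity on objects of the smothering functor $c_{P'}\colon\tcat{C}(P',P)\to\lim(W,\tcat{C}(P',D(-)))$, there is a $1$-cell $v\colon P'\to P$ with $c_{P'}(v)=c'$, so $v$ likewise commutes with the cone legs.

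Next I would produce the two invertible $2$-cells exhibiting $u$ and $v$ as an equivalence pair. Consider the endo-$1$-cell $vu\colon P\to P$. By functoriality of whiskering, whiskering $c$ by $vu$ equals whiskering $(c$ whiskered by $v)$ by $u$, which equals $c'$ whiskered by $u$, which equals $c$; that is, $c_P(vu)=c=c_P(\id_P)$ inside $\lim(W,\tcat{C}(P,D(-)))$. Thus $vu$ and $\id_P$ lie in the same fibre of the smothering functor $c_P\colon\tcat{C}(P,P)\to\lim(W,\tcat{C}(P,D(-)))$; by Lemma~\ref{lem:smothering} that fibre is a connected groupoid, and connectedness supplies a $2$-cell $vu\Rightarrow\id_P$ which the groupoid property renders invertible, so $vu\cong\id_P$. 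Running the identical argument with the smothering functor $c'_{P'}$ yields $uv\cong\id_{P'}$. Hence $u$ is an equivalence with equivalence inverse $v$, and we noted above that $u$ commutes with the legs of the limit cones, which is what was claimed.

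I do not expect a genuine obstacle here: the argument is essentially forced. The only points requiring a little care are the bookkeeping of whiskering and $2$-naturality needed to see $c_P(vu)=c_P(\id_P)$, and the observation that the failure of $c_P$ to be injective on objects is repaired \emph{exactly} by the connectedness of its fibres provided by Lemma~\ref{lem:smothering}. This is where the weakness of a weak $2$-limit enters, and it is why the comparison $1$-cell $u$ is only an equivalence rather than an isomorphism, and only non-canonically so.
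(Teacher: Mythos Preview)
Your proposal is correct and follows essentially the same Yoneda-style argument as the paper: surjectivity on objects of the smothering comparison functors produces the comparison 1-cells $u$ and $v$, and the connected-groupoid property of the fibres (Lemma~\ref{lem:smothering}) upgrades the equalities $c_P(vu)=c_P(\id_P)$ and $c'_{P'}(uv)=c'_{P'}(\id_{P'})$ to the required 2-isomorphisms. The only cosmetic difference is that the paper writes the key computation $c_P(vu)=c_P(\id_P)$ as a four-line chain invoking 2-naturality of the families $c_K$ and $c'_K$ explicitly, whereas you phrase it as ``functoriality of whiskering''; these are the same calculation.
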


\begin{proof}
Given a pair of cones $c\colon W\To \tcat{C}(P,D(-))$ and $c'\colon W\To \tcat{C}(P',D(-))$ that display  $P$ and $P'$ as weak 2-limits of $D$ weighted by $W$, then for each $K \in \tcat{C}$ we have a pair of smothering functors:
  \begin{equation*}
    \tcat{C}(K,P)\stackrel{c_K}\longrightarrow
    \lim(W, \tcat{C}(K,D(-)))
    \stackrel{c'_{K}}\longleftarrow\tcat{C}(K,P')
  \end{equation*}
  Taking $K=P$, consider the identity 1-cell $\id_P$, an object in the hom-category $\tcat{C}(P,P)$. Since $c'_P$ is surjective on objects, there is a 1-cell $u\colon P\to P'$, an object in $\tcat{C}(P,P')$, such that $c'_{P}(u) = c_{P}(\id_P)$. Exchanging the role of $P$ and $P'$, we also find a 1-cell $u'\colon P'\to P$ such that $c_{P'}(u')=c'_{P'}(\id_{P'})$. These definitions ensure that $u$ and $u'$ commute with the legs of the limit cones.

  Now we can apply the 2-naturality properties of the functors $c_K$ and $c'_K$ to show that
  \begin{align*}
    c_{P}(u'u) &= \lim(W, \tcat{C}(u,D(-)))(c_{P'}(u')) && \text{naturality of family $c_K$}\\
    &= \lim(W, \tcat{C}(u,D(-)))(c'_{P'}(\id_{P'})) && \text{definition of $u'$}\\
    &= c'_{P}(u) && \text{naturality of family $c'_K$}\\
    & = c_{P}(\id_P) && \text{definition of $u$.}
  \end{align*} 
  In other words, $u'u$ and $\id_P$ are both in the same fibre of $c_P$, and so they are isomorphic in that fibre since $c_P$ is a smothering functor. Dually, $uu'$ and $\id_{P'}$ are both in the same fibre of $c'_{P'}$ from which it follows that they too are isomorphic in that fibre. It follows that $u\colon P\to P'$ and $u'\colon P'\to P$ are equivalence inverses. 
\end{proof}

The only diagrams we will consider are indexed by small 1-categories $\stcat{A}$.   Because $\qCat_2$ and $\qCat_\infty$ have the same underlying category, a diagram $D \colon \stcat{A}\to\qCat$ is equally a 2-functor $D\colon\stcat{A}\to\qCat_2$ and a simplicial functor $D\colon\stcat{A}\to\qCat_\infty$. A weight $W\colon\stcat{A}\to\Cat$ for a 2-limit can be regarded as a weight for a simplicial limit by composing with the subcategory inclusion $\Cat\inc\sSet$. Our general strategy will be to show that  the simplicial weighted limit $\lim(W,D)$ exists in $\qCat_\infty$ and that it has the weak 2-universal property expected of the weak 2-limit of $D$ in $\qCat_2$. The following lemma allows us to considerably simplify the class of functors \eqref{eq:cone-induced} that we will need to consider.

\begin{lem}\label{lem:weak-simplification} Fix a small 1-category $\stcat{A}$ and a weight $W \colon \stcat{A} \to \Cat$. Suppose $\mclass{D}$ is a class of diagrams $D\colon\stcat{A} \to \qCat$ that is closed under exponentiation by quasi-categories, in the sense that if $D$ is in the class $\mclass{D}$ then so is $D(-)^X$ for any quasi-category $X$. Then $\qCat_2$ admits weak $W$-weighted 2-limits of this class of diagrams if and only if, for all $D \in \mclass{D}$, the canonical functor
\[ \ho(\lim(W,D)) \to \lim(W,h(D(-)))\] is smothering.
\end{lem}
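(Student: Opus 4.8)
The plan is to take as the candidate weak $2$-limit the $\sSet$-enriched weighted limit $L\defeq\lim(W,D)$ formed in $\sSet$---which, in the situations to which this lemma is applied, is a quasi-category, and to which, by Lemma~\ref{lem:unique-weak-2-limits}, the summit of any weak $W$-weighted $2$-limit of $D$ must be equivalent over $D$---and then to reduce the smothering requirement imposed on \emph{all} of the comparison functors~\eqref{eq:cone-induced} to its single instance at $K=\Del^0$, which is precisely the functor named in the statement. The closure of $\mclass{D}$ under exponentiation by quasi-categories is exactly what will make this reduction legitimate uniformly over $\mclass{D}$.

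First I would exhibit the canonical cone. Applying the finite-product-preserving functor $\ho$ to the universal $\sSet$-natural transformation $W\To\sSet(L,D(-))$ carried by the weighted limit, and using that $\ho(W(a))=W(a)$ since $W(a)$ is a category and that $\ho(D(a)^L)=\qCat_2(L,D(a))$ by~\eqref{eq:qCat2homdefn}, produces a $2$-natural transformation $c\colon W\To\qCat_2(L,D(-))$. The task then becomes to determine when, for every quasi-category $K$, the induced functor $c_K\colon\qCat_2(K,L)\to\lim(W,\qCat_2(K,D(-)))$ of~\eqref{eq:cone-induced} is smothering.

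The crux is to recognise each $c_K$ as a comparison functor of exactly the type appearing in the statement, but applied to the exponentiated diagram $D(-)^K$. Since $(-)^K$ is a right adjoint on $\sSet$ it preserves the conical limit $L=\int_{a}D(a)^{W(a)}$, so $L^K\cong\int_{a}(D(a)^K)^{W(a)}=\lim(W,D(-)^K)$, naturally in $K$ and compatibly with the canonical cones; applying $\ho$ and~\eqref{eq:qCat2homdefn} this rewrites the domain of $c_K$ as $\ho(\lim(W,D(-)^K))$, while its codomain is by definition $\lim(W,h((D(-)^K)(-)))$. The step I expect to be the main obstacle is the naturality chase showing that, under these rewritings, $c_K$ is carried to the canonical functor $\ho(\lim(W,D(-)^K))\to\lim(W,h((D(-)^K)(-)))$ of the statement for the diagram $D(-)^K$: this is purely bookkeeping, but one must keep careful track of how the transformation $c$ and the comparison maps ``$\ho$ of a weighted limit to the weighted limit of $\ho$'' interact. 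Taking $K=\Del^0$ here returns the functor of the statement for $D$ itself.

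With that identification in hand, both implications are immediate and use the closure hypothesis. If the stated functor is smothering for every member of $\mclass{D}$, then, fixing $D\in\mclass{D}$ and an arbitrary quasi-category $K$, the diagram $D(-)^K$ again lies in $\mclass{D}$, so $c_K$---being isomorphic to the stated functor for $D(-)^K$---is smothering; as $K$ ranges over all quasi-categories, $c$ displays $L$ as a weak $W$-weighted $2$-limit of $D$, so $\qCat_2$ admits weak $W$-weighted $2$-limits of $\mclass{D}$. Conversely, if $\qCat_2$ admits weak $W$-weighted $2$-limits of $\mclass{D}$, then for each $D\in\mclass{D}$ the canonical cone $c$ displays $L$ as one, so every $c_K$ is smothering, and the instance $K=\Del^0$ is precisely the assertion that $\ho(\lim(W,D))\to\lim(W,h(D(-)))$ is smothering.
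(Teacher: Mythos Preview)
Your proposal is correct and follows essentially the same route as the paper: take the simplicial weighted limit $\lim(W,D)$ as the candidate summit, use that the right adjoint $(-)^K$ preserves simplicial weighted limits to identify the comparison functor $c_K$ with the canonical functor $\ho(\lim(W,D(-)^K))\to\lim(W,\ho(D(-)^K))$, and then invoke closure of $\mclass{D}$ under exponentiation. The paper's own proof writes out only the ``if'' direction (the converse being implicit as the $K=\Del^0$ instance), whereas you spell out both; your remark that the naturality bookkeeping is the only genuine obstacle is accurate, and the paper dispatches it in a single line.
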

\begin{proof}
By Definition \ref{defn:weak2limit}, to show that the simplicial weighted limit $\lim(W,D)$ defines a weak 2-limit of a diagram $D \colon \stcat{A} \to \qCat$ in the class $\mclass{D}$, we must show that  for each quasi-category $X$ the canonical comparison map
  \begin{equation}\label{eq:qCat.wl.comp.1}
    \hom'(X,\lim(W,D)) \longrightarrow \lim(W,\hom'(X,D(-)))
  \end{equation}
  is a smothering functor. Recall that $\hom'(X,-) = \ho((-)^X)$. The right adjoint simplicial functor $(-)^X\colon\qCat_\infty\to\qCat_\infty$ preserves all simplicial weighted limits; in other words, the canonical comparison map $\lim(W,D)^X\to\lim(W,D(-)^X)$ is an isomorphism. Thus, the comparison functor~\eqref{eq:qCat.wl.comp.1} is isomorphic to the functor:
  \begin{equation*}
    \ho(\lim(W,D(-)^X)) \longrightarrow \lim(W,\ho(D(-)^X)).
  \end{equation*}
By hypothesis, the diagram $D(-)^X$ is in $\mclass{D}$. Thus, to prove that $\qCat_2$ admits weak 2-limits of the diagrams in $\mclass{D}$, it suffices to show that for all diagrams $D\in\mclass{D}$ the comparison map
  \begin{equation*}
    \ho(\lim(W,D)) \longrightarrow \lim(W,\ho{(D(-))})
  \end{equation*}
  is smothering. 
\end{proof}

\begin{obs}[cones whose summits are not quasi-categories]
The classes of diagrams $\mclass{D}$ we will consider are in fact closed under exponentiation by all simplicial sets. The proof of Lemma \ref{lem:weak-simplification} can then be used to extend the 2-universal properties of the weak 2-limits of $\qCat_2$ constructed here to cones whose summits are arbitrary simplicial sets.  Abstractly speaking, this tells us that the inclusion 2-functor $\qCat_2\inc\sSet_2$ preserves the weak 2-limits of diagrams in $\mclass{D}$. 
In order to avoid repeated remarks of this kind throughout the remainder of this paper, our notation will tacitly signal when this is so by use of the letter ``$X$'' for the object of $\qCat_2$ or $\qCat_\infty$ that could equally be replaced by any simplicial set. By contrast, the letters ``$A$'', ``$B$'', and ``$C$'' refer only to quasi-categories.
\end{obs}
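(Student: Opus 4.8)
The plan is to show that the whole argument of Lemma~\ref{lem:weak-simplification} survives verbatim when ``quasi-category'' is replaced by ``arbitrary simplicial set'' throughout, once we feed in the stronger closure hypothesis on $\mclass{D}$. The only place where the quasi-categorical hypothesis on the summit was genuinely used is in the preservation of simplicial weighted limits by exponentiation, so that is where I would concentrate the work; everything else is bookkeeping.

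First I would record that for an arbitrary simplicial set $X$ and any quasi-category $A$, the exponential $A^X$ is again a quasi-category by Recollection~\ref{rec:cart-modcat}. Hence exponentiation restricts to a simplicial endofunctor $(-)^X\colon\qCat_\infty\to\qCat_\infty$ for \emph{every} $X$, not merely for quasi-categorical $X$. The subtle step---and the one I expect to be the main obstacle---is showing that this restricted functor still preserves simplicial weighted limits when $X$ is not a quasi-category: in that case $(-)\times X$ need not preserve quasi-categories, so $(-)^X$ is no longer visibly a right adjoint on $\qCat_\infty$ and the one-line justification used in Lemma~\ref{lem:weak-simplification} is unavailable. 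I would resolve this by passing to the ambient self-enriched $\sSet$, where $(-)^X$ is genuinely right adjoint to $(-)\times X$ and therefore preserves all weighted limits (equivalently, all ends and cotensors, via the exponential law). Since a $\qCat$-valued diagram $D\in\mclass{D}$ has its weighted limit computed as in $\sSet$ and the resulting object lands in $\qCat_\infty$, the canonical comparison $\lim(W,D)^X\to\lim(W,D(-)^X)$ is an isomorphism of quasi-categories for every simplicial set $X$.

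With this in hand, the remainder follows the template of Lemma~\ref{lem:weak-simplification}. By Definition~\ref{defn:2-cat.of.all.simpsets} the hom-category in $\sSet_2$ out of a summit $X$ is $\ho((-)^X)$, exactly as for a quasi-categorical summit, so the cone-induced comparison functor of Definition~\ref{defn:weak2limit} with summit $X$ is
\[ \ho(\lim(W,D)^X)\longrightarrow\lim(W,\ho(D(-)^X)).\]
Using the isomorphism established above, this is isomorphic to the functor $\ho(\lim(W,D(-)^X))\to\lim(W,\ho(D(-)^X))$. Because $\mclass{D}$ is now assumed closed under exponentiation by all simplicial sets, the diagram $D(-)^X$ again lies in $\mclass{D}$, and the hypothesis that $\qCat_2$ admits the weak $W$-weighted $2$-limits of diagrams in $\mclass{D}$---in the simplified form supplied by Lemma~\ref{lem:weak-simplification}---says precisely that this last functor is smothering.

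I would conclude by observing that smotheringness of the comparison functor for every simplicial-set summit $X$ is exactly the weak $2$-universal property of $\lim(W,D)$ regarded as a weak $2$-limit in $\sSet_2$. Since the legs of the cone are untouched, this shows that the inclusion $2$-functor $\qCat_2\inc\sSet_2$ carries the weak $2$-limit of $D$ in $\qCat_2$ to the weak $2$-limit of $D$ in $\sSet_2$, i.e.\ that it preserves these weak $2$-limits, which is the content of the observation.
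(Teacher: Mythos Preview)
Your proposal is correct and follows exactly the approach the paper suggests---indeed, the paper's ``proof'' of this observation is simply the sentence ``The proof of Lemma~\ref{lem:weak-simplification} can then be used to extend the 2-universal properties\ldots''; you have spelled out the details of that extension. You are in fact more careful than the paper on one point: you explicitly flag that for non-quasi-categorical $X$ the functor $(-)^X$ is not obviously a right adjoint on $\qCat_\infty$ itself, and you resolve this by computing the weighted limit in the ambient $\sSet$, where $(-)^X$ is a genuine right adjoint---the paper leaves this step implicit.
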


As our first example of a weak 2-limit in $\qCat_2$ we examine cotensors with the generic arrow $\cattwo$. 
Recall we write $A^\cattwo$ for the quasi-category $A^{\Delta^1}$ using our convention that categories are identified with their nerves. We invite the reader to verify that the natural functor $\ho(A^\cattwo) \to (\ho{A})^\cattwo$ is not an isomorphism: it is neither injective on objects nor faithful. However, it is a smothering functor. In other words:

\begin{prop}\label{prop:weak-cotensors} 
The exponential $A^\cattwo$ is a weak cotensor of $A$ by $\cattwo$ in 
$\qCat_2$.
\end{prop}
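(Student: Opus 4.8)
The plan is to deduce this from Lemma~\ref{lem:weak-simplification}. Cotensoring by $\cattwo$ is the weighted limit over the one-object indexing category $\catone$ with weight $W=\cattwo$; here a diagram is simply a quasi-category $A$, the weighted limit $\lim(W,A)$ is the cotensor $A^\cattwo = A^{\Del^1}$, and $\lim(W,\ho A)$ is the arrow category $(\ho A)^\cattwo$. The class of all quasi-categories is closed under exponentiation by quasi-categories (recollection~\ref{rec:cart-modcat}), so the lemma reduces the proposition to the assertion that, for every quasi-category $A$, the natural functor $\ho(A^\cattwo)\to(\ho A)^\cattwo$ discussed just above the statement is smothering, i.e.\ surjective on objects, full, and conservative; the proof of Lemma~\ref{lem:weak-simplification} moreover identifies the witnessing weak $2$-limit with $\lim(W,A)=A^\cattwo$ itself. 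Recall this functor carries a $1$-simplex of $A$ (a vertex of $A^{\Del^1}$) to the arrow of $\ho A$ it represents, and carries the homotopy class of a square $\alpha\colon\Del^1\times\Del^1\to A$ to the commutative square in $\ho A$ assembled from the classes of its four edges $\alpha|_{\Del^1\times\{0\}}$, $\alpha|_{\Del^1\times\{1\}}$, $\alpha|_{\{0\}\times\Del^1}$, $\alpha|_{\{1\}\times\Del^1}$.

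Surjectivity on objects is immediate from the description of $\ho A$ in recollection~\ref{rec:hty-category}: every arrow of $\ho A$ is, by construction, the class of some $1$-simplex of $A$, which is a vertex of $A^\cattwo$.

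For fullness I would exploit that $\Del^1\times\Del^1$ is the pushout of its two non-degenerate $2$-simplices along their common diagonal edge, so that a map out of it is precisely a compatible pair of maps out of $\Del^2$. Given $1$-simplices $f,g$ of $A$ and a commutative square $([h],[k])$ from $[f]$ to $[g]$ in $\ho A$, so that $[k]\circ[f]=[g]\circ[h]$, I would first fill the inner horn determined by the composable pair $(h,g)$ to obtain a $2$-simplex with faces $h$, $g$ and diagonal $d$, where $[d]=[g]\circ[h]$; then, using $[d]=[k]\circ[f]$ together with the ``if and only if'' characterisation of composition in $\ho A$ from recollection~\ref{rec:hty-category}, obtain a second $2$-simplex with faces $f$, $k$ and the \emph{same} diagonal $d$. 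Gluing these two $2$-simplices along $d$ produces a square $\Del^1\times\Del^1\to A$, i.e.\ a $1$-arrow of $A^\cattwo$ from $f$ to $g$, whose image is the given morphism. The only real care needed is to arrange that both $2$-simplices carry $d$ on the nose (not merely up to homotopy), which is exactly what the equational form of the composition rule supplies; I expect this to be the most fiddly point, though entirely routine.

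Conservativity is where the earlier results pay off. If $\alpha$ represents a morphism of $\ho(A^\cattwo)$ whose image in $(\ho A)^\cattwo$ is an isomorphism, then, since a morphism of an arrow category is invertible precisely when both of its components are, the edges $\alpha|_{\{0\}\times\Del^1}$ and $\alpha|_{\{1\}\times\Del^1}$ represent isomorphisms of $\ho A$ and hence are isomorphisms in $A$. But these two edges are exactly the components (in the sense of Corollary~\ref{cor:pointwise-equiv}, taking $X=\Del^1$) of the edge $\alpha\in(A^{\Del^1})_1$, so that corollary shows $\alpha$ is an isomorphism in $A^\cattwo$, and therefore represents an isomorphism in $\ho(A^\cattwo)$. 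Having verified the three properties, the proposition follows.
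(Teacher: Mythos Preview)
Your proposal is correct and follows essentially the same approach as the paper: both reduce via Lemma~\ref{lem:weak-simplification} to showing $\ho(A^\cattwo)\to(\ho A)^\cattwo$ is smothering, verify surjectivity on objects from the definition of $\ho A$, prove fullness by gluing two $2$-simplices along a common diagonal representative, and deduce conservativity from the pointwise-isomorphism result (the paper cites Lemma~\ref{lem:pointwise-equiv} where you cite its unmarked reformulation Corollary~\ref{cor:pointwise-equiv}).
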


\begin{proof}
   By Lemma~\ref{lem:weak-simplification}, it suffices to prove that for any quasi-category $A$, the canonical functor \[ \ho(A^\cattwo) \longrightarrow (\ho{A})^\cattwo\] is a smothering functor. Certainly this map is surjective on objects, simply because every arrow in $\ho{A}$ is represented by a 1-simplex in the quasi-category $A$. 

To prove fullness, suppose given a commutative square in $\ho{A}$ and choose arbitrary 1-simplices representing each morphism  and their common composite \begin{equation}\label{eq:arrowinhA}\xymatrix{ \cdot \ar[d]_f \ar[r]^a  \ar[dr]|k & \cdot \ar[d]^g \\ \cdot \ar[r]_b & \cdot}\end{equation} Because $A$ is a quasi-category, any relation between morphisms in $\ho{A}$ is witnessed by a 2-simplex with any choice of representative 1-simplices as its boundary. Hence, we may choose 2-simplices witnessing the fact that $k$ is a composite of $a$ with $g$ and of $f$ with $b$ as displayed. 
 \begin{equation}\label{eq:liftedarrowinA2} \xymatrix{ \cdot \ar[d]_f \ar[r]^{a} \ar[dr]|k^*+{\labelstyle\sim}_*+{\labelstyle \sim}& \cdot \ar[d]^g \\ \cdot \ar[r]_{b} & \cdot}\end{equation}
These two 2-simplices define a map $\Delta^1  \to A^{\Delta^1} = A^\cattwo$, which represents an arrow in the category $h(A^{\cattwo})$ whose image is the specified commutative square.

To prove conservativity, suppose given a map in $h(A^{\cattwo})$ represented by a diagram  \eqref{eq:liftedarrowinA2} whose image  \eqref{eq:arrowinhA} is an isomorphism in $(\ho{A})^\cattwo$, meaning that $a$ and $b$ are isomorphisms in $\ho{A}$, in which case $a$ and $b$ are isomorphisms in the quasi-category $A$. Lemma~\ref{lem:pointwise-equiv} tells us immediately that this diagram is an isomorphism in $A^\cattwo$; compare with \eqref{eq:pointwise-equivalence-square}.
\end{proof}

\begin{rmk}
  A generalisation of this argument shows that if $\scat{C}$ is a free category and $A$ is a quasi-category then the exponential $A^\scat{C}$ is the weak cotensor of $A$ by $\scat{C}$ in $\qCat_2$. Conservativity of the canonical comparison $\ho(A^\scat{C})\to(\ho A)^\scat{C}$ follows from Lemma~\ref{lem:pointwise-equiv}. Its surjectivity on objects makes use of the fact that the inclusion of the \emph{spine} of an $n$-simplex, the simplicial subset spanned by the edges $\fbv{i,i+1}$ in $\Del^n$, is a trivial cofibration for all $n \geq 1$. Fullness is similar.

One should note, however, that this result does not hold for exponentiation by arbitrary categories $\scat{C}$. For example,  $A^{\cattwo\times\cattwo}$ is not the weak cotensor of $A$ by the product category $\cattwo\times\cattwo$ in $\qCat_2$.
\end{rmk}

\begin{prop}
  The exponential $A^\iso$ is a weak cotensor of $A$ by the generic isomorphism $\iso$ in 
  $\qCat_2$. 
\end{prop}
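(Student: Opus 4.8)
The plan is to follow the proof of Proposition~\ref{prop:weak-cotensors} almost verbatim. Applying Lemma~\ref{lem:weak-simplification} to the weight $\iso\colon\catone\to\Cat$ and the class $\mclass{D}$ of all quasi-categories (viewed as diagrams indexed by $\catone$, a class trivially closed under exponentiation by quasi-categories), it suffices to prove that for every quasi-category $A$ the canonical comparison functor $\ho(A^\iso)\to(\ho A)^\iso$ is smothering, where $(\ho A)^\iso$ denotes the cotensor of $\ho A$ by $\iso$ in $\Cat$, i.e.\ the functor category. As in the case of $\cattwo$, the cone exhibiting $A^\iso$ is obtained by applying $\ho$ to the transpose of the evaluation map $\iso\times A^\iso\to A$.

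Surjectivity on objects and conservativity are handled exactly as there. An object of $(\ho A)^\iso$ is a functor $\iso\to\ho A$, equivalently an isomorphism of $\ho A$; choosing a representing $1$-simplex, which is an isomorphism of the quasi-category $A$ by Definition~\ref{defn:equivalences}, Recollection~\ref{rec:qmc-quasi-marked} extends it to a functor $\iso\to A$, a $0$-simplex of $A^\iso$ whose image is the chosen object. For conservativity, an arrow of $\ho(A^\iso)$ is represented by an edge $H\in(A^\iso)_1$, and its image in $(\ho A)^\iso$ is an isomorphism precisely when both components of the natural transformation $\ho(H)$ are isomorphisms of $\ho A$, which happens precisely when the two components $H(0),H(1)\in A_1$ obtained by evaluating $H$ at the two vertices of $\iso$ are isomorphisms of $A$; by Corollary~\ref{cor:pointwise-equiv} this is exactly the assertion that $H$ is an isomorphism of $A^\iso$.

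The substance of the proof, and the step I expect to be the main obstacle, is fullness. The relevant input is the combinatorial observation that a natural transformation between two functors $\iso\to\ho A$ is determined by, and may be prescribed arbitrarily by, its component at either object of $\iso$, the other component being forced by naturality along the generic isomorphism of $\iso$ (which is invertible). Given $0$-simplices $p,q\colon\iso\to A$ of $A^\iso$ and a natural transformation $\tau\colon\ho(p)\To\ho(q)$, I would pick a $1$-simplex $t\in A_1$ representing the component $\tau_0\colon p(0)\to q(0)$; then $p$, $q$ and $t$ assemble into a map into $A$ from the simplicial subset $(\iso\times\boundary\Del^1)\cup(\{0\}\times\Del^1)$ of $\iso\times\Del^1$. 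This inclusion is the Leibniz product $(\{0\}\inc\iso)\leib\times(\boundary\Del^1\inc\Del^1)$, and $\{0\}\inc\iso$ is a trivial cofibration in the Joyal model structure since $\iso\to\Del^0$ is an equivalence of categories and hence a weak equivalence by Proposition~\ref{prop:equivsareequivs}. Cartesianness of the model structure (Recollection~\ref{rec:cart-modcat}) then makes this Leibniz product a trivial cofibration, so it has the left lifting property with respect to the isofibration $A\to\Del^0$ and our map extends to $H\colon\iso\times\Del^1\to A$. This $H$ represents an arrow of $\ho(A^\iso)$ from $p$ to $q$ whose image $\ho(H)\colon\ho(p)\To\ho(q)$ has $0$-component $[t]=\tau_0$, hence equals $\tau$ by the uniqueness observation. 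This establishes fullness and completes the argument.

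It is worth emphasising that, unlike the cotensors by free categories discussed in the remark following Proposition~\ref{prop:weak-cotensors}, the category $\iso$ is not free: neither surjectivity on objects nor the filling used for fullness follows from a spine-filling argument, and both instead rely on genuine facts about isomorphisms in quasi-categories---concretely, on the fact that $\Del^0\inc\iso$ is a trivial cofibration in the Joyal model structure.
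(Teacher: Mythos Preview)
Your proof is correct and genuinely different from the paper's. The paper does not argue smotheringness of $\ho(A^\iso)\to(\ho A)^\iso$ directly; instead it reduces to the already-established $\cattwo$ case. Concretely, it uses the marked trivial cofibration $\cattwo^\sharp\inc\iso$ to exhibit $\ho(A^\iso)\to\ho(A^{\cattwo^\sharp})$ as a surjective equivalence, observes that $(\ho A)^\iso\cong(\ho A)^{\cattwo^\sharp}$, and then shows the square comparing $\ho(A^{\cattwo^\sharp})\to(\ho A)^{\cattwo^\sharp}$ with the known smothering functor $\ho(A^{\cattwo})\to(\ho A)^{\cattwo}$ is a pullback (using that a $1$-simplex is marked in $A^\natural$ iff its image is an isomorphism in $\ho A$), so smotheringness is inherited by pullback. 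Your argument, by contrast, handles each axiom in place: surjectivity on objects via the extension $(\Del^1)^\sharp\inc\iso$, conservativity via Corollary~\ref{cor:pointwise-equiv}, and fullness via the Leibniz trivial cofibration $(\{0\}\inc\iso)\leib\times(\boundary\Del^1\inc\Del^1)$ together with the elementary observation that a natural transformation between functors out of $\iso$ is determined by one component. Your route is more self-contained and parallels the proof of Proposition~\ref{prop:weak-cotensors} more closely; the paper's route is more structural, leveraging the $\cattwo$ result and closure properties of smothering functors rather than re-running the three checks.
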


\begin{proof}
 By Lemma \ref{lem:weak-simplification}, it suffices to show that
  \begin{equation*}
    \ho(A^\iso)\longrightarrow \ho(A)^\iso
  \end{equation*}
  is a smothering functor. This is easiest to do by arguing in the marked context. 

By Observation~\ref{obs:nat-mark-homs}, $A^\iso$ may equally well be regarded as an internal hom of naturally marked quasi-categories in $\msSet$. Recollection~\ref{rec:qmc-quasi-marked} tells us that the inclusion $\cattwo^\sharp\inc\iso$ is a trivial cofibration in the marked model structure. Because the marked model structure is cartesian closed, the restriction functor  $A^\iso\to A^{\cattwo^\sharp}$  is a trivial fibration.  Immediately from their defining lifting properties, trivial fibrations of quasi-categories are carried by $\ho$ to functors which are surjective on objects and fully faithful, the so-called {\em surjective equivalences}, so it follows that $\ho(A^\iso)\to \ho(A^{\cattwo^\sharp})$ is a surjective equivalence. Furthermore, in the case where $A$ is an actual category, the functor $A^\iso\to A^{\cattwo^\sharp}$ is an isomorphism. So we obtain a commutative square
  \begin{equation*}
    \xymatrix{
      {\ho(A^\iso)} \ar[r]\ar[d] &
      {\ho(A)^\iso} \ar[d]^{\cong}\\
      {\ho(A^{\cattwo^\sharp})}\ar[r] &
      {\ho(A)^{\cattwo^\sharp}}
    }
  \end{equation*}
of functors between categories in which the left hand vertical is a surjective equivalence. By the composition and cancellation results described in \ref{defn:smothering}, the upper horizontal map in this square is a smothering functor if and only if the lower horizontal map is smothering.

The smothering functors are stable under pullback, so to complete our proof, we will show that  for any naturally marked quasi-category $A$ the square
\begin{equation*}
    \xymatrix{
      {\ho(A^{\cattwo^\sharp})}\pbexcursion \ar[r]\ar[d] &
      {\ho(A)^{\cattwo^\sharp}} \ar[d]\\
      {\ho(A^{\cattwo})}\ar[r] &
      {\ho(A)^{\cattwo}}
    }
  \end{equation*}
  is a pullback;  we know from Proposition~\ref{prop:weak-cotensors} that the lower horizontal map is a smothering functor.  This follows from the definition of the natural marking: a 1-simplex in $A$ is marked if and only if it is an isomorphism, which is the case if and only if it represents an isomorphism in $\ho{A}$. \end{proof}

\begin{prop}\label{prop:weak-homotopy-pullbacks} The 2-category $\qCat_2$ admits weak 2-pullbacks along isofibrations: if the square
\begin{equation*}
  \xymatrix{
    {B\times_A C}\pbexcursion\ar[r]^-{\pi_2} \ar@{->>}[d]_-{\pi_1} 
    &  C\ar@{->>}[d]^g \\
    {B} \ar[r]_f & A
  }
\end{equation*}
is a pullback in simplicial sets for which $B$, $A$, and $C$ are quasi-categories and $g$ is an isofibration, then $B\times_A C$ is a quasi-category and it is a weak 2-pullback of $g$ along $f$ in the 2-category $\qCat_2$.
\end{prop}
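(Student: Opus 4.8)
The plan is to dispatch the two claims separately. Since $\pi_1\colon B\times_A C\to B$ is a pullback of the isofibration $g$, it has the right lifting property against every inner horn inclusion and against $\catone\inc\iso$; composing with $B\to\Del^0$, which has the right lifting property against inner horns because $B$ is a quasi-category, shows that $B\times_A C$ is itself a quasi-category and that $\pi_1$ is an isofibration. It remains to verify the weak $2$-universal property. The relevant weight is the terminal weight on the cospan shape, whose weighted limit in $\sSet$ is the strict pullback $B\times_A C$ and whose weighted limit applied to the cospan $\ho{B}\to\ho{A}\leftarrow\ho{C}$ is the ordinary pullback $\ho{B}\times_{\ho{A}}\ho{C}$ in $\Cat$. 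The class of cospans $B\xrightarrow{f}A\xleftarrow{g}C$ of quasi-categories with $g$ an isofibration is closed under exponentiation by any simplicial set, since $B^X$, $A^X$, $C^X$ remain quasi-categories and $g^X$ remains an isofibration by Observation~\ref{obs:isofibration-closure}; so Lemma~\ref{lem:weak-simplification} reduces the problem to showing that the canonical comparison functor $\ho(B\times_A C)\to\ho{B}\times_{\ho{A}}\ho{C}$ is smothering.

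Surjectivity on objects is clear, indeed a bijection: both sides have as objects the pairs $(b,c)$ of $0$-simplices of $B$ and $C$ with $fb = gc$, because the objects of a homotopy category are exactly its $0$-simplices. For fullness I would take objects $(b,c),(b',c')$ and a morphism of $\ho{B}\times_{\ho{A}}\ho{C}$ between them, represented by $1$-simplices $e\colon b\to b'$ of $B$ and $e'\colon c\to c'$ of $C$ with $[fe]=[ge']$ in $\ho{A}$. Then $fe$ and $ge'$ are homotopic relative to their boundary, so there is a $2$-simplex $\sigma$ of $A$ with $\sigma\cdot\face^2 = ge'$, $\sigma\cdot\face^0 = (fb')\cdot\degen^0$, and $\sigma\cdot\face^1 = fe$. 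The $1$-simplices $e'$ and $c'\cdot\degen^0$ assemble into an inner horn $\Horn^{2,1}\to C$ whose image under $g$ is the restriction of $\sigma$ to $\Horn^{2,1}$, so the inner fibration $g$ supplies a filler $\tilde\sigma\colon\Del^2\to C$; then $e'':=\tilde\sigma\cdot\face^1$ is a $1$-simplex $c\to c'$ homotopic to $e'$ with $ge'' = fe$. Hence $(e,e'')$ is a $1$-simplex of $B\times_A C$ representing a morphism $(b,c)\to(b',c')$ whose image is the given one.

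For conservativity I would pass to marked simplicial sets, following the strategy of the previous proof. By Recollection~\ref{rec:qmc-quasi-marked}, $g\colon C^\natural\to A^\natural$ is a fibration between fibrant objects of the marked model structure, so its pullback $B^\natural\times_{A^\natural}C^\natural$ is again fibrant, hence a naturally marked quasi-category; its underlying simplicial set is $B\times_A C$ and its marked $1$-simplices are precisely the pairs $(e,e')$ with $e$ an isomorphism of $B$ and $e'$ an isomorphism of $C$, so these pairs are exactly the isomorphisms of $B\times_A C$. Therefore, if a $1$-simplex $(e,e')$ of $B\times_A C$ maps to an isomorphism of $\ho{B}\times_{\ho{A}}\ho{C}$, so that $e$ and $e'$ are isomorphisms in $B$ and $C$ respectively, then $(e,e')$ is an isomorphism in $B\times_A C$ and so represents an isomorphism of $\ho(B\times_A C)$. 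The only genuinely technical point is the horn-filling replacement in the fullness step; the rest is bookkeeping with homotopy categories and the marked model structure.
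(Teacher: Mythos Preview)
Your proof is correct and follows essentially the same approach as the paper: reduce via Lemma~\ref{lem:weak-simplification} to showing the comparison functor is smothering, then establish surjectivity on objects trivially, fullness by an inner horn lift along the isofibration $g$ to correct the $C$-component, and conservativity via the marked model structure. The details of each step match the paper's argument closely.
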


\begin{proof}
  The statement only applies to pullbacks of those diagrams of shape $B\xrightarrow{f} A \xleftarrow{g} C$ for which the map $g$ is an isofibration. However, Observation~\ref{obs:isofibration-closure} tells us that any exponentiated isofibration $g^X\colon C^X\to A^X$ is again an isofibration, and so we are in a position to apply Lemma~\ref{lem:weak-simplification}.

   It remains to show that the canonical comparison functor
  \begin{equation*}
    \ho(B\times_A C)\longrightarrow \ho{B}\times_{\ho{A}} \ho{C}
  \end{equation*}
  is smothering. This functor is actually bijective on objects, since in both categories an object consists simply of a pair $(b,c)$ of 0-simplices $b\in B$ and $c\in C$ with $f(b)=g(c)$. 
  
  For fullness, suppose we are given two such pairs $(b,c)$ and $(b',c')$. An arrow between these objects in $\ho{B} \times_{\ho{A}} \ho{C}$ consists of a pair of equivalence classes represented by 1-simplices $\beta \colon b \to b'$ and $\gamma \colon c \to c'$ which both map to the same equivalence class in $\ho{A}$ under $f$ and $g$ respectively. This latter condition simply posits that $f(\beta)$ and $g(\gamma)$ are homotopic relative to their endpoints in $A$; such a homotopy is represented by a 2-simplex with $2^{\text{nd}}$ face $g(\gamma)$, $1^{\text{st}}$ face $f(\beta)$, and $0^{\text{th}}$ face degenerate. This information provides us with a lifting problem between $\Horn^{2,1} \to \Delta^2$ and $g$, which we may solve because $g$ is an isofibration. The resulting filler supplies us with a 1-simplex $\gamma' \colon c \to c'$ for which $g(\gamma')=f(\beta)$ and a homotopy of $\gamma'$ and $\gamma$ (relative to their endpoints) which shows these represent the same arrow in $\ho{C}$. In other words, $(\beta,\gamma')$ is a 1-simplex in $B\times_A C$ that represents an arrow of $\ho(B\times_A C)$ from $(b,c)$ to $(b',c')$ and this arrow maps to the originally chosen arrow in $\ho{B}\times_{\ho{A}}\ho{C}$.

The proof of conservativity is simplified by arguing in the marked model structure.  Giving our quasi-categories $A$, $B$, and $C$ the natural marking, the isofibration $g$ becomes a fibration in the marked model structure. It follows that the pullback is a fibrant object and hence naturally marked. Consequently, a 1-simplex $(\beta,\gamma)$ of $B\times_A C$ represents an isomorphism in $\ho(B\times_A C)$ if and only if it is marked, and this is the case if and only if $\beta$ is marked in $B$ and $\gamma$ is marked in $C$. Now, this latter condition holds if and only if $\beta$ is invertible in $\ho{B}$ and $\gamma$ is invertible in $\ho{C}$ and these conditions together are equivalent to the pair $(\beta,\gamma)$ being invertible as an arrow in the category $\ho{B}\times_{\ho{A}} \ho{C}$.
\end{proof} 

  \begin{defn}[comma objects]\label{def:comma-obj}
 Given a pair of functors $B\xrightarrow{f} A \xleftarrow{g} C$ between quasi-categories, we define the {\em comma object\/} $f\comma g$ to be the simplicial set constructed by forming the following pullback:
    \begin{equation*}
      \xymatrix@=2.5em{
        {f\comma g}\pbexcursion \ar[r]\ar[d]_{p} &
        {A^\cattwo} \ar[d] \\
        {C\times B} \ar[r]_-{g\times f} & {A\times A}
      }
    \end{equation*}
\end{defn}

The right-hand vertical  is defined by restricting along the boundary inclusion $\Del^0\sqcup\Del^0\cong\boundary\Del^1\inc\Del^1$ and then composing with the symmetry isomorphism $A \times A \cong A \times A$. In a subsequent paper, we will think of the comma object $f \comma g$ as a module, with $C$ acting on the left and with $B$ acting on the right, which is the reason for our convention.

\begin{lem}\label{lem:comma-obj} The simplicial set $f \comma g$ is a quasi-category and the projection functors $p_0\defeq\pi_B\circ p \colon f\comma g\tfib B$ and $p_1\defeq\pi_C\circ p\colon f\comma g\tfib C$ are isofibrations.
\end{lem}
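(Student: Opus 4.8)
The plan is to deduce everything formally from the cartesian-ness of the Joyal model structure (recollection~\ref{rec:cart-modcat}) together with the closure properties of isofibrations recorded in Observation~\ref{obs:isofibration-closure}. The single substantive point is that the right-hand vertical of the pullback square defining $f\comma g$, namely the map $A^\cattwo \to A\times A$, is an isofibration; once this is in hand, the quasi-categoricity of $f\comma g$ and the isofibration property of $p_0$ and $p_1$ drop out with no further work.

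First I would identify $A^\cattwo \to A\times A$ with the Leibniz hom $\leib\hom(\boundary\Del^1\inc\Del^1,\,A\to 1)$: because $1$ is terminal this Leibniz hom is precisely the restriction map $A^{\Del^1}\to A^{\boundary\Del^1}\cong A\times A$, and the symmetry isomorphism $A\times A\cong A\times A$ that is post-composed in the definition is invertible, hence immaterial for the fibration question. Since $\boundary\Del^1\inc\Del^1$ is a monomorphism and $A\to 1$ is an isofibration (as $A$ is a quasi-category), recollection~\ref{rec:cart-modcat}(2), equivalently the second clause of Observation~\ref{obs:isofibration-closure}, shows this map is an isofibration. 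Isofibrations are stable under pullback, so $p\colon f\comma g\to C\times B$ is an isofibration; in particular its domain $f\comma g$ is a fibrant object, i.e.\ a quasi-category (consistent with the fact that $C\times B$ is a quasi-category, being a product of such).

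Next I would treat $p_0=\pi_B\circ p$ and $p_1=\pi_C\circ p$. The product projections $\pi_B\colon C\times B\to B$ and $\pi_C\colon C\times B\to C$ are themselves isofibrations: for instance $\pi_B=(C\to 1)\times\id_B$ is a product of the isofibrations $C\to 1$ and $\id_B$, so it is an isofibration by the product-closure in Observation~\ref{obs:isofibration-closure}. As isofibrations are fibrations between fibrant objects they are closed under composition, so $p_0$ and $p_1$ are isofibrations with codomains $B$ and $C$ respectively, as claimed.

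As for obstacles: there is really none of substance here, only bookkeeping — one must check that the symmetry isomorphism does not disturb the fibration argument (it is invertible) and keep straight which of $\pi_B,\pi_C$ records the domain and which the codomain of the $1$-simplex of $A$ classified by a simplex of $A^\cattwo$, which matters for matching later conventions but not for the present lemma. An elementary alternative would be to verify the defining lifting properties of a quasi-category and of an isofibration for $f\comma g$, $p_0$, $p_1$ directly, transposing lifting problems across the exponential adjunction and computing the relevant Leibniz products of horn inclusions with $\boundary\Del^1\inc\Del^1$; but the model-categorical route above is shorter and reuses machinery already established, so that is the one I would carry out.
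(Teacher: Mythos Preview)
Your proposal is correct and is essentially the paper's own proof: identify the right-hand vertical of the defining pullback with the Leibniz hom $\leib\hom(\boundary\Del^1\hookrightarrow\Del^1,\,A\to 1)\cong(A^{\Del^1}\to A^{\boundary\Del^1})$, conclude it is an isofibration by cartesianness, and then use stability of isofibrations under pullback, products, and composition to finish. The only difference is that you are slightly more explicit about the Leibniz-hom identification and about why the product projections are isofibrations.
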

\begin{proof}
    The right hand vertical in the pullback square above is isomorphic to the simplicial map $A^{\Del^1}\to A^{\boundary\Del^1}$ and is thus, by \ref{rec:cart-modcat}, an isofibration whenever $A$ is a quasi-category. Consequently, since the product $C\times B$ is again a quasi-category, $p\colon f\comma g\to C\times B$ is  an isofibration and $f\comma g$ is a quasi-category. The projection functors $\pi_C\colon C\times B\tfib C$ and $\pi_B\colon C\times B\tfib B$ are both isofibrations because $B$ and $C$ are fibrant, so it follows that the domain and codomain projection maps $p_0 \colon f\comma g\tfib B$ and $p_1\colon f\comma g\tfib C$ are also isofibrations.
    \end{proof}
        
\begin{lem}[maps induced between comma objects]\label{lem:comma-obj-maps}
A commutative diagram
     \begin{equation*}
       \xymatrix{
         {B}\ar[r]^{f}\ar@{->>}[d]_-{r} & 
         {A}\ar@{->>}[d]_-{q} & 
         {C}\ar[l]_{g}\ar@{->>}[d]^-{s} \\
         {\bar{B}}\ar[r]_{\bar{f}} & 
         {\bar{A}} & 
         {\bar{C}}\ar[l]^{\bar{g}} 
       }
     \end{equation*} 
     in $\qCat$ in which the vertical maps are (trivial) fibrations in the Joyal model structure, induces a (trivial) fibration $r \comma_q s \colon f \comma g \tfib \bar{f}\comma\bar{g}$ between comma quasi-categories.
\end{lem}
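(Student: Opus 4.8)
The plan is to recognise the induced functor $r \comma_q s$ as a composite of two pullbacks of $($trivial$)$ fibrations, and then to invoke the stability of $($trivial$)$ fibrations under pullback and composition. Recall from Definition~\ref{def:comma-obj} that $f \comma g$ is the pullback of the isofibration $A^\cattwo \to A \times A$ --- which, up to the harmless symmetry isomorphism of $A \times A$, is restriction along $\boundary\Del^1 \inc \Del^1$ --- along $g \times f \colon C \times B \to A \times A$, and likewise $\bar f \comma \bar g$ is the pullback of $\bar A^\cattwo \to \bar A \times \bar A$ along $\bar g \times \bar f$.

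First I would form $L \defeq (\bar f \comma \bar g) \times_{\bar C \times \bar B} (C \times B)$, the pullback along $s \times r \colon C \times B \to \bar C \times \bar B$. Since $r$ and $s$ are $($trivial$)$ fibrations so is their product $s \times r$, and hence so is the projection $L \to \bar f \comma \bar g$, using that $($trivial$)$ fibrations are stable under pullback --- Observation~\ref{obs:isofibration-closure} in the fibration case, and the usual lifting-against-monomorphisms argument in the trivial-fibration case. A routine rearrangement of iterated pullbacks, which invokes the commutativity of the displayed square, identifies $L$ with $\bar A^\cattwo \times_{\bar A \times \bar A} (C \times B)$, where the leg $C \times B \to \bar A \times \bar A$ is $(q \times q)(g \times f)$.

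Next, the canonical map factors as $f \comma g \to L \to \bar f \comma \bar g$ with composite $r \comma_q s$, and I claim the first map is the pullback along $g \times f \colon C \times B \to A \times A$ of the Leibniz hom (recollection~\ref{rec:leibniz})
\[
  \leib\hom(\boundary\Del^1 \inc \Del^1,\, q) \colon A^\cattwo \longrightarrow \bar A^\cattwo \times_{\bar A \times \bar A} (A \times A),
\]
again by a short diagram chase using the associativity of pullbacks and the commutativity of the square. Because the Joyal model structure is cartesian (recollection~\ref{rec:cart-modcat}), $\boundary\Del^1 \inc \Del^1$ is a cofibration, and $q$ is a $($trivial$)$ fibration, this Leibniz hom is a $($trivial$)$ fibration; hence so is its pullback $f \comma g \to L$. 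Composing the two $($trivial$)$ fibrations $f \comma g \to L$ and $L \to \bar f \comma \bar g$ shows that $r \comma_q s$ is a $($trivial$)$ fibration, as required.

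The only steps that require any care are the two identifications of iterated pullbacks --- that of $L$ and that of the map $f \comma g \to L$ --- and these are formal, following from the universal property of pullbacks and the pasting lemma for pullback squares. I do not foresee a genuine obstacle here; the fiddliest bookkeeping is keeping track of the symmetry isomorphism built into the definition of $A^\cattwo \to A \times A$, but since it is an isomorphism it is immaterial to the argument.
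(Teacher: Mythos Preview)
Your proof is correct and follows essentially the same route as the paper's: both factor $r\comma_q s$ as a composite of a pullback of $s\times r$ and a pullback of the Leibniz hom $\leib\hom(\boundary\Del^1\inc\Del^1,q)$, invoking the cartesianness of the Joyal model structure and closure of (trivial) fibrations under product and pullback. The only cosmetic difference is that the paper introduces the intermediate pullback $P=\bar A^\cattwo\times_{\bar A\times\bar A}(A\times A)$ over $A\times A$ rather than your $L$ over $C\times B$, but your $L$ is simply $P$ pulled back along $g\times f$, so the factorisations coincide.
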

\begin{proof}
Consider the commutative diagram
     \begin{equation*}
       \xymatrix@C=4.5em@R=2.5em{
         {C\times B}\ar[r]^-{g\times f}\ar@{->>}[d]_-{s\times r} & 
         {A\times A}\ar@{->}[d]_-{q\times q} & 
         {A^\cattwo}\ar[l]_-{(p_1,p_0)}\ar@{->}[d]^-{q^\cattwo} 
         \save "1,3"-<2.5em,1.5em>*+{P}\ar[l]\ar[d]\ar@{<<.}[]_(0.6)l \restore \\
         {\bar{C}\times\bar{B}}\ar[r]_-{\bar{g}\times\bar{f}} & 
         {\bar{A}\times\bar{A}} & 
         {\bar{A}^\cattwo}\ar[l]^-{(p_1,p_0)} 
       }
     \end{equation*}
     in which $P$ denotes the pullback of the maps $q\times q$ and $(p_1,p_0)$ and $l$ is the unique map induced into it by the right hand square. The pullbacks of the two horizontal lines are the comma objects $f\comma g$ and $\bar{f}\comma\bar{g}$ respectively. So this diagram induces a unique map $r\comma_q s\colon f\comma g\to\bar{f}\comma\bar{g}$ of comma objects which makes the manifest cube commute.
     
     The (trivial) fibrations of any model category are closed under product, so the map $s\times r$ is a (trivial) fibration in the Joyal model structure.  The induced map $l$ is isomorphic to the Leibniz hom $\leib\hom(\boundary\Del^1\inc\Del^1, q\colon A\tfib \bar{A})$; a recalled in \ref{rec:cart-modcat}, cartesianness of the Joyal model structure implies that $l$ is a (trivial) fibration.
The induced map $r\comma_q s\colon f\comma g\tfib\bar{f}\comma\bar{g}$ is again a (trivial) fibration because it factors as a composite of pullbacks of the (trivial) fibrations $s\times r$ and $l$.
   \end{proof}

\begin{prop}\label{prop:weakcomma} For any functors $B \xrightarrow{f} A \xleftarrow{g} C$  of quasi-categories, the comma quasi-category $f\comma g$ is a weak comma object in $\qCat_2$.
\end{prop}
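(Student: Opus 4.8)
The plan is to follow the template already used for the earlier weak 2-limits in $\qCat_2$: realise $f\comma g$ as a simplicial weighted limit, equip it with the tautological cone, and then verify the smothering condition by reducing via Lemma~\ref{lem:weak-simplification} to a statement about homotopy categories. Concretely, $f\comma g$ is the simplicial weighted limit $\lim(W,D)$ where $D$ is the cospan $B\xrightarrow{f}A\xleftarrow{g}C$ and $W$ is the comma weight, valued in $\catone$ at the two outer objects and in $\cattwo$ at the apex, with its two legs picking out the source and target vertices of $\cattwo$; the defining pullback of Definition~\ref{def:comma-obj} literally computes this weighted limit, and Lemma~\ref{lem:comma-obj} places it in $\qCat_\infty$. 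The cone consists of the projections $p_0\colon f\comma g\to B$ and $p_1\colon f\comma g\to C$ together with the canonical 2-cell $\psi\colon fp_0\Rightarrow gp_1$ obtained by transposing the projection $f\comma g\to A^\cattwo$. Since the class of cospans of quasi-categories is trivially closed under exponentiation by quasi-categories, and comma objects commute with such exponentials because $(-)^X$ preserves pullbacks and $(A^X)^\cattwo\cong(A^\cattwo)^X$, Lemma~\ref{lem:weak-simplification} reduces the proposition to showing that for every cospan the comparison functor $\kappa\colon\ho(f\comma g)\to\lim(W,\ho(D(-)))$ is smothering.

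The key observation is that this $\ho$-level comparison need not be checked by hand: it factors as a composite of two smothering functors we have already produced. First identify the target $\lim(W,\ho(D(-)))$ with the comma category $\ho{f}\comma\ho{g}$, presented as the iterated pullback $(\ho{C}\times\ho{B})\times_{\ho{A}\times\ho{A}}(\ho{A})^\cattwo$, the right leg restricting along $\boundary\Del^1\inc\Del^1$ and swapping, the left leg being $\ho{g}\times\ho{f}$ (here using that $\ho$ is cartesian). On the other hand, $f\comma g$ is the pullback in $\sSet$ of the isofibration $A^\cattwo\to A\times A$ — isomorphic, as in Lemma~\ref{lem:comma-obj}, to $A^{\Del^1}\to A^{\boundary\Del^1}$ — along $g\times f\colon C\times B\to A\times A$, so Proposition~\ref{prop:weak-homotopy-pullbacks} applied to this isofibration gives that
\[ \ho(f\comma g)\longrightarrow \ho(A^\cattwo)\times_{\ho{A}\times\ho{A}}(\ho{C}\times\ho{B}) \]
is smothering. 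Second, Proposition~\ref{prop:weak-cotensors} says $\ho(A^\cattwo)\to(\ho{A})^\cattwo$ is smothering, and since smothering functors are stable under pullback (Definition~\ref{defn:smothering}), so is the induced map
\[ \ho(A^\cattwo)\times_{\ho{A}\times\ho{A}}(\ho{C}\times\ho{B})\longrightarrow(\ho{A})^\cattwo\times_{\ho{A}\times\ho{A}}(\ho{C}\times\ho{B})=\ho{f}\comma\ho{g}. \]
Composing these two smothering functors yields $\kappa$, which is therefore smothering, completing the proof.

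I expect the genuine content here to be bookkeeping rather than new mathematics: the delicate points are matching the comma weight against the cospan diagram so that $\lim(W,D)$ is visibly the Definition~\ref{def:comma-obj} pullback, and checking that the target of $\kappa$ really is the iterated pullback displayed above so that the factorisation type-checks. If one preferred not to route through Proposition~\ref{prop:weak-homotopy-pullbacks}, the alternative is a direct verification of the three smothering conditions for $\kappa$: surjectivity on objects is immediate (lift a morphism of $\ho{A}$ to a representing $1$-simplex with the prescribed boundary); fullness amounts to filling a square $\Del^1\times\Del^1\to A$ with prescribed boundary, done by triangulating it into two $2$-simplices whose common diagonal exists precisely because of the commutativity relation built into the comma category; and conservativity is handled exactly as in Proposition~\ref{prop:weak-homotopy-pullbacks}, by passing to the marked model structure, noting that $f\comma g$ with its pullback marking is a fibrant, hence naturally marked, object, so a $1$-simplex there is invertible iff its components in $B$ and $C$ are. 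The composite-of-smothering-functors argument makes this direct check unnecessary, so the only real obstacle is being careful with the identifications.
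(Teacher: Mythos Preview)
Your proposal is correct and follows essentially the same argument as the paper: reduce via Lemma~\ref{lem:weak-simplification} to showing $\ho(f\comma g)\to\ho(f)\comma\ho(g)$ is smothering, then factor this comparison as the composite of the smothering functor from Proposition~\ref{prop:weak-homotopy-pullbacks} (for the pullback defining $f\comma g$) with a pullback of the smothering functor from Proposition~\ref{prop:weak-cotensors}, and conclude by closure of smothering functors under composition. The paper's proof is exactly this, stated more tersely and without your additional commentary on the direct verification.
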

\begin{proof}
    Again, Lemma~\ref{lem:weak-simplification} applies, so it suffices to show that the canonical comparison
  \begin{equation}\label{eq:commacat-comp}
    \ho(f\comma g) \longrightarrow \ho(f)\comma\ho(g)
  \end{equation}
  is a smothering functor. Here the target category is just the usual comma category constructed in $\Cat$. By definition,  $f\comma g \cong (C\times B)\times_{(A\times A)} A^\cattwo$ and consequently we find that we may express the functor in~\eqref{eq:commacat-comp} as a composite:
  \begin{equation*}
    \ho((C\times B)\times_{(A\times A)} A^\cattwo)
    \longrightarrow
    \ho(C\times B)\times_{\ho(A\times A)} \ho(A^\cattwo)
    \longrightarrow
    \ho(C\times B)\times_{\ho(A\times A)} \ho(A)^\cattwo
  \end{equation*}
  The first of these maps is the canonical comparison functor studied in Proposition~\ref{prop:weak-homotopy-pullbacks}, so we know that it is smothering. The second of these maps is a pullback of the canonical comparison functor discussed in Proposition~\ref{prop:weak-cotensors}; since smothering functors are stable under pullback, it too is a smothering functor. We obtain the required result from the fact that smothering functors compose.
\end{proof}

\begin{obs}[unpacking the universal property of weak comma objects]\label{obs:unpacking-weak-comma-objects}
  The smothering functors
  \begin{equation}\label{eq:weak-comma-prop}
    \hom'(X,f\comma g)\longrightarrow\hom'(X,f)\comma\hom'(X,g)
  \end{equation}
  which express the weak 2-universal property of the quasi-category $f\comma g$ are induced by composition with a cone:
  \begin{equation}\label{eq:standard-comma-pic}
    \xymatrix@=10pt{
      & f \downarrow g \ar[dl]_{p_1} \ar[dr]^{p_0} \ar@{}[dd]|(.4){\psi}|{\Leftarrow}  \\ 
      C \ar[dr]_g & & B \ar[dl]^f \\ 
      & A}
  \end{equation}
  The data displayed in \eqref{eq:standard-comma-pic} is the image of the identity 1-cell under \eqref{eq:weak-comma-prop} in the case $X = f\comma g$. The weak universal property of this comma cone has three aspects, corresponding to the surjectivity on objects, fullness, and conservativity of the smothering functor \eqref{eq:weak-comma-prop}, which we refer to as 
 {\em 1-cell induction\/}, {\em 2-cell induction}, and {\em 2-cell conservativity}.

Surjectivity on objects of the functor~\eqref{eq:weak-comma-prop} simply says that for any comma cone
  \begin{equation}\label{eq:comma-cone}
    \xymatrix@=10pt{
       & X \ar[dl]_{c} \ar[dr]^{b} \ar@{}[dd]|(.4){\alpha}|{\Leftarrow}  \\ 
       C \ar[dr]_g & & B \ar[dl]^f \\ 
       & A}
  \end{equation}
  over our diagram there exists a map $a\colon X \to f \comma g$ which factors $b\colon X\to B$ and $c\colon X\to C$ through $p_0\colon f\comma g\to B$ and $p_1\colon f\comma g\to C$ respectively and which whiskers with the 2-cell $\psi\colon fp_0\Rightarrow gp_1$ to give the 2-cell $\alpha\colon fb\Rightarrow gc$; diagrammatically speaking, \emph{1-cell induction} produces a functor $a \colon X \to f \comma g$ from a 2-cell $\alpha \colon fb \To gc$ so that:
  \begin{equation}\label{eq:comma-ind-1cell-prop}
    \vcenter{\xymatrix@=10pt{
       & X \ar[dl]_{c} \ar[dr]^{b} \ar@{}[dd]|(.4){\alpha}|{\Leftarrow}  \\ 
       C \ar[dr]_g & & B \ar[dl]^f \\ 
       & A
    }}
    \mkern20mu = \mkern20mu
    \vcenter{\xymatrix@=10pt{
      & f \downarrow g \ar[dl]_{p_1} \ar[dr]^{p_0} \ar@{}[dd]|(.4){\psi}|{\Leftarrow}  \\ 
      C \ar[dr]_g & & B \ar[dl]^f \\ 
      & A
      \save "1,2"+<0pt,40pt>*+{X}\ar "1,2" _-a\restore
      }}
  \end{equation}

Fullness of \eqref{eq:weak-comma-prop} tells us that if we are given a pair of functors $a,a'\colon X\to f\comma g$ and a pair of 2-cells
  \begin{equation}\label{eq:comma-ind-2cell-data}
    \vcenter{\xymatrix@=10pt{
      & {X}\ar[dl]_{a'}\ar[dr]^{a}
      \ar@{}[dd]|(.4){\tau_0}|{\Leftarrow} & \\
      {f\comma g}\ar[dr]_{p_0} & & 
      {f\comma g}\ar[dl]^{p_0} \\
      & B &
    }}
    \mkern30mu\text{and}\mkern30mu
    \vcenter{\xymatrix@=10pt{
      & {X}\ar[dl]_{a'}\ar[dr]^{a}
      \ar@{}[dd]|(.4){\tau_1}|{\Leftarrow} & \\
      {f\comma g}\ar[dr]_{p_1} & & 
      {f\comma g}\ar[dl]^{p_1} \\
      & C &
    }}
  \end{equation}
  with the property that 
    \begin{equation}\label{eq:comma-ind-2cell-compat}
      \xymatrix@=10pt{ & X \ar[dl]_{a'} \ar[dr]^a \ar@{}[dd]|(.4){\tau_1}|{\Leftarrow} & &  & \ar@{}[dd]|{\displaystyle =} & &  & X \ar[dl]_{a'} \ar[dr]^a  \ar@{}[dd]|(.4){\tau_0}|{\Leftarrow} \\ f \downarrow g  \ar[dr]_{p_1} & & f \downarrow g \ar[dl]|{p_1} \ar[dr]^{p_0}   \ar@{}[dd]|(.4){\psi}|{\Leftarrow} & & & &   f \downarrow g \ar[dl]_{p_1} \ar[dr]|{p_0}  \ar@{}[dd]|(.4){\psi}|{\Leftarrow}  & & f \downarrow g \ar[dl]^{p_0} \\ & C \ar[dr]_g & & B \ar[dl]^f & & C \ar[dr]_g & & B \ar[dl]^f & &  \\ & & A & &  &  & A}
    \end{equation}
  then there exists a 2-cell $\tau \colon a \Rightarrow a'$, defined by \emph{2-cell induction}, satisfying the equalities 
  \[     \vcenter{\xymatrix@=10pt{
      & {X}\ar[dl]_{a'}\ar[dr]^{a}
      \ar@{}[dd]|(.4){\tau_0}|{\Leftarrow} & \\
      {f\comma g}\ar[dr]_{p_0} & & 
      {f\comma g}\ar[dl]^{p_0} \\
      & B &
    }}    \mkern20mu = \mkern20mu
    \vcenter{\xymatrix@=30pt{ X \ar@/^2ex/[d]^a \ar@/_2ex/[d]_{a'} \ar@{}[d]|(.4){\tau}|{\Leftarrow}  \\ f \downarrow g \ar[d]^{p_0} \\ B}}
      \mkern30mu\text{and}\mkern30mu
    \vcenter{\xymatrix@=10pt{
      & {X}\ar[dl]_{a'}\ar[dr]^{a}
      \ar@{}[dd]|(.4){\tau_1}|{\Leftarrow} & \\
      {f\comma g}\ar[dr]_{p_1} & & 
      {f\comma g}\ar[dl]^{p_1} \\
      & C &
    }}     \mkern20mu = \mkern20mu
        \vcenter{\xymatrix@=30pt{ X \ar@/^2ex/[d]^a \ar@/_2ex/[d]_{a'} \ar@{}[d]|(.4){\tau}|{\Leftarrow}  \\ f \downarrow g \ar[d]^{p_1} \\ C}}.    \]

    Finally, conservativity of \eqref{eq:weak-comma-prop} tells us that if we are given a 2-cell $\tau\colon a\Rightarrow a' \colon X \to f \comma g$ then if the whiskered composites $p_0\tau$ and $p_1\tau$, as shown in the previous diagram, are isomorphisms in $\hom'(X,B)$ and $\hom'(X,C)$ respectively, then $\tau$ is also an isomorphism in $\hom'(X,f\comma g)$; this is \emph{2-cell conservativity}.
 \end{obs}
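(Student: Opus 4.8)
The plan is to reduce the weak $2$-universal property of $f\comma g$ to a statement about homotopy categories and then to bootstrap from the two weak $2$-limits already constructed: the weak $2$-pullback along an isofibration (Proposition~\ref{prop:weak-homotopy-pullbacks}) and the weak cotensor by $\cattwo$ (Proposition~\ref{prop:weak-cotensors}).

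First I would recall from Lemma~\ref{lem:comma-obj} that $f\comma g$ is a quasi-category and, as such, realizes the weighted $2$-limit over $\qCat_\infty$ of the cospan $B\xrightarrow{f}A\xleftarrow{g}C$ with the comma weight. Since exponentiating a cospan of quasi-categories by a quasi-category again yields a cospan of quasi-categories, the relevant class of diagrams is closed under exponentiation, so Lemma~\ref{lem:weak-simplification} reduces the claim to showing that the canonical comparison $\ho(f\comma g)\to\ho(f)\comma\ho(g)$ is a smothering functor, the target being the ordinary comma category formed in $\Cat$.

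The key step is to factor this comparison using the description of $f\comma g$ from Definition~\ref{def:comma-obj} as the pullback of $g\times f\colon C\times B\to A\times A$ along the right-hand vertical $A^\cattwo\to A\times A$, which is isomorphic to $A^{\Del^1}\to A^{\boundary\Del^1}$ and hence, by recollection~\ref{rec:cart-modcat}, an isofibration. Concretely, I would write the comparison as the composite
\[
  \ho\bigl((C\times B)\times_{A\times A}A^\cattwo\bigr)
  \longrightarrow \ho(C\times B)\times_{\ho(A\times A)}\ho(A^\cattwo)
  \longrightarrow \ho(C\times B)\times_{\ho(A\times A)}(\ho A)^\cattwo,
\]
identifying the final object with $\ho(f)\comma\ho(g)$ by unwinding the two comma constructions. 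The first functor is precisely the comparison analysed in Proposition~\ref{prop:weak-homotopy-pullbacks} for the $2$-pullback of the isofibration $A^\cattwo\to A\times A$ along $g\times f$, so it is smothering. The second functor is a pullback of the comparison $\ho(A^\cattwo)\to(\ho A)^\cattwo$ proved smothering in Proposition~\ref{prop:weak-cotensors}, and smothering functors are stable under pullback; hence it too is smothering. Since smothering functors are closed under composition (Definition~\ref{defn:smothering}), the composite is smothering, as required.

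I do not anticipate a genuine obstacle; the only points demanding care are checking that the hypotheses of Lemma~\ref{lem:weak-simplification} are met (closure of the class of cospans under exponentiation) and the bookkeeping needed to match the handedness convention for $f\comma g$ fixed in Definition~\ref{def:comma-obj} against the standard comma-category convention, both when identifying the target of the comparison functor and when verifying that the displayed composite is the canonical one.
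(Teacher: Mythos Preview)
Your argument is correct and is essentially identical to the paper's proof of Proposition~\ref{prop:weakcomma}, which establishes that the comparison functor is smothering by the same factorisation through the weak $2$-pullback and weak cotensor comparisons. The observation itself merely unpacks what surjectivity on objects, fullness, and conservativity of that smothering functor say in elementary terms, and requires no further argument.
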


\begin{lem}[1-cell induction is unique up to isomorphism]\label{lem:1cell-ind-uniqueness}
Any two 1-cells $a,a' \colon X \to f \comma g$ over a weak comma object \eqref{eq:standard-comma-pic} that are induced by the same comma cone $\alpha \colon fb \To gc$ are isomorphic over $C \times B$. 
\end{lem}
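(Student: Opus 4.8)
The plan is to read off the desired isomorphism directly from the weak $2$-universal property of $f\comma g$ as unpacked in Observation~\ref{obs:unpacking-weak-comma-objects}, invoking only the \emph{$2$-cell induction} and \emph{$2$-cell conservativity} aspects of the smothering functor~\eqref{eq:weak-comma-prop}; no combinatorial input should be needed.

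First I would spell out what the hypothesis gives us. Saying that $a$ and $a'$ are \emph{induced by the same comma cone} $\alpha\colon fb\To gc$ means, by the pasting equality~\eqref{eq:comma-ind-1cell-prop}, that $p_0a=b=p_0a'$, that $p_1a=c=p_1a'$, and that $\psi\cdot a=\alpha=\psi\cdot a'$. Thus $a$ and $a'$ are literally a parallel pair of $1$-cells with $pa=pa'\colon X\to C\times B$, and the assertion to be proved is that there is an invertible $2$-cell $\tau\colon a\To a'$ with $p_0\tau=\id_b$ and $p_1\tau=\id_c$.

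The key step is to build $\tau$ by $2$-cell induction applied to the pair of identity $2$-cells $\tau_0\defeq\id_b\colon p_0a\To p_0a'$ and $\tau_1\defeq\id_c\colon p_1a\To p_1a'$. To legitimately invoke $2$-cell induction I must check the compatibility condition~\eqref{eq:comma-ind-2cell-compat}, and this is the only point warranting a second's thought: after substituting $f\cdot\tau_0=\id_{fb}$ and $g\cdot\tau_1=\id_{gc}$, the left-hand composite $2$-cell in~\eqref{eq:comma-ind-2cell-compat} collapses to $\psi\cdot a$ and the right-hand one to $\psi\cdot a'$, and these coincide by hypothesis, so the condition is the tautology $\alpha=\alpha$. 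Hence $2$-cell induction furnishes $\tau\colon a\To a'$ with $p_0\tau=\tau_0=\id_b$ and $p_1\tau=\tau_1=\id_c$.

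Finally I would invoke $2$-cell conservativity: the whiskered composites $p_0\tau=\id_b$ and $p_1\tau=\id_c$ are identity $2$-cells, hence isomorphisms in $\hom'(X,B)$ and $\hom'(X,C)$, so $\tau$ is an isomorphism in $\hom'(X,f\comma g)$. Since $p_0\tau$ and $p_1\tau$ are identities, $p\tau$ is an identity, so $\tau$ is an isomorphism over $C\times B$, as required. The ``main obstacle'', such as it is, is purely bookkeeping: recognising that the hypothesis of being induced by $\alpha$ is exactly the input needed to discharge the compatibility condition of $2$-cell induction when the prescribed data are identity $2$-cells.
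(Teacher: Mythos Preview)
Your proof is correct and follows essentially the same approach as the paper: unpack the hypothesis to obtain $p_0a=p_0a'$, $p_1a=p_1a'$, and $\psi a=\psi a'$, feed the resulting identity $2$-cells into $2$-cell induction (the third equality being precisely the compatibility condition), and conclude by $2$-cell conservativity. The paper also notes in passing that the result follows from Lemma~\ref{lem:smothering} on fibres of smothering functors, but its direct argument matches yours step for step.
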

\begin{proof}
This follows from Lemma~\ref{lem:smothering}, which demonstrates that fibres of smothering functors are connected groupoids, or can be proven directly. From the defining property of induced 1-cells displayed in~\eqref{eq:comma-ind-1cell-prop} it follows that $p_0 a = p_0 a'$, $p_1 a = p_1 a'$, and $\psi a = \psi a'$. We can regard the first two of these equalities as being identity 2-cells of the form displayed in~\eqref{eq:comma-ind-2cell-data}. Then the third of these equalities may be re-interpreted as positing the compatibility property displayed in~\eqref{eq:comma-ind-2cell-compat} for those identity 2-cells. So we may apply the 2-cell induction property of $f\comma g$ to obtain a 2-cell $\tau\colon a\Rightarrow a'$ whose whiskered composites with $p_0$ and $p_1$ are the identity 2-cells corresponding to the equalities $p_0 a = p_0 a'$ and $p_1 a = p_1 a'$ respectively. This then allows us to apply the 2-cell conservativity property of our weak comma object to show that $\tau\colon a\Rightarrow a'$ is an isomorphism.
 \end{proof}

\subsection{Slices of the category of quasi-categories}\label{subsec:slice-2cats-of-qcats}

\begin{defn}[enriching the slices of $\qCat$]\label{defn:enriched-slice}
  For a quasi-category $A$, we will write $\qCat/A$ for the full subcategory of the usual slice category whose objects are isofibrations $E\tfib A$. Where not otherwise stated, we shall restrict our attention to these subcategories of isofibrations: these are the subcategories of fibrant objects in slices of Joyal's model structure and so are better behaved when viewed from the perspective of formal quasi-category theory than the slice categories of all maps with fixed codomain.
   
   The category $\qCat/A$ has two enrichments of interest to us here. Let $\qCat_2\slice A$ and $\qCat_\infty\slice A$ denote the 2-category and simplicial category (respectively) whose objects are the isofibrations with codomain $A$ and whose hom-category and simplicial hom-space (respectively) between $p \colon E \tfib A$ and $q \colon F \tfib A$ are defined by the pullbacks 
   \begin{equation}\label{eq:slice-hom-objects}  
    \xymatrix{
      \hom'_A(p,q) \pbexcursion \ar[d] \ar[r] & \hom'(E,F) \ar[d]^-{\hom'(E,q)} &  &   \hom_A(p,q) \pbexcursion \ar[r] \ar[d] & F^E \ar@{->>}[d]^-{q^E} \\ 
       \catone \ar[r]_-p & \hom'(E,A) & &     \Delta^0 \ar[r]_-p & A^E}
   \end{equation}
  The objects of $\hom'_A(p,q)$ and the vertices of $\hom_A(p,q)$ are exactly the morphisms from $p$ to $q$ in $\qCat/A$. The morphisms in $\hom'_A(p,q)$, 2-cells in the 2-category $\qCat_2\slice A$, are natural transformations between functors $E \to F$ in $\qCat_2$ whose whiskered composite with $q$ is the identity 2-cell on $p$. Since $q\colon F\tfib A$ is an isofibration we know that $q^E\colon F^E\tfib A^E$ is also an isofibration as is its pullback $\hom_A(p,q)\tfib\Del^0$; hence, $\hom_A(p,q)$ is a quasi-category. In other words, $\qCat_\infty\slice A$ is enriched in quasi-categories.
\end{defn}

\begin{obs}[pushforward]\label{obs:fibred-pushforward} If $f \colon B \tfib A$ is an isofibration of quasi-categories then post-composition defines a simplicial functor $f_* \colon \qCat_\infty\slice B \to \qCat_\infty\slice A$ and a 2-functor $f_* \colon \qCat_2\slice B \to \qCat_2\slice A$. 
\end{obs}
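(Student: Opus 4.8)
The plan is to verify that post-composition with $f$ preserves objects of the sliced categories, and then to promote this operation to the enriched levels by exploiting that the sliced hom-objects of~\eqref{eq:slice-hom-objects} are pullbacks which are manifestly compatible with post-composition. The hypothesis that $f$ is an isofibration is needed only to stay inside the full subcategories of isofibrations: if $p\colon E\tfib B$ is an object of $\qCat/B$, then $f_*p\defeq fp\colon E\to A$ is a composite of isofibrations and hence is itself an isofibration, since fibrations between fibrant objects are closed under composition in any model category. Thus post-composition with $f$ defines an ordinary functor $\qCat/B\to\qCat/A$; it remains to enrich it.

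For isofibrations $p\colon E\tfib B$ and $q\colon F\tfib B$, the hom-space $\hom_B(p,q)$ of~\eqref{eq:slice-hom-objects} is the fibre of the isofibration $q^E\colon F^E\tfib B^E$ over the vertex $\bar p$ of $B^E$ naming $p$, and so in particular is a simplicial subset of $F^E$. Since $(fq)^E=f^E\circ q^E$ and the vertex of $A^E$ naming $fp$ is $f^E\bar p$, any simplex of $F^E$ which $q^E$ carries to a degeneracy of $\bar p$ is also carried by $(fq)^E$ to the corresponding degeneracy of $f^E\bar p$; that is, $\hom_B(p,q)$ is a simplicial subset of $\hom_A(fp,fq)$, and we take $f_*$ to be this inclusion. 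Because the composition and identities of both $\qCat_\infty\slice B$ and $\qCat_\infty\slice A$ are obtained by restriction from the composition maps $G^F\times F^E\to G^E$ and identities $\Del^0\to E^E$ of $\qCat_\infty$, these inclusions are automatically compatible with composition and identities and therefore assemble into a simplicial functor $f_*\colon\qCat_\infty\slice B\to\qCat_\infty\slice A$.

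The 2-functor is produced by exactly the same argument, now with the hom-categories $\hom'_B(p,q)=\catone\times_{\ho(B^E)}\ho(F^E)$ of~\eqref{eq:slice-hom-objects} in place of the hom-spaces. Because $\hom'(E,fq)=\ho((fq)^E)=\ho(f^E)\circ\hom'(E,q)$ and the object of $\ho(A^E)$ naming $fp$ is the image of the object naming $p$ under $\ho(f^E)$, the category $\hom'_B(p,q)$ is the subcategory of $\ho(F^E)$ consisting of the objects over $p$ and morphisms over the identity of $p$, and this is contained in the subcategory $\hom'_A(fp,fq)$ of $\ho(F^E)$ consisting of the objects over $fp$ and morphisms over the identity of $fp$; we take $f_*$ to be this inclusion, which manifestly respects the composition and identities of $\qCat_2\slice B$ and $\qCat_2\slice A$ inherited from $\ho(G^F)\times\ho(F^E)\to\ho(G^E)$. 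This yields the 2-functor $f_*\colon\qCat_2\slice B\to\qCat_2\slice A$. There is no serious obstacle here; the only point requiring any care is confirming that post-composition respects the pullbacks defining the sliced hom-objects, which in each case reduces to the functoriality identity $(fq)^E=f^E q^E$ together with the observation that the name of $fp$ is the image under $f^E$ (resp.\ $\ho(f^E)$) of the name of $p$.
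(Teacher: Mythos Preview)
Your verification is correct. The paper records this statement as an observation without proof, treating it as self-evident; you have simply written out the routine check that the paper leaves implicit, namely that post-composition with an isofibration preserves the objects of the slice and that the sliced hom-objects, being fibres of $q^E$ over the vertex naming $p$, are contained in the corresponding fibres of $(fq)^E=f^Eq^E$ over the vertex naming $fp$.
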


One reason for our particular interest in the simplicial categories $\qCat_\infty\slice A$ has to do with the following observation.  Simplicially enriched limits are defined up to isomorphism and thus assemble into a simplicial functor. The universal property defining weak 2-limits, however, lacks a uniqueness statement of sufficient strength to make them assemble into a (strict) 2-functor. In particular:

\begin{obs}[pullback]\label{obs:fibred-pullback}
Consider any functor $f \colon B \to A$ between quasi-categories. Pullback along $f$ defines a functor $f^* \colon \qCat/A \to \qCat/B$, but it cannot be extended to a 2-functor between slice 2-categories $\qCat_2\slice A$ and $\qCat_2\slice B$ in any canonical way. On the other hand, pullback is a genuine simplicial limit in $\qCat_\infty$ and so it does define a simplicial functor $f^* \colon \qCat_\infty\slice A \to \qCat_\infty\slice B$, which in turn gives rise to a 2-functor  $f^* \colon \ho_*(\qCat_\infty\slice A) \to \ho_*(\qCat_\infty\slice B)$ on application of $\ho_*\colon\eCat{\sSet}\to\twoCat$.  The remarks apply equally to the larger slice categories of all maps with fixed codomain.
\end{obs}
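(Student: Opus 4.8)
The plan is to dispatch the two positive assertions by routine checks and to spend the real effort on the negative middle claim, whose content is the failure of the strict pullback to see the coarser $2$-cells of $\qCat_2\slice A$. To begin, $f^*\colon\qCat/A\to\qCat/B$ is well defined: pullback of simplicial sets is a strict limit, hence strictly functorial, and it preserves isofibrations by Observation~\ref{obs:isofibration-closure}, the pullback $f^*p\colon B\times_A E\to B$ of an isofibration $p\colon E\tfib A$ along any map being again an isofibration of quasi-categories (in particular $B\times_A E$ is a quasi-category), so that $f^*p$ is an object of $\qCat/B$.

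Next, the positive $2$-categorical claim. The hom-space $\hom_A(p,q)$ of $\qCat_\infty\slice A$ is the strict pullback $F^E\times_{A^E}\Delta^0$, so a $1$-simplex of it is a homotopy $\alpha\colon E\times\Delta^1\to F$ whose composite with $q$ is, \emph{on the nose}, the degenerate homotopy $p\circ\pi_E\colon E\times\Delta^1\to E\to A$. Such an $\alpha$ therefore induces, via the universal property of the strict pullback $B\times_A F$, a homotopy $f^*\alpha\colon (B\times_A E)\times\Delta^1\to B\times_A F$ lying strictly over $B$, i.e.\ a $1$-simplex of $\hom_B(f^*p,f^*q)$; carrying out the same construction in every dimension yields a simplicial map $\hom_A(p,q)\to\hom_B(f^*p,f^*q)$, and a routine chase through the universal properties of the pullbacks involved shows these maps respect the composition and identities of the two simplicial categories. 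Hence $f^*\colon\qCat_\infty\slice A\to\qCat_\infty\slice B$ is a simplicial functor, and applying $\ho_*\colon\eCat{\sSet}\to\twoCat$ produces the advertised $2$-functor $\ho_*(\qCat_\infty\slice A)\to\ho_*(\qCat_\infty\slice B)$.

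The obstruction to extending this over $\qCat_2\slice A$ — the step I expect to be the crux — is that a $2$-cell there is strictly more liberal than a $1$-simplex of $\hom_A(p,q)$: it is a homotopy class of homotopies $\alpha\colon E\times\Delta^1\to F$ for which $q\circ\alpha$ is merely \emph{homotopic} rel boundary to $p\circ\pi_E$. For such an $\alpha$ the evident maps from $(B\times_A E)\times\Delta^1$ to $B$ and to $F$ agree over $A$ only up to homotopy, so they do not assemble into a map into the strict pullback $B\times_A F$; one can always rectify $\alpha$ to a strictly compatible representative by lifting against the isofibration $q$, but this involves choices and is not functorial, so no canonical extension exists. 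Structurally this is just the assertion that $\ho_*(\qCat_\infty\slice A)$ is a proper refinement of $\qCat_2\slice A$: applying Proposition~\ref{prop:weak-homotopy-pullbacks} to the cospan $\Delta^0\xrightarrow{p}A^E\xleftarrow{q^E}F^E$, whose right leg $q^E$ is an isofibration by Observation~\ref{obs:isofibration-closure}, shows the canonical comparison $\ho(\hom_A(p,q))\to\ho(F^E)\times_{\ho(A^E)}\catone=\hom'_A(p,q)$ is a smothering functor but in general not an isomorphism. Since the isofibration hypothesis on objects of the slices was used only to guarantee that these hom-objects are quasi-categories, the same discussion applies verbatim to the larger slice categories of all maps with fixed codomain.
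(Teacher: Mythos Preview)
Your proposal is correct and goes well beyond what the paper itself offers: in the paper this is stated as an \emph{observation} with no separate proof, the positive claims being justified by the single phrase ``pullback is a genuine simplicial limit in $\qCat_\infty$'' and the negative claim left as a bare assertion. Your explicit unpacking of the simplicial functor on hom-spaces and, especially, your diagnosis of the obstruction---that a $2$-cell of $\qCat_2\slice A$ need only satisfy $q\alpha=\id_p$ at the level of homotopy classes, so a representative cannot be fed into the strict pullback without non-canonical rectification---supplies the content the paper leaves implicit; your closing appeal to Proposition~\ref{prop:weak-homotopy-pullbacks} is exactly the argument the paper later uses in Proposition~\ref{prop:slice-smothering-2-functor} to compare the two slice $2$-categories.
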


\begin{obs}[comparing the 2-categories $\qCat_2\slice A$ and $\ho_*(\qCat_\infty\slice A)$]
The 2-categories $\qCat_2\slice A$ to $\ho_*(\qCat_\infty\slice A)$ have the same 0-cells and 1-cells; however it is not the case that their 2-cells coincide. If we are given a parallel pair of 1-cells \[ \xymatrix@=1.5em{ E \ar@{->>}[dr]_p \ar@/^1ex/[rr]^f \ar@/_1ex/[rr]_g & & F \ar@{->>}[dl]^q \\ & A}\] a 2-cell from $f$ to $g$ in
  \begin{description}
    \item[$\qCat_2\slice A$] is a homotopy class of 1-simplices $f \to g$ in $F^E$ that whisker with $q$ to the homotopy class of the degenerate 1-simplex on $p$.
    \item[$\ho_*(\qCat_\infty\slice A)$] is a homotopy class represented by a 1-simplex $f \to g$ in the fibre of $q^E\colon F^E\tfib A^E$ over the vertex $p\in A^E$ under homotopies which are also constrained to that fibre.
  \end{description}
  Note here that the notion of homotopy involved in the description of 2-cells in $\ho_*(\qCat_\infty\slice A)$ is more refined (identifies fewer simplices) than that given for 2-cells in $\qCat_2\slice A$. Each homotopy class representing a 2-cell in $\qCat_2\slice A$ may actually split into a number of distinct homotopy classes representing 2-cells in $\ho_*(\qCat_\infty\slice A)$.
\end{obs}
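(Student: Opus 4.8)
The plan is to deduce everything from Proposition~\ref{prop:weak-homotopy-pullbacks}. First I would unwind the two pullbacks in~\eqref{eq:slice-hom-objects}. An object of $\hom'_A(p,q)=\catone\times_{\ho(A^E)}\ho(F^E)$ is a $0$-simplex $f$ of $F^E$, that is a functor $f\colon E\to F$, whose image under post-composition with $q$ is the object $p$ of $\ho(A^E)$; since the objects of $\ho(A^E)$ are literally the vertices of $A^E$, this says $qf=p$ on the nose, so a $1$-cell $p\to q$ of $\qCat_2\slice A$ is exactly a map over $A$. The vertices of $\hom_A(p,q)=\Del^0\times_{A^E}F^E$ unwind to the identical data, so $\ho(\hom_A(p,q))$ has the same objects. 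Consequently there is a canonical comparison $2$-functor $\ho_*(\qCat_\infty\slice A)\to\qCat_2\slice A$ that is the identity on $0$-cells (the isofibrations over $A$) and on $1$-cells, and whose action on each hom-category is the functor $\ho(\hom_A(p,q))\to\hom'_A(p,q)$ obtained by applying $\ho$ to the two projections out of $\hom_A(p,q)$ and invoking the universal property of $\hom'_A(p,q)$; compatibility of these hom-functors with vertical and horizontal composition is a routine consequence of the functoriality of $\ho$ and of pullback.

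Next I would observe that, because $q^E\colon F^E\tfib A^E$ is an isofibration by Observation~\ref{obs:isofibration-closure}, this hom-functor is precisely the comparison functor studied in Proposition~\ref{prop:weak-homotopy-pullbacks}, for the pullback of the cospan $\Del^0\xrightarrow{p}A^E\xleftarrow{q^E}F^E$ whose right leg is an isofibration; hence it is a smothering functor --- surjective on objects, full, and conservative. The three displayed claims are then read off. The promised descriptions of the $2$-cells are just Recollection~\ref{rec:hty-category} applied, on the one hand, to the category $\hom'_A(p,q)$, and on the other to the quasi-category $\hom_A(p,q)$, which is by construction the fibre of $q^E$ over the vertex $p$, so that its homotopy category is computed using only $1$- and $2$-simplices of $F^E$ whose $q^E$-image is degenerate at $p$. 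Surjectivity on objects (together with the unwinding above, which shows the comparison is in fact bijective on objects) confirms that the two $2$-categories have the same $0$- and $1$-cells; fullness says that every $2$-cell of $\qCat_2\slice A$ has a representative $1$-simplex lying strictly in a fibre of $q^E$ --- if an explicit witness is wanted, take any representative $\alpha\colon f\to g$, choose a $2$-simplex of $A^E$ exhibiting $q^E\alpha$ as homotopic to $p\cdot\degen^0$, and fill the inner horn $\Horn^{2,1}\inc\Del^2$ with $2$-face $\alpha$ and $0$-face the relevant degeneracy against the isofibration $q^E$, its new $1$-face being a fibre representative homotopic rel boundary to $\alpha$; and conservativity together with Lemma~\ref{lem:smothering} (fibres of a smothering functor are connected groupoids, possibly non-trivial) is exactly the assertion that a single homotopy class of $2$-cells in $\qCat_2\slice A$ may split into several homotopy classes of $2$-cells in $\ho_*(\qCat_\infty\slice A)$.

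The only step that requires constructing anything, rather than merely unwinding definitions and citing Proposition~\ref{prop:weak-homotopy-pullbacks}, is the assertion that the two notions of $2$-cell genuinely differ in general --- equivalently, that the smothering comparison is not always faithful. I would settle this with a small explicit example: a concrete isofibration $q\colon F\tfib A$ (already with $E=\Del^0$, so that the comparison on a hom-category is $\ho$ of a fibre of $q$ compared with the corresponding fibre of $\ho(q)$) together with two $1$-simplices of $F$ lying in one fibre of $q$ that are homotopic rel boundary in $F$ but admit no homotopy constrained to that fibre. I expect this to be the only substantive point; everything else is bookkeeping on top of the weak $2$-pullback already established in Proposition~\ref{prop:weak-homotopy-pullbacks}.
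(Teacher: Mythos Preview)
Your proposal is correct and takes essentially the same approach as the paper, but you are doing more work than the observation itself requires. In the paper this statement is a definitional \emph{observation} with no proof: it simply unwinds the two pullbacks in~\eqref{eq:slice-hom-objects} and records the resulting descriptions of the 2-cells, together with the remark that the fibre-constrained homotopy relation is finer. The construction of the comparison 2-functor and the invocation of Proposition~\ref{prop:weak-homotopy-pullbacks} to show it is smothering is precisely the content and proof of the \emph{subsequent} Proposition~\ref{prop:slice-smothering-2-functor}, which you have effectively anticipated; the paper does not supply an explicit example witnessing non-faithfulness either, treating the ``may split'' remark as evident from the descriptions.
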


  Consequently, it is not the case that these two enrichments of $\qCat\slice A$ to a 2-category are identical. However, they are related by a 2-functor whose properties we now enumerate.

\begin{defn}[smothering 2-functor]\label{defn:smothering-2-functor}
  A 2-functor $F\colon\tcat{C}\to\tcat{D}$ is said to be a {\em smothering 2-functor\/} if it is surjective on 0-cells and \emph{locally smothering}, i.e., if for all 0-cells $K$ and $K'$ in $\tcat{C}$ the action $F\colon \tcat{C}(K,K')\to\tcat{D}(FK,FK')$ of $F$ on the hom-category from $K$ to $K'$ is a smothering functor.

Note that smothering 2-functors are also conservative at the level of 1-cells in the sense appropriate to 2-category theory; that is to say if $k\colon K\to K'$ is a 1-cell in $\tcat{C}$ for which $Fk$ is an equivalence in $\tcat{D}$ then $k$ is an equivalence in $\tcat{C}$.
\end{defn}

\begin{prop}\label{prop:slice-smothering-2-functor} There exists a canonical 2-functor $\ho_*(\qCat_\infty\slice A)\to\qCat_2\slice A$  which acts identically on 0-cells and 1-cells and is a smothering 2-functor.
\end{prop}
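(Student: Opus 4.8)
The plan is to construct the comparison 2-functor by hand and then reduce the two conditions in Definition~\ref{defn:smothering-2-functor}---surjectivity on $0$-cells and being locally smothering---to results already in place. The two 2-categories $\ho_*(\qCat_\infty\slice A)$ and $\qCat_2\slice A$ have literally the same $0$-cells, namely isofibrations $p\colon E\tfib A$, and the same $1$-cells, namely maps over $A$, so I would have the 2-functor act as the identity on $0$-cells and $1$-cells. What needs defining is its action on hom-categories. Reading off the pullbacks in \eqref{eq:slice-hom-objects} and recalling that $\hom'(E,F)=\ho(F^E)$ and $\hom'(E,q)=\ho(q^E)$ by \eqref{eq:qCat2homdefn}, the hom-category of $\ho_*(\qCat_\infty\slice A)$ from $p$ to $q$ is $\ho(\Delta^0\times_{A^E}F^E)$, whereas $\hom'_A(p,q)$ is the pullback $\catone\times_{\ho(A^E)}\ho(F^E)$ formed in $\Cat$. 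Since $\ho$ preserves finite products but not pullbacks, applying it to the square that defines $\hom_A(p,q)$ produces a cone over the cospan defining $\hom'_A(p,q)$, and hence a canonical comparison functor $G_{p,q}\colon\ho(\hom_A(p,q))\to\hom'_A(p,q)$. Concretely, $G_{p,q}$ sends the homotopy class of a $1$-simplex of $F^E$ lying in the fibre of $q^E$ over $p$ to its homotopy class as a $1$-simplex of $F^E$, whose $q$-whiskering is degenerate and so in particular represents the identity $2$-cell on $p$ in $\qCat_2$.

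That the family $\{G_{p,q}\}$ assembles into a 2-functor I would verify formally rather than compute by hand. Every structure map involved---the composition functors and the identity-picking functors of both 2-categories---is induced from the cartesian closed structure of $\sSet$ together with the pullbacks in \eqref{eq:slice-hom-objects}, and the canonical comparisons $\ho(X\times_Y Z)\to\ho X\times_{\ho Y}\ho Z$ are natural in the cospan, so the requisite compatibility squares commute. Equivalently, one may observe that the domain-forgetting simplicial functor $U\colon\qCat_\infty\slice A\to\qCat_\infty$, $(p\colon E\tfib A)\mapsto E$, induces a 2-functor $\ho_*U\colon\ho_*(\qCat_\infty\slice A)\to\qCat_2$ that factors through the forgetful 2-functor $\qCat_2\slice A\to\qCat_2$, which is injective on objects and morphisms of each hom-category; that factorization is $G$, and both its existence and its 2-functoriality are then automatic.

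It remains to check that $G$ is a smothering 2-functor. It is the identity on $0$-cells, hence surjective there. For local smotheringness, fix isofibrations $p\colon E\tfib A$ and $q\colon F\tfib A$. By Observation~\ref{obs:isofibration-closure} the exponentiated map $q^E\colon F^E\tfib A^E$ is again an isofibration of quasi-categories, so Proposition~\ref{prop:weak-homotopy-pullbacks} applies to the pullback $\Delta^0\times_{A^E}F^E$ and asserts precisely that the canonical comparison functor $\ho(\Delta^0\times_{A^E}F^E)\to\catone\times_{\ho(A^E)}\ho(F^E)$---which is exactly $G_{p,q}$---is smothering. (In particular, as recorded in the proof of that proposition, this comparison is bijective on objects, which is what allows the 2-functor to be taken to act as the identity on $1$-cells.) The closing clause of the statement, that $G$ is conservative on $1$-cells, needs no separate argument: it holds for any smothering 2-functor, as already noted after Definition~\ref{defn:smothering-2-functor}.

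There is no real obstacle here; the entire content is packaged into Proposition~\ref{prop:weak-homotopy-pullbacks}, and the only slightly tedious point is the verification in the second paragraph that the hom-category functors $G_{p,q}$ respect composition and identities. Because each of those functors is the canonical comparison attached to a pullback and $\ho$ interacts well with finite products, that verification is purely formal, and I would dispatch it in a sentence rather than with a diagram.
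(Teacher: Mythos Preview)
Your proof is correct and follows essentially the same route as the paper: construct the hom-category comparison by applying $\ho$ to the defining pullback for $\hom_A(p,q)$ and using the universal property of the pullback defining $\hom'_A(p,q)$, declare the 2-functoriality routine, and invoke Proposition~\ref{prop:weak-homotopy-pullbacks} (with $q^E$ an isofibration) for the local smothering condition. One minor remark: the statement itself has no separate ``closing clause'' about 1-cell conservativity---that is a general consequence of being a smothering 2-functor recorded after Definition~\ref{defn:smothering-2-functor}, so you need not mention it.
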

\begin{proof}
  To construct the required 2-functor, apply the homotopy category functor $\ho$ to the defining pullback square for $\hom_A(p,q)$ in~\eqref{eq:slice-hom-objects} to obtain a square which then induces a functor $h(\hom_A(p,q))\to \hom'_A(p,q)$ by the pullback property of the defining square for $\hom'_A(p,q)$. It is a routine matter now to check that we may assemble these actions on hom-categories together to give a 2-functor which acts as the identity on the common underlying category $\qCat\slice A$ of these 2-categories. 

  To show that this 2-functor is smothering, we already know that it acts bijectively on 0-cells, so all that remains is to show that each $h(\hom_A(p,q))\to \hom'_A(p,q)$ is a smothering functor. This fact follows by direct application of Proposition~\ref{prop:weak-homotopy-pullbacks} to the defining pullbacks~\eqref{eq:slice-hom-objects}.
 \end{proof}

Our next aim is to develop a useful principle by which to recognise those 1-cells of $\ho_*(\qCat_\infty\slice A)$ which are equivalences in there. To achieve this, we must first explore the 2-categorical properties of the isofibrations between quasi-categories.

 \begin{defn}[representably defined isofibrations in 2-categories]\label{defn:representable-isofibrations}
  A 1-cell $p\colon B\to A$ in a 2-category $\tcat{C}$ is said to be a {\em representably defined isofibration\/} (or just an \emph{isofibration}) if and only if for each object $X\in\tcat{C}$ the functor $\tcat{C}(X,p)\colon\tcat{C}(X,B)\to\tcat{C}(X,A)$ is an isofibration of categories (has the right lifting property with respect to the inclusion $\catone\inc\iso$).  In more explicit terms, this means that for any diagram \[ \xymatrix{ \ar@{}[dr]|(.7){\alpha\cong} & B \ar[d]^p  & \ar@{}[d]|{\displaystyle\rightsquigarrow} &  \ar@{}[dr]|{\beta\cong} & B \ar[d]^p \\ X \ar[ur]^b \ar[r]_a & A &&  X \ar@/^1.5ex/[ur]^b \ar@/_1.5ex/[ur]_*!<-3pt,+3pt>{\labelstyle x} \ar[r]_a & A}\] consisting of 1-cells $a$ and $b$ and a 2-isomorphism $\alpha \colon pb \cong a$, there exists a 1-cell $x$ and 2-isomorphism $\beta \colon b \cong x$ so that $p \beta = \alpha$ and $px = a$.
\end{defn}

\begin{lem}\label{lem:representable-isofibration} If $p \colon B \tfib A$ is an isofibration between quasi-categories, then $p$ is a representably defined isofibration in $\qCat_2$.
\end{lem}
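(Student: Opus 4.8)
The plan is to unwind Definition~\ref{defn:representable-isofibrations} for $\tcat{C}=\qCat_2$ into an explicit lifting statement about homotopy categories, and then to solve that lifting problem inside the quasi-category $B$ itself using only the lifting properties of isofibrations recalled in~\ref{rec:qmc-quasicat}.

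First I would observe that, for $\tcat{C}=\qCat_2$, the functor $\tcat{C}(X,p)$ of Definition~\ref{defn:representable-isofibrations} is precisely $\ho(p^X)\colon\ho(B^X)\to\ho(A^X)$, and that by Observation~\ref{obs:isofibration-closure} the exponentiated map $p^X\colon B^X\tfib A^X$ is again an isofibration between quasi-categories. Hence it suffices to prove the general claim that, for any isofibration $q\colon E\tfib D$ of quasi-categories, the functor $\ho(q)\colon\ho E\to\ho D$ is an isofibration of categories, i.e.\ has the right lifting property against $\catone\inc\iso$.

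To prove this claim I would take an object $e\in\ho E$ (a vertex of $E$) together with an isomorphism $\bar\beta\colon q(e)\to d'$ in $\ho D$, choose a $1$-simplex $\beta$ of $D$ representing $\bar\beta$, and note that $\beta$ is an isomorphism in $D$ in the sense of Definition~\ref{defn:equivalences} because its class in $\ho D$ is invertible. By the final sentence of Recollection~\ref{rec:qmc-quasi-marked}, $\beta$ extends to a functor $\hat\beta\colon\iso\to D$ carrying the domain vertex of the non-degenerate edge $\fbv{0,1}$ of $\iso$ to $q(e)$. The square with top edge $e\colon\Del^0\to E$, left edge the corresponding domain-vertex inclusion $\Del^0\inc\iso$, bottom edge $\hat\beta$, and right edge $q$ then commutes, so the right lifting property of $q$ against $\Del^0\inc\iso$ from~\ref{rec:qmc-quasicat} supplies a functor $\hat\gamma\colon\iso\to E$ restricting to $e$ on that vertex with $q\hat\gamma=\hat\beta$. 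Restricting $\hat\gamma$ to $\fbv{0,1}$ gives a $1$-simplex $\gamma$ of $E$ with domain $e$, satisfying $q(\gamma)=\beta$, and which is an isomorphism in $E$ since it lies in the image of the functor $\hat\gamma$ out of $\iso$. Passing to homotopy categories, $[\gamma]\colon e\to e'$ is an isomorphism of $\ho E$ with $\ho(q)[\gamma]=\bar\beta$ and $q(e')=d'$, which is exactly the lifting datum demanded by Definition~\ref{defn:representable-isofibrations}.

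There is no substantial obstacle here: the one point needing care is the identification in the second paragraph of the representable condition for $\qCat_2$ with a statement about $\ho$ of an exponentiated map (using that exponentiation preserves isofibrations), after which the heart of the argument is a single application of the isofibration lifting property. One can equivalently stay in the marked setting: $q\colon E^\natural\tfib D^\natural$ is a fibration of naturally marked quasi-categories (Recollection~\ref{rec:qmc-quasi-marked}), and the special marked horn inclusion $\Horn^{1,0}\inc\Del^{1:0}$---the inclusion of the initial vertex into the marked $1$-simplex $(\Del^1)^\sharp$---is a trivial cofibration, so the marked edge $\beta$ lifts directly through $q$ along $e$, giving the same conclusion.
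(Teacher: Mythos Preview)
Your proof is correct and follows essentially the same approach as the paper: both reduce to showing that $\ho(p^X)$ is an isofibration of categories by extending a given isomorphism in $\ho(A^X)$ to a functor out of $\iso$ (via the result recalled in~\ref{rec:qmc-quasi-marked}) and then lifting that functor along the isofibration $p^X$. You have simply unpacked in detail the step the paper summarises as ``it follows that''.
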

\begin{proof} 
For any simplicial set $X$,   $p^X\colon B^X\tfib A^X$ is also an isofibration  and in particular has the right lifting property with respect to $\catone\inc\iso$. Using the standard homotopy coherence result, recalled in \ref{rec:qmc-quasi-marked}, that an isomorphism in the homotopy category of a quasi-category can be extended to a functor with domain $\iso$, it follows that $\hom'(X,p)\colon\hom'(X,B)\to\hom'(X,A)$ also has the right lifting property with respect to $\catone\inc\iso$. Thus $\hom'(X,p)$ is 
  an isofibration of categories, which shows that the isofibrations of quasi-categories are representably defined  in the 2-category $\qCat_2$.
  \end{proof}

    The following lemma, stated here in the special case of $\qCat_2$, applies equally to any slice 2-category whose objects are isofibrations.

  \begin{lem}\label{lem:proj-is-1-conservative}
     The canonical projection 2-functor $\qCat_2\slice A\to \qCat_2$ is conservative on 1-cells in the appropriate 2-categorical sense: if
\begin{equation}\label{eq:equiv-to-lift}
  \xymatrix@=1.5em{
    {E}\ar[rr]^w\ar@{->>}[dr]_p && {F}\ar@{->>}[dl]^q \\
    & {A} &
  }
\end{equation}
is a 1-cell in $\qCat_2\slice A$ for which $w\colon E\to F$ admits an equivalence inverse $w' \colon F \to E$ in $\qCat_2$, then $w$  is  an equivalence in the slice 2-category $\qCat_2\slice A$.
  \end{lem}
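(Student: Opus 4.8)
The plan is to use the fact that $p\colon E\to A$ is a representably-defined isofibration of $\qCat_2$ (Lemma~\ref{lem:representable-isofibration}) to replace an equivalence inverse of $w$ in $\qCat_2$ by one lying strictly over $A$, and then to observe that the triangle identities of an adjoint equivalence force the resulting unit and counit to lie over $A$ as well.

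First I would promote the equivalence $w$ to an adjoint equivalence $(w\dashv w',\ \eta\colon\id_E\cong w'w,\ \epsilon\colon ww'\cong\id_F)$ in $\qCat_2$ with $\eta$ and $\epsilon$ invertible; this is the standard $2$-categorical fact that any equivalence may be improved to an adjoint one by adjusting one of the two witnessing isomorphisms. Since $qw=p$, whiskering $\epsilon$ by $q$ yields an isomorphism $\alpha\defeq q\epsilon\colon pw'=qww'\cong q$ in $\hom'(F,A)$. Applying the representably-defined isofibration property of $p$ (Definition~\ref{defn:representable-isofibrations}) to the $1$-cells $w'\colon F\to E$, $q\colon F\to A$ and the $2$-isomorphism $\alpha\colon pw'\cong q$ produces a $1$-cell $\bar{w}'\colon F\to E$ together with a $2$-isomorphism $\beta\colon w'\cong\bar{w}'$ satisfying $p\beta=\alpha$ and $p\bar{w}'=q$. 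The last equality says exactly that $\bar{w}'$ is a $1$-cell of $\qCat_2\slice A$ from $q$ to $p$.

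It then remains to build the unit and counit over $A$. I would set $\bar\epsilon\defeq\epsilon\circ(w\beta^{-1})\colon w\bar{w}'\cong\id_F$ and $\bar\eta\defeq(\beta w)\circ\eta\colon\id_E\cong\bar{w}'w$, both invertible since $\eta$, $\epsilon$, $\beta$ are. Whiskering $\bar\epsilon$ by $q$ gives $q\epsilon\circ((qw)\beta^{-1})=\alpha\circ(p\beta)^{-1}=\id_q$, so $\bar\epsilon$ is a $2$-cell of $\qCat_2\slice A$. Whiskering $\bar\eta$ by $p=qw$ gives $((p\beta)w)\circ p\eta=q(\epsilon w)\circ q(w\eta)=q\bigl((\epsilon w)\circ(w\eta)\bigr)=q(\id_w)=\id_p$, where the central equality is one of the triangle identities of the adjoint equivalence $w\dashv w'$. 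Hence $\bar\eta$ is a $2$-cell of $\qCat_2\slice A$ as well, and $(w,\bar{w}',\bar\eta^{-1},\bar\epsilon)$ exhibit $w$ as an equivalence in $\qCat_2\slice A$.

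The one real subtlety, and the step I would take care over, is this final verification: a naive approach that merely lifts $w'$, the counit, and the unit along $p$ fails, because each attempt to correct the $p$-whisker of one of the two $2$-cells to an identity disturbs the other. Starting from an honest adjoint equivalence sidesteps this entirely, since the triangle identity is precisely what makes $p\bar\eta$ collapse once $\bar\epsilon$ has been normalised. Note also that the argument uses nothing special about $\qCat_2$ beyond $p$ being a representably-defined isofibration, which is why it applies verbatim to any slice $2$-category whose objects are isofibrations.
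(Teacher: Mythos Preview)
Your proof is correct and follows essentially the same approach as the paper: promote $w$ to an adjoint equivalence, use the representably-defined isofibration property of $p$ to lift the equivalence inverse strictly over $A$, and then invoke a triangle identity to verify that both the modified unit and counit lie over $A$. The only cosmetic differences are that the paper orients the adjoint equivalence as $w'\dashv w$ rather than $w\dashv w'$ and uses different names for the $2$-cells; your remark about the generality of the argument to arbitrary slice $2$-categories with isofibration objects also matches the paper's own observation preceding the lemma.
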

  \begin{proof}
By a standard 2-categorical argument, we may choose  2-isomorphisms $\alpha\colon w'w\cong\id_E$ and $\beta\colon\id_F\cong ww'$ which display $w'$ as a left adjoint equivalence inverse to $w$ in $\qCat_2$. As $p$ is an isofibration in $\qCat_2$, the isomorphism $q\beta\colon q\cong qww' = pw'$ can be lifted along $p$ to give a 1-cell $\bar{w}\colon F\to E$ with $p\bar{w} = q$ and a 2-isomorphism $\gamma\colon \bar{w}\cong w'$ with $p\gamma=q\beta$. The first of these equations tells us that $\bar{w}$ is a 1-cell in $\qCat_2\slice A$.  Using the second of these equations and the triangle identities relating $\alpha$ and $\beta$, we see that the isomorphisms $\alpha\cdot\gamma w\colon \bar{w} w\cong\id_E$ and $w\gamma^{-1}\cdot\beta\colon\id_F\cong w\bar{w}$ are 2-cells in $\qCat_2\slice A$: \[ p(\alpha \cdot \gamma w) = p\alpha \cdot p\gamma w = qw\alpha \cdot q\beta w = q\id_w \qquad q(w\gamma^{-1}\cdot \beta) = qw\gamma^{-1} \cdot q\beta = p\gamma^{-1} \cdot q\beta = \id_p. \]
These isomorphisms display $\bar{w}$ as an equivalence inverse to $w$ in $\qCat_2\slice A$.
  \end{proof}

  \begin{cor}\label{cor:recog-fibred-equivs}
    The 1-cell depicted in~\eqref{eq:equiv-to-lift} is an equivalence in $\ho_*(\qCat_\infty\slice A)$ if and only if $w\colon E\to F$ is an equivalence in $\qCat_2$. 
  \end{cor}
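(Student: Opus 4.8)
The plan is to deduce this corollary immediately by combining the smothering 2-functor of Proposition~\ref{prop:slice-smothering-2-functor} with Lemma~\ref{lem:proj-is-1-conservative}. Recall that $\ho_*(\qCat_\infty\slice A)$ and $\qCat_2\slice A$ have the same $0$-cells and $1$-cells, and that the canonical comparison 2-functor $F\colon\ho_*(\qCat_\infty\slice A)\to\qCat_2\slice A$ of Proposition~\ref{prop:slice-smothering-2-functor} acts as the identity on both; in particular $F$ carries the $1$-cell depicted in~\eqref{eq:equiv-to-lift} to itself. Likewise the $1$-cell~\eqref{eq:equiv-to-lift} is a genuine $1$-cell of $\ho_*(\qCat_\infty\slice A)$ precisely because these three enrichments of $\qCat\slice A$ share the same underlying category.

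For the ``only if'' direction, suppose $w$ is an equivalence in $\ho_*(\qCat_\infty\slice A)$. Since equivalences in a 2-category are defined by $2$-equational conditions they are preserved by any 2-functor, so applying $F$ and then the canonical projection 2-functor $\qCat_2\slice A\to\qCat_2$ shows that $w$ is an equivalence in $\qCat_2$.

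For the ``if'' direction, suppose $w\colon E\to F$ is an equivalence in $\qCat_2$. Lemma~\ref{lem:proj-is-1-conservative} (whose proof relies on the fact, established in Lemma~\ref{lem:representable-isofibration}, that the isofibrations between quasi-categories are representably defined isofibrations in $\qCat_2$) then tells us that the $1$-cell~\eqref{eq:equiv-to-lift} is an equivalence in $\qCat_2\slice A$. Now $F$ is a smothering 2-functor by Proposition~\ref{prop:slice-smothering-2-functor}, and smothering 2-functors are conservative on $1$-cells in the appropriate $2$-categorical sense by Definition~\ref{defn:smothering-2-functor}. Since $Fw = w$ is an equivalence in $\qCat_2\slice A$, it follows that $w$ is an equivalence in $\ho_*(\qCat_\infty\slice A)$, as required.

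I do not anticipate any real obstacle: every substantive ingredient has already been established, in Lemma~\ref{lem:proj-is-1-conservative} and in the conservativity of smothering 2-functors. The only point worth noting is the purely formal observation that $F$ fixes the $1$-cell in question and that the three enrichments share a common underlying $1$-category, both of which are immediate from the constructions in Proposition~\ref{prop:slice-smothering-2-functor} and Definition~\ref{defn:enriched-slice}.
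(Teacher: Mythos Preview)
Your proof is correct and follows essentially the same approach as the paper: both combine the conservativity on 1-cells of the smothering 2-functor $\ho_*(\qCat_\infty\slice A)\to\qCat_2\slice A$ from Proposition~\ref{prop:slice-smothering-2-functor} with the conservativity on 1-cells of the projection $\qCat_2\slice A\to\qCat_2$ from Lemma~\ref{lem:proj-is-1-conservative}. The paper simply observes that the composite of two 1-cell-conservative 2-functors is 1-cell-conservative and concludes in one line, whereas you spell out both directions separately.
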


  \begin{proof}
  By  Proposition~\ref{prop:slice-smothering-2-functor} and Lemma \ref{lem:proj-is-1-conservative}, the canonical 2-functors $\ho_*(\qCat_\infty\slice A)\to\qCat_2\slice A$ and $\qCat_2\slice A\to \qCat_2$ are both conservative on 1-cells, so their composite is also conservative on 1-cells. The result follows immediately.
  \end{proof}

  \begin{defn}[fibred equivalence]\label{defn:fibred-equivalence}
  A functor $w \colon E \to F$ between quasi-categories equipped with specified isofibrations $p\colon E \tfib A$ and $q \colon F \tfib A$ is an {\em equivalence fibred over $A$\/}, or just a {\em fibred equivalence}, if it is an equivalence in $\ho_*(\qCat_\infty\slice A)$. By Corollary \ref{cor:recog-fibred-equivs}, any equivalence in $\qCat_2$ which commutes with the maps down to $A$ is a fibred equivalence. Unpacking the definition, a fibred equivalence admits an equivalence inverse $w' \colon F \to E$ over $A$ together with isomorphisms $\alpha \colon w' w \cong \id_E \in E^E$ and $\beta \colon \id_F \cong w w' \in F^F$ represented by 1-simplices that compose with $p$ and $q$ to degenerate 1-simplices.
    \end{defn}

Corollary \ref{cor:recog-fibred-equivs} allows us to lift equivalences in $\qCat_2/A$ to fibred equivalences, which can be pulled back along a functor $f \colon B \to A$ as described in Observation \ref{obs:fibred-pullback}. The lifting arguments developed here relied upon the assumption that the simplicial categories in which we work have hom-spaces which are quasi-categories, which is why our default is to assume that the objects of our slice categories $\qCat_2\slice A$ and $\qCat_\infty\slice A$ are isofibrations.
  
\subsection{A strongly universal characterisation of weak comma objects}

We may use properties of the 2-categorical slice $\qCat_2\slice (C \times B)$ to characterise the weak comma objects of $\qCat_2$ in terms of a \emph{strict} 1-categorical universal property. We present this technical result here and then use it to good effect in section~\ref{sec:limits}, where we demonstrate how to characterise limits and colimits that exist in a quasi-category in purely 2-categorical terms.

For this subsection we shall assume, contrary to our notational convention elsewhere, that $\qCat_2\slice(C\times B)$ denotes the unrestricted slice 2-category whose objects are all functors with codomain $C\times B$.

\begin{obs}[uniqueness of 1-cell induction revisited]\label{obs:1cell-ind-uniqueness-reloaded}
Any 1-cell $a\colon X\to f\comma g$ induced by the comma cone~\eqref{eq:comma-cone} may be regarded as a 1-cell
  \begin{equation*}
    \xymatrix@=1em{
      {X}\ar[dr]_(0.3){(c,b)}\ar[rr]^{a}
      && *+!L(0.5){f\comma g}\ar[dl]^(0.3){(p_1,p_0)} \\
      & {C\times B}&
    }
  \end{equation*}
  in $\qCat_2\slice(C\times B)$. If we are given a second 1-cell $a'\colon X\to f\comma g$ which is also induced by the same comma cone then the argument of Lemma~\ref{lem:1cell-ind-uniqueness} delivers us a 2-cell
  \begin{equation}\label{eq:induced-1-cell-comparison}
    \xymatrix@=1.5em{
      {X}\ar[dr]_(0.3){(c,b)} 
      \ar@/^1.5ex/[rr]^{a}_{}="one" \ar@/_1.5ex/[rr]_{a'}^{}="two" \ar@{=>}"one";"two"^{\tau}
      && *+!L(0.5){f\comma g}\ar[dl]^(0.3){(p_1,p_0)} \\
      & {C\times B}&
    }
  \end{equation}
in $\qCat_2\slice(C\times B)$, which is moreover an isomorphism; this is what we meant by the assertion that any pair of functors defined by 1-cell induction over the same comma cone are isomorphic over $C \times B$.
 Conversely, by 2-cell conservativity of the comma quasi-category $f \downarrow g$, any 2-cell of $\qCat_2\slice(C\times B)$ of the form depicted in~\eqref{eq:induced-1-cell-comparison} is an isomorphism. Thus, the hom-category $\hom'_{C\times B}((c,b),(p_1,p_0))$ is a groupoid, whose connected components comprise those 1-cells induced by a common cone \eqref{eq:comma-cone}.
\end{obs}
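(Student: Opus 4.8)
The plan is to translate the three facets of the weak $2$-universal property of $f\comma g$ recorded in Observation~\ref{obs:unpacking-weak-comma-objects} --- $1$-cell induction, $2$-cell induction, and $2$-cell conservativity --- together with Lemma~\ref{lem:1cell-ind-uniqueness}, into statements about the hom-category $\hom'_{C\times B}((c,b),(p_1,p_0))$ of the slice $2$-category $\qCat_2\slice(C\times B)$, which in this subsection is the unrestricted slice so that $(c,b)\colon X\to C\times B$ counts as an object. I would begin by unwinding this slice: a $1$-cell from $(c,b)$ to $(p_1,p_0)\colon f\comma g\to C\times B$ is precisely a functor $a\colon X\to f\comma g$ with $p_1a=c$ and $p_0a=b$, and a $2$-cell between two such is a $2$-cell $\tau$ of $\qCat_2$ with $p_1\tau=\id_c$ and $p_0\tau=\id_b$.

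First I would check that any $a$ induced by the comma cone~\eqref{eq:comma-cone} lands in this hom-category: the defining equations~\eqref{eq:comma-ind-1cell-prop} assert exactly $p_0a=b$, $p_1a=c$ and $\psi a=\alpha$, whence $(p_1,p_0)a=(c,b)$. Given a second $1$-cell $a'$ induced by the same cone, the argument of Lemma~\ref{lem:1cell-ind-uniqueness} supplies an isomorphism $\tau\colon a\To a'$ in $\hom'(X,f\comma g)$ with $p_0\tau=\id_b$ and $p_1\tau=\id_c$, so $\tau$ is a $2$-cell of $\qCat_2\slice(C\times B)$; since whiskering by $(p_1,p_0)$ is functorial on hom-categories it carries $\tau^{-1}$ to the inverse of $(p_1,p_0)\tau=\id_{(c,b)}$, which is again $\id_{(c,b)}$, so $\tau^{-1}$ is also a $2$-cell of the slice and $\tau$ is an isomorphism there. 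This establishes that $1$-cells induced by a common cone are isomorphic over $C\times B$, as displayed in~\eqref{eq:induced-1-cell-comparison}.

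For the converse I would note that any $2$-cell $\tau\colon a\To a'$ of the shape~\eqref{eq:induced-1-cell-comparison} has $p_0\tau=\id_b$ and $p_1\tau=\id_c$ by the description of slice $2$-cells, and that these are isomorphisms in $\hom'(X,B)$ and $\hom'(X,C)$ respectively; hence $2$-cell conservativity of $f\comma g$ forces $\tau$ to be an isomorphism in $\hom'(X,f\comma g)$, and then --- by the inverse-of-an-identity argument of the previous paragraph --- an isomorphism in $\qCat_2\slice(C\times B)$. So every $2$-cell of $\hom'_{C\times B}((c,b),(p_1,p_0))$ is invertible, that is, this category is a groupoid. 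To identify its connected components I would observe that whiskering with the universal cell $\psi$ sends each object $a$ to the comma cone $(b,c,\psi a)$ that induces it, and that the interchange law applied to $\psi$ and a slice $2$-cell $\tau\colon a\To a'$ gives $(gp_1\tau)\cdot(\psi a)=(\psi a')\cdot(fp_0\tau)$, which collapses to $\psi a=\psi a'$ because $p_0\tau$ and $p_1\tau$ are identities; conversely, if $\psi a=\psi a'$ then $a$ and $a'$ are both induced by the common cone $(b,c,\psi a)$ and so are connected by the previous paragraph. Hence the connected components of $\hom'_{C\times B}((c,b),(p_1,p_0))$ are exactly the classes of $1$-cells induced by a common comma cone. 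There is no genuine obstacle here; the one place calling for care is the bookkeeping that fixes the meaning of the phrase ``$a$ is induced by the cone $\alpha$'' as the conjunction $p_0a=b$, $p_1a=c$, $\psi a=\alpha$, after which every step is a routine application of the weak comma universal property, Lemma~\ref{lem:1cell-ind-uniqueness}, and the interchange law.
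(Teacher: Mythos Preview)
Your proposal is correct and follows exactly the line of argument sketched in the paper's own observation: you invoke Lemma~\ref{lem:1cell-ind-uniqueness} for the forward direction, 2-cell conservativity for the groupoid claim, and the middle-four interchange computation for the identification of connected components. The only difference is one of explicitness---the paper leaves the interchange-law step implicit in the observation and spells it out separately in Lemma~\ref{lem:sq-as-a-functor}, whereas you include it directly here.
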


\begin{obs}\label{obs:squares-set}
  For each object $(c,b)\colon X\to C\times B$ of $\qCat/(C\times B)$ we have a set $\sq_{g,f}(c,b)$ of 2-cells as depicted in~\eqref{eq:comma-cone}. This construction may be extended immediately to a contravariant functor $\sq_{g,f}\colon(\qCat/(C\times B))\op\to\Set$, which carries a morphism
  \begin{equation*}
    \xymatrix@=1em{
      {X}\ar[dr]_(0.3){(c,b)}\ar[rr]^{u}
      && *+!L(0.5){Y}\ar[dl]^(0.3){(\bar{c},\bar{b})} \\
      & {C\times B}&
    }
  \end{equation*}
  of $\qCat/(C\times B)$ to the function $\sq_{g,f}(u)$ which maps a 2-cell $\beta$ of $\sq_{g,f}(\bar{c},\bar{b})$ to the whiskered 2-cell $\beta u$ in $\sq_{g,f}(c,b)$.

  \end{obs}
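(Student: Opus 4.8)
The plan is to check directly that the assignments $(c,b)\mapsto\sq_{g,f}(c,b)$ on objects and $u\mapsto\sq_{g,f}(u)$ on morphisms satisfy the two functoriality axioms for a contravariant functor into $\Set$; everything will reduce to the strict associativity and unitality of whiskering in the 2-category $\qCat_2$, together with the fact that $\qCat_2$ is a genuine 2-category (Definition~\ref{def:qCat-2}), so that whiskering is already a well-defined operation on the homotopy classes of $1$-simplices that serve as its $2$-cells.

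First I would confirm that $\sq_{g,f}(u)$ has the claimed domain and codomain. A morphism $u\colon X\to Y$ of $\qCat/(C\times B)$ is a functor with $(\bar c,\bar b)\circ u=(c,b)$, which unpacks to the equations $\bar c u=c$ and $\bar b u=b$ of functors. Hence for any $2$-cell $\beta\colon f\bar b\To g\bar c$ representing an element of $\sq_{g,f}(\bar c,\bar b)$, the whiskered $2$-cell $\beta u$ has domain $f\bar b u=fb$ and codomain $g\bar c u=gc$, so it lies in $\sq_{g,f}(c,b)$ and the function $\sq_{g,f}(u)$ is well-defined.

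Then I would verify functoriality. Unitality is immediate: $\sq_{g,f}(\id_X)(\beta)=\beta\,\id_X=\beta$, since whiskering by an identity $1$-cell is the identity operation. For composable morphisms $u\colon X\to Y$ and $v\colon Y\to Z$ in $\qCat/(C\times B)$ and a $2$-cell $\gamma$ over $Z$, the strict associativity of whiskering in $\qCat_2$ gives $\gamma(vu)=(\gamma v)u$, that is $\sq_{g,f}(vu)(\gamma)=\sq_{g,f}(u)\bigl(\sq_{g,f}(v)(\gamma)\bigr)$, whence $\sq_{g,f}(vu)=\sq_{g,f}(u)\circ\sq_{g,f}(v)$; this is exactly the contravariant composition law. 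I anticipate no real obstacle: the only point that needs any care, namely that these whiskering operations are well-defined on $2$-cells of $\qCat_2$ (rather than merely on chosen representing simplicial maps) and are strictly associative there, has already been settled in establishing $\qCat_2$ as a $2$-category.
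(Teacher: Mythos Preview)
Your proposal is correct and fills in exactly the routine details the paper leaves implicit: the paper treats this Observation as immediate and gives no proof, and your verification that whiskering is well-defined on codomains and satisfies strict unitality and associativity in $\qCat_2$ is precisely what ``may be extended immediately'' is gesturing at.
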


\begin{obs}\label{obs:groupoid-components}
There is a product-preserving  functor $\pi^g_0\colon\Cat\to\Set$ that sends a category to  the set of connected components of its sub-groupoid of isomorphisms. We may apply $\pi^g_0$ to the hom-categories of a 2-category $\tcat{C}$ to construct a category $(\pi^g_0)_*\tcat{C}$.  Any isomorphism $K\cong L$ in the category $(\pi^g_0)_*\tcat{C}$ can be lifted to a corresponding equivalence in $\tcat{C}$ by picking representatives $w\colon K\to L$ and $w'\colon L\to K$ in $\tcat{C}$ for the isomorphism and its inverse. The 2-isomorphisms $\alpha\colon w'w\cong\id_K$ and $\beta\colon ww'\cong\id_L$ which witness these as equivalence inverses in $\tcat{C}$ arise by choosing 2-cells which witness the mutual inverse identities $w'w=\id_K$ and $ww'=\id_L$ in $(\pi^g_0)_*\tcat{C}$.
\end{obs}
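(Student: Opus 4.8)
The plan is to establish, in order, the three assertions packaged into the observation: that $\pi^g_0$ preserves finite products; that applying it hom-category-wise to a $2$-category $\tcat{C}$ therefore yields a well-defined category $(\pi^g_0)_*\tcat{C}$; and that an isomorphism of $(\pi^g_0)_*\tcat{C}$ lifts to an equivalence of $\tcat{C}$ in the stated manner.

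First I would factor $\pi^g_0$ as $\Cat\xrightarrow{\mathrm{core}}\category{Grpd}\xrightarrow{\cpts}\Set$, where $\mathrm{core}$ sends a category to its maximal subgroupoid (its \emph{core}) and $\cpts$ sends a groupoid to its set of connected components, equivalently its set of isomorphism classes of objects. Since $\mathrm{core}$ is right adjoint to the inclusion $\category{Grpd}\inc\Cat$ it preserves all limits; in particular $\mathrm{core}(\catone)=\catone$ and $\mathrm{core}(\scat{B}\times\scat{C})\cong\mathrm{core}(\scat{B})\times\mathrm{core}(\scat{C})$, the latter also being visible from the elementary fact that a morphism $(f,g)$ of a product category is invertible exactly when $f$ and $g$ each are. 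On groupoids $\cpts$ likewise preserves finite products --- objects $(x,y)$ and $(x',y')$ of a product of groupoids are isomorphic if and only if $x\cong x'$ and $y\cong y'$, and $\cpts(\catone)={*}$ --- so the composite $\pi^g_0$ preserves finite products. Consequently $\pi^g_0$ underlies a finite-product-preserving, hence canonically lax symmetric monoidal, functor $(\Cat,\times)\to(\Set,\times)$, and change of base along it sends $\Cat$-enriched categories to $\Set$-enriched ones. Concretely, $(\pi^g_0)_*\tcat{C}$ has the same objects as $\tcat{C}$, hom-sets $\pi^g_0(\tcat{C}(K,L))$, and composition and identities obtained by applying $\pi^g_0$ to those of $\tcat{C}$, using product-preservation to identify $\pi^g_0(\tcat{C}(K,L)\times\tcat{C}(L,M))$ with $\pi^g_0(\tcat{C}(K,L))\times\pi^g_0(\tcat{C}(L,M))$; associativity and the unit laws then follow by applying $\pi^g_0$ to the corresponding identities of functors already holding in $\tcat{C}$, and the whole construction is evidently functorial in $2$-functors. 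There is an identity-on-objects quotient functor from the underlying category of $\tcat{C}$ to $(\pi^g_0)_*\tcat{C}$ carrying a $1$-cell to the connected component of its isomorphism class.

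For the final assertion, let $\phi\colon K\to L$ be an isomorphism of $(\pi^g_0)_*\tcat{C}$ with inverse $\psi\colon L\to K$; these are connected components of $\mathrm{core}(\tcat{C}(K,L))$ and $\mathrm{core}(\tcat{C}(L,K))$ respectively, and I would choose representing $1$-cells $w\colon K\to L$ and $w'\colon L\to K$. The relation $\psi\phi=\id_K$ in $(\pi^g_0)_*\tcat{C}$ says exactly that $w'w$ and $\id_K$ represent the same element of $\pi^g_0(\tcat{C}(K,K))$, i.e.\ that they lie in a common connected component of the groupoid $\mathrm{core}(\tcat{C}(K,K))$; since any two objects in a given component of a groupoid are joined by an isomorphism (a zigzag of isomorphisms composing to a single one), I may choose an invertible $2$-cell $\alpha\colon w'w\cong\id_K$, and symmetrically $\phi\psi=\id_L$ supplies an invertible $2$-cell $\beta\colon ww'\cong\id_L$. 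The quadruple $(w,w',\alpha,\beta)$ is precisely the data of an equivalence $w\colon K\to L$ in $\tcat{C}$ with equivalence inverse $w'$, as required. I anticipate no genuine obstacle: the substantive points are merely the two product-preservation computations above, both immediate, and the passage from ``lying in a common connected component of a groupoid'' to ``isomorphic via a chosen $2$-cell''; the dependence of the output equivalence on the arbitrary choices of $w$, $w'$, $\alpha$, $\beta$ is harmless, since any such choices relate the same pair of objects.
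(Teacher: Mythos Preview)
Your argument is correct and is exactly the routine verification the paper has in mind; the paper itself treats this observation as self-evident and supplies no proof beyond the statement. Your factorisation $\pi^g_0=\cpts\circ\mathrm{core}$ and the change-of-base construction are the standard way to make the assertions precise, and your lifting of an isomorphism in $(\pi^g_0)_*\tcat{C}$ to an equivalence in $\tcat{C}$ is precisely the content the paper sketches in the last two sentences of the observation.
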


\begin{lem}\label{lem:sq-as-a-functor}
The functor $\sq_{g,f}$ factorises through the quotient functor $\qCat/(C\times B)\to(\pi^g_0)_*(\qCat_2\slice(C\times B))$ to define a functor
\begin{equation}\label{eq:the-real-sq-functor}
    \sq_{g,f}\colon(\pi^g_0)_*(\qCat_2\slice(C\times B))\op\longrightarrow\Set.
    \end{equation}
\end{lem}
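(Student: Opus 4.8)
The key point is that the functor $\sq_{g,f}\colon(\qCat/(C\times B))\op\to\Set$ of Observation~\ref{obs:squares-set} sends a morphism $u$ of $\qCat/(C\times B)$ to the whiskering operation $\beta\mapsto\beta u$. To factor this through the quotient $(\pi^g_0)_*(\qCat_2\slice(C\times B))$, I must check two things: first, that whiskering by $u$ only depends on the 2-cell it represents in $\qCat_2\slice(C\times B)$ (so that $\sq_{g,f}$ descends to a functor on $\qCat_2\slice(C\times B)$), and second, that isomorphic 1-cells $u\cong u'$ in $\qCat_2\slice(C\times B)$ induce the same function on $\sq_{g,f}$ (so that it further descends to $(\pi^g_0)_*$). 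The second point is the real content; the first is essentially formal.

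For the first point, I would observe that a morphism $u\colon X\to Y$ in $\qCat/(C\times B)$ is a 0-cell's worth of data, and $\sq_{g,f}(u)$ is defined purely by the 2-categorical whiskering $\beta u$, which makes sense in $\qCat_2$. Since $\sq_{g,f}(Y)$ records 2-cells $\beta\colon fb\To gc$ only up to the 2-cell equality of $\qCat_2$ (they are defined as genuine 2-cells in $\qCat_2$, i.e. homotopy classes), functoriality and 2-naturality of whiskering in $\qCat_2$ give that this assignment is functorial on the underlying category $\qCat/(C\times B) = $ the 0- and 1-cells of $\qCat_2\slice(C\times B)$.

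For the second point, suppose $\tau\colon u\Rightarrow u'$ is an isomorphism in $\qCat_2\slice(C\times B)$ between $u,u'\colon X\to Y$ over $C\times B$, and fix $\beta\in\sq_{g,f}(\bar c,\bar b)$, i.e. a 2-cell $\beta\colon f\bar b\To g\bar c\colon Y\to A$. I must show $\beta u = \beta u'$ as elements of $\sq_{g,f}(c,b)$. Whiskering the isomorphism $\tau$ by $\bar b$ and $\bar c$ gives 2-isomorphisms $\bar b\tau\colon bu\cong bu'$ and $\bar c\tau\colon cu\cong cu'$; but since $\tau$ is a 2-cell of the slice over $C\times B$, its whiskered composite with $(p_1,p_0)\circ(\bar c,\bar b)$-data, i.e. with the projections to $C$ and $B$, is the identity, so $\bar b\tau = \id_{bu}$ and $\bar c\tau = \id_{cu}$, forcing $bu = bu'$ and $cu = cu'$ as 1-cells. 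Then the interchange law in $\qCat_2$ gives $\beta u' = (g\cdot\bar c\tau)\cdot(\beta u)\cdot(f\cdot\bar b\tau)^{-1} = \beta u$, using that $f\cdot\bar b\tau$ and $g\cdot\bar c\tau$ are identity 2-cells. Hence $\sq_{g,f}(u) = \sq_{g,f}(u')$ whenever $u\cong u'$ over $C\times B$, so $\sq_{g,f}$ is constant on each connected component of each hom-groupoid of $\qCat_2\slice(C\times B)$ and thus factors through $(\pi^g_0)_*(\qCat_2\slice(C\times B))$.

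The main obstacle — really the only subtle point — is keeping track of the 2-categorical bookkeeping in the last step: one must use precisely that $\tau$ lives in the slice $\qCat_2\slice(C\times B)$ (not merely in $\qCat_2$) in order to conclude that $bu=bu'$ and $cu=cu'$ on the nose and that the correcting 2-cells in the interchange computation vanish. Everything else is a routine unwinding of the functoriality and interchange laws of the 2-category $\qCat_2$, combined with the fact (already built into Observation~\ref{obs:squares-set}) that the elements of $\sq_{g,f}$ are 2-cells of $\qCat_2$ rather than of $\qCat_\infty$.
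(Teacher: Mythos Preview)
Your proof is correct and follows essentially the same approach as the paper: the key computation is the middle-four interchange applied to $\beta$ and $\tau$, combined with the observation that $\tau$ living in the slice over $C\times B$ forces $\bar b\tau$ and $\bar c\tau$ to be identities. Two minor remarks: your ``first point'' is superfluous, since the underlying 1-category of $\qCat_2\slice(C\times B)$ is literally $\qCat/(C\times B)$ and there is no further quotienting at the level of 1-cells to check; and the paper's version of the interchange argument works for an arbitrary 2-cell $\tau$ (not just an isomorphism), which avoids the need to invoke $(f\bar b\tau)^{-1}$ --- though of course only the isomorphism case is required for the factorisation through $(\pi^g_0)_*$.
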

\begin{proof}
If we are given a 2-cell
  \begin{equation*}
    \xymatrix@=1.5em{
      {X}\ar[dr]_(0.3){(c,b)} 
      \ar@/^1.5ex/[rr]^{u}_{}="one" \ar@/_1.5ex/[rr]_{u'}^{}="two" \ar@{=>}"one";"two"^{\tau}
      && *+!L(0.5){Y}\ar[dl]^(0.3){(\bar{c},\bar{b})} \\
      & {C\times B}&
    }
  \end{equation*}
  in $\qCat_2\slice(C\times B)$ and a 2-cell $\beta \in \sq_{g,f}(\bar{c},\bar{b})$ then the middle four interchange rule for the horizontal composite of the 2-cells $\beta$ and $\tau$ provides us with a commutative square
   \begin{equation*}
    \xymatrix@=1.5em{
      {f\bar{b}u} \ar@{=>}[r]^{\beta u}\ar@{=>}[d]_{f\bar{b}\tau} & 
      {g\bar{c}u}\ar@{=>}[d]^{g\bar{c}\tau} \\
      {f\bar{b}u'} \ar@{=>}[r]_{\beta u'} & {g\bar{c}u'}
    }
  \end{equation*} 
  whose vertical arrows are the identities on $fb$ and $gc$ respectively. Hence, $\beta u = \beta u'$, and we conclude that if $u$ and $u'$ are 1-cells in the same connected component of the category $\hom'_{C \times B}((c,b),(\bar{c},\bar{b}))$ then the functions $\sq_{g,f}(u)$ and $\sq_{g,f}(u')$ are identical.
\end{proof}

  This functor allows us to expose another aspect of the weak 2-universal property of weak comma objects: namely that the comma cone formed from the cospan $B \xrightarrow{f} A \xleftarrow{g} C$ represents the functor \eqref{eq:the-real-sq-functor}.

\begin{lem}\label{lem:cpts-and-comma-2-cells}
  The weakly universal comma cone 
  \begin{equation}\label{eq:first-comma-cone}
    \xymatrix@=10pt{
      & f \downarrow g \ar[dl]_{p_1} \ar[dr]^{p_0} \ar@{}[dd]|(.4){\psi}|{\Leftarrow}  \\ 
      C \ar[dr]_g & & B \ar[dl]^f \\ 
      & A}
  \end{equation}
  provides us with an element $\psi\in\sq_{g,f}(p_1,p_0)$ which is universal, in the usual sense, for the functor $\sq_{g,f}\colon(\pi^g_0)_*(\qCat_2\slice(C\times B))\op\to\Set$. Furthermore, any comma cone
   \begin{equation}\label{eq:other-comma-cone}
    \xymatrix@=10pt{
      & Q \ar[dl]_{q_1} \ar[dr]^{q_0} \ar@{}[dd]|(.4){\phi}|{\Leftarrow}  \\ 
      C \ar[dr]_g & & B \ar[dl]^f \\ 
      & A}
  \end{equation} 
  for which the 2-cell $\phi\in\sq_{g,f}(q_1,q_0)$ is a universal element of the functor $\sq_{g,f}$ displays $Q$ as a weak comma object in $\qCat_2$.
\end{lem}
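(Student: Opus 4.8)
The plan is to recognise this statement as the assertion that the functor $\sq_{g,f}$ of Lemma~\ref{lem:sq-as-a-functor} is representable, with universal element the comma cone $\psi$ of~\eqref{eq:first-comma-cone}, and then to deduce the converse from the uniqueness of representing data together with the lifting of isomorphisms to equivalences recorded in Observation~\ref{obs:groupoid-components}.

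For the first assertion I would check that for every object $(c,b)\colon X\to C\times B$ the comparison
\[
  (\pi^g_0)_*(\qCat_2\slice(C\times B))\bigl((c,b),(p_1,p_0)\bigr)\longrightarrow \sq_{g,f}(c,b),\qquad [a]\longmapsto \psi a,
\]
which is well defined and natural in $(c,b)$ by Observations~\ref{obs:squares-set} and~\ref{obs:1cell-ind-uniqueness-reloaded} and Lemma~\ref{lem:sq-as-a-functor}, is a bijection. Surjectivity is exactly \emph{$1$-cell induction} for the weak comma object $f\comma g$ of Proposition~\ref{prop:weakcomma} (see Observation~\ref{obs:unpacking-weak-comma-objects}): each $\alpha\in\sq_{g,f}(c,b)$ is of the form $\psi a$ for the $1$-cell $a$ that it induces. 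Injectivity is Lemma~\ref{lem:1cell-ind-uniqueness}: if $\psi a=\psi a'$ then $a$ and $a'$ are induced by a common cone, hence isomorphic over $C\times B$, so $[a]=[a']$. Since $\psi$ is the image of $[\id_{f\comma g}]$ under this bijection, it is a universal element of $\sq_{g,f}$.

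For the converse, suppose $\phi\in\sq_{g,f}(q_1,q_0)$ is a universal element. Comparing the two sets of representing data for $\sq_{g,f}$ produces a unique isomorphism $\theta\colon(q_1,q_0)\to(p_1,p_0)$ in $(\pi^g_0)_*(\qCat_2\slice(C\times B))$ with $\sq_{g,f}(\theta)(\psi)=\phi$; lifting $\theta$ by Observation~\ref{obs:groupoid-components} gives a functor $w\colon Q\to f\comma g$ that is an equivalence over $C\times B$ --- in particular $p_0w=q_0$ and $p_1w=q_1$ strictly, with an equivalence inverse and witnessing $2$-isomorphisms all lying over $C\times B$ --- and that satisfies $\psi w=\phi$. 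It remains to show that for each $X$ the comma cone $(q_1,q_0,\phi)$ induces a smothering functor $\Gamma\colon\hom'(X,Q)\to\hom'(X,f)\comma\hom'(X,g)$; from $p_iw=q_i$ and $\psi w=\phi$ one computes $\Gamma=\Gamma'\circ\hom'(X,w)$, where $\Gamma'$ is the smothering comma comparison for $f\comma g$. For \emph{surjectivity on objects} I would not invoke this factorisation, since a composite of a smothering functor after an equivalence need not be surjective on objects; instead, an object of $\hom'(X,f)\comma\hom'(X,g)$ is a triple $(c,b,\alpha)$ with $\alpha\in\sq_{g,f}(c,b)$, and universality of $\phi$ supplies a $1$-cell $a\colon X\to Q$ over $C\times B$ with $\phi a=\alpha$, whence $\Gamma a=(c,b,\alpha)$. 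For \emph{conservativity}: if the whiskered composites $q_i\tau$ of a $2$-cell $\tau$ are invertible then, since $p_i(w\tau)=q_i\tau$, the $2$-cell $w\tau$ is invertible by $2$-cell conservativity of $f\comma g$, hence so is $\tau$ because $\hom'(X,w)$ is an equivalence. For \emph{fullness}: a morphism $(\sigma_C,\sigma_B)$ of $\hom'(X,f)\comma\hom'(X,g)$ from $\Gamma a$ to $\Gamma a'$ becomes, via $p_i(wa)=q_ia$ and $\psi(wa)=\phi a$, precisely the data for $2$-cell induction at $f\comma g$, producing $\tau'\colon wa\To wa'$ with $p_i\tau'=\sigma_i$; fullness of $\hom'(X,w)$ then writes $\tau'=w\tau$, and $q_i\tau=\sigma_i$ as required.

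The one genuine obstacle is the surjectivity-on-objects point just flagged: the strict $1$-categorical universal property being extracted is honestly stronger than ``$\Gamma$ is smothering up to equivalence'', so it must be read off from the strictness of the universal element $\phi$ (and of the identities $p_iw=q_i$ and $\psi w=\phi$) rather than transported through the equivalence $w$. Everything else is bookkeeping with the weak $2$-universal property of $f\comma g$ established in Proposition~\ref{prop:weakcomma} and the closure properties of smothering functors already recorded.
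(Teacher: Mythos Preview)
Your proof is correct and follows essentially the same route as the paper. The first half is identical: surjectivity from 1-cell induction, injectivity from Lemma~\ref{lem:1cell-ind-uniqueness}. For the converse, both arguments lift the Yoneda isomorphism via Observation~\ref{obs:groupoid-components} to an equivalence over $C\times B$ and then transfer the weak comma property across it. The only stylistic difference is that the paper handles 2-cell induction by an explicit formula $\tau\defeq\alpha a'\cdot w'\mu\cdot\alpha^{-1}a$ built from the witnessing 2-isomorphism $\alpha\colon w'w\cong\id_Q$, whereas you invoke full faithfulness of $\hom'(X,w)$ to pull back the induced 2-cell; your observation that surjectivity on objects must come from the strict universality of $\phi$ rather than from the factorisation through the equivalence is on point, and the paper makes the same move by using the strict inverse $w'$ rather than a mere equivalence inverse.
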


\begin{proof}
  For each object $(c,b)\colon X\to C\times B$ of $\qCat_2\slice(C\times B)$ the element $\psi\in\sq_{g,f}(p_1,p_0)$ induces a function
  \begin{equation*}
\pi^g_0(\hom'_{C\times B}((c,b),(p_1,p_0)))\longrightarrow \sq_{g,f}(c,b)
  \end{equation*}
  which carries a functor $a\colon X\to f\comma g$ representing an element of the set on the left to the whiskered composite $\psi a$ on the right. The element $\psi\in\sq_{g,f}(p_1,p_0)$ is universal for $\sq_{g,f}$ if and only if each of those functions is a bijection. Surjectivity follows directly from the 1-cell induction property of $f\comma g$, and injectivity follows from the reformulation of Lemma \ref{lem:smothering} discussed in Observation~\ref{obs:1cell-ind-uniqueness-reloaded}. 

If $\phi\in\sq_{g,f}(q_1,q_0)$ is another element which is universal for $\sq_{g,f}$, then by Yoneda's lemma the objects $(p_1,p_0)\colon f\comma g\to C\times B$ and $(q_1,q_0)\colon Q\to C\times B$ are isomorphic in the category $(\pi^g_0)_*(\qCat_2\slice(C\times B))$ via an isomorphism whose action under $\sq_{g,f}$ carries $\psi\in\sq_{g,f}(p_1,p_0)$ to $\phi\in\sq_{g,f}(q_1,q_0)$. Proceeding as in Observation \ref{obs:groupoid-components}, we may pick representatives of this isomorphism and its inverse to provide a pair of 1-cells
\begin{equation*}
  \xymatrix@=1.5em{
    {Q} \ar@/_1ex/[rr]_w 
    \ar[dr]_(0.4){(q_1,q_0)} &&
    *+!L(0.5){f\comma g} \ar@/_1ex/[ll]_{w'}
    \ar[dl]^(0.4){(p_1,p_0)}  \\
    & {C\times B}
  }
\end{equation*}
which are related by a pair of 2-isomorphisms $\alpha\colon w'w\cong \id_{Q}$ and $\beta\colon ww'\cong\id_{f\comma g}$ in the slice 2-category $\qCat_2\slice(C\times B)$. The fact that this isomorphism carries $\phi$ to $\psi$ under the action of $\sq_{g,f}$ provides the 2-cellular equations $\psi w = \phi$ and $\phi w' = \psi$. 

  To prove the 1-cell induction property for the comma cone~\eqref{eq:other-comma-cone} suppose that we are given a comma cone~\eqref{eq:comma-cone}. The 1-cell induction property of $f\comma g$ provides us with a 1-cell $a\colon X\to f\comma g$ with the defining property that $p_0 a = b$, $p_1 a = c$, and $\psi a = \alpha$. The functor $w'\colon f\comma g\to Q$ satisfies the equations $q_0 w' = p_0$, $q_1 w' = p_1$, and $\phi w' = \psi$, so we have $q_0 w' a = p_0 a = b$, $q_1 w' a = p_1 a = c$, and $\phi w' a = \psi a = \alpha$. This demonstrates that $w' a\colon X\to Q$ is a 1-cell induced by the comma cone~\eqref{eq:comma-cone} with respect to the comma cone~\eqref{eq:other-comma-cone}.

  To prove the 2-cell induction property for the comma cone~\eqref{eq:other-comma-cone} suppose that we are given a pair of 1-cells $a,a'\colon X\to Q$ and a pair of 2-cells $\tau_0 \colon q_0 a \Rightarrow q_0 a'$ and $\tau_1 \colon q_1 a \Rightarrow q_1 a'$ satisfying the condition given in~\eqref{eq:comma-ind-2cell-compat} with respect to the comma cone~\eqref{eq:other-comma-cone}. The 1-cells $w a, w a'\colon X\to f\comma g$ and the 2-cells $\tau_0 \colon p_0 w a = q_0 a \Rightarrow q_0 a' = p_0 w a'$ and $\tau_1 \colon p_1 w a = q_1 a \Rightarrow q_1 a' = p_1 w a'$ also satisfy the condition given in~\eqref{eq:comma-ind-2cell-compat} with respect to the comma cone~\eqref{eq:first-comma-cone}. Hence, the 2-cell induction property of $f\comma g$ ensures that we have a 2-cell $\mu\colon w a \Rightarrow w a'$ with the defining properties that $p_0 \mu = \tau_0$ and $p_1\mu = \tau_1$. Combining this with the invertible 2-cell $\alpha\colon w'w\cong\id_Q$, we may construct a 2-cell 
  \begin{equation*}
    \xymatrix@C=4em{
      {\tau \defeq a} \ar@{=>}[r]_-{\cong}^-{\alpha^{-1} a} &
      {w'wa} \ar@{=>}[r]^{w'\mu} &
      {w'wa'} \ar@{=>}[r]_-{\cong}^-{\alpha a'} & {a'}
    }
  \end{equation*}
Because $\alpha$ is a 2-cell in the endo-hom-category in $\qCat_2\slice(C\times B)$ on the object $(q_1,q_0)\colon Q\to C\times B$, $q_0\alpha = \id_{q_0}$ and $q_1\alpha = \id_{q_1}$. It follows that $q_0 \tau = q_0 w' \mu = p_0 \mu = \tau_0$ and $q_1 \tau = q_1 w' \mu = p_1 \mu = \tau_1$, 
  which demonstrates that $\tau\colon a\Rightarrow a'$ satisfies the defining properties required of a 2-cell induced by the pair of 2-cells $\tau_0$ and $\tau_1$.

  The proof of 2-cell conservativity is of a similar ilk and is left to the reader.
\end{proof}


\section{Adjunctions of quasi-categories}\label{sec:qcatadj}

We begin our 2-categorical development of quasi-category theory by introducing the appropriate notion of adjunction, following Joyal. As observed in \cite{kelly.street:2} and elsewhere, adjunctions can be defined internally to any 2-category and the proofs of many of their familiar properties can be internalised similarly.

\setcounter{thm}{0}
\begin{defn}[adjunction]\label{defn:adjunction}
An {\em adjunction\/}  \[ \adjdisplay f-| u : A ->B .\] in a 2-category consists of objects $A,B$; 1-cells $f \colon B \to A$, $u \colon A \to B$; and {\em unit\/} and {\em counit\/} 2-cells $\eta \colon \id_B \Rightarrow uf$, $\epsilon \colon fu \Rightarrow \id_A$ satisfying the triangle identities.
\[\xymatrix@=1.5em{ & B \ar[dr]^f \ar@{}[d]|(.6){\Downarrow\epsilon} \ar@{=}[rr] &  \ar@{}[d]|(.4){\Downarrow\eta} & B \ar@{}[d]^*+{=} & B &&   B \ar@{=}[rr] \ar[dr]_f & \ar@{}[d]|(.4){\Downarrow \eta} & B \ar[dr]^f \ar@{}[d]|(.6){\Downarrow\epsilon} & {\mkern40mu}\ar@{}[d]^*+{=} &  B \ar@/^2ex/[d]^f \ar@/_2ex/[d]_f \ar@{}[d]|(.4){\id_f}|(.6){=} & \\A \ar[ur]^u \ar@{=}[rr] & &  A \ar[ur]_u & {\mkern40mu} & A \ar@/^2ex/[u]^u \ar@/_2ex/[u]_u \ar@{}[u]|(.4){=}|(.6){\id_u}  && &A \ar[ur]_u \ar@{=}[rr] & & A & A }\]
\end{defn}

In particular, an \emph{adjunction between quasi-categories} is an adjunction in the 2-category $\qCat_2$.  As always we identify the unit and counit 2-cells with the simplicial maps \[ \xymatrix@=1.5em{ B \ar[d]_{i_0} \ar@{=}[dr] & & &  A \ar[r]^u \ar[d]_{i_0} & B \ar[d]^f  \\ B \times \Del^1 \ar[r]_-{\eta} & B & \mathrm{and} &  A \times \Del^1 \ar[r]^-{\epsilon} & A \\ B \ar[u]^{i_1} \ar[r]_f & A \ar[u]_u & &  A \ar[u]^{i_1} \ar@{=}[ur]}\] (1-simplices in $B^B$ and $A^A$ respectively) representing the unit and counit respectively. Because $B^A$ and $A^B$ are quasi-categories we know, from the description of the homotopy category of a quasi-category given in Recollection~\ref{rec:hty-category}, that for any choice of representatives of the unit and counit there exist maps \[ \alpha \colon A \times \Del^2 \to B \qquad \mathrm{and} \qquad \beta \colon B \times \Del^2 \to A\] (2-simplices in $B^A$ and $A^B$ respectively) which witness the triangle identities in the  sense that their boundaries have the form 
 \[ \xymatrix@=1em{ & ufu \ar@{}[d]|(.6){\alpha} \ar[dr]^{u\epsilon} & & & fuf \ar@{}[d]|(.6){\beta} \ar[dr]^{\epsilon f} \\ u \ar[ur]^{\eta u} \ar[rr]_{\id_u} & & u & f \ar[ur]^{f\eta} \ar[rr]_{\id_f} & & f}\]

\begin{ex}
On account of the fully-faithful inclusion $\Cat_2 \inc \qCat_2$, any adjunction of categories gives rise to an adjunction of quasi-categories with canonical representatives for the unit and counit. Conversely, the 2-functor $\ho \colon \qCat_2 \to \Cat_2$ carries any adjunction of quasi-categories to an adjunction between their respective homotopy categories.
\end{ex}

\begin{ex} The homotopy coherent nerve, introduced in \cite{Cordier:1982:HtyCoh} and studied in \cite{Cordier:1986:HtyCoh}, defines a 2-functor from the 2-category of topologically enriched categories, continuous functors, and enriched natural transformations to $\qCat_2$. This 2-functor factors through the 2-category of locally Kan simplicial categories, simplicial functors, and simplicial natural transformations; the locally Kan simplicial categories are the cofibrant objects in Berger's model structure \cite{Bergner:2007fk}.  Hence, any enriched adjunction between topological or fibrant simplicial categories gives rise to an adjunction of quasi-categories by passing to homotopy coherent nerves. As in the unenriched case, there exist canonical representatives for the unit and counit defined by applying the homotopy coherent nerve to the corresponding enriched natural transformations.
\end{ex}

\begin{ex}\label{ex:simp.quillen.adj}
Any simplicially enriched Quillen adjunction between simplicial model categories descends to an adjunction between the associated quasi-categories, constructed by restricting to the fibrant-cofibrant objects and then applying the homotopy coherent nerve. This restriction is necessary to define the quasi-category associated to a simplicial model category; the homotopy coherent nerve of a simplicial category might not be a quasi-category if the simplicial category is not locally Kan. The subcategory of fibrant-cofibrant objects of a simplicial model category is locally Kan, and furthermore the hom-space bifunctor preserves weak equivalences in both variables; it is common to say that only between fibrant-cofibrant objects are the simplicial hom-spaces guaranteed to have the ``correct'' homotopy type. 

In contrast with the topological case, some care is required to define the functors constituting the adjunction; the point-set level functors will not do because neither adjoint need land directly in the fibrant-cofibrant objects. We prove that  a simplicial Quillen adjunction descends to an adjunction of quasi-categories in Theorem~\ref{thm:simplicial-Quillen-adjunction}.
\end{ex}

Adjunctions can also be constructed internally to $\qCat_2$ using its weak 2-limits, as we shall see in the next section. Later, we will also meet adjunctions constructions using limits or colimits defined internally to a quasi-category.

\subsection{Right adjoint right inverse adjunctions}\label{subsec:RARI} 

We begin by studying an important class of adjunctions whose counit 2-cells are isomorphisms.

\begin{defn}\label{defn:RARI}
A 1-cell $f \colon B \to A$ in a 2-category admits a \emph{right adjoint right inverse} (abbreviated \emph{RARI}) if it admits a right adjoint $u \colon A \to B$ so that the counit of the adjunction $f \dashv u$ is an isomorphism.
\end{defn}

In the situation of Definition \ref{defn:RARI}, $f$ defines a \emph{left adjoint left inverse} (abbreviated \emph{LALI}) to $u$. When the counit of $f \dashv u$ is an isomorphism, the whiskered composites $f\eta$ and $\eta u$ of the unit must also be isomorphisms. Indeed, to construct an adjunction of this form it suffices to give 2-cells with these properties, as demonstrated by the following 2-categorical lemma. 

\begin{lem}\label{lem:adjunction.from.isos} Suppose we are given a pair of 1-cells $u \colon A \to B$ and $f\colon B\to A$ and a 2-isomorphism $fu \cong \id_A$ in a 2-category. If there exists a 2-cell $\eta' \colon \id_B \Rightarrow uf$ with the property that $f\eta'$ and $\eta' u$ are 2-isomorphisms, then $f$ is left adjoint to $u$. Furthermore, in the special case where $u$ is a section of $f$, then $f$ is left adjoint to $u$ with the counit of the adjunction an identity.
\end{lem}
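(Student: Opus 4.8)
The plan is to build the adjunction $f \dashv u$ by correcting the given data $(\eta', fu \cong \id_A)$ into a genuine unit-counit pair satisfying the triangle identities, using the standard 2-categorical manipulations. First I would set up notation: write $\phi \colon fu \xRightarrow{\cong} \id_A$ for the given 2-isomorphism, and observe that $f\eta' \colon f \Rightarrow fuf$ and $\eta' u \colon u \Rightarrow ufu$ are by hypothesis invertible. The natural candidate for the counit is $\epsilon \defeq \phi$, and the unit should be $\eta'$ modified by a factor that forces one of the triangle identities to hold on the nose. Concretely, the composite $f \xRightarrow{f\eta'} fuf \xRightarrow{\phi f} f$ is an isomorphism (as a composite of the isomorphism $f\eta'$ with the isomorphism $\phi f$); call it $\gamma \colon f \cong f$. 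Then I would define the corrected unit $\eta \defeq (\gamma^{-1} \cdot \id_u) \cdot \eta'$ — that is, whisker the inverse of $\gamma$ by $u$ and paste with $\eta'$ — so that by construction $(\epsilon f)(f\eta) = (\phi f)(f\eta') (f\gamma^{-1}) = \gamma \gamma^{-1} = \id_f$, establishing the first triangle identity.

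The harder step is the second triangle identity $(u\epsilon)(\eta u) = \id_u$. Here I would use the standard argument that in a 2-category, if one triangle identity holds and the relevant 2-cells are isomorphisms, the other can be forced or is automatic. Specifically, once the first triangle identity holds, $u\epsilon \colon ufu \Rightarrow u$ and $\eta u \colon u \Rightarrow ufu$ are both isomorphisms: $\epsilon = \phi$ is an isomorphism so $u\epsilon$ is; and $\eta u = (\gamma^{-1} u)(\eta' u)$ is a composite of isomorphisms (note $\gamma^{-1} u$ makes sense and $\eta' u$ is invertible by hypothesis). So $(u\epsilon)(\eta u)$ is an invertible endo-2-cell of $u$. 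The classical trick (due to the interplay of the triangle identities, cf.\ the argument that an ``adjunction up to iso'' can be rigidified) is to replace $\epsilon$ by $\epsilon' \defeq \epsilon \cdot (f\delta^{-1} u)$ or to replace $\eta$ by a further correction, where $\delta \defeq (u\epsilon)(\eta u)$; one checks that this second correction does not disturb the first triangle identity (by a middle-four-interchange computation) while making the second hold. I expect this rigidification computation — verifying that the correction to fix the second identity is compatible with the first — to be the main obstacle, though it is a completely standard diagram chase with pasting diagrams.

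For the ``furthermore'' clause, suppose $u$ is a section of $f$, so $fu = \id_A$ strictly and we may take $\phi = \id_{\id_A}$, hence $\epsilon = \id$. Then $\gamma = (\phi f)(f\eta') = f\eta'$, and the corrected unit is $\eta = ((f\eta')^{-1} u) \cdot \eta' = (\eta'(fu))^{-1}$-ish — more carefully, $\gamma = f\eta'$ and $\eta \defeq (\gamma^{-1}u)\eta'$, for which the first triangle identity $(\epsilon f)(f\eta) = f\eta = \gamma \gamma^{-1} = \id_f$ holds with $\epsilon$ the identity. The second triangle identity $(u\epsilon)(\eta u) = \eta u = \id_u$ then needs to be checked; with $\epsilon$ an identity, the general rigidification argument simplifies considerably, and one can verify directly that $\eta u = \id_u$ using $fu = \id$, functoriality of whiskering, and the invertibility of $\eta' u$. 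So in this special case the adjunction is exhibited with counit an identity, as claimed.

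I would present the proof by first recording the two whiskering-isomorphism hypotheses, then defining $\epsilon$, $\gamma$, and the corrected $\eta$, verifying the first triangle identity by a short computation, then invoking the rigidification lemma (or carrying out the pasting-diagram chase) for the second, and finally specializing to the section case. Since the paper works inside an arbitrary 2-category, every manipulation is purely formal (whiskering, horizontal and vertical composition, middle-four interchange), so no quasi-categorical input is needed — this is a lemma ``in a 2-category'' and should be proved as such.
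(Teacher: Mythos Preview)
Your strategy is the paper's strategy: take $\epsilon$ to be the given isomorphism (the identity when $fu=\id_A$) and modify $\eta'$ by an invertible automorphism to force one triangle identity. The paper happens to make the dual choice --- it sets $\theta \defeq u\epsilon \cdot \eta' u$ and defines $\eta \defeq \theta^{-1}f \cdot \eta'$, which nails the $u$-side identity $u\epsilon\cdot\eta u=\id_u$ first rather than your $f$-side identity --- but either choice works and the computations are symmetric. (Your display $(\epsilon f)(f\eta)=(\phi f)(f\eta')(f\gamma^{-1})$ is garbled: with $\eta=u\gamma^{-1}\cdot\eta'$ you get $\epsilon f\cdot f\eta=\phi f\cdot fu\gamma^{-1}\cdot f\eta'$, and one interchange gives $\gamma^{-1}\cdot\phi f\cdot f\eta'=\gamma^{-1}\gamma=\id_f$. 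The idea is right; the bookkeeping needs care.)

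Where you diverge from the paper is in the second triangle identity, and this is the one place you hedge. You don't need a second correction, and you don't need to ``invoke a rigidification lemma'': once one triangle identity holds, the remaining triangle composite is automatically \emph{idempotent} --- this is a short pasting computation using interchange and the already-established triangle identity (the paper's display (4.1.3) does exactly this) --- and since it is visibly a composite of isomorphisms, an idempotent isomorphism is the identity. That single observation finishes the proof cleanly in both the general case and the section case, with no further modification of $\eta$ or $\epsilon$. The paper flags this ``idempotent isomorphism'' trick explicitly as a recurring device; committing to it here would tighten your argument considerably and remove the vagueness in your treatment of the second identity and of the ``furthermore'' clause.
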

\begin{proof} 
Let $\epsilon \colon fu \To \id_A$ be the isomorphism, taken to be the identity in the case where $u$ is a section of $f$. We will define an adjunction $f \dashv u$ with counit $\epsilon$ by modifying $\eta' \colon \id_B \To uf$.  The ``triangle identity composite'' $\theta\defeq u\epsilon \cdot \eta' u \colon u \To u$ defines an automorphism of $u$.  Define 
\[ \eta \defeq \xymatrix{ \id_B \ar@{=>}[r]^-{\eta'} & uf \ar@{=>}[r]^-{\theta^{-1}f} & uf.}\] Immediately, $u\epsilon \cdot \eta u = \id_u$, as is verified by the calculation: 
\begin{equation}\label{eq:triangle-calculation-1} \xymatrix@R=1.2em@C=2.5em{ u \ar@{=>}[dr]_\theta \ar@{=>}[r]^-{\eta' u} & ufu \ar@{=>}[r]^{\theta^{-1} fu} \ar@{=>}[d]^{u \epsilon} & ufu \ar@{=>}[d]^{u \epsilon} \\ & u \ar@{=>}[r]_{\theta^{-1}} & u}\end{equation} 

The other triangle identity composite $\phi\defeq \epsilon f \cdot f \eta $ is an isomorphism, as a composite of isomorphisms, and also an idempotent:
\begin{equation}\label{eq:triangle-calculation-2} \xymatrix@R=1.2em@C=2.5em{ f \ar@{=>}[d]_{f \eta} \ar@{=>}[r]^-{f \eta} & fuf \ar@{=}[dr] \ar@{=>}[d]_{f\eta uf} \\ fuf \ar@{=>}[d]_{\epsilon f} \ar@{=>}[r]_-{fuf\eta} & fufuf \ar@{=>}[d]^{\epsilon f} \ar@{=>}[r]_-{fu\epsilon f} & fuf \ar@{=>}[d]^{\epsilon f} \\ f \ar@{=>}[r]_-{f\eta} & fuf \ar@{=>}[r]_-{\epsilon f} & f}\end{equation} But any idempotent isomorphism is an identity: the isomorphism $\phi$ can be cancelled from both sides of the idempotent equation $\phi \cdot \phi = \phi$. Hence, $\epsilon f \cdot f \eta = \id_f$, proving the second triangle identity.
\end{proof}

\begin{rmk}[idempotent isomorphisms]\label{rmk:idempotent-isomorphisms} Because $\qCat_2$ has many weak but few strict 2-limits, it is frequently easier to show that a 2-cell is an isomorphism than to show that it is an identity. When we desire an identity and not merely an isomorphism,  we will make frequent use of the trick that any idempotent isomorphism is an identity.
\end{rmk}

We now show that for any functor $\ell \colon C \to B$, the codomain projection functor $\pi_1 \colon B \comma \ell \to C$  admits a right adjoint right inverse, the ``identity functor'' $i \colon C \to B \comma \ell$ defined below. Here the right adjoint $i$ defines a section to the left adjoint $p_i$. Taking the counit of $i \dashv \pi_1$ to be an identity, as permitted by Lemma \ref{lem:adjunction.from.isos}, the adjunction lifts to the slice 2-category $\qCat_2/C$.

\begin{lem}\label{lem:technicalsliceadjunction}
  Suppose that $\ell\colon C\to B$ is a functor of quasi-categories and let $i\colon C\to B\comma\ell$ be any functor induced by the identity comma cone:
  \begin{equation}\label{eq:technicalsliceadjunction}
    \vcenter{\xymatrix@=1em{
      & {C}\ar@{=}[dl]\ar[dr]^{\ell} & \\
      {C}\ar[rr]_{\ell} && {B}
      \ar@{} "1,2";"2,2" |(0.6){\textstyle =}
    }}
    \mkern20mu = \mkern20mu
    \vcenter{\xymatrix@=1em{
      & {C}\ar[d]^-i \ar@/^/[ddr]^\ell \ar@/_/@{=}[ddl] & \\
      & {B\comma\ell}\ar[dl]|{p_1}\ar[dr]|{p_0} & \\
      {C}\ar[rr]_{\ell} && {B}
      \ar@{} "2,2";"3,2" |(0.6){\Leftarrow\phi}
    }}
  \end{equation}
  Then $i\colon C\to B\comma\ell$ is right adjoint to the codomain projection functor $p_1\colon B\comma\ell\to C$ in the slice 2-category $\qCat_2\slice C$ 
  \[    \xymatrix@=1.2em{
      {C}\ar@{=}[dr]\ar@/_1.5ex/[rr]_-{i}^-{}="one"
      & & *+!L(0.5){B\comma\ell}\ar@{->>}[dl]^-{p_1}
      \ar@/_1.5ex/[ll]_-{p_1}^-{}="two" \\
      & {C} &
      \ar@{}"one";"two"|{\bot}
    }
    \]
 and the counit may be chosen to be an identity 2-cell.
\end{lem}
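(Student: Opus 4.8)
The plan is to apply Lemma~\ref{lem:adjunction.from.isos} to the pair of 1-cells $p_1 \colon B\comma\ell \to C$ and $i \colon C \to B\comma\ell$, working in the slice 2-category $\qCat_2\slice C$ (with objects all maps to $C$, as permitted by the current subsection's convention). First I would record that $i$ is a section of $p_1$: from the defining property of 1-cell induction~\eqref{eq:comma-ind-1cell-prop} applied to the identity comma cone displayed in~\eqref{eq:technicalsliceadjunction}, we get $p_1 i = \id_C$, $p_0 i = \ell$, and $\psi i = \phi$ where $\psi$ is the universal 2-cell of the comma object and $\phi$ is the identity 2-cell $\ell = \ell$ on the left of~\eqref{eq:technicalsliceadjunction}. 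In particular $i$ is a 1-cell of $\qCat_2\slice C$, since it commutes with the projections to $C$, and $\id_{B\comma\ell}$ and $p_1$ both live in $\qCat_2\slice C$ as well. So $p_1 i = \id_C$ gives the required isomorphism (in fact identity) $p_1 i \cong \id_C$ in the slice, which will become the counit.

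Next I would produce the candidate unit $\eta' \colon \id_{B\comma\ell} \To i p_1$ in $\qCat_2\slice C$ by 2-cell induction over the weak comma object $B\comma\ell$. The two functors $\id_{B\comma\ell}, i p_1 \colon B\comma\ell \to B\comma\ell$ need whiskered 2-cells down to $B$ and $C$. On the $C$ side: $p_1 \cdot \id = p_1$ and $p_1 \cdot i p_1 = p_1$, so take $\tau_1$ to be the identity 2-cell on $p_1$. On the $B$ side: $p_0 \cdot \id = p_0$ and $p_0 \cdot i p_1 = \ell p_1$, and the universal 2-cell $\psi \colon f p_0 \To g p_1$ — here with $f = \id_B$, $g = \ell$, so $\psi \colon p_0 \To \ell p_1$ — is exactly a 2-cell of the right type; take $\tau_0 = \psi$. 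I would check the compatibility condition~\eqref{eq:comma-ind-2cell-compat}: both sides amount to whiskering $\psi$ appropriately, and the identity $\psi i p_1 \cdot (\text{stuff}) = \psi$ reduces to the interchange law plus $\psi i = \phi = \id_\ell$; this is the routine verification I would not grind through. Then 2-cell induction yields $\eta' \colon \id_{B\comma\ell} \To i p_1$ with $p_0 \eta' = \psi$ and $p_1 \eta' = \id_{p_1}$. The latter equation shows $\eta'$ is a 2-cell \emph{in} $\qCat_2\slice C$, since it whiskers with $p_1$ to the identity on $p_1 = \id_C$-component.

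Now I would verify the hypotheses of Lemma~\ref{lem:adjunction.from.isos} for $\eta'$, namely that $p_1\eta'$ and $\eta' i$ are 2-isomorphisms (all computed in $\qCat_2\slice C$). We already have $p_1 \eta' = \id_{p_1}$, which is an isomorphism. For $\eta' i \colon i \To i p_1 i = i$: I would show $\eta' i$ is an isomorphism using 2-cell conservativity of the weak comma object $B\comma\ell$ (Observation~\ref{obs:unpacking-weak-comma-objects}): it suffices that $p_0(\eta' i)$ and $p_1(\eta' i)$ are isomorphisms. We have $p_1 (\eta' i) = \id_{p_1} i = \id_{p_1 i} = \id_{\id_C}$, and $p_0(\eta' i) = \psi i = \phi = \id_\ell$, both identities hence isomorphisms. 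So $\eta' i$ is a 2-isomorphism. By Lemma~\ref{lem:adjunction.from.isos} — and specifically its last clause, since $i$ is a section of $p_1$ — we conclude $p_1 \dashv i$ with the counit an identity 2-cell. The main obstacle I anticipate is bookkeeping: being careful that the 2-cell induction and conservativity arguments are carried out in the slice $\qCat_2\slice C$ (so that whiskering with $p_1$ to an identity certifies membership in the slice) rather than just in $\qCat_2$, and checking the interchange-law compatibility~\eqref{eq:comma-ind-2cell-compat} cleanly using $\psi i = \phi$ being an identity; none of this is deep, but it is where a careless argument would slip.
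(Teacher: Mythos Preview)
Your proposal is correct and follows essentially the same argument as the paper's proof: construct the unit by 2-cell induction from the pair $(\text{universal comma 2-cell}, \id_{p_1})$, verify that whiskering with $p_1$ gives an identity (so the 2-cell lives in the slice), use 2-cell conservativity to see that whiskering with $i$ gives an isomorphism, and conclude via Lemma~\ref{lem:adjunction.from.isos}. One minor caution: in the statement of this lemma the universal comma 2-cell is already named $\phi$, so your use of $\psi$ for it and $\phi$ for the identity 2-cell clashes with the paper's notation; otherwise the arguments coincide.
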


\begin{proof}
By construction, $i$ is a section to the isofibration $p_1$ and, accordingly, we may take the counit of the postulated adjunction to be the identity $p_1 i = \id_C$. Now a 2-cell $\nu \colon \id_{B\comma \ell} \Rightarrow i p_1$ provides us with a 2-cell in $\qCat_2\slice C$ which satisfies the triangle identities with respect to that counit if and only if $p_1\nu$ and $\nu i$ are identity 2-cells. 

  We construct a suitable 2-cell $\nu\colon \id_{B\comma \ell} \Rightarrow i p_1$ by applying the 2-cell induction property of $B\comma\ell$ to the pair of 2-cells $\phi\colon p_0 \Rightarrow \ell p_1 = p_0 i p_1$ and $\id_{p_1}\colon p_1 = p_1 i p_1$; here, the compatibility condition of~\eqref{eq:comma-ind-2cell-compat} reduces to the trivial pasting identity \[ \vcenter{\xymatrix@=0.7em{ & B\comma \ell \ar@/^2ex/[ddr]^{p_0}   \ar@/^2ex/[ddl]|*+<3pt>{\scriptstyle p_1} \ar@/_2ex/[ddl]_{p_1}  \ar@{}[ddl]|{=} \\ &  \ar@{}[dr]|(.3){\Leftarrow\phi} \\ C \ar[rr]_\ell & & B}} \mkern20mu = \mkern20mu\vcenter{ \xymatrix@=0.7em{ & B\comma \ell \ar@/^2ex/[ddr]^{p_0}  \ar@/_2ex/[ddr]|*+<3pt>{\scriptstyle\ell p_1} \ar@{}[ddr]|{\Leftarrow\phi}  \ar@/_2ex/[ddl]_{p_1}  \\ & \ar@{}[dl]|(0.3){=} & \\ C \ar[rr]_\ell & & B}}  \] By construction, $\nu\colon \id_{B\comma \ell} \Rightarrow i p_1$ is a 2-cell satisfying $p_0\nu = \phi$ and $p_1\nu = \id_{p_1}$.

To show that $\nu i$ is an isomorphism,  observe that $p_0\nu i = \phi i = \id_\ell$ and $p_1\nu i = \id_{p_1} i = \id_{p_1 i} = \id_{\id_C}$, so  using the 2-cell conservativity property of $B\comma\ell$ we conclude that $\nu i$ is an isomorphism.  By Lemma \ref{lem:adjunction.from.isos} this suffices; indeed, applying middle-four interchange to $\nu i \cdot \nu i$ and the equation $p_1\nu=\id_{p_1}$,  $\nu i$ can be seen to be an idempotent isomorphism and thus an identity.
\end{proof}

In general, if a (representable) isofibration $f \colon B \tfib A$ admits a right adjoint right inverse $u$, then the counit of the RARI adjunction may be chosen to be an identity. Lemma \ref{lem:representable-isofibration}, which shows that an isofibration between quasi-categories defines a representable isofibration in $\qCat_2$, will allow us to make frequent use of this ``strictification'' result.

\begin{lem}\label{lem:isofibration-RARI} If $f \colon B \tfib A$ is a representable isofibration in a 2-category $\tcat{C}$ admitting a right adjoint right inverse $u' \colon A \to B$, then there exists a 1-cell $u \colon A \to B$ that is right adjoint right inverse to $f$ with identity counit.
\end{lem}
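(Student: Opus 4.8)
The plan is to start from the given RARI adjunction $f \dashv u'$ with invertible counit $\epsilon' \colon fu' \To \id_A$ and use the representable isofibration property of $f$ to replace $u'$ by a genuine section of $f$. Since $f$ is a representably defined isofibration (Definition~\ref{defn:representable-isofibrations}), I can lift the isomorphism $\epsilon' \colon fu' \cong \id_A$ along $f$: taking the 1-cells $a = \id_A$ and $b = u'$ with $2$-isomorphism $\epsilon' \colon fu' \cong \id_A = a$, the lifting property supplies a 1-cell $u \colon A \to A$... wait, $u \colon A \to B$, together with a $2$-isomorphism $\gamma \colon u' \cong u$ such that $f\gamma = \epsilon'$ and $fu = \id_A$. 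So $u$ is a strict section of $f$.

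Next I would transport the adjunction structure along the isomorphism $\gamma \colon u' \cong u$. Concretely, $\gamma$ is an isomorphism between the two right adjoint candidates, so $f \dashv u$ with the unit $\eta'' \defeq (uf)\cdot(\text{something}) \colon \id_B \To uf$ obtained by pasting $\gamma$ onto the original unit $\eta' \colon \id_B \To u'f$, namely $\eta'' \defeq \gamma f \cdot \eta'$, and counit $\epsilon'' \defeq \epsilon' \cdot (f\gamma^{-1}) \colon fu \To \id_A$. This is a standard fact that adjunctions transport along isomorphisms of one adjoint, and the triangle identities follow by routine $2$-categorical pasting. Crucially, $\epsilon''$ is still an isomorphism, and $f\eta''$, $\eta'' u$ are isomorphisms (being whiskered composites of the unit of an adjunction with invertible counit, as remarked after Definition~\ref{defn:RARI}).

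Now I am in exactly the situation of the second sentence of Lemma~\ref{lem:adjunction.from.isos}: $u$ is a section of $f$ (so $fu = \id_A$), and there is a $2$-cell $\eta'' \colon \id_B \To uf$ with $f\eta''$ and $\eta'' u$ invertible. Applying Lemma~\ref{lem:adjunction.from.isos} in its special case yields an adjunction $f \dashv u$ whose counit is the identity $2$-cell on $\id_A$, with $u$ still the chosen section. This is precisely the assertion of the lemma.

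The main obstacle is really just bookkeeping: being careful that the lifting property of a representable isofibration is applied in the correct variance — we lift the isomorphism $\epsilon' \colon fb \cong a$ to obtain $u$ and $\gamma$ with $p\gamma = \epsilon'$ in the notation of Definition~\ref{defn:representable-isofibrations} — and then checking that after transporting along $\gamma$ the hypotheses of Lemma~\ref{lem:adjunction.from.isos} genuinely hold, in particular that $\eta'' u$ remains invertible (which follows since $u$ and $u'$ are isomorphic via $\gamma$ and $\eta' u'$ is invertible). Once these are in place the conclusion is immediate from Lemma~\ref{lem:adjunction.from.isos}. I would also remark that this is the reason the hypothesis "$f$ is a representable isofibration" is needed: without it there is no way to strictify $u'$ to an honest section.

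\begin{proof}
Suppose $f \dashv u'$ with unit $\eta' \colon \id_B \To u'f$ and counit $\epsilon' \colon fu' \cong \id_A$ an isomorphism. Since $f$ is a representably defined isofibration in $\tcat{C}$, we may apply the lifting property of Definition~\ref{defn:representable-isofibrations} to the $2$-isomorphism $\epsilon' \colon fu' \cong \id_A$: with $a \defeq \id_A$ and $b \defeq u'$, this produces a $1$-cell $u \colon A \to B$ and a $2$-isomorphism $\gamma \colon u' \cong u$ satisfying $f\gamma = \epsilon'$ and $fu = \id_A$. Thus $u$ is a section of $f$.

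Transport the adjunction along $\gamma$: set $\eta \defeq \gamma f \cdot \eta' \colon \id_B \To uf$ and $\epsilon \defeq \epsilon' \cdot f\gamma^{-1} \colon fu \To \id_A$. A routine pasting calculation using the triangle identities for $f \dashv u'$ shows that $f \dashv u$ with unit $\eta$ and counit $\epsilon$; moreover $\epsilon = \epsilon' \cdot f\gamma^{-1}$ is an isomorphism as a composite of isomorphisms. Since the counit of $f \dashv u$ is invertible, the whiskered composites $f\eta$ and $\eta u$ are isomorphisms as well (see the discussion following Definition~\ref{defn:RARI}); alternatively, $\eta u \cong \eta' u'$ via $\gamma$ and $\eta' u'$ is invertible because $\epsilon'$ is.

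We are now in the situation of Lemma~\ref{lem:adjunction.from.isos}: $fu = \id_A$, so $u$ is a section of $f$, and there is a $2$-cell $\eta \colon \id_B \To uf$ with $f\eta$ and $\eta u$ invertible. The special case of that lemma provides an adjunction $f \dashv u$ whose counit is the identity $2$-cell, with $u$ the chosen section. Hence $u$ is a right adjoint right inverse to $f$ with identity counit, as required.
\end{proof}
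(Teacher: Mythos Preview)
Your proof is correct and follows essentially the same approach as the paper: lift the counit isomorphism along the representable isofibration to obtain a section $u$ with $\gamma \colon u' \cong u$, define the new unit as $\gamma f \cdot \eta'$, and apply Lemma~\ref{lem:adjunction.from.isos}. The paper's version is slightly more streamlined in that it skips the intermediate step of transporting the full adjunction (your $\epsilon$) and goes directly to checking that the new unit whiskers to isomorphisms, but the argument is the same.
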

\begin{proof}
We construct the functor $u\colon A\to B$ and an isomorphism $\beta\colon u' \cong u$ by applying the universal property of the isofibration $f\colon B\tfib A$ to the counit $\epsilon'\colon fu'\cong\id_A$.
\[\xymatrix{ \ar@{}[dr]|(.7){\epsilon'\cong} & B \ar@{->>}[d]^f  & \ar@{}[d]|{\displaystyle\rightsquigarrow} &  \ar@{}[dr]|{\beta\cong} & B \ar@{->>}[d]^f \\ A \ar[ur]^{u'} \ar@{=}[r] & A &&  A \ar@/^1.5ex/[ur]^{u'} \ar@/_1.5ex/[ur]_{u} \ar@{=}[r] & A}\] By construction $fu = \id_A$. The composite
 $\eta \defeq \xymatrix@1{\id_B \ar@{=>}[r]^{\eta'} & u'f \ar@{=>}[r]^{\beta f} & uf}$ of the original unit $\eta'$ with the lifted isomorphism $\beta$ defines a 2-cell that whiskers with $f$ and $u$ to isomorphisms, permitting the application of Lemma \ref{lem:adjunction.from.isos} to conclude.
\end{proof}

\subsection{Terminal objects as adjoint functors}\label{subsec:terminal}

A quasi-category $A$ has a terminal object if and only if the projection functor $! \colon A \to \Del^0$ admits a right adjoint right inverse:

\begin{defn}[terminal objects]\label{defn:terminal}
An object $t$ in a quasi-category $A$ is {\em terminal\/} if there is an adjunction \[ \adjdisplay !-| t :\Del^0 -> A . \]  
 \end{defn}

Dually, of course, an object in $A$ is initial just when it defines a left adjoint left inverse to $! \colon A \to \Delta^0$.

 \begin{ex}[slices have terminal objects]\label{ex:slice-terminal}
For any object $a$  of a quasi-category $A$, there is an adjunction \[\adjdisplay !-| i : \Del^0 -> A\comma a .\] whose right adjoint, defining the terminal object of $A \comma a$, is any vertex of $A \comma a$ that is isomorphic to the degenerate 1-simplex $a\cdot\degen^0\colon a\to a$. This functor whiskers with the comma cone to an identity 2-cell:
\[     \vcenter{\xymatrix@=1em{
      & {\Del^0}\ar@{=}[dl]\ar[dr]^{a} & \\
      {\Del^0}\ar[rr]_{a} && {A}
      \ar@{} "1,2";"2,2" |(0.6){\textstyle =}
    }}
    \mkern20mu = \mkern20mu
    \vcenter{\xymatrix@=1em{
      & {\Del^0}\ar[d]^-i \ar@/^/[ddr]^a \ar@/_/@{=}[ddl] & \\
      & {A\comma a}\ar[dl]|{p_1}\ar[dr]|{p_0} & \\
      {\Del^0}\ar[rr]_{a} && {A}
      \ar@{} "2,2";"3,2" |(0.6){\Leftarrow\phi}
    }}\]
Thus, the adjunction $!\dashv i$ is a special case of Lemma \ref{lem:technicalsliceadjunction}.
\end{ex}

Lemma \ref{lem:adjunction.from.isos} allows us to describe the minimal information required to display a terminal object.

\begin{lem}[minimal information required to display a terminal object]\label{lem:min-term-pres}
  To demonstrate that an object $t$ is terminal in $A$ it is enough to provide a unit 2-cell $\eta\colon\id_A\Rightarrow t!$ for which the whiskered composite $\eta t$ is an isomorphism.
 \end{lem}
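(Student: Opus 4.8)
The plan is to deduce this from Lemma \ref{lem:adjunction.from.isos} applied in the special situation where one of the two 1-cells is the unique functor $! \colon A \to \Del^0$. Recall that $! \colon A \to \Del^0$ is an isofibration of quasi-categories (indeed every functor to $\Del^0$ is), and hence, by Lemma \ref{lem:representable-isofibration}, a representable isofibration in $\qCat_2$. By Definition \ref{defn:terminal}, to say $t$ is terminal means precisely that the functor $t \colon \Del^0 \to A$ --- equivalently, an object of $A$ --- is a right adjoint to $!$. Since $!\, t = \id_{\Del^0}$ automatically (there being a unique 1-cell $\Del^0 \to \Del^0$), such a right adjoint is automatically a right adjoint right inverse, and $t$ is a section of $!$.

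First I would observe that the hypothesis supplies exactly the data needed to invoke Lemma \ref{lem:adjunction.from.isos} with $u = t \colon \Del^0 \to A$, $f = {!} \colon A \to \Del^0$, and $\eta' = \eta \colon \id_A \Rightarrow t{!}$. The required 2-isomorphism $f u = {!}\, t \cong \id_{\Del^0}$ is the identity, since $\hom'(\Del^0, \Del^0) = \ho{\Del^0} = \catone$ is the terminal category, so there is a unique 2-cell on $\id_{\Del^0}$ and it is invertible. For the other hypotheses of Lemma \ref{lem:adjunction.from.isos} we need $f \eta' = {!}\,\eta \colon {!} \Rightarrow {!}\, t{!}$ and $\eta' u = \eta t \colon t \Rightarrow t{!}\,t$ to be 2-isomorphisms. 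The whiskered composite ${!}\,\eta$ is a 2-cell in $\hom'(A, \Del^0) = \ho{(\Del^0)^A} \cong \ho{\Del^0} = \catone$, which is the terminal category, so it is automatically an identity, in particular an isomorphism. The whiskered composite $\eta t$ is assumed to be an isomorphism by hypothesis. Hence Lemma \ref{lem:adjunction.from.isos} applies and produces an adjunction ${!} \dashv t$; moreover, since $t$ is a section of $!$, the ``furthermore'' clause of that lemma gives that the counit may be taken to be an identity. This establishes the adjunction of Definition \ref{defn:terminal}, so $t$ is terminal.

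There is essentially no obstacle here: the whole content is the bookkeeping that the two whiskered composites $f\eta'$ and $\eta' u$ required by Lemma \ref{lem:adjunction.from.isos} reduce, in this special case, to the one whiskered composite $\eta t$ we are handed plus one vacuous condition coming from the fact that the hom-category into $\Del^0$ is terminal. If one wanted to avoid even invoking Lemma \ref{lem:representable-isofibration}, one could note that Lemma \ref{lem:adjunction.from.isos} is purely 2-categorical and makes no fibrancy hypothesis, so it applies directly to $\qCat_2$ without that step; the identity-counit refinement follows since $t$ is literally a section of $!$. The only point deserving a word of care is the assertion that every 2-cell with codomain $\Del^0$ is an isomorphism, which is immediate from $\hom'(A,\Del^0) \cong \catone$ as established in the proof of Proposition \ref{prop:qcat2closed}.

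I would therefore write the proof as essentially one paragraph: observe $\Del^0$ is terminal in $\qCat_2$ so $\hom'(A,\Del^0) = \catone$; hence ${!}\,\eta$ is an identity, in particular a 2-isomorphism; the hypothesis gives that $\eta t$ is a 2-isomorphism; and ${!}\, t = \id_{\Del^0}$; so Lemma \ref{lem:adjunction.from.isos} (applied with $f = {!}$, $u = t$, $\eta' = \eta$) produces an adjunction ${!} \dashv t$ exhibiting $t$ as terminal, with identity counit since $t$ is a section of $!$.

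\begin{proof}
Since $\Del^0$ is a terminal object of $\qCat_2$, the hom-category $\hom'(A,\Del^0) = \ho((\Del^0)^A) \cong \ho(\Del^0) = \catone$ is the terminal category. In particular ${!}\, t = \id_{\Del^0}$, so that $t$ is a section of $!\colon A\to\Del^0$, and the whiskered 2-cell ${!}\,\eta\colon {!}\Rightarrow{!}\,t{!}$, being a 2-cell in $\hom'(A,\Del^0)=\catone$, is an identity and hence a 2-isomorphism. By hypothesis the whiskered composite $\eta t\colon t\Rightarrow t{!}\,t$ is a 2-isomorphism. Thus we may apply Lemma \ref{lem:adjunction.from.isos}, with $f = {!}\colon A\to\Del^0$, $u = t\colon\Del^0\to A$, and $\eta' = \eta$: the required 2-isomorphism $fu = {!}\,t\cong\id_{\Del^0}$ is the identity, and $f\eta' = {!}\,\eta$ and $\eta' u = \eta t$ are both 2-isomorphisms. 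The lemma then shows that ${!}$ is left adjoint to $t$; moreover, since $t$ is a section of ${!}$, the counit of this adjunction may be chosen to be an identity. An adjunction $\adjinline {!}-| t :\Del^0 -> A.$ is precisely the data required by Definition \ref{defn:terminal} to witness that $t$ is terminal in $A$.
\end{proof}
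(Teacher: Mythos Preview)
Your proof is correct and takes essentially the same approach as the paper: both observe that the hom-categories $\hom'(A,\Del^0)$ and $\hom'(\Del^0,\Del^0)$ are terminal, so that the counit and the whiskering $!\eta$ are forced to be identities, and then invoke Lemma~\ref{lem:adjunction.from.isos} with the hypothesis on $\eta t$. The paper additionally remarks that specialising the proof of Lemma~\ref{lem:adjunction.from.isos} shows $\eta t$ is in fact an idempotent isomorphism and hence an identity, but this is an aside rather than an essential step.
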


When $A$ is a category this presentation is neither more nor less than the well known observation that an object $t$ is terminal in $A$ if and only if there exists a cocone on the identity diagram with vertex $t$ whose component at $t$ is an isomorphism. The proof of this lemma applies in any 2-category which possesses a 2-terminal object.

\begin{proof}
The categories $\hom'(\Del^0,\Del^0)$ and $\hom'(A,\Del^0)$ are both isomorphic to the terminal category $\catone$, so the counit is necessarily taken to be the identity and one of the triangle identities arises trivially. By Lemma \ref{lem:adjunction.from.isos} it remains only to provide a unit $\eta \colon \id_A \To t!$ for which the whiskered composition $\eta t$ is an isomorphism.  Specialising the proof of Lemma \ref{lem:adjunction.from.isos}, it follows formally that $\eta t \colon t \To t$ is an idempotent isomorphism and hence an identity, as required.
\end{proof}

The following straightforward 2-categorical lemma provides us with a useful ``external'' characterisation of terminal objects in quasi-categories.

\begin{lem}\label{lem:adj-ext-univ}
  Suppose we are given a pair of 1-cells $u\colon A\to B$ and $f\colon B\to A$ and a 2-cell $\epsilon\colon fu\Rightarrow\id_A$ in a 2-category $\tcat{C}$. Then $f$ is left adjoint to $u$ with counit $\epsilon$ in $\tcat{C}$ if and only if for all 0-cells $X\in\tcat{C}$ the functor $\tcat{C}(X,f)\colon\tcat{C}(X,B)\to\tcat{C}(X,A)$ is left adjoint to $\tcat{C}(X,u)\colon\tcat{C}(X,A)\to\tcat{C}(X,B)$, in the usual sense, with counit $\tcat{C}(X,\epsilon)$. 
\end{lem}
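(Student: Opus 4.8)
The plan is to prove both directions by exploiting the fact that $\hom'$ is the hom-functor of the 2-category $\tcat{C}$, so that the unit and counit together with the triangle identities are precisely $\hom'$-representably detectable data. For the forward direction, suppose $f\dashv u$ with unit $\eta\colon\id_B\To uf$ and counit $\epsilon\colon fu\To\id_A$. First I would apply the representable 2-functor $\tcat{C}(X,-)$ to the 1-cells $f,u$ and the 2-cells $\eta,\epsilon$, obtaining functors $\tcat{C}(X,f)$, $\tcat{C}(X,u)$ and natural transformations $\tcat{C}(X,\eta)$, $\tcat{C}(X,\epsilon)$ between the appropriate composites. Since a 2-functor preserves pasting composites and identity 2-cells, the triangle identities for $(\eta,\epsilon)$ are carried to the triangle identities for $(\tcat{C}(X,\eta),\tcat{C}(X,\epsilon))$; hence $\tcat{C}(X,f)\dashv\tcat{C}(X,u)$ with counit $\tcat{C}(X,\epsilon)$ in the ordinary (1-categorical) sense.

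For the converse, suppose that for all $X$ the functor $\tcat{C}(X,f)$ is left adjoint to $\tcat{C}(X,u)$ with counit $\tcat{C}(X,\epsilon)$. I would recover the unit by evaluating at $X=B$: the identity 1-cell $\id_B$ is an object of $\tcat{C}(B,B)$, and the unit of the adjunction $\tcat{C}(B,f)\dashv\tcat{C}(B,u)$ supplies a morphism $\eta\colon\id_B\To \tcat{C}(B,u)(\tcat{C}(B,f)(\id_B))=uf$ in $\tcat{C}(B,B)$, i.e.\ a 2-cell $\eta\colon\id_B\To uf$ in $\tcat{C}$. It then remains to verify the two triangle identities for $(\eta,\epsilon)$ in $\tcat{C}$. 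Here the key point is that the whiskered composites appearing in the triangle identities are themselves computed by the representable 2-functors: for the 2-cell $u\epsilon\cdot\eta u\colon u\To u$, whiskering by $\tcat{C}(A,-)$ on one side and using the naturality of the chosen adjunction structure over $X$ identifies this with the corresponding triangle-identity composite for the 1-categorical adjunction $\tcat{C}(A,f)\dashv\tcat{C}(A,u)$ applied to the object $u\in\tcat{C}(A,B)$, which is an identity; a symmetric argument with $X=B$ handles $\epsilon f\cdot f\eta$. Concretely, naturality in $X$ of the family of adjunctions means precisely that the unit at a general $X$ is obtained by whiskering $\eta$, so both triangle composites reduce to identities by the 1-categorical triangle identities.

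The main obstacle is the bookkeeping in the converse direction: one must be careful that the hypothesis provides adjunctions $\tcat{C}(X,f)\dashv\tcat{C}(X,u)$ that are \emph{compatible} as $X$ varies, in the sense that their units are natural in $X$, rather than merely an unrelated family of adjunctions. This naturality is exactly what is needed to guarantee that the unit extracted at $X=B$ whiskers correctly and that the triangle identities, which are 2-cell equations in $\tcat{C}$, can be checked representably. In fact the cleanest route is to observe that a 2-cell equation $\alpha=\beta$ in a 2-category holds if and only if $\tcat{C}(X,\alpha)=\tcat{C}(X,\beta)$ for all $X$ (taking $X$ to range over the objects appearing, and using that $\tcat{C}(X,-)$ is faithful on 2-cells in the sense that the Yoneda-type evaluation at identities recovers the 2-cell); with the unit defined as above, each triangle identity becomes, after applying $\tcat{C}(X,-)$ and using naturality, the ordinary triangle identity for $\tcat{C}(X,f)\dashv\tcat{C}(X,u)$, hence holds. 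I would present the forward direction first, as it is immediate, and then spend the bulk of the argument setting up the naturality needed for the converse.
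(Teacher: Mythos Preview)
Your proposal is correct and follows essentially the same approach as the paper: the forward direction is immediate from 2-functoriality of $\tcat{C}(X,-)$, and for the converse you extract $\eta$ representably and verify the triangle identities via Yoneda, exactly as the paper does. The one point you could sharpen is your discussion of naturality of the units in $X$: this is not an additional assumption but follows automatically, since the hypothesis fixes the counit to be $\tcat{C}(X,\epsilon)$ at every $X$, which is manifestly 2-natural, and the unit of an adjunction is uniquely determined by its counit---the paper handles this by invoking the 2-categorical Yoneda lemma directly.
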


\begin{proof}
  The only if direction is immediate on observing that $\tcat{C}(X,-)$ is a 2-functor and thus preserves adjunctions. For the converse, we observe that the family of units of the adjunctions $\tcat{C}(X,f)\dashv\tcat{C}(X,u)$ is 2-natural in $X$ and so the 2-categorical Yoneda lemma provides us with a 2-cell $\eta\colon\id_B\Rightarrow uf$ with the property that $\tcat{C}(X,\eta)$ and $\tcat{C}(X,\epsilon)$ are unit and counit of the adjunction  $\tcat{C}(X,f)\dashv\tcat{C}(X,u)$. A further application of the 2-categorical Yoneda lemma demonstrates that the triangle identities for $\eta$ and $\epsilon$ follow immediately from those for $\tcat{C}(X,\eta)$ and $\tcat{C}(X,\epsilon)$.
\end{proof}

\begin{prop}\label{prop:terminal-ext-univ}
A vertex $t$ in a quasi-category $A$  is terminal if and only if for all $X$ the constant functor $\xymatrix@1{{X}\ar[r]^{!} & {\Del^0}\ar[r]^t & {A}}$ is terminal, in the usual sense, in the hom-category $\hom'(X,A)$.
\end{prop}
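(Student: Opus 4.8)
The statement is the special case $B=\Del^0$ of Lemma~\ref{lem:adj-ext-univ}, applied to the pair of functors $!\colon A\to\Del^0$ and $t\colon\Del^0\to A$ together with the unique 2-cell $\epsilon\colon {!}\,t=\id_{\Del^0}\To\id_{\Del^0}$ (which is necessarily the identity, since $\hom'(\Del^0,\Del^0)\cong\catone$). So the plan is simply to invoke Lemma~\ref{lem:adj-ext-univ} directly.

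\begin{proof}
This is an immediate application of Lemma~\ref{lem:adj-ext-univ}. Take $\tcat{C}=\qCat_2$, $B=\Del^0$, $f=t\colon\Del^0\to A$, and $u={!}\colon A\to\Del^0$. Since $\hom'(\Del^0,\Del^0)\cong\catone$, there is a unique 2-cell $fu=t\,{!}\,{!}=\id_{\Del^0}\Rightarrow\id_{\Del^0}$, namely the identity, which we take as $\epsilon$. By Definition~\ref{defn:terminal}, $t$ is terminal in $A$ precisely when there is an adjunction ${!}\dashv t\colon\Del^0\to A$, that is, when $t$ is left adjoint to $!$ in $\qCat_2$; by the uniqueness of 2-cells noted above, the counit of any such adjunction is this $\epsilon$. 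Lemma~\ref{lem:adj-ext-univ} then asserts that $t$ is left adjoint to $!$ with counit $\epsilon$ if and only if for every object $X\in\qCat_2$ the functor $\hom'(X,t)\colon\hom'(X,\Del^0)\to\hom'(X,A)$ is left adjoint to $\hom'(X,{!})\colon\hom'(X,A)\to\hom'(X,\Del^0)$, with counit $\hom'(X,\epsilon)$. Now $\hom'(X,\Del^0)\cong\catone$ by cartesian closedness of $\qCat_2$ (Proposition~\ref{prop:qcat2closed}), so $\hom'(X,t)$ is the functor $\catone\to\hom'(X,A)$ picking out the object $tX\colon X\to\Del^0\to A$; and a functor from the terminal category is left adjoint to the unique functor $\hom'(X,A)\to\catone$ exactly when the chosen object is terminal in $\hom'(X,A)$ in the usual sense. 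Thus $t$ is terminal in $A$ if and only if, for all $X$, the constant functor $\xymatrix@1{{X}\ar[r]^{!} & {\Del^0}\ar[r]^t & {A}}$ is terminal in $\hom'(X,A)$.
\end{proof}
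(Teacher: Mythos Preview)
Your overall plan is exactly the paper's: apply Lemma~\ref{lem:adj-ext-univ}. However, you have reversed the handedness of the adjunction, and this propagates through the argument as two cancelling errors.

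Concretely: the notation ${!}\dashv t$ means that $!$ is the \emph{left} adjoint and $t$ is the \emph{right} adjoint, not the other way round. In the variables of Lemma~\ref{lem:adj-ext-univ} (where $f\colon B\to A$ is the left adjoint and $u\colon A\to B$ the right adjoint, with counit $\epsilon\colon fu\Rightarrow\id_A$), the correct instantiation is to take the lemma's $A$ to be $\Del^0$ and the lemma's $B$ to be our $A$, so that $f={!}\colon A\to\Del^0$ and $u=t\colon\Del^0\to A$. Then $fu={!}\,t=\id_{\Del^0}$ and the counit is the identity, which is what the paper writes. With your assignment $f=t$, $u={!}$, the composite $fu=t\,{!}$ is an endomap of $A$, not of $\Del^0$, so the claimed equality $fu=\id_{\Del^0}$ does not typecheck. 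Your argument is rescued only because you make a second, compensating error in the final step: a functor $\catone\to\hom'(X,A)$ is \emph{left} adjoint to the unique functor to $\catone$ precisely when its value is \emph{initial}, not terminal. Had you kept the correct direction (${!}$ left adjoint, $t$ right adjoint), the lemma would give $\hom'(X,{!})\dashv\hom'(X,t)$, and a right adjoint to the unique functor to $\catone$ really does pick out a terminal object.
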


\begin{proof}
Apply Lemma~\ref{lem:adj-ext-univ} to the functors $t\colon\Del^0\to A$ and $!\colon A\to \Del^0$ and the identity natural transformation $!t = \id_{\Del^0}$.
\end{proof}

We conclude by comparing our definition of terminal object with its antecedent.

\begin{ex}\label{ex:terminaldefn} Joyal defines a vertex $t$ in a quasi-category $A$ to be terminal if and only if any sphere $\partial\Delta^n \to A$ whose final vertex is $t$ has a filler \cite[4.1]{Joyal:2002:QuasiCategories}. In Proposition~\ref{prop:terminalconverse}, we will show that Joyal's definition is equivalent to ours. For the moment, however, we shall at least take some satisfaction in convincing ourselves directly that his notion implies ours.

Supposing that $t \in A$ is terminal in Joyal's sense, then to define an adjunction $\adjinline !-| t :\Del^0 -> A .$ we wish to define a unit $\eta\colon\id_A\Rightarrow t!$ for which $\eta t$ is an identity. This unit is represented by a map \[ \xymatrix@=1.5em{ A \ar[d]_-{i_0} \ar@{=}[dr] \\ A \times \Delta^1 \ar[r]_-\eta & A \\  A \ar[u]^{i_1} \ar[r]_{!} & \Delta^0 \ar[u]_t} \] which we define as follows. For each $a \in A_0$, use the universal property of $t$ to choose a 1-simplex $\eta a \colon \Delta^1 \to A$ from $a$ to $t$. We take care to pick $\eta t$ to be the degenerate 1-simplex at $t$, thus ensuring that the 2-cell $\eta t$ will be the identity at $t$ as required by Lemma \ref{lem:min-term-pres}.

To define $\eta \colon A \to A^{\Delta^1}$ it suffices to inductively specify maps $\Delta^n \xrightarrow{\sigma} A \xrightarrow{\eta} A^{\Delta^1}$ for each non-degenerate $\sigma \in A_n$ compatibly with taking faces of $\sigma$. The map $\eta (\sigma \times \id_{\Delta^1}) \colon \Delta^n \times \Delta^1 \to A$ should be thought of as the component of $\eta$ at $\sigma$. The chosen 1-simplices $\eta a$ define the components at the vertices $a \in A_0$.

For each non-degerate $\alpha \colon a \to a' \in A_1$, define a cylinder $\Delta^1 \times \Delta^1 \to A$ as follows. The 1-skeleton consists of the displayed 1-simplices.
\[ \xymatrix{ a \ar[d]_\alpha \ar[r]^{\eta a} \ar[dr]|{\eta a} & t \ar@{=}[d]^{t\cdot\sigma^0} \\ a' \ar[r]_{\eta a'} & t} \] 
One shuffle is defined by degenerating $\eta a$. The other is chosen by applying the universal property of $t$ to the sphere formed by $\alpha$, $\eta a$, and $\eta a'$.

Continuing inductively, suppose we have chosen, for each $\sigma \in A_n$, a cylinder $\Delta^n \times \Delta^1 \to A$ from $\sigma$ to the degenerate $n$-simplex at $t$ in such a way that these choices are compatible with the face and degeneracy maps from the $n$-truncation $\sk_n\Del$ of $\Del$. Given a non-degenerate simplex $\tau \in A_{n+1}$, this simplex together with the $(n+1)$-simplices chosen for each of its $n$-dimensional faces $\tau\delta^i$ form an $(n+2)$-sphere with final vertex $t$, and we may choose a filler $\hat{\tau} \in A_{n+2}$. Define the requisite cylinder, the component of $\eta$ at $\tau$, to be the composite \[ \Delta^{n+1} \times \Delta^1 \xrightarrow{q} \Delta^{n+2} \xrightarrow{\hat{\tau}} A\] of $\hat{\tau}$ with the map induced by the functor $q \colon [n+1] \times [1] \to [n+2]$ defined by $q(i,0) = i$ and $q(i,1) = n+2$. By construction, $\hat{\tau} \face^i = \hat{\tau \face^i}$ for each $0 \leq i \leq n+1$, that is, the $i^{\th}$ face of the sphere whose filler defines $\hat{\tau}$ is the $(n+1)$-simplex chosen to fill the corresponding sphere for $\tau\delta^i$; thus the cylinder for $\tau$ is chosen compatibly with its faces. 
\end{ex}

This example will be generalised in Proposition \ref{prop:limitsasadjunctions} to limits of arbitrary shape.

\subsection{Basic theory}

A key advantage to our 2-categorical definition of adjunctions is that formal category theory supplies easy proofs of a number of desired results.

\begin{prop}\label{prop:adj-comp} A pair of adjunctions $\adjinline f -| u : A -> B.$ and $\adjinline f' -| u' : B -> C.$  in a 2-category compose to give an adjunction $\adjinline ff' -| u'u : A -> C.$. In particular, we may compose adjunctions of quasi-categories.
\end{prop}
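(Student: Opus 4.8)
The plan is to prove this entirely formally, working in an arbitrary 2-category and then specialising to $\qCat_2$. Given adjunctions $f \dashv u$ with unit $\eta \colon \id_B \To uf$ and counit $\epsilon \colon fu \To \id_A$, and $f' \dashv u'$ with unit $\eta' \colon \id_C \To u'f'$ and counit $\epsilon' \colon f'u' \To \id_B$, I would propose the composite adjunction $ff' \dashv u'u$ with unit and counit built by whiskering. First I would write down the candidate unit as the pasted composite $\xymatrix@1{\id_C \ar@{=>}[r]^-{\eta'} & u'f' \ar@{=>}[r]^-{u'\eta f'} & u'uff'}$ and the candidate counit as $\xymatrix@1{ff'u'u \ar@{=>}[r]^-{f\epsilon' u} & fu \ar@{=>}[r]^-{\epsilon} & \id_A}$.

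The two triangle identities then reduce to the triangle identities for the two given adjunctions by a standard string-diagram (pasting) computation. For instance, the composite $ff' \xRightarrow{ff'\cdot(\text{unit})} ff'u'uff' \xRightarrow{(\text{counit})\cdot ff'} ff'$ can be rewritten, using interchange, so that the inner occurrences of $f'\eta' $ paired with $\epsilon' u$ whisker down via the first triangle identity for $f' \dashv u'$ to leave $f\eta$ paired with $\epsilon f$, which collapses to the identity by the first triangle identity for $f \dashv u$; the dual argument handles the other triangle identity. I would present this as a short pasting-diagram calculation rather than grinding through every interchange step, since it is the classical proof that adjunctions compose and is entirely routine 2-categorical bookkeeping. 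The ``in particular'' clause is then immediate: $\qCat_2$ is a 2-category by Proposition~\ref{prop:qcat2closed} (indeed simply by its construction in Definition~\ref{def:qCat-2}), and adjunctions of quasi-categories are by definition adjunctions in $\qCat_2$, so they compose.

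There is really no main obstacle here — the statement is a formal fact true in any 2-category, and the paper has already set up all the needed vocabulary. If I wanted to avoid even the pasting computation, I could instead invoke representability: by the 2-categorical Yoneda-style argument (as used in Lemma~\ref{lem:adj-ext-univ}), for each object $X$ the hom-functors $\tcat{C}(X,-)$ send the two adjunctions to ordinary adjunctions of categories $\tcat{C}(X,f) \dashv \tcat{C}(X,u)$ and $\tcat{C}(X,f') \dashv \tcat{C}(X,u')$; these compose in $\Cat$ by elementary category theory, and the resulting family of adjunctions $\tcat{C}(X, ff') \dashv \tcat{C}(X, u'u)$ is 2-natural in $X$, so Lemma~\ref{lem:adj-ext-univ} (applied with the composite counit $\epsilon \cdot f\epsilon'u$) produces the adjunction $ff' \dashv u'u$ in $\tcat{C}$. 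Either route is clean; I would likely present the direct pasting argument as the primary proof and mention the representable argument as an alternative.
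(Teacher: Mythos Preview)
Your proposal is correct and takes essentially the same approach as the paper: the paper's proof simply displays the composite unit and counit as pasting diagrams (exactly your $u'\eta f' \cdot \eta'$ and $\epsilon \cdot f\epsilon' u$) and leaves the triangle identities to the reader. You have in fact supplied more detail than the paper does, and your alternative representable argument is a fine aside but unnecessary here.
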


\begin{proof}
The unit and counit of the composite adjunction are \[ \xymatrix@=10pt{ C \ar[dr]_{f'} \ar@{=}[rrrr] & & \ar@{}[d]|(.4){\Downarrow\eta'}& & C & & & C \ar[dr]^{f'} \ar@{}[d]|(.6){\Downarrow\epsilon'} \\ & B \ar[dr]_f \ar@{=}[rr] & \ar@{}[d]|(.4){\Downarrow\eta} & B \ar[ur]_{u'}  & & & B \ar[ur]^{u'} \ar@{=}[rr] & \ar@{}[d]|(.6){\Downarrow\epsilon} & B \ar[dr]^f \\  & & A \ar[ur]_u & &  & A \ar[ur]^u \ar@{=}[rrrr] & & & & A} \qedhere\] 
\end{proof}

Recall Proposition \ref{prop:equivsareequivs}, which demonstrates that equivalences in $\qCat_2$ are exactly the weak equivalences between quasi-categories in the Joyal model structure. The following classical 2-categorical result allows us to promote any equivalence to an adjoint equivalence (cf.~\cite[IV.4.1]{Maclane:1971:CWM}):

\begin{prop}\label{prop:equivtoadjoint} Any equivalence $w\colon A\to B$ in a 2-category may be promoted to an adjoint equivalence in which $w$ may be taken to be either the left or right adjoint. In particular, we may promote equivalences of quasi-categories to adjoint equivalences.
\end{prop}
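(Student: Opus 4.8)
The plan is to prove the standard ``any equivalence can be improved to an adjoint equivalence'' result purely 2-categorically, so that the final sentence follows by specialising to $\tcat{C} = \qCat_2$ (using Proposition~\ref{prop:equivsareequivs} to identify the equivalences there with the Joyal weak equivalences). So suppose $w\colon A\to B$ is an equivalence in a 2-category $\tcat{C}$, with equivalence inverse $w'\colon B\to A$ and 2-isomorphisms $\alpha\colon w'w\cong\id_A$ and $\gamma\colon ww'\cong\id_B$. The one obstruction to these data forming an adjunction $w'\dashv w$ (say) is that the composite pair $(\alpha,\gamma)$ need not satisfy the triangle identities. The classical fix is to keep $\alpha$ as the unit and replace $\gamma$ by a ``corrected'' counit $\epsilon$ so that one triangle identity holds on the nose, and then show the other follows automatically.

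Concretely, first I would define $\epsilon\defeq\gamma\cdot(w\alpha^{-1}w')\cdot(\gamma^{-1}w w')\colon ww'\To\id_B$ (the usual ``tweak'' formula), or equivalently follow the recipe already used in the proof of Lemma~\ref{lem:adjunction.from.isos}: set $\theta\defeq w\alpha\cdot \gamma^{-1} w\colon w\To w$, which is an automorphism of $w$, and then define the corrected counit by precomposing $\gamma$ with $\theta$ on the appropriate side. By exactly the computation displayed in~\eqref{eq:triangle-calculation-1}, this choice makes the triangle identity $\epsilon w\cdot w\alpha^{-1}=\id_w$ hold strictly (here I am using $\alpha^{-1}$ in the role played by $\eta'$ there, with $\epsilon\defeq$ the relevant isomorphism). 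For the second triangle identity, the ``triangle composite'' $\phi\defeq w'\epsilon\cdot\alpha^{-1} w'\colon w'\To w'$ is an isomorphism (being a composite of isomorphisms), and the diagram~\eqref{eq:triangle-calculation-2} shows it is idempotent; since an idempotent isomorphism is an identity (Remark~\ref{rmk:idempotent-isomorphisms}), the second triangle identity holds too. Thus $w'\dashv w$ is an adjunction, and since $\alpha$ and $\epsilon$ are both isomorphisms it is an adjoint equivalence with $w$ the right adjoint. Dually — or by reading the same argument in $\tcat{C}\op$ or with the roles of $w$ and $w'$ swapped — one obtains an adjoint equivalence $w\dashv w''$ exhibiting $w$ as the left adjoint.

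Finally, specialise: $\qCat_2$ is a 2-category, so the general statement applies, and Proposition~\ref{prop:equivsareequivs} identifies its equivalences with the Joyal weak equivalences, giving the ``in particular'' clause. I expect the only real subtlety to be bookkeeping: making sure the whiskered isomorphisms are composed on the correct side so that the invocation of the computations~\eqref{eq:triangle-calculation-1} and~\eqref{eq:triangle-calculation-2} is legitimate, and being careful about which of $w$, $w'$ is playing the role of the left versus right adjoint when one dualises to get the other handedness. None of this is deep — it is the classical argument of \cite[IV.4.1]{Maclane:1971:CWM} internalised to a 2-category — so the proof can afford to be brief, citing Lemma~\ref{lem:adjunction.from.isos} (whose proof already contains both key computations) to do most of the work: indeed, $\alpha^{-1}\colon\id_A\To w'w$ has the property that $w\alpha^{-1}$ and $\alpha^{-1}w'$ are isomorphisms, so Lemma~\ref{lem:adjunction.from.isos} applies directly to the pair $w'\colon B\to A$, $w\colon A\to B$ with the isomorphism $ww'\cong\id_B$, yielding $w'\dashv w$ as an adjoint equivalence; swapping the names $w\leftrightarrow w'$ (equivalently, applying the same lemma with $w$ as the would-be left adjoint) gives the other case.
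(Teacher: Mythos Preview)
Your proposal is correct and takes essentially the same approach as the paper: the paper's entire proof is the single sentence ``This is an immediate corollary of Lemma~\ref{lem:adjunction.from.isos},'' which is exactly the observation you arrive at in your final paragraph. Your longer exposition simply unpacks that lemma's proof in situ. One small bookkeeping slip: with the isomorphism $ww'\cong\id_B$ playing the role of the lemma's $fu\cong\id_A$ and $\alpha^{-1}\colon\id_A\Rightarrow w'w$ playing the role of $\eta'$, the lemma yields $w\dashv w'$ rather than $w'\dashv w$; this is precisely the handedness issue you yourself flagged, and swapping the roles (or using $\gamma^{-1}$ instead) gives the other adjunction.
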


\begin{proof}
  This is an immediate corollary of Lemma~\ref{lem:adjunction.from.isos}. 
\end{proof}

 \begin{prop}\label{prop:expadj} Suppose $\adjinline f -| u : A -> B.$ is an adjunction of quasi-categories. For any simplicial set $X$ and any quasi-category $C$, \[ \adjdisplay f^X-| u^X : A^X-> B^X .\qquad \text{and} \qquad \adjdisplay C^u -| C^f : C^A -> C^B .\] are adjunctions of quasi-categories. 
 \end{prop}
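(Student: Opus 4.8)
The key point is that both constructions $(-)^X$ and $C^{(-)}$ are 2-functors on (the appropriate 2-categories of) quasi-categories, and 2-functors preserve adjunctions since an adjunction is given by equational 2-categorical data (the triangle identities). Concretely, Remark~\ref{rmk:exp2functor} tells us that exponentiation by a fixed simplicial set $X$ defines a 2-functor $(-)^X\colon\qCat_2\to\qCat_2$; applying this 2-functor to the adjunction $f\dashv u$ in $\qCat_2$ of Definition~\ref{defn:adjunction} immediately produces an adjunction $f^X\dashv u^X\colon A^X\to B^X$ in $\qCat_2$, with unit $\eta^X$ and counit $\epsilon^X$ inheriting the triangle identities by functoriality. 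This is the first half of the statement.

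For the second half, I would use the cartesian closed structure of $\qCat_2$ established in Proposition~\ref{prop:qcat2closed}: the exponential defines a 2-functor $\qCat_2\op\times\qCat_2\to\qCat_2$, and in particular, fixing a quasi-category $C$ in the second slot yields a 2-functor $C^{(-)}\colon\qCat_2\op\to\qCat_2$. Note the variance: this is a 2-functor on the \emph{opposite} 2-category, so it sends the adjunction $f\dashv u$ to an adjunction with the roles of the two 1-cells swapped and the unit/counit interchanged. Since $f\colon B\to A$ is left adjoint to $u\colon A\to B$ in $\qCat_2$, equivalently $u$ is right adjoint to $f$, applying $C^{(-)}$ we obtain $C^u\colon C^A\to C^B$ left adjoint to $C^f\colon C^B\to C^A$ in $\qCat_2$, with unit obtained from $\epsilon$ and counit from $\eta$. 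That all objects appearing — $A^X$, $B^X$, $C^A$, $C^B$ — are again quasi-categories is exactly Recollection~\ref{rec:cart-modcat} (internal homs into a quasi-category are quasi-categories, and $(-)^X$ preserves this).

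The proof is therefore essentially a one-line invocation of ``2-functors preserve adjunctions'' twice, once for each 2-functor. The only place requiring mild care — and the closest thing to an obstacle — is bookkeeping the variance in the second case: one must be careful that the contravariance of $C^{(-)}$ in its exponent variable exchanges left and right adjoint, so that it is $C^u$ (not $C^f$) that ends up as the left adjoint. I would state this explicitly rather than leaving it to the reader, and perhaps remark that applying instead the contravariant 2-functor to the adjunction viewed ``the other way round'' gives the expected reading. No genuine difficulty arises; the substance was already done in Proposition~\ref{prop:qcat2closed} and Remark~\ref{rmk:exp2functor}.
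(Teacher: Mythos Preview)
Your proposal is correct and essentially identical to the paper's own proof, which is a one-line invocation of Proposition~\ref{prop:qcat2closed} and Remark~\ref{rmk:exp2functor} to note that $(-)^X$ and $C^{(-)}$ are 2-functors and hence preserve adjunctions. Your added care about the variance of $C^{(-)}$ is a welcome elaboration that the paper leaves implicit.
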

 \begin{proof}
By \ref{prop:qcat2closed} and \ref{rmk:exp2functor}, exponentiation defines 2-functors $(-)^X \colon \qCat_2 \to \qCat_2$ and $C^{(-)} \colon \qCat_2\op \to \qCat_2$, which preserve adjunctions.
 \end{proof}

As an easy corollary of the last few results, terminal objects are preserved by right adjoints, initial objects are preserved by left adjoints, and they are both preserved by equivalences.

\begin{prop}\label{prop:terminaldefn} If $u \colon A \to B$ is a right adjoint or an equivalence of quasi-categories and $t$ is a terminal object of $A$, then $ut$ is a terminal object in $B$.
\end{prop}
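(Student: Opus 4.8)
The plan is to reduce the statement to the facts already established about terminal objects and adjunctions, using the definition of a terminal object as a right adjoint right inverse to the projection $! \colon A \to \Del^0$. First I would recall Definition~\ref{defn:terminal}: $t$ being terminal in $A$ means precisely that there is an adjunction $\adjinline !-|t : \Del^0 -> A.$, i.e.\ $t \colon \Del^0 \to A$ is right adjoint to $! \colon A \to \Del^0$. The goal is then to show that the composite $\Del^0 \xrightarrow{t} A \xrightarrow{u} B$ is right adjoint to $! \colon B \to \Del^0$.

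Next I would handle the two cases. If $u$ is a right adjoint, say $\adjinline f -| u : A -> B.$, then by Proposition~\ref{prop:adj-comp} adjunctions compose, so the adjunction $\adjinline !-|t : \Del^0 -> A.$ composes with $\adjinline f -| u : A -> B.$ to give $\adjinline !f -| ut : \Del^0 -> B.$. Since $!f \colon B \to \Del^0$ is forced to equal the unique functor $! \colon B \to \Del^0$ (the category $\hom'(B,\Del^0) \cong \catone$ is terminal, as noted in the proof of Proposition~\ref{prop:qcat2closed}), this exhibits $ut$ as right adjoint to $! \colon B \to \Del^0$, which is exactly the assertion that $ut$ is terminal in $B$. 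If instead $u$ is an equivalence, then by Proposition~\ref{prop:equivtoadjoint} we may promote $u$ to an adjoint equivalence in which $u$ is the right adjoint; now the previous case applies verbatim.

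The argument is entirely formal and there is no real obstacle: the only thing to be slightly careful about is the identification $!f = {!}$, which is immediate from the 2-terminality of $\Del^0$ in $\qCat_2$ (equivalently, from the fact that $\hom'(B,\Del^0)$ is the terminal category). Everything else is just an application of Proposition~\ref{prop:adj-comp} and Proposition~\ref{prop:equivtoadjoint}. I would also remark, as the proposition's surrounding text already does, that the dual statements---left adjoints and equivalences preserve initial objects---follow by the same reasoning applied to $\tcat{C}\op$, or directly by dualising.

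\begin{proof}
By Definition~\ref{defn:terminal}, $t$ being a terminal object of $A$ means that $t \colon \Del^0 \to A$ is right adjoint to the projection $! \colon A \to \Del^0$.

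Suppose first that $u$ is a right adjoint, with left adjoint $f \colon B \to A$, so that $\adjinline f -| u : A -> B.$. By Proposition~\ref{prop:adj-comp}, the adjunctions $\adjinline !-|t :\Del^0 -> A.$ and $\adjinline f -| u : A -> B.$ compose to give an adjunction $\adjinline !f -| ut : \Del^0 -> B.$. Since the hom-category $\hom'(B,\Del^0)$ is the terminal category (see the proof of Proposition~\ref{prop:qcat2closed}), the functor $!f \colon B \to \Del^0$ coincides with the unique functor $! \colon B \to \Del^0$. Thus $ut \colon \Del^0 \to B$ is right adjoint to $! \colon B \to \Del^0$, which by Definition~\ref{defn:terminal} says precisely that $ut$ is a terminal object of $B$.

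If instead $u$ is an equivalence, then by Proposition~\ref{prop:equivtoadjoint} we may promote $u$ to an adjoint equivalence in which $u$ is the right adjoint. The previous paragraph then applies and shows that $ut$ is terminal in $B$.
\end{proof}
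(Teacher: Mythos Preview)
Your proof is correct and follows essentially the same approach as the paper: reduce the equivalence case to the right-adjoint case via Proposition~\ref{prop:equivtoadjoint}, then compose the adjunction $f \dashv u$ with the adjunction $! \dashv t$ using Proposition~\ref{prop:adj-comp}. The only difference is cosmetic---you treat the right-adjoint case first and spell out the identification $!f = {!}$, which the paper leaves implicit.
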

\begin{proof} By Proposition~\ref{prop:equivtoadjoint}, if $u$ is an equivalence then it may be promoted to a right adjoint, which reduces preservation by equivalences to preservation by right adjoints. Now Proposition~\ref{prop:adj-comp} tells us that we may compose the adjunction in which $u$ features with that which displays $t$ as a terminal object in $A$ to obtain an adjunction which displays $ut$ as a terminal object in $B$.
\end{proof}

\subsection{The universal property of adjunctions}

An essential point in the proof of the main existence theorem of \cite{RiehlVerity:2012hc} is that adjunctions between quasi-categories, while defined equationally, satisfy a universal property. In the terminology introduced there, any adjunction between quasi-categories extends to a \emph{homotopy coherent adjunction}. By contrast, a monad in $\qCat_2$ need not underlie a homotopy coherent monad. In this subsection, we provide several forms of the universal property held by an adjunction. 

Given an adjunction, we form the comma quasi-categories 
\begin{equation}\label{eq:commaobjdefn} \xymatrix@=1.5em{ f \comma A \ar[d]_{(p_1,p_0)} \ar[r] \pbexcursion & A^\cattwo \ar[d]  & & B \comma u \ar[d]_{(q_1,q_0)} \ar[r] \pbexcursion & B^\cattwo \ar[d] \\ A \times B \ar[r]_{\id_A \times f} & A \times A & & A \times B \ar[r]_{u \times \id_B} & B \times B}\end{equation} as in Definition~\ref{def:comma-obj}.  These quasi-categories are equipped with 2-cells
\[
\xymatrix@=15pt{ & f \comma A \ar[dl]_{p_1} \ar[dr]^{p_0} \ar@{}[d]|(.6){\Leftarrow\alpha} & && & B \comma u \ar[dl]_{q_1} \ar[dr]^{q_0} \ar@{}[d]|(.6){\Leftarrow\beta} \\ A & & B \ar[ll]^f && B \ar[rr]_u & & A}
\]
satisfying the weak 2-universal properties detailed in Observation~\ref{obs:unpacking-weak-comma-objects}. Mimicking the standard argument, we derive a fibred equivalence $f \comma A \simeq B \comma u$ from the unit and counit of our adjunction.

\begin{prop}\label{prop:adjointequiv} If $\adjinline f -| u : A -> B.$ is an adjunction of quasi-categories, then there is a fibred equivalence between the objects $(p_1,p_0)\colon f \comma A\tfib A\times B$ and $(q_1,q_0)\colon B \comma u\tfib A\times B$. 
\end{prop}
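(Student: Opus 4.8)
The plan is to mimic the classical $1$-categorical argument showing that, for an adjunction $f \dashv u$, the comma categories $f \comma A$ and $B \comma u$ are isomorphic over $A \times B$, but working only with the weak universal properties of comma objects recalled in Observation~\ref{obs:unpacking-weak-comma-objects}, and concluding with a fibred equivalence rather than an isomorphism. Throughout, we work in $\qCat_2$ (and its slice $\qCat_2\slice(A\times B)$), using Corollary~\ref{cor:recog-fibred-equivs} at the end to upgrade an equivalence over $A\times B$ in $\qCat_2$ to a fibred equivalence in $\ho_*(\qCat_\infty\slice(A\times B))$; this is legitimate because the projection maps $(p_1,p_0)$ and $(q_1,q_0)$ are isofibrations by Lemma~\ref{lem:comma-obj}.

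First I would construct the comparison $1$-cells. The comma cone $\alpha \colon f p_1 \To p_0$ on $f \comma A$ (with legs $p_1 \colon f\comma A \to A$, $p_0 \colon f \comma A \to B$) can be whiskered and pasted with the unit $\eta \colon \id_B \To uf$ to produce a $2$-cell $u\alpha \cdot \eta p_1 \colon p_1 \To u p_0$, which by $1$-cell induction for $B\comma u$ (its defining cone is $\beta \colon q_1 \To u q_0$ with legs $q_1 \colon B\comma u \to A$, $q_0 \colon B\comma u \to B$) yields a functor $\Phi \colon f\comma A \to B\comma u$ with $q_1 \Phi = p_1$, $q_0 \Phi = p_0$, and $\beta\Phi = u\alpha \cdot \eta p_1$. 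Symmetrically, pasting the comma cone $\beta$ of $B\comma u$ with the counit $\epsilon \colon fu \To \id_A$ gives a $2$-cell $\epsilon q_1 \cdot f\beta \colon f q_0 \To q_1$, whence $1$-cell induction for $f\comma A$ produces $\Psi \colon B\comma u \to f\comma A$ with $p_1 \Psi = q_1$, $p_0 \Psi = q_0$, $\alpha\Psi = \epsilon q_1 \cdot f\beta$. Since $q_1 \Phi = p_1$ and $q_0\Phi = p_0$ (and dually), both $\Phi$ and $\Psi$ are $1$-cells in $\qCat_2\slice(A\times B)$.

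Next I would show $\Psi\Phi \cong \id_{f\comma A}$ and $\Phi\Psi \cong \id_{B\comma u}$ over $A\times B$. For the first, observe that $\Psi\Phi \colon f\comma A \to f\comma A$ satisfies $p_1 \Psi\Phi = q_1\Phi = p_1$ and $p_0\Psi\Phi = q_0\Phi = p_0$, so both $\Psi\Phi$ and $\id_{f\comma A}$ lie over $(p_1,p_0) \colon f\comma A \to A\times B$. To invoke Lemma~\ref{lem:1cell-ind-uniqueness} (uniqueness of $1$-cell induction up to isomorphism over $A\times B$), it suffices to check that $\Psi\Phi$ and $\id_{f\comma A}$ are both induced by the \emph{same} comma cone, i.e.\ that $\alpha\Psi\Phi = \alpha$. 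Unwinding: $\alpha\Psi\Phi = (\epsilon q_1 \cdot f\beta)\Phi = \epsilon q_1\Phi \cdot f\beta\Phi = \epsilon p_1 \cdot f(u\alpha \cdot \eta p_1) = \epsilon p_1 \cdot fu\alpha \cdot f\eta p_1$. By middle-four interchange (naturality of $\epsilon$ against the $2$-cell $\alpha \colon fp_1 \To p_0$) we have $\epsilon p_1 \cdot fu\alpha = \alpha \cdot \epsilon (fp_1)$, so the composite becomes $\alpha \cdot \epsilon f p_1 \cdot f\eta p_1 = \alpha \cdot (\epsilon f \cdot f\eta) p_1 = \alpha \cdot \id_f p_1 = \alpha$, using the triangle identity $\epsilon f \cdot f\eta = \id_f$. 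Hence by Lemma~\ref{lem:1cell-ind-uniqueness}, $\Psi\Phi \cong \id_{f\comma A}$ over $A\times B$. The argument that $\Phi\Psi \cong \id_{B\comma u}$ over $A\times B$ is dual, using instead the other triangle identity $u\epsilon \cdot \eta u = \id_u$.

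Finally, these two isomorphisms over $A\times B$ exhibit $\Phi$ as an equivalence in $\qCat_2\slice(A\times B)$, hence in particular $\Phi$ is an equivalence $f\comma A \to B\comma u$ in $\qCat_2$ commuting with the maps down to $A\times B$; Corollary~\ref{cor:recog-fibred-equivs} then promotes it to an equivalence in $\ho_*(\qCat_\infty\slice(A\times B))$, i.e.\ to a fibred equivalence between $(p_1,p_0)$ and $(q_1,q_0)$, as required. The main obstacle I anticipate is bookkeeping the whiskering and middle-four-interchange manipulations precisely enough to land on the triangle-identity reductions — the content is entirely formal $2$-categorical calculation, but one must be careful about which $2$-cell each induced functor is declared to be induced by, so that Lemma~\ref{lem:1cell-ind-uniqueness} actually applies rather than merely Lemma~\ref{lem:smothering}.
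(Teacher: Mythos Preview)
Your proposal is correct and follows essentially the same route as the paper: construct the two comparison functors by $1$-cell induction from the comma cones pasted with the unit and counit respectively, reduce the composites to the identity cone via a pasting calculation that invokes a triangle identity, apply Lemma~\ref{lem:1cell-ind-uniqueness} to obtain isomorphisms over $A\times B$, and then lift to a fibred equivalence (the paper cites Proposition~\ref{prop:slice-smothering-2-functor} directly rather than its consequence Corollary~\ref{cor:recog-fibred-equivs}, but this is the same step). One small caution: with the paper's conventions $p_0\colon f\comma A\to B$, $p_1\colon f\comma A\to A$, the comma $2$-cell is $\alpha\colon fp_0\Rightarrow p_1$ (and dually $\beta\colon q_0\Rightarrow uq_1$), so several of your whiskered expressions have the indices swapped and do not typecheck as written---but your computation is internally consistent under a swapped convention and the argument goes through verbatim once the indices are corrected.
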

\begin{proof}
The composite 2-cells displayed on the left of the equalities below give rise to functors $w\colon B \comma u \to f \comma A$ and $w'\colon f \comma A \to B \comma u$ by 1-cell induction: 
\begin{equation*}\xymatrix@C=10pt{ & B \comma u \ar[dl]_{q_1} \ar[dr]^{q_0} \ar@{}[d]|(.6){\Leftarrow\beta} &  &  & B \comma u \ar[d]^{w}  & && & f \comma A \ar[dl]_{p_1} \ar[dr]^{p_0} \ar@{}[d]|(.6){\Leftarrow\alpha} & & & f \comma A \ar[d]^{w'}  \\ A \ar@{=}[dr] \ar[rr]^u & \ar@{}[d]|{\Leftarrow\epsilon} & B  \ar[dl]^f & = & f \comma A \ar[dl]_{p_1} \ar[dr]^{p_0} \ar@{}[d]|(.6){\Leftarrow\alpha} & & & A  \ar[dr]_u & \ar@{}[d]|{\Leftarrow\eta} & B \ar@{=}[dl] \ar[ll]_f & = & B \comma u \ar[dl]_{q_1} \ar[dr]^{q_0} \ar@{}[d]|(.6){\Leftarrow\beta}  \\ & A & & A  & & B \ar[ll]^f & & & B & & A \ar[rr]_u & & B}\end{equation*} 
By these defining pasting identities, the induced functors provide us with 1-cells 
\begin{equation*}
    \xymatrix{ f\comma A \ar@{->>}[dr]_{(p_1,p_0)} \ar@/^1ex/[rr]^{w'} & & B \comma u \ar@{->>}[dl]^{(q_1,q_0)} \ar@/^1ex/[ll]^w \\ & A \times B} 
\end{equation*}
in the slice 2-category $\qCat_2\slice(A\times B)$ commuting with the canonical isofibrations to $A \times B$. These identities give rise to the following sequence of pasting identities 
\begin{equation*}
\xymatrix@C=10pt{ & f \comma A \ar[d]^{w'} & & & f \comma A \ar[d]^{w'} & & & f \comma A \ar[dl]_{p_1} \ar[dr]^{p_0}\ar@{}[d]|(.6){\Leftarrow\alpha} & \ar@{}[dr]|*+{=} & & f \comma A  \ar[dl]_{p_1} \ar[dr]^{p_0} \ar@{}[d]|(.6){\Leftarrow\alpha} \\ & B \comma u \ar[d]^{w} & \ar@{}[d]|*+{=} & & B \comma u \ar[dl]_{q_1} \ar[dr]^{q_0} \ar@{}[d]|(.6){\Leftarrow\beta}  & {=}  & A  \ar[dr]_u \ar@{=}[dd] &\ar@{}[d]|{\Leftarrow\eta} & B \ar[ll]_f \ar@{=}[dl]  & A & & B \ar[ll]^f \\ & f \comma A \ar[dl]_{p_1} \ar[dr]^{p_0} \ar@{}[d]|(.6){\Leftarrow\alpha} & & A \ar@{=}[dr] \ar[rr]^u & \ar@{}[d]|(0.4){\Leftarrow\epsilon}  & B  \ar[dl]^f  &\ar@{}[r]|(0.4){\Leftarrow\epsilon} &  B \ar[dl]^f  &  \\ A & & B \ar[ll]^f & & A & & A& &  }
\end{equation*} 
in which the last step is an application of one of the triangle identities of the adjunction $f\dashv u$. This tells us that the endo-1-cells $ww'$ and $\id_{f\comma A}$ on the object $(p_1,p_0)\colon f\comma A\tfib A\times B$ in $\qCat_2\slice(A\times B)$ both map to the same 2-cell $\alpha$ under the whiskering operation. Applying Lemma \ref{lem:1cell-ind-uniqueness} (or Observation~\ref{obs:1cell-ind-uniqueness-reloaded}), 
we find that  $ww'$ and $\id_{f\comma A}$ are connected by a 2-isomorphism in $\qCat_2\slice(A\times B)$. A dual argument provides us with a 2-isomorphism between the 1-cells $w' w$ and $\id_{B\comma u}$ in the groupoid of endo-cells on $(q_1,q_0)\colon B\comma u\tfib A\times B$. This data provides us with an equivalence in the slice 2-category $\qCat_2\slice(A\times B)$, which we may lift along the smothering 2-functor of Proposition~\ref{prop:slice-smothering-2-functor} to give a fibred equivalence over $A\times B$.
\end{proof}

Just as in ordinary category theory, the Proposition~\ref{prop:adjointequiv} has a converse:

\begin{prop}\label{prop:adjointequivconverse} Suppose we are given functors $u \colon A \to B$ and $f\colon B\to A$ between quasi-categories. If there is a fibred equivalence between $(p_1,p_0)\colon f \comma A\tfib A\times B$ and $(q_1,q_0)\colon B \comma u\tfib A\times B$, then $f$ is left adjoint to $u$.
\end{prop}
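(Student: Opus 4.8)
The plan is to reverse-engineer the unit and counit from the fibred equivalence, mimicking the classical proof that an adjunction is the same thing as a representable equivalence of comma categories. First I would use Lemma~\ref{lem:technicalsliceadjunction}: applied to $\id_A\colon A\to A$, it tells us that the identity functor $i\colon A\to A\comma A$ (induced by the identity comma cone on $\id_A$) is right adjoint right inverse to $p_1\colon A\comma A\tfib A$, with identity counit. Precomposing with $f\colon B\to A$, or rather contemplating the object $f\comma A$ directly, the functor $i_f\colon B\to f\comma A$ induced by the comma cone $\id_f\colon ff\To ff$ (i.e.\ the cone with legs $\id_B$ on the $B$-side, $f$ on the $A$-side, and identity 2-cell) is right adjoint right inverse to $p_1\colon f\comma A\tfib B$, again by Lemma~\ref{lem:technicalsliceadjunction} applied with $\ell = f$. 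Dually, $i_u\colon A\to B\comma u$ induced by $\id_u$ is a \emph{left} adjoint left inverse to $q_0\colon B\comma u\tfib A$ (apply the dual of Lemma~\ref{lem:technicalsliceadjunction}, or op-dualise: $(\slicel{f}{g})\op \cong \slicer{g\op}{f\op}$ from Remark~\ref{rmk:map-slices}, together with $B\comma u = u\comma B$-type identifications). So we have adjunctions $p_1 \dashv i_f$ over $B$ and $i_u \dashv q_0$ over $A$.

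Next I would transport these along the fibred equivalence $w\colon B\comma u\to f\comma A$ (with inverse $w'$) over $A\times B$ provided by hypothesis. Since $w$ is an equivalence in $\ho_*(\qCat_\infty\slice(A\times B))$, by Proposition~\ref{prop:equivtoadjoint} (promoting equivalences to adjoint equivalences, valid in any 2-category) it is both a left and a right adjoint, so composing with it preserves adjunctions. Composing $p_1\dashv i_f$ (over $B$) with $w' \dashv w$ and $w\dashv w'$ appropriately, and using that $p_1 w = q_1$ and $p_0 w = q_0$ (because $w$ lives over $A\times B$, hence commutes with the projections), I obtain that $q_1\colon B\comma u\tfib B$ has a right adjoint, namely $w' i_f$; and dually, transporting $i_u\dashv q_0$, that $p_0\colon f\comma A\tfib A$ has a left adjoint $w i_u$. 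Now $f$ arises by 1-cell induction from the canonical comma cone $\alpha$ over $f\comma A$: specifically $f = p_0$ composed with appropriate things, but more to the point, the classical argument extracts $f\colon B\to A$ as the composite $B\xrightarrow{i_f} f\comma A\xrightarrow{p_0} A$ (which is indeed $f$, since $p_0 i_f = f$ by the defining property of $i_f$), and $u\colon A\to B$ as $A\xrightarrow{i_u} B\comma u\xrightarrow{q_1} B$ (which is $u$). The content is to produce unit and counit.

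To produce the counit $\epsilon\colon fu\To\id_A$: apply 1-cell induction / the universal property. The composite $A\xrightarrow{i_u} B\comma u\xrightarrow{w} f\comma A$ is a functor over $A\times B$ whose $A$-leg is $q_0 i_u = \id_A$ (using $p_0 w = q_0$) and whose $B$-leg is $q_1 i_u = u$ (using $p_1 w = q_1$ and $q_1 i_u = u$), and it carries the canonical 2-cell $\alpha$ of $f\comma A$ to a 2-cell $\epsilon\colon f\cdot u \To \id_B\cdot (\text{something})$ — more carefully, $\alpha (w i_u)\colon f p_1 (w i_u) \To p_0(w i_u)$, i.e.\ $f u \To \id_A$, which is our candidate counit. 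Similarly the unit $\eta\colon\id_B\To uf$ comes from whiskering the canonical 2-cell $\beta$ of $B\comma u$ with $w' i_f\colon B\to B\comma u$. The triangle identities then must be verified: this is where I would use the strongly universal characterisation of weak comma objects from Lemma~\ref{lem:cpts-and-comma-2-cells} together with Lemma~\ref{lem:1cell-ind-uniqueness} (1-cell induction is unique up to isomorphism over $C\times B$) — each triangle identity composite, whiskered down to $A$ or $B$ and composed with the relevant projection, reduces to an equality of 2-cells that both classify the same square, hence are equal up to a canonical isomorphism, and since we have isofibrations available (Lemma~\ref{lem:isofibration-RARI}, Lemma~\ref{lem:representable-isofibration}) we may strictify the structure so that the relevant isomorphisms become identities. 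Finally one invokes the ``idempotent isomorphism is an identity'' trick (Remark~\ref{rmk:idempotent-isomorphisms}) to upgrade the triangle-identity isomorphisms to genuine equalities, completing the adjunction $f\dashv u$.

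The main obstacle I anticipate is the bookkeeping in the triangle identities: the fibred equivalence is only a \emph{weak} gadget, so $w w'$ and $w' w$ are merely isomorphic to identities over $A\times B$, not equal, and the comma objects only have weak 2-universal properties (smothering, not bijective). So at each stage one produces 2-cells that agree \emph{up to isomorphism} rather than on the nose, and the real work is checking compatibility of all these isomorphisms — precisely the situation Lemma~\ref{lem:cpts-and-comma-2-cells} and the uniqueness Lemma~\ref{lem:1cell-ind-uniqueness} are designed to handle, by passing to $(\pi^g_0)_*(\qCat_2\slice(A\times B))$ where the hom-categories are groupoids and the comma cone is a genuine representing object. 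I expect the cleanest route is: (i) set up the two RARI/LALI adjunctions via Lemma~\ref{lem:technicalsliceadjunction}; (ii) compose with the fibred adjoint equivalences; (iii) read off $\eta$, $\epsilon$ by 1-cell/2-cell induction; (iv) verify the triangle identities hold up to isomorphism using Observation~\ref{obs:1cell-ind-uniqueness-reloaded}, then strictify via Lemma~\ref{lem:adjunction.from.isos} (whose hypotheses — $f\eta$, $\eta u$ isomorphisms — follow from 2-cell conservativity of the comma objects).
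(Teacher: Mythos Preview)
Your overall strategy --- extract candidate unit and counit from the fibred equivalence by whiskering with the ``identity'' sections $i_f$ and $i_u$, then verify the triangle identities --- is the natural direct attack, but several details are off and the endgame doesn't work as stated.

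First, the adjunction directions on your sections are reversed. Applying Lemma~\ref{lem:technicalsliceadjunction} with $\ell = u\colon A\to B$ gives $i_u\colon A\to B\comma u$ as a \emph{right} adjoint right inverse to $q_1\colon B\comma u\tfib A$ (not a LALI to $q_0$). For $f\comma A$ you need the dual statement: the section $i_f\colon B\to f\comma A$ is a \emph{left} adjoint left inverse to $p_0\colon f\comma A\tfib B$. Lemma~\ref{lem:technicalsliceadjunction} applied with $\ell=f$ concerns $A\comma f$, not $f\comma A$. This is repairable, but your subsequent transport-of-adjunctions argument would need to be rewritten accordingly.

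More seriously, your final step is a genuine gap. You propose to check the triangle identities ``up to isomorphism'' and then invoke Lemma~\ref{lem:adjunction.from.isos} to strictify. But that lemma requires the counit $fu\Rightarrow\id_A$ to be a 2-\emph{isomorphism}, and your claim that $f\eta$ and $\eta u$ are isomorphisms by 2-cell conservativity is simply false in general --- there is no reason for the unit of an arbitrary adjunction to have invertible whiskerings. So Lemma~\ref{lem:adjunction.from.isos} does not apply, and you have no mechanism for promoting approximate triangle identities to exact ones.

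The paper avoids this difficulty entirely by a different route. Rather than defining $\eta$ and $\epsilon$ independently and then struggling to reconcile them, it first develops the theory of absolute right liftings: Proposition~\ref{prop:absliftingtranslation2} shows that any fibred equivalence $B\comma u \simeq f\comma A$ over $A\times B$ corresponds to a 2-cell $\epsilon\colon fu\Rightarrow\id_A$ which displays $u$ as an absolute right lifting of $\id_A$ through $f$. Example~\ref{ex:adjasabslifting} then observes that such an absolute right lifting is precisely the counit of an adjunction $f\dashv u$: the unit $\eta$ is \emph{defined} via the universal property of the lifting (equation~\eqref{eq:unitdefn}), and both triangle identities follow immediately from uniqueness of factorisations. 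The whole proof is two lines once that machinery is in place. The point is that by letting the universal property of $\epsilon$ manufacture $\eta$, compatibility is automatic --- whereas your approach produces $\eta$ and $\epsilon$ by separate constructions and must then establish compatibility by hand, which is where your argument breaks down.
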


    Schematically the proof of this result proceeds by observing that the image of the identity morphism at $f$ under the equivalence $f \comma A \simeq B \comma u$ defines a candidate unit for the desired adjunction. This can then be shown to have the appropriate universal property; the proof, however is slightly subtle. We delay it to the next section, where it will appear as a special case of a more general result needed there.

\begin{obs}[the hom-spaces of a quasi-category]\label{obs:pointwise-adjoint-correspondence} One model for the hom-space between a pair of objects $a$ and $a'$ in a quasi-category $A$ is the comma quasi-category $a \comma a'$, denoted by $\mathrm{Hom}_A(a,a')$ in~ \cite{Lurie:2009fk}. Proposition~\ref{prop:weakcomma} tells us that the canonical comparison $\ho(a\comma a')\to \ho(a)\comma \ho(a')$ from the homotopy category of this hom-space is a smothering functor. Its codomain $\ho(a)\comma\ho(a')$ is a comma category of arrows between a fixed pair of objects in the category $\ho A$, so it is simply the discrete category whose objects are the arrows from $a$ to $a'$ in $\ho A$. It follows from conservativity of the smothering functor that all arrows in $\ho(a\comma a')$ and thus also $a\comma a'$ are isomorphisms; hence,  $a\comma a'$ is a Kan complex by Joyal's result \cite[1.4]{Joyal:2002:QuasiCategories}.

By Observation~\ref{obs:fibred-pullback}, the fibred equivalence of Proposition~\ref{prop:adjointequiv} may be pulled back along the functor $(a,b)\colon\Del^0\to A\times B$ associated with any pair of vertices $a\in A$ and $b\in B$ to give an equivalence $fb \comma a \simeq b \comma ua$ of hom-spaces. This should be thought of as a quasi-categorical analog of the usual adjoint correspondence defined for arrows between a fixed pair of objects $b \in B$ and $a \in A$. 
\end{obs}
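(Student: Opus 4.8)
The plan is to treat the two assertions of this observation separately, since they are logically independent. The first is that a comma quasi-category $a\comma a'$ built from a pair of vertices $a,a'\colon\Del^0\to A$ is a Kan complex (so that it genuinely deserves the name ``hom-space''); the second is that, for an adjunction $f\dashv u\colon A\to B$ and vertices $a\in A$, $b\in B$, there is an equivalence $fb\comma a\simeq b\comma ua$. Only the forward direction Proposition~\ref{prop:adjointequiv} is needed as input, and that has already been proved above, so everything here is a short deduction from results in hand.

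For the first assertion I would argue as follows. Lemma~\ref{lem:comma-obj} already tells us that $a\comma a'$ is a quasi-category, and Proposition~\ref{prop:weakcomma} tells us that the canonical comparison $\ho(a\comma a')\to\ho(a)\comma\ho(a')$ is a smothering functor. The key point is that the target is the comma category in $\Cat$ built from a cospan of functors out of the terminal category $\catone$: it is therefore the \emph{discrete} category whose objects are the arrows $a\to a'$ of $\ho A$, so in particular every arrow of $\ho(a)\comma\ho(a')$ is an isomorphism. Since a smothering functor is conservative, it follows that every arrow of $\ho(a\comma a')$ is an isomorphism, i.e.\ $\ho(a\comma a')$ is a groupoid. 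Consequently every $1$-simplex of $a\comma a'$ represents an isomorphism in its homotopy category, and by Joyal's characterisation of Kan complexes among quasi-categories (the corollary recalled after Proposition~\ref{prop:joyal-special-horn}) we conclude that $a\comma a'$ is a Kan complex.

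For the second assertion, recall that Proposition~\ref{prop:adjointequiv} produces a fibred equivalence between $(p_1,p_0)\colon f\comma A\tfib A\times B$ and $(q_1,q_0)\colon B\comma u\tfib A\times B$, that is, an equivalence in the $2$-category $\ho_*(\qCat_\infty\slice(A\times B))$. By Observation~\ref{obs:fibred-pullback}, pullback along the vertex $(a,b)\colon\Del^0\to A\times B$ is a genuine simplicial limit, hence defines a simplicial functor $\qCat_\infty\slice(A\times B)\to\qCat_\infty\slice\Del^0\cong\qCat_\infty$ and therefore a $2$-functor $(a,b)^*\colon\ho_*(\qCat_\infty\slice(A\times B))\to\qCat_2$ after applying $\ho_*$. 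A $2$-functor preserves equivalences, so $(a,b)^*$ carries the fibred equivalence to an equivalence in $\qCat_2$. It then remains only to identify the pulled-back objects: pasting the pullback square along $(a,b)$ onto the defining square of $f\comma A$ from~\eqref{eq:commaobjdefn} exhibits $(a,b)^*(f\comma A)$ as the pullback of $A^\cattwo\to A\times A$ along $(a,fb)\colon\Del^0\to A\times A$, which is exactly $fb\comma a$ in the sense of Definition~\ref{def:comma-obj}; the analogous pasting identifies $(a,b)^*(B\comma u)$ with $b\comma ua$. Since both of these are Kan complexes by the first assertion, and equivalences in $\qCat_2$ are the Joyal weak equivalences (Proposition~\ref{prop:equivsareequivs}), the resulting map is a homotopy equivalence of Kan complexes, as claimed.

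I do not anticipate a genuine obstacle. The only place where a little care is needed is the bookkeeping in the last step: one must keep the two factors of $A\times B$, and their images in $A\times A$ and $B\times B$ under $\id_A\times f$ and $u\times\id_B$, straight, so that the handedness convention of Definition~\ref{def:comma-obj} is respected and the pulled-back comma objects come out as $fb\comma a$ and $b\comma ua$ rather than their opposites.
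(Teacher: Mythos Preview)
Your proposal is correct and follows essentially the same approach as the paper: the observation's justification is already embedded in its text, and you have simply unpacked it with a bit more detail (particularly the pullback-pasting identification of the fibres and the remark that the resulting equivalence is between Kan complexes). No meaningful deviation.
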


\begin{rmk}\label{rmk:vs-lurie-adjunction}
Observation \ref{obs:pointwise-adjoint-correspondence} demonstrates that the 2-categorical definition of an adjunction implies the definition of adjunction given by Lurie in \cite[5.2.2.8]{Lurie:2009fk}. As his definition has a more complicated form, we prefer not to recall it here. It is in fact precisely equivalent to Joyal's 2-categorical definition \ref{defn:adjunction}. Our preferred proof that Lurie's definition implies Joyal's makes use of the fact that the domain and codomain projections from comma quasi-categories are, respectively, cartesian and cocartesian fibrations. A proof will appear in \cite{RiehlVerity:2015fy}, which gives new 2-categorical definitions of these notions, which, when interpreted in $\qCat_2$, recapture precisely the (co)cartesian fibrations of \cite{Lurie:2009fk}.
\end{rmk}

We may apply Proposition~\ref{prop:adjointequiv}  to give a converse to Example~\ref{ex:terminaldefn}, proving that our notion of terminal objects is equivalent to Joyal's. The proof requires one combinatorial lemma, which relates certain comma quasi-categories with Joyal's slices, which are recalled in \ref{defn:slices} and \ref{rmk:map-slices}.

\begin{lem}\label{lem:slice-equiv-comma} For any vertex $a$ in a quasi-category $A$, there is an equivalence
\[\xymatrix@=1em{ \slicer{A}{a} \ar@{->>}[dr] \ar[rr]^-\sim & & A \comma a \ar@{->>}[dl] \\ & A} \] over $A$, which pulls back along any $f \colon B \to A$ to define an equivalence $\slicer{f}{a} \simeq f \comma a$ over $B$.
\end{lem}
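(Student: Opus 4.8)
The plan is to compare the slice $\slicer{A}{a}$ with the fat slice $\fatslicer{A}{a}$ and the latter with the comma quasi-category $A\comma a$, assembling the desired equivalence from these two comparisons, and then to check that everything takes place over $A$ and is preserved by pullback along $f$.

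First I would invoke Proposition~\ref{prop:slice-fatslice-equiv}, applied with $X = \Del^0$ and the map $a\colon\Del^0\to A$, which provides an equivalence of quasi-categories $e_r^a\colon \slicer{A}{a}\to \fatslicer{A}{a}$; by Remark~\ref{rmk:map-slices} this equivalence commutes with the canonical projections to $A$, and indeed is an equivalence of fibrant objects in the sliced Joyal model structure $\sSet\slice A$. The second, and main, step is to exhibit an equivalence $\fatslicer{A}{a}\to A\comma a$ over $A$. Unwinding the definitions, $\fatslicer{A}{a}$ is by Definition~\ref{defn:fat-slices} the value at $a\colon \Del^0\to A$ of the right adjoint to ${-}\mathbin{\bar\fatjoin}\Del^0$, while $A\comma a = a\comma\id_A$ is by Definition~\ref{def:comma-obj} (in the generalised form of Remark~\ref{rmk:map-slices}) the pullback of $A^\cattwo\to A\times A$ along $a\times\id_A\colon\Del^0\times A\to A\times A$. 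The key observation is that the fat join $\Del^0\fatjoin Y$ is, from the pushout~\eqref{eq:fat-join-def} with $X=\Del^0$, exactly $Y\times\Del^1$ with the $\{1\}$-end collapsed onto the cone point — in other words, for a map $f\colon X\to A$, a cone in $\fatslicer{A}{a}$ over a simplex $\sigma\colon\Del^n\to X$ is precisely a map $\Del^n\times\Del^1\to A$ restricting to $\sigma$ composed with $f$ on $\Del^n\times\{0\}$ and to $a$ (constantly) on $\Del^n\times\{1\}$; this is exactly the data picked out by the pullback defining $A\comma a$, using that $A\comma a$ is built from $A^\cattwo = A^{\Del^1}$. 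So $\fatslicer{A}{a}$ and $A\comma a$ are in fact \emph{isomorphic} over $A$, not merely equivalent. Composing, $e\defeq (\text{this iso})\circ e_r^a\colon \slicer{A}{a}\to A\comma a$ is an equivalence over $A$.

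The hard part, and the step I expect to cost the most care, is the explicit identification $\fatslicer{A}{a}\cong A\comma a$ over $A$: one must chase the adjunction defining the fat slice through the defining pushout~\eqref{eq:fat-join-def} for $X=\Del^0$, noting that $\Del^0\times Y\cong Y$ and that one leg of the pushout becomes the composite $Y\sqcup\Del^0\to Y\times\Del^1$ picking out the two ends, and then match the resulting universal property against the pullback in Definition~\ref{def:comma-obj}, being careful about the handedness convention (the comma object $a\comma\id_A$ restricts along $\Del^0\sqcup A\cong\partial\Del^1\inc\Del^1$ composed with the symmetry, which corresponds to the $\{1\}$-end of the fat join being the collapsed one). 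Alternatively one can avoid the isomorphism and argue with equivalences throughout: both $\fatslicer{A}{a}$ and $A\comma a$ are fibrant in $\sSet\slice A$ and the comparison map between them — induced, as in the construction of~\eqref{eq:join-comp-def} and the surrounding observation, by the natural map $X\fatjoin Y\to X\join Y$ together with the explicit description of $A\comma a$ — is a pointwise weak equivalence, hence an equivalence of quasi-categories by Ken Brown's lemma, exactly as in the proof of Proposition~\ref{prop:slice-fatslice-equiv} and Remark~\ref{rmk:map-slices}.

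Finally, for the assertion about pullback: by Remark~\ref{rmk:map-slices}, $\slicer{f}{a}$ and $f\comma a$ are by definition the pullbacks of $\slicer{A}{a}\tfib A$ and $A\comma a\tfib A$ along $f\colon B\to A$, and the projections $\pi$ are isofibrations since $A$ is a quasi-category. Pullback along $f$ is a right Quillen functor of sliced model structures (pullback along any map in a model category is right Quillen), so by Ken Brown's lemma it carries the equivalence $e\colon \slicer{A}{a}\to A\comma a$ between fibrant objects of $\sSet\slice A$ to an equivalence $\slicer{f}{a}\to f\comma a$ between fibrant objects of $\sSet\slice B$, which is the claimed equivalence over $B$. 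This completes the proof.
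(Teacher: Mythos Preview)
Your proposal is correct and follows essentially the same route as the paper: establish an isomorphism $\fatslicer{A}{a}\cong A\comma a$ over $A$ by matching universal properties, compose with the equivalence $\slicer{A}{a}\simeq\fatslicer{A}{a}$ of Proposition~\ref{prop:slice-fatslice-equiv}, and invoke Remark~\ref{rmk:map-slices} for the pullback statement. Two small handedness slips to clean up: $\fatslicer{A}{a}$ is right adjoint to ${-}\bar\fatjoin\Del^0$ (so the relevant fat join is $Y\fatjoin\Del^0$, not $\Del^0\fatjoin Y$), and $A\comma a$ is $\id_A\comma a$ in the notation of Definition~\ref{def:comma-obj}, not $a\comma\id_A$; also, the ``alternative'' argument via equivalences is unnecessary since the isomorphism is straightforward once the universal properties are unwound.
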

\begin{proof}
The result follows from an isomorphism $A \comma a \cong \fatslicer{A}{a}$ between the comma and the fat slice construction reviewed in Definition \ref{defn:fat-slices}. The map $\slicer{A}{a} \to A \comma a$ and the equivalence over $A$ are then special cases of Proposition \ref{prop:slice-fatslice-equiv}. To establish the isomorphism, it suffices to show that $A \comma a$ has the universal property that defines $\fatslicer{A}{a}$.  By adjunction, a map $X \to \fatslicer{A}{a}$ corresponds to a commutative square, as displayed on the left:
\[ \vcenter{\xymatrix{ X \coprod X \ar[d] \ar[r]^-{\pi_X \coprod !} & X \coprod \Del^0 \ar[d]^{ (f, a)} \\ X \times \Del^1 \ar[r]_-k & A}} \qquad \leftrightsquigarrow \qquad \vcenter{\xymatrix{ X \ar[r]^-k \ar[d]_{(!,f)} & A^{\Del^1} \ar[d] \\  \Del^0 \times A \ar[r]_-{ a \times \id_A} & A \times A}}\]
which transposes to the commutative square displayed on the right. The data of the right-hand square is precisely that of a map $X \to A \comma a$ by the universal property of the pullback \ref{def:comma-obj} defining the comma quasi-category.

The isomorphism $A \comma a \cong \fatslicer{A}{a}$ pulls back to define an isomorphism $f \comma a \cong \fatslicer{f}{a}$. The map $\slicer{f}{a} \to f \comma a$ is then an equivalence over $B$ by Remark \ref{rmk:map-slices}.
\end{proof}

\begin{prop}\label{prop:terminalconverse} A vertex $t \in A$ is terminal in the sense of  Joyal's \cite[4.1]{Joyal:2002:QuasiCategories} if and only if \[\adjdisplay !-| t :\Del^0 -> A . \] is an adjunction of quasi-categories. 
\end{prop}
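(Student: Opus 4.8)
The plan is to prove the equivalence by combining the previously-established machinery: Example~\ref{ex:terminaldefn} already shows that Joyal-terminality implies our adjunction-theoretic definition, so only the converse needs a new argument. Suppose then that $\adjinline !-| t :\Del^0 -> A.$ is an adjunction. We must show that any sphere $\boundary\Del^n \to A$ whose final vertex is $t$ admits a filler.

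The key idea is to reduce this lifting problem to the universal property of a terminal object in a slice quasi-category, transported across an equivalence. First I would observe that, by Proposition~\ref{prop:adjointequiv} applied to the adjunction $!\dashv t$, there is a fibred equivalence between $!\comma A \tfib A \times \Del^0 \cong A$ and $\Del^0\comma t \tfib A \times \Del^0 \cong A$; but $!\comma A$ is isomorphic to $A^\cattwo$ fibred over $A$ via its codomain projection $p_0$, while $\Del^0\comma t \cong A\comma t$, which by Lemma~\ref{lem:slice-equiv-comma} is fibred-equivalent over $A$ to $\slicer{A}{t}$. So we obtain a fibred equivalence $A^\cattwo \simeq \slicer{A}{t}$ over $A$ (via the domain projection $A^\cattwo \to A$ on the left, which corresponds to $p_0$). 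Pulling back along $t\colon \Del^0\to A$ — equivalently, restricting to the fibre over $t$ — gives an equivalence between the Kan complex of arrows out of $t$ in $A$ and the fibre of $\slicer{A}{t}$ over $t$; since the latter has a terminal object ($\id_t$, by Example~\ref{ex:slice-terminal}), and equivalences preserve terminal objects (Proposition~\ref{prop:terminaldefn}), the former does too, which is a first consistency check but not yet the full sphere-filling condition.

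The substantive step is to extract the sphere-filling property from the adjunction directly, rather than merely from the equivalence of hom-spaces. Here I would argue as follows. Given $\boundary\Del^n \to A$ with final vertex $t$, we may regard it (via the join decomposition $\boundary\Del^n \cong \partial\Del^{n-1}\join\Del^0 \cup \Del^{n-1}\join\boundary\Del^0$ appropriately; more precisely using that $\Del^n \cong \Del^{n-1}\join\Del^0$) as a map $\Del^{n-1}\to A$ together with a cone datum, i.e.\ as a map from a boundary into the slice $\slicer{A}{t}$ over $A$—concretely, a lifting problem of $\boundary\Del^{n-1}\inc\Del^{n-1}$ against the isofibration $\slicer{A}{t}\tfib A$ together with a specified lift of the boundary. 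Because $\slicer{A}{t}$ is fibred-equivalent over $A$ to $A^\cattwo$, and the latter is an isofibration over $A$ via $p_0$ whose "identity section" $A \to A^\cattwo$ is a right adjoint right inverse to $p_0$ (Lemma~\ref{lem:technicalsliceadjunction} with $\ell = \id_A$), the fibre of $\slicer{A}{t}$ over $t$ is itself fibred-equivalent to the quasi-category $A\comma t$ pulled back over $t$, which has a terminal object; a terminal object in a quasi-category, by Lemma~\ref{lem:min-term-pres} together with the description of its universal property, supplies fillers for all such spheres with final vertex that object. Translating back through the equivalence $\slicer{A}{t}\simeq A\comma t$ and the join isomorphism $\slicer{A}{t}_n \cong \{\sigma\colon\Del^n\to A \mid \sigma\cdot\delta^{n+1}\text{-type last vertex}=t\}$ converts a filler for the sphere in the slice into the desired filler for $\boundary\Del^n\to A$ in $A$.

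The main obstacle I anticipate is the bookkeeping in the last paragraph: carefully matching the combinatorial description of a sphere $\boundary\Del^n\to A$ with final vertex $t$ to a boundary-lifting problem against the projection $\slicer{A}{t}\tfib A$, and then transporting that lifting problem across the fibred equivalence $\slicer{A}{t}\simeq A\comma t$ of Lemma~\ref{lem:slice-equiv-comma}. Fibred equivalences are equivalences in $\ho_*(\qCat_\infty\slice A)$, so transporting a strict lifting problem requires using that both $\slicer{A}{t}$ and $A\comma t$ are isofibrant over $A$ and that equivalences between isofibrations over a base induce bijections on isomorphism-classes of lifts—essentially the argument that a fibred equivalence is a homotopy equivalence of fibres, so that solvability of the lifting problem is invariant. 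Once that transfer principle is in hand, the existence of a filler is immediate from the existence of the terminal object $\id_t$ in $A\comma t$ and its universal property as unpacked via Lemma~\ref{lem:min-term-pres}. An alternative, perhaps cleaner, route that avoids some of this bookkeeping is to cite the equivalence $\slicer{A}{t}\simeq A\comma t$ over $A$ to deduce that $\slicer{A}{t}\tfib A$ has a section which is a right adjoint right inverse (transporting the RARI of Lemma~\ref{lem:technicalsliceadjunction} across the equivalence using the fact that smothering/fibred-equivalence data is stable under such transport), and then observe that a section of $\slicer{A}{t}\tfib A$ picking out $\id_t$ is exactly the data of compatible fillers for all spheres with final vertex $t$; I would lead with whichever of these the authors' surrounding development makes most immediate.
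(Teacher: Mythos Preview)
Your identification of the comma quasi-categories is wrong, and this error propagates through the rest of the argument. In Proposition~\ref{prop:adjointequiv}, the adjunction $\adjinline f-| u:A->B.$ has $f\colon B\to A$; applying it to $! \dashv t$ means the target ``$A$'' is $\Del^0$ and the source ``$B$'' is the quasi-category $A$. The two commas are therefore $!\comma\Del^0$ and $A\comma t$, not $!\comma A$ and $\Del^0\comma t$. Crucially, $!\comma\Del^0$ is the pullback of $(\Del^0)^\cattwo\cong\Del^0$ along $\id\times !\colon\Del^0\times A\to\Del^0\times\Del^0$, so $!\comma\Del^0\cong A$ --- not $A^\cattwo$. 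Your claimed fibred equivalence $A^\cattwo\simeq\slicer{A}{t}$ over $A$ is therefore false (their fibres over a generic $a$ are $a\comma A$ and the hom-space $a\comma t$ respectively), and everything you build on it, including the transport of the RARI from Lemma~\ref{lem:technicalsliceadjunction} with $\ell=\id_A$, collapses.

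Once the commas are identified correctly the argument is much shorter than what you attempt, and this is what the paper does. The fibred equivalence $A\cong!\comma\Del^0\simeq A\comma t$ over $A$ says exactly that the isofibration $A\comma t\tfib A$ is an equivalence, hence a trivial fibration in Joyal's model structure. By Lemma~\ref{lem:slice-equiv-comma} and the 2-of-3 property, the isofibration $\slicer{A}{t}\tfib A$ is also a trivial fibration. But the right lifting property of $\slicer{A}{t}\tfib A$ against $\boundary\Del^n\inc\Del^n$ is, after transposing across the join--slice adjunction, precisely Joyal's sphere-filling condition for $t$. So there is no need to transport individual lifting problems across a fibred equivalence or to unwind terminal objects in fibres: the ``trivial fibration'' formulation packages all the sphere-fillers at once.
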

\begin{proof}
The ``if'' direction is Example~\ref{ex:terminaldefn}. For the converse implication, an adjunction $! \dashv t$ gives rise to an equivalence between $!\comma \Delta^0 \cong A$ and $A \comma t$ over $A$ by Proposition~\ref{prop:adjointequiv}. Hence, by the 2-of-3 property of equivalences, the isofibration $A \comma t \tfib A$ is a trivial fibration. Lemma \ref{lem:slice-equiv-comma} supplies an equivalence \[\xymatrix@=1em{ \slicer{A}{t} \ar@{->>}[dr]_\sim \ar[rr]^-\sim & & A \comma t \ar@{->>}[dl] \\ & A} \] between our comma quasi-category and Joyal's slice quasi-category; see \ref{defn:slices} for a definition. Applying the 2-of-3 property again, it follows that the isofibration $\slicer{A}{t} \tfib A$ is a trivial fibration; the right lifting property against the boundary inclusions $\boundary\Delta^n \to \Delta^n$ says precisely that $t \in A$ is terminal in Joyal's sense.
\end{proof}

One reason for our particular interest in terminal objects is to show that the units and counits of adjunctions have universal properties which may be expressed ``pointwise'' in terms of certain outer horn filler conditions.

\begin{prop}[the pointwise universal property of an adjunction]\label{prop:pointwise-univ-adj}
    Suppose that we are given an adjunction 
    \begin{equation*}
        \adjdisplay f -| u : A -> B.
    \end{equation*}
    of quasi-categories with unit $\eta\colon\id_B\Rightarrow uf$ and counit $\epsilon\colon fu\Rightarrow \id_A$. Then for each $a \in A$ the (fat) slice quasi-category $f\comma a\simeq\slicer{f}{a}$ has terminal object $\epsilon a\colon fua\to a$, namely the component of the counit $\epsilon$ at $a$.
\end{prop}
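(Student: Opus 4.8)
The plan is to reduce the claim to the external characterisation of terminal objects (Proposition~\ref{prop:terminal-ext-univ}) combined with the hom-space interpretation of comma quasi-categories and the ``pointwise'' form of the adjoint correspondence extracted from Proposition~\ref{prop:adjointequiv}. Since $f\comma a \simeq \slicer{f}{a}$ by Lemma~\ref{lem:slice-equiv-comma} and since terminal objects transport across equivalences (Proposition~\ref{prop:terminaldefn}), it suffices to show that $\epsilon a\colon fua\to a$ is terminal in $f\comma a$. Here $\epsilon a$ is the object of $f\comma a$ induced, via 1-cell induction over the comma cone for $f\comma a = f\comma A$ pulled back along $a\colon\Del^0\to A$, by the component of the counit: it is the vertex whose image under the two projections is $ua\in B$ and $a\in A$ and whose whiskered composite with the comma 2-cell $\alpha$ is $\epsilon a\colon fua\To a$.

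First I would recall from Observation~\ref{obs:pointwise-adjoint-correspondence} that pulling the fibred equivalence $w\colon B\comma u \we f\comma A$ of Proposition~\ref{prop:adjointequiv} back along $(a,b)\colon\Del^0\to A\times B$ yields an equivalence of Kan complexes $b\comma ua \we fb\comma a$, natural in the appropriate sense. Taking $b = ua$ and chasing the identity-at-$ua$ vertex $\id_{ua}\in ua\comma ua$ through this equivalence produces (up to the isomorphism of Lemma~\ref{lem:1cell-ind-uniqueness}) exactly the vertex $\epsilon a\in fua\comma a \subseteq f\comma a$, since by the defining pasting identity for $w$ the whiskered composite of $\alpha$ with $w$ is $\epsilon\cdot(fq_1)$, and $q_1$-component of $\id_{ua}$ is $\id_{ua}$. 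So I would set up this identification carefully, using the 1-cell induction property and its uniqueness, to pin down $\epsilon a$ as $w$ applied to the terminal object of $ua\comma ua$.

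Next I would invoke Example~\ref{ex:slice-terminal}: for any object $a'$ of a quasi-category $A'$, the slice $A'\comma a'$ has terminal object the vertex isomorphic to $a'\cdot\degen^0$. Applying this with $A' = ua\comma ua$ is overkill; rather, applying it directly, $\id_{ua}$ (i.e.\ the degenerate $1$-simplex $ua\cdot\degen^0$) \emph{is} the terminal object of the hom-space $ua\comma ua$. Since $w$ restricts, after pullback, to an equivalence of Kan complexes $ua\comma ua \we fua\comma a$, and since the comma quasi-category $f\comma a$ is built by gluing these hom-spaces over the objects of $B$ via the isofibration $p_1\colon f\comma a\tfib B$ whose fibre over $ua$ is $fua\comma a$, I would argue that the terminal object of the fibre, transported along $w$, together with the RARI adjunction $i\dashv p_1$ of Lemma~\ref{lem:technicalsliceadjunction} applied to $\ell = f$, assembles into a terminal object of the total quasi-category $f\comma a$. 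Concretely: $p_1\colon f\comma a = B\comma f\text{-pullback}\ldots$ admits a RARI, so its terminal objects are detected fibrewise, and the distinguished fibre $fua\comma a$ (over the terminal-ish object $ua$, but here simply over $ua$) has $\epsilon a$ as terminal object.

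The cleanest route, and the one I would actually write, avoids this fibrewise bookkeeping: use Proposition~\ref{prop:terminal-ext-univ}, reducing to showing that for every $X$ the constant functor at $\epsilon a$ is terminal in $\hom'(X, f\comma a)$. By the weak 2-universal property of $f\comma a$ (Observation~\ref{obs:unpacking-weak-comma-objects}), objects of $\hom'(X,f\comma a)$ correspond, via the smothering functor, to comma cones, i.e.\ triples $(c\colon X\to \Del^0, b\colon X\to B, \gamma\colon fb\To a!)$, equivalently $2$-cells $\gamma\colon fb\To a!$; and maps between them correspond (via 2-cell induction, using 2-cell conservativity for well-definedness) to $2$-cells $\tau\colon b\To b'$ compatible with the $\gamma$'s. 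Then I would directly verify that $\epsilon a\in f\comma a$, viewed as the cone $(\,!\,, ua\cdot!, \epsilon a\cdot!)$, is weakly terminal and that the requisite maps into it are unique up to the isomorphisms permitted by the smothering functor: given a cone $\gamma\colon fb\To a$, the $2$-cell $\epsilon a\cdot fb\;\circ\; f(\eta b)$-type construction, i.e.\ the transpose $\bar\gamma\colon b\To ua$ of $\gamma$ under the adjunction, supplies the unique comparison map, its uniqueness being exactly the statement that the adjoint transpose is a bijection --- which holds on homotopy categories and is upgraded by the smothering functor being full and conservative. I expect the main obstacle to be making the ``up to the ambiguity of a smothering functor'' bookkeeping precise: smothering functors are not equivalences, so ``terminal object'' must be verified honestly in $\hom'(X, f\comma a)$ rather than merely in its image, and this requires combining surjectivity-on-objects, fullness, and conservativity in the right order, much as in the proof of Lemma~\ref{lem:1cell-ind-uniqueness}. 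Once that pattern is in place the verification that adjoint transposition gives the required unique factorisations is a formal $2$-categorical manipulation with the triangle identities.
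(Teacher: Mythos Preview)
Your proposal contains the right ingredients but misapplies them, and the ``cleanest route'' you eventually settle on has a gap you yourself flag but do not close.

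The main slip is in where you pull back the fibred equivalence of Proposition~\ref{prop:adjointequiv}. You pull back along $(a,b)\colon\Del^0\to A\times B$ to obtain equivalences of hom-spaces $b\comma ua\simeq fb\comma a$, and then assert that $\id_{ua}$ is terminal in the Kan complex $ua\comma ua$. That is false in general: a Kan complex has a terminal object only when it is contractible, and there is no reason for the endomorphism space of $ua$ to be contractible. What Example~\ref{ex:slice-terminal} actually says is that $\id_{ua}$ is terminal in $B\comma ua$, the full slice, not in a single hom-space fibre. Your attempt to recover by assembling fibrewise data via Lemma~\ref{lem:technicalsliceadjunction} does not go through either: that lemma concerns $B\comma\ell$, and $f\comma a$ is not of that form until you already know $f\comma a\simeq B\comma ua$.

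The paper's proof fixes this with one change: pull back the fibred equivalence $f\comma A\simeq B\comma u$ along $(a,\id_B)\colon B\to A\times B$ rather than along a point. This yields a fibred equivalence $f\comma a\simeq B\comma ua$ over $B$ directly. Now Example~\ref{ex:slice-terminal} gives $\id_{ua}$ as terminal in $B\comma ua$, Proposition~\ref{prop:terminaldefn} transports it across the equivalence, and a short check using the defining pasting for $w'$ identifies the transported object with $\epsilon a$. Lemma~\ref{lem:slice-equiv-comma} handles the passage to $\slicer{f}{a}$.

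Your alternative via Proposition~\ref{prop:terminal-ext-univ} would require showing that the preimage of a terminal object under the smothering functor $\hom'(X,f\comma a)\to\hom'(X,f)\comma\hom'(X,a)$ is terminal. But smothering functors are full, not faithful: two parallel morphisms in $\hom'(X,f\comma a)$ with equal images under $p_0$ and $p_1$ need not coincide, so uniqueness of the comparison map downstairs does not lift to uniqueness upstairs. You correctly identify this as the obstacle; it is a real one, and the tools you list (surjectivity, fullness, conservativity) do not suffice to close it.
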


\begin{proof}
    From Proposition~\ref{prop:adjointequiv}, the adjunction $f\dashv u$ gives rise to the equivalence $f \comma A \simeq B \comma u$ fibred over $A \times B$. By Observation~\ref{obs:fibred-pullback}, for each  $a\in A$, the fibred equivalence pulls back along the functor $(a,\id_B)\colon B\to A\times B$ to give a fibred equivalence
    \begin{equation}
      \xymatrix{ f\comma a \ar@{->>}[dr]_{p_0} \ar@/^1ex/[rr]^{w'} & & B \comma ua \ar@{->>}[dl]^{q_0} \ar@/^1ex/[ll]^w \\ & B}
    \end{equation}
  over $B$. 

By Example~\ref{ex:slice-terminal}, we know that $B\comma ua$ has the identity map $ua\cdot\degen^0\colon ua\to ua$ as its terminal object, and by Proposition~\ref{prop:terminaldefn} we know that terminal objects transport along equivalences, so it follows that $f\comma a$ also has terminal object $w'(ua\cdot\degen^0)$. It is now easily checked, from the definition of $w'$ given in Proposition~\ref{prop:adjointequiv}, that $w'(ua\cdot\degen^0)$ is isomorphic to $\epsilon a\colon fua\to a$. The desired result follows on transporting this terminal object along the equivalence between $f\comma a$ and $\slicer{f}{a}$ provided by the geometry result of Lemma \ref{lem:slice-equiv-comma}.
\end{proof}

Of course, the unit of an adjunction of quasi-categories satisfies a dual universal property.

\begin{obs}[unpacking this pointwise universal property of an adjunction] \label{obs:universal-property-of-epsilon}
    Unpacking the definitions in Remark~\ref{rmk:map-slices} and Definition~\ref{defn:slices} we see that a map $X\to \slicer{f}{a}$ corresponds to a pair of maps $b\colon X\to B$ and $\alpha\colon X\join\Del^0\to A$ which make the diagram 
    \[
        \xymatrix@=1.5em{ 
            X \ar[d] \ar[r]^f & B \ar[d]^b \\ X \join \Del^0 \ar[r]^-{\alpha} & A \\ \Del^0 \ar[u] \ar[ur]_{a}} \] commute.

By Proposition \ref{prop:terminalconverse}, we know that $\epsilon a\colon fua\to a$ is terminal in $\slicer{f}{a}$ is terminal if and only if every sphere $\boundary\Del^{n-1}\to \slicer{f}{a}$ whose last vertex is $\epsilon a$ may be filled to a simplex. Applying our description of maps into $\slicer{f}{a}$ and observing that $\Del^{n-1}\join\Del^0\cong\Del^n$ and $\boundary\Del^{n-1}\join\Del^0\cong\Horn^{n,n}$, we see that $\epsilon a$ being terminal means that if we are given
    \begin{itemize} 
        \item a horn $\Horn^{n,n} \to A$, with $n \geq 2$ together with
        \item a sphere $\partial\Delta^{n-1} \to B$ whose composite with $f$ is the boundary of the missing face of the horn, with the property that
        \item  the final edge of the horn  is $\epsilon a$ 
    \end{itemize} 
    then there is 
    \begin{itemize}
        \item a simplex $\Delta^n \to A$ filling the given horn and
        \item a simplex $\Delta^{n-1} \to B$ filling the given sphere, with the property that
        \item the $n\th$ face of the filling $n$-simplex in $B$ is the simplex obtained by applying $f$ to the filling $(n-1)$-simplex in $A$.
    \end{itemize} 

    For $n=2$, this situation is summarised by the following schematic:
 \[ \vcenter{ \xymatrix@=1.2em{ & fua \ar[dr]^{\epsilon} & \\ fb \ar[rr]_\alpha & & a}} b \in B_0\quad \rightsquigarrow \vcenter{\xymatrix@=1.2em{ & fua \ar[dr]^{\epsilon} \ar@{}[d]|(.6){\sigma} &\\ fb \ar[ur]^{f\beta} \ar[rr]_\alpha & & a}}  \mkern10mu \sigma \in A_2,\ \beta \colon b \to ua \in B_1\]
  \end{obs}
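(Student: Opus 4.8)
The statement to be established is the explicit combinatorial reformulation of what terminality of $\epsilon a$ in $\slicer{f}{a}$ amounts to, so the plan is to combine two ingredients: the adjoint description of maps into the slice, and Joyal's horn-theoretic criterion for terminal vertices recorded in Proposition~\ref{prop:terminalconverse}. First I would record the description of maps into $\slicer{f}{a}$. By Remark~\ref{rmk:map-slices}, $\slicer{f}{a}$ is the pullback of $\slicer{A}{a}\tfib A$ along $f\colon B\to A$, and by Definition~\ref{defn:slices} the object $\slicer{A}{a}$ represents the functor sending $Y$ to the set of maps $Y\join\Del^0\to A$ which carry the apex $\Del^0\cong\Del^{-1}\join\Del^0$ to $a$. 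Composing these two universal properties, a map $X\to\slicer{f}{a}$ is precisely a pair $(b,\alpha)$ consisting of $b\colon X\to B$ and $\alpha\colon X\join\Del^0\to A$ whose restriction along the base inclusion $X\cong X\join\Del^{-1}\to X\join\Del^0$ is $f\circ b$ and whose apex goes to $a$; this is exactly the content of the first commutative diagram of the observation.

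Next I would invoke Proposition~\ref{prop:terminalconverse}, applied to the quasi-category $\slicer{f}{a}$ and its vertex $\epsilon a$, which says that $\epsilon a$ is terminal if and only if every sphere $\boundary\Del^{n-1}\to\slicer{f}{a}$ whose final vertex is $\epsilon a$ admits a filler $\Del^{n-1}\to\slicer{f}{a}$. It then remains only to translate both the sphere and its filler through the map description of the previous paragraph. Taking $X=\boundary\Del^{n-1}$, such a sphere is a pair $(b,\alpha)$ with $b\colon\boundary\Del^{n-1}\to B$ and $\alpha\colon\boundary\Del^{n-1}\join\Del^0\to A$; taking $X=\Del^{n-1}$, a filler is a pair $(b',\alpha')$ with $b'\colon\Del^{n-1}\to B$ and $\alpha'\colon\Del^{n-1}\join\Del^0\to A$ restricting to the boundary data. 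Here I would use the join computations underlying Definition~\ref{defn:join-dec}, which supply the isomorphisms $\Del^{n-1}\join\Del^0\cong\Del^n$ and $\boundary\Del^{n-1}\join\Del^0\cong\Horn^{n,n}$ (the horn missing the face opposite the apex). Under these, $\alpha$ becomes a horn $\Horn^{n,n}\to A$ whose final edge is $\epsilon a$, $b$ becomes a sphere in $B$, $\alpha'$ becomes the filling $n$-simplex of $A$, and $b'$ the filling $(n-1)$-simplex of $B$; the commutativity constraint of the slice description forces exactly that $f\circ b$ is the boundary of the missing face of the horn and that the $n\th$ face of $\alpha'$ agrees with $f\circ b'$.

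The routine but delicate part is the bookkeeping of the join combinatorics. I must check that the apex vertex of each $Y\join\Del^0$ is always carried to $a$, that this is what pins the final vertex of the sphere to $\epsilon a$, and that the face of $\Del^n$ opposite the apex — the unique one not belonging to $\Horn^{n,n}$ — is precisely the copy of $\Del^{n-1}$ along which $\alpha'$ restricts to $f\circ b'$. This identification of the missing face with the $B$-datum under $f$ is what yields the final bullet of the statement, and it is the one point at which the compatibility square of the slice description does genuine work; everything else is a formal consequence of the two representing universal properties and is confirmed by specialising to the displayed $n=2$ schematic.
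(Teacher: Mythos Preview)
Your proposal is correct and follows essentially the same route as the paper's own observation: you invoke the adjoint description of maps into $\slicer{f}{a}$ via Remark~\ref{rmk:map-slices} and Definition~\ref{defn:slices}, apply Proposition~\ref{prop:terminalconverse} for the sphere-filling criterion, and then translate through the join identities $\Del^{n-1}\join\Del^0\cong\Del^n$ and $\boundary\Del^{n-1}\join\Del^0\cong\Horn^{n,n}$. Your bookkeeping is somewhat more explicit than the paper's, but the argument is the same.
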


\begin{obs}[the relative universal property of an adjunction]
For any quasi-category $X$ the 2-functor $\hom'(X,{-})\colon \qCat_2\to\Cat$ carries an adjunction $f\dashv u\colon A\to B$ of quasi-categories to an adjunction $\hom'(X,f)\dashv \hom'(X,u)\colon\hom'(X,A)\to\hom'(X,B)$ of categories. Extending Lemma~\ref{lem:adj-ext-univ}, a standard and easily established fact of 2-category theory is that $f\colon B\to A$ has a right adjoint in $\qCat_2$ if and only if for each quasi-category $X$ the functor $\hom'(X,f)\colon\hom'(X,B)\to\hom'(X,A)$  has a right adjoint. We might call this observation the {\em external\/} universal property of an adjunction.

    There is a closely related {\em internal\/} or {\em relative\/} universal property of adjunctions in $\qCat_2$, which arises instead from Remark~\ref{rmk:exp2functor} that the cotensor $(-)^X\colon\qCat_2\to\qCat_2$ is also a 2-functor. Applying this cotensor 2-functor to the adjunction $f\dashv u$ we obtain its relative universal property simply as the pointwise universal property of the adjunction $f^X\dashv u^X\colon A^X\to B^X$ as derived in Proposition~\ref{prop:pointwise-univ-adj} and expressed explicitly in Observation~\ref{obs:universal-property-of-epsilon}. The relative universal property of adjunctions will become a key tool in the proof that any adjoint functor between quasi-categories extends to a homotopy coherent adjunction; see  \cite{RiehlVerity:2012hc}.  
\end{obs}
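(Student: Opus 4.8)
The plan is to treat the two universal properties separately, in each case reducing to results already in hand. For the \emph{external} property I would first record the easy (``only if'') direction: the assignment $B\mapsto\hom'(X,B)$ is the covariant representable $2$-functor $\qCat_2(X,-)\colon\qCat_2\to\Cat$, and every $2$-functor preserves adjunctions, since an adjunction is specified by $1$- and $2$-cells subject to the two triangle identities, all of which are equational $2$-categorical data. Hence $\hom'(X,f)\dashv\hom'(X,u)$ with unit $\hom'(X,\eta)$ and counit $\hom'(X,\epsilon)$ for every quasi-category $X$, which is exactly the first assertion of the observation.

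For the converse (``if'') direction, suppose each $\hom'(X,f)\colon\hom'(X,B)\to\hom'(X,A)$ admits a right adjoint. The strategy is the standard representability argument, assembling this family of ordinary adjoints into a single $1$-cell of $\qCat_2$ and then invoking Lemma~\ref{lem:adj-ext-univ}. Concretely, I would set $u\defeq U_A(\id_A)\colon A\to B$, where $U_A$ is the chosen right adjoint to $\hom'(A,f)$, and let $\epsilon\colon fu\Rightarrow\id_A$ be the component at $\id_A$ of the counit of $\hom'(A,f)\dashv U_A$. The key step is to verify that, for every $X$, the whiskered $2$-cell $\hom'(X,\epsilon)$ exhibits $\hom'(X,u)$ as right adjoint to $\hom'(X,f)$; equivalently, that for each $a\colon X\to A$ the $2$-cell $\epsilon a\colon fua\Rightarrow a$ is terminal in the comma category $\hom'(X,f)\comma a$. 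This follows by transporting the universal property of $\epsilon$ along the (pseudo)naturality of the family $\{U_X\}$ in $X$, which in turn is forced by uniqueness of adjoints once one observes that precomposition by $h\colon X'\to X$ commutes strictly with postcomposition by $f$ on left adjoints. With $\hom'(X,f)\dashv\hom'(X,u)$ established representably and with counit $\hom'(X,\epsilon)$, Lemma~\ref{lem:adj-ext-univ} immediately yields $f\dashv u$ with counit $\epsilon$ in $\qCat_2$.

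The \emph{relative} property requires no new work: by Remark~\ref{rmk:exp2functor} the cotensor $(-)^X\colon\qCat_2\to\qCat_2$ is a $2$-functor, so it carries the adjunction $f\dashv u$ to an adjunction $f^X\dashv u^X\colon A^X\to B^X$ of quasi-categories, with unit $\eta^X$ and counit $\epsilon^X$. Applying Proposition~\ref{prop:pointwise-univ-adj} to this new adjunction then exhibits, for each object $a$ of $A^X$, the component $\epsilon^X a$ as a terminal object of the slice $f^X\comma a$, and Observation~\ref{obs:universal-property-of-epsilon} unpacks this into the explicit horn-filling conditions; this is precisely the asserted relative universal property.

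I expect the main obstacle to be the converse direction of the external property: the honest content there is the coherence needed to promote an unstructured \emph{family} of ordinary right adjoints $\{U_X\}$ into a single representable $1$-cell $u$ of $\qCat_2$. The subtlety is entirely $2$-categorical, namely showing that the mate $2$-cells comparing $U_{X'}\circ\hom'(h,A)$ with $\hom'(h,B)\circ U_X$ are invertible and coherent, after which Lemma~\ref{lem:adj-ext-univ} does the rest. The paper rightly flags this as ``standard,'' so I would be content to cite the $2$-categorical Yoneda lemma rather than belabour the pasting diagrams.
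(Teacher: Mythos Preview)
Your treatment of the relative property and of the forward (``only if'') direction of the external property is correct and matches the paper's indications; the observation carries no formal proof in the paper beyond the pointers to the cotensor 2-functor and to Proposition~\ref{prop:pointwise-univ-adj}, so on those parts there is nothing further to compare.

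The gap is in your converse for the external property. You assert that the right-adjoint family $\{U_X\}$ is pseudonatural in $X$, ``forced by uniqueness of adjoints'' from the strict naturality of $\hom'({-},f)$, and that this (or the 2-categorical Yoneda lemma) makes the mate 2-cells $h^*U_X\Rightarrow U_{X'}h^*$ invertible. That inference fails in a general 2-category: these mates are Beck--Chevalley transformations and need not be isomorphisms. Concretely, take the one-object 2-category whose endo-hom is the monoidal category $(\mathrm{Ab},\otimes,\mathbb{Z})$ and let $f=\mathbb{Q}$. Then $\hom'(*,f)=\mathbb{Q}\otimes{-}$ has right adjoint $\mathrm{Hom}_{\mathbb{Z}}(\mathbb{Q},{-})$, so every $\hom'(X,f)$ has a right adjoint; yet $\mathbb{Q}$ is not dualizable and so has no right adjoint as a 1-cell. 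Your recipe produces $u=\mathrm{Hom}_{\mathbb{Z}}(\mathbb{Q},\mathbb{Z})=0$, and $\hom'(*,u)=0\otimes{-}$ is visibly not right adjoint to $\mathbb{Q}\otimes{-}$, so the step ``$\hom'(X,\epsilon)$ exhibits $\hom'(X,u)$ as right adjoint to $\hom'(X,f)$'' already fails. Since your sketch invokes only facts valid in an arbitrary 2-category, it cannot be repaired without bringing in structure particular to $\qCat_2$. The paper does not supply such an argument either, flagging the claim as ``standard'', so you are right that genuine content hides here---but neither uniqueness of adjoints nor the 2-categorical Yoneda lemma is that content.
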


Another application of Proposition \ref{prop:adjointequiv} allows us to show that an isofibration between quasi-categories admits a right adjoint right inverse if and only if the following lifting property holds.

\begin{lem}[right adjoint right inverse as a lifting property]\label{lem:RARI-lifting}
  An isofibration $f \colon B \tfib A$ of quasi-categories admits a right adjoint right inverse if and only if for all $a \in A_0$ there exists $ua \in B_0$ with $fua = a$ and so that any lifting problem with $n \geq 1$
\begin{equation}\label{eq:RARI-lifting} \xymatrix{ \Delta^0 \ar[r]_{\fbv{n}} \ar@/^2ex/[rr]^{ua} & \boundary\Delta^n \ar@{u(->}[d] \ar[r] & B \ar@{->>}[d]^f \\ & \Delta^n \ar@{-->}[ur] \ar[r] & A} \end{equation}
has a solution.
\end{lem}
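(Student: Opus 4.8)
The plan is to leverage the characterisation of right adjoint right inverses to an isofibration established in Lemmas~\ref{lem:isofibration-RARI} and~\ref{lem:adjunction.from.isos}, together with the pointwise universal property of an adjunction via terminal objects in slices, specialised to the case $u = t$. First I would handle the ``only if'' direction. Suppose $f \colon B \tfib A$ admits a right adjoint right inverse. By Lemma~\ref{lem:isofibration-RARI} (using that $f$ is a representable isofibration by Lemma~\ref{lem:representable-isofibration}) we may assume the right adjoint $u$ satisfies $fu = \id_A$ strictly and has identity counit. For each $a \in A_0$, set $ua \in B_0$; then $fua = a$. By Proposition~\ref{prop:pointwise-univ-adj}, applied with this adjunction, the slice $\slicer{f}{a} \simeq f \comma a$ has terminal object the component $\epsilon a$, which in the strict RARI case is the degenerate $1$-simplex $ua \cdot \degen^0 \colon ua \to ua$; equivalently, $\slicer{f}{a}$ has a terminal object whose image in $B$ under the projection $\slicer{f}{a} \tfib B$ is $ua$. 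Unwinding the description of $\slicer{f}{a}$ via the join (Observation~\ref{obs:universal-property-of-epsilon}, or directly from \ref{defn:slices} and \ref{rmk:map-slices}), a sphere $\boundary\Del^{n-1} \to \slicer{f}{a}$ whose last vertex is this terminal object corresponds precisely to a lifting problem of the form~\eqref{eq:RARI-lifting}; Joyal's characterisation of terminal objects (Proposition~\ref{prop:terminalconverse}) then supplies the required filler. I should be a little careful tracking indices: a sphere $\boundary\Del^{m} \to \slicer{f}{a}$ with terminal last vertex corresponds, under $\Del^{m-1}\join\Del^0 \cong \Del^m$ and $\boundary\Del^{m-1}\join\Del^0 \cong \Horn^{m,m}$, to the data of a horn $\Horn^{m,m} \to A$ together with a sphere $\boundary\Del^{m-1} \to B$ over its missing face; but a \emph{boundary} $\boundary\Del^n \to A$ with last vertex the terminal object is not quite a horn. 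The cleaner route is to observe that $\slicer{f}{a} \tfib A$ being a trivial fibration (which is the content of $\epsilon a$ being terminal, via \ref{prop:terminalconverse}) is exactly the statement that it lifts against all $\boundary\Del^n \inc \Del^n$, and unwinding the join adjunction turns this into~\eqref{eq:RARI-lifting}.

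For the ``if'' direction, suppose the stated lifting property holds: for each $a \in A_0$ we have $ua \in B_0$ with $fua = a$, and every problem~\eqref{eq:RARI-lifting} is solvable. The plan is to show that this data forces $\slicer{f}{a}$ to have $ua \cdot \degen^0$ as a terminal object, for every $a$, and then to assemble these into a RARI adjunction. The lifting property~\eqref{eq:RARI-lifting} says precisely that $\slicer{f}{a} \tfib A$ is a trivial fibration onto the component... more precisely, that the map $\slicer{f}{a} \to A$ has the right lifting property with respect to $\boundary\Del^n \inc \Del^n$ for the spheres whose final vertex maps appropriately; combined with the fact that it is already an isofibration (Observation~\ref{obs:slice-and-qcats}, Remark~\ref{rmk:map-slices}), and that we have a chosen vertex $ua \cdot \degen^0$ in the fibre over $a$, this gives that $ua\cdot\degen^0$ is terminal in $\slicer{f}{a}$ in Joyal's sense, hence by Proposition~\ref{prop:terminalconverse} in our sense. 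Then for each $a$ we obtain an equivalence $f \comma a \simeq \slicer{f}{a}$ (Lemma~\ref{lem:slice-equiv-comma}) carrying terminal object to terminal object. Running the pointwise-to-global machinery in reverse — this is where Proposition~\ref{prop:adjointequivconverse} is relevant, or more directly the characterisation of $f$-having-a-right-adjoint via fibred equivalence $f\comma A \simeq B \comma u$ — I would build the candidate right adjoint: define $u$ on objects by $a \mapsto ua$, and use the terminal objects in the slices $\slicer{f}{a}$ to produce, by induction over skeleta exactly as in Example~\ref{ex:terminaldefn}, a functor $u \colon A \to B$ together with a unit $\eta \colon \id_B \To uf$; one checks $fu = \id_A$ by construction and that $f\eta$, $\eta u$ are isomorphisms using the universal properties, whence Lemma~\ref{lem:adjunction.from.isos} gives the adjunction $f \dashv u$ with identity counit, i.e. a RARI.

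The main obstacle, I expect, is the bookkeeping in the ``if'' direction: promoting the family of pointwise terminal-object data (one terminal object in each $\slicer{f}{a}$, or equivalently in each $f\comma a$) into an actual functor $u \colon A \to B$ defined coherently on all simplices, not just on objects. This is precisely the kind of inductive extension-over-skeleta argument carried out in Example~\ref{ex:terminaldefn} for the special case $A = \Del^0$, and the present statement is its relative (fibred) analogue over an arbitrary base $A$; the key point is that the hypothesis~\eqref{eq:RARI-lifting} with $n \geq 1$ ranging over all $n$ is exactly what is needed to carry out each inductive step — the $n$-simplex filler in $A$ and the $(n-1)$-simplex filler in $B$ feeding the next stage. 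An alternative, which avoids the explicit induction, is to argue entirely through the fibred equivalence $f \comma A \simeq B \comma u$: define $u$ on objects, check that the hypothesis makes each fibre $f \comma a \to \Del^0$ a trivial fibration so that $f\comma A \tfib A$ "is" the universal cone, and then invoke Proposition~\ref{prop:adjointequivconverse} (or its proof, promised in the next section) to extract the adjunction. Either way, the combinatorial heart is the identification of the lifting problems~\eqref{eq:RARI-lifting} with sphere-fillers into Joyal slices $\slicer{f}{a}$ via the join/slice adjunction of Definitions~\ref{defn:slices} and~\ref{defn:fat-slices} and Remark~\ref{rmk:map-slices}, and the recognition (Proposition~\ref{prop:terminalconverse}) that such sphere-fillers encode terminality of $\epsilon a$.
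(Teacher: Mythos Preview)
Your ``only if'' direction has a genuine gap: the lifting property~\eqref{eq:RARI-lifting} is \emph{not} a restatement of the terminality of $\epsilon a = ua\cdot\degen^0$ in $\slicer{f}{a}$. Unwinding that terminality via Proposition~\ref{prop:terminalconverse} and the join adjunction yields a filling condition for a sphere $\boundary\Del^m \to B$ paired with a \emph{horn} $\Lambda^{m+1,m+1} \to A$ whose final edge is degenerate at $a$ --- not a full simplex $\Del^n \to A$ as demanded by~\eqref{eq:RARI-lifting}. Your parenthetical ``$\slicer{f}{a} \tfib A$ being a trivial fibration (which is the content of $\epsilon a$ being terminal)'' conflates two different things: terminality of $t$ in $Q$ says $\slicer{Q}{t} \to Q$ is a trivial fibration, not that $Q$ fibres trivially over some base. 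What~\eqref{eq:RARI-lifting} actually encodes, under the join adjunction, is that the map $\slicer{B}{ua} \to \slicer{f}{a}$ is a trivial fibration. The paper obtains this by observing that the map $B\comma u \to f\comma fu = f\comma A$ over $A \times B$ induced by applying $f$ is an isofibration (Lemma~\ref{lem:comma-obj-maps}) and an equivalence (Proposition~\ref{prop:adjointequiv}), hence a trivial fibration; pulling back over $a$ and transporting along the equivalences of Lemma~\ref{lem:slice-equiv-comma} gives the result.

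For the ``if'' direction your skeletal-induction sketch is essentially what the paper does, and is the efficient route: the lifting property~\eqref{eq:RARI-lifting} is exactly what is needed to extend the assignment $a \mapsto ua$ from $\sk_0 A$ to a section $u \colon A \to B$ of $f$. The paper then builds $\eta$ by solving one further lifting problem against the cylinder inclusion $B\sqcup B \hookrightarrow B\times\Del^1$, arranged so that $f\eta = \id_f$ strictly, and checks $\eta u$ is a pointwise isomorphism using~\eqref{eq:RARI-lifting} in low dimensions; Lemma~\ref{lem:adjunction.from.isos} then concludes. Your alternative route through terminality in each $\slicer{f}{a}$ followed by the pointwise-to-global machinery would work, but it is circuitous and forward-references Theorem~\ref{thm:pointwise}, which itself cites the present lemma.
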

\begin{proof}
If $u$ is the right adjoint right inverse, then $fu = \id_A$ and there is a trivial fibration $B \comma u \trvfib f \comma fu \cong f \comma A$ over $A \times B$ defined by applying $f$ (Lemma \ref{lem:comma-obj-maps} proves that this map is an isofibration and Proposition \ref{prop:adjointequiv} shows that it is an equivalence). This trivial fibration pulls back over any vertex $a \in A_0$ to define a trivial fibration $B \comma ua \trvfib f \comma a$. The domain and codomain are equivalent to Joyal's slices by Lemma \ref{lem:slice-equiv-comma}, so the isofibration $\slicer{B}{ua} \tfib \slicer{f}{a}$ is also a trivial fibration:
\[ \xymatrix{ \boundary\Delta^{n-1} \ar[r] \ar[d] & \slicer{B}{ua} \ar[d] \\ \Delta^n \ar[r] \ar@{-->}[ur] & \slicer{f}{a} \cong B \times_A \slicer{A}{a}}\] In adjoint form, this is the lifting property of \eqref{eq:RARI-lifting}.

Conversely, the lifting property \eqref{eq:RARI-lifting} can be used to inductively define a section $u \colon A \to B$ of $f$ extending the choices $ua \in B_0$ for $a \in A_0$. The inclusion $\sk_0A\hookrightarrow A$ can be expressed as a countable composite of pushouts of coproducts of maps $\boundary\Del^n\hookrightarrow\Del^n$ with $n \geq 1$, and each intermediate lifting problem required to define a lift
\[ \xymatrix{ \Delta^0 \ar[r]_-{a} \ar@/^2ex/[rr]^{ua} & \sk_0 A \ar@{u(->}[d] \ar[r] & B \ar@{->>}[d]^f \\ & A \ar@{-->}[ur]^u \ar@{=}[r] & A}\]
will have the form of \eqref{eq:RARI-lifting}. To show that $u$ is a right adjoint right inverse to $f$, it suffices, by Lemma \ref{lem:adjunction.from.isos} to define a 2-cell $\eta \colon \id_B \To uf$ that whiskers with $u$ and with $f$ to isomorphisms. We construct a representative for $\eta$ by solving the lifting problem
\[\xymatrix{ B \coprod B \ar[d] \ar[rr]^{\id_B \coprod uf} & & A \ar[d]^{f} \\ B \times \Delta^1 \ar@{-->}[urr]^\eta \ar[r]_-{\pi_B} & B \ar[r]_f & A}\]
By construction $f\eta=\id_f$. 

To show that $\eta u$ is an isomorphism it suffices, by Corollary \ref{cor:pointwise-equiv}, to check that its components $\eta u(a) \colon ua \to ufua=ua$ are isomorphisms in $A$. Inverse isomorphisms can be found by elementary applications of the lifting property \eqref{eq:RARI-lifting}, whose details we leave to the reader.
\end{proof}

\subsection{Fibred adjunctions}\label{subsec:fibred.adjunction}

Fibred equivalences over $A$, i.e., equivalences in $\ho_*(\qCat_\infty\slice  A)$, are preferable to equivalences in the slice 2-category $\qCat_2\slice  A$ because the former can be pulled back along arbitrary maps $f \colon B \to A$; see Observation~\ref{obs:fibred-pullback}. Precisely the same kind of reasoning applies to adjunctions in $\qCat_2\slice  A$. 

\begin{defn}[fibred adjunctions]\label{defn:fibred.adj}
  We refer to adjunctions in $\ho_*(\qCat_\infty\slice A)$ as \emph{adjunctions fibred over $A$} or simply \emph{fibred adjunctions}.
\end{defn}

    Our aim in this section is to show that any adjunction in $\qCat_2\slice A$ can  be lifted to an adjunction fibred over $A$, i.e., to an adjunction in $\ho_*(\qCat_\infty\slice A)$. In particular, such a result will allow us to prove that any adjunction in $\qCat_2\slice  A$ may be pulled back along any functor $f\colon B\to A$. We shall use this result to define a loops--suspension adjunction on any quasi-category with appropriate finite limits and colimits (cf.\ Proposition~\ref{prop:loops-suspension}).

    Recall from Proposition \ref{prop:slice-smothering-2-functor} that the canonical 2-functor $\ho_*(\qCat_\infty\slice A) \to \qCat_2\slice A$ is a smothering 2-functor. Consequently, the following 2-categorical lemma is key:

\begin{lem}\label{lem:missed-lemma} Suppose $F \colon \tcat{C} \to \tcat{D}$ is a smothering 2-functor. Then any adjunction in $\tcat{D}$ can be lifted to an adjunction in $\tcat{C}$. Furthermore, if we have previously specified a lift of the objects, 1-cells, and either the unit or counit of the adjunction in $\tcat{D}$, then there is a lift of the remaining 2-cell that combines with the previously specified data to define an adjunction in $\tcat{C}$.
\end{lem}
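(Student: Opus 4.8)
The plan is to exploit the three defining properties of a smothering 2-functor---surjectivity on 0-cells, and surjectivity-on-objects, fullness, and conservativity of each hom-functor---to lift an adjunction one piece at a time, arranging the triangle identities by the ``idempotent isomorphism is an identity'' trick that has already been used repeatedly in this section (see Remark~\ref{rmk:idempotent-isomorphisms} and the proof of Lemma~\ref{lem:adjunction.from.isos}).

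First I would set up notation: let $f' \dashv u' \colon \bar{A} \to \bar{B}$ be the adjunction in $\tcat{D}$, with unit $\eta'$ and counit $\epsilon'$. Using surjectivity of $F$ on 0-cells and on objects of hom-categories, choose 0-cells $A, B$ and 1-cells $f \colon B \to A$, $u \colon A \to B$ in $\tcat{C}$ lifting the given data (or use the specified lifts, in the ``furthermore'' case). The point $F$ is only \emph{full}, not faithful, on hom-categories, so the images $Ff$, $Fu$ are literally $f'$, $u'$, and similarly composites like $F(uf) = u'f'$ etc.\ hold on the nose; this is what makes the bookkeeping work. Next, using fullness of $F \colon \tcat{C}(B,B) \to \tcat{D}(\bar B, \bar B)$ and $F \colon \tcat{C}(A,A) \to \tcat{D}(\bar A, \bar A)$, choose 2-cells $\tilde\eta \colon \id_B \To uf$ and $\tilde\epsilon \colon fu \To \id_A$ with $F\tilde\eta = \eta'$ and $F\tilde\epsilon = \epsilon'$. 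These need not yet satisfy the triangle identities, but their \emph{images} do.

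The heart of the argument is to correct $\tilde\eta$ and $\tilde\epsilon$ so that the triangle identities hold on the nose in $\tcat{C}$. Consider the two ``triangle identity composites'' $\theta \defeq u\tilde\epsilon \cdot \tilde\eta u \colon u \To u$ and $\phi \defeq \tilde\epsilon f \cdot f\tilde\eta \colon f \To f$ in $\tcat{C}$. Since $F\theta = u'\epsilon' \cdot \eta' u' = \id_{u'}$ and $F\phi = \id_{f'}$, both $\theta$ and $\phi$ lie in the fibre over an identity 2-cell; hence by conservativity of $F$ on hom-categories they are \emph{isomorphisms}. Now I would replace $\tilde\eta$ by $\eta \defeq \theta^{-1} f \cdot \tilde\eta$ (whiskering $\theta^{-1}$ appropriately), exactly as in the proof of Lemma~\ref{lem:adjunction.from.isos}: the calculation there (diagram~\eqref{eq:triangle-calculation-1}) shows $u\tilde\epsilon \cdot \eta u = \id_u$, giving one triangle identity with $\epsilon \defeq \tilde\epsilon$. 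For the second, the recalibrated composite $\epsilon f \cdot f\eta$ is still an isomorphism (its $F$-image is $\id_{f'}$, so conservativity applies) and, by the computation in diagram~\eqref{eq:triangle-calculation-2}, idempotent; an idempotent isomorphism is an identity, so the second triangle identity holds as well. Finally, $F\eta = (F\theta)^{-1} f' \cdot \eta' = \eta'$ since $F\theta = \id_{u'}$, so $(f,u,\eta,\epsilon)$ is a lift of the original adjunction data. For the ``furthermore'' clause: if the unit $\eta'$ was prescribed and a lift $\eta$ chosen, run the argument above to produce a compatible counit; if the counit $\epsilon'$ was prescribed instead, run the dual argument (correcting $\tilde\epsilon$ by whiskering with the inverse of the other triangle composite), which is formally identical.

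The main obstacle---really the only subtlety---is making sure the correction is performed on the correct 2-cell and whiskered on the correct side so that \emph{both} triangle identities come out right simultaneously, and that the corrected unit still maps to $\eta'$ under $F$; all of this is handled cleanly because the relevant composites have $F$-image an identity and hence are isomorphisms by conservativity, so the entire argument of Lemma~\ref{lem:adjunction.from.isos} can be replayed verbatim with ``$F$-image is an identity $\Rightarrow$ isomorphism'' in place of ``component is an isomorphism''. No genuinely new idea is needed beyond recognising that a smothering 2-functor detects identities up to isomorphism in exactly the way required by that lemma.
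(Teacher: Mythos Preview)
Your proof is correct and follows essentially the same route as the paper's: lift the 0- and 1-cells and both 2-cells by surjectivity and fullness, observe that the triangle-identity composite $\theta = u\tilde\epsilon \cdot \tilde\eta u$ is an isomorphism by conservativity, replace $\tilde\eta$ by $\theta^{-1}f \cdot \tilde\eta$, and then invoke the calculations of Lemma~\ref{lem:adjunction.from.isos} to conclude. One small slip in the final paragraph: the argument you have just run keeps $\epsilon = \tilde\epsilon$ fixed and modifies the unit, so it handles the case where a lift of the \emph{counit} is prescribed; when a lift of the unit is prescribed you need the dual argument---you have these two cases swapped.
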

\begin{proof}
We use surjectivity on objects and local surjectivity on arrows to define $u \colon A \to B$ and $f\colon B\to A$ in $\tcat{C}$ lifting the objects and 1-cells of the downstairs adjunction. Then we use local fullness to define lifts $\epsilon \colon fu \Rightarrow \id_A$ and $\eta' \colon \id_B \Rightarrow uf$ of the downstairs counit and unit. If desired, we can regard $A$, $B$, $f$, $u$ and $\epsilon$ as ``previously specified''. We will show that $f \dashv u$ by modifying the 2-cell $\eta'$. The details are similar to the proof of Lemma \ref{lem:adjunction.from.isos}.

We define a 2-cell $\theta\colon u\Rightarrow u$ as the ``triangle identity composite'' $\theta\defeq u\epsilon \cdot \eta' u$ and observe that $F\theta = \id_{Fu}$. Applying the local conservativity of the action of $F$ on 2-cells, we conclude that $\theta$ is an isomorphism. Define the 2-cell $\eta \colon \id_B \Rightarrow uf$ to be the composite $\eta \defeq \theta^{-1} f \cdot \eta'$. Because $F\theta$ is an identity,  $F\eta$ and $F\eta'$ lift the same downstairs 2-cell. We claim that this data forms an adjunction in $\tcat{C}$.

The diagram \eqref{eq:triangle-calculation-1} demonstrates that $u\epsilon \cdot \eta u = \id_u$. The diagram \eqref{eq:triangle-calculation-2} demonstrates that the other triangle identity composite $\phi\defeq \epsilon f \cdot f \eta $ is an idempotent. Finally observe that the component parts we've composed to make $\phi$ all map by $F$ to the corresponding components of the original adjunction in $\lcat{L}$. It follows that $F\phi$ is equal to the corresponding triangle identity composite in $\lcat{L}$ and so is an identity. Consequently, applying the local conservativity of $F$ on 2-cells we find that $\phi$ is an isomorphism. Because all idempotent isomorphisms are identities,  it follows that $\epsilon f \cdot f \eta = \id_f$ as required.
\end{proof} 

\begin{cor}\label{cor:missed-lemma}
  Every adjunction in $\qCat_2\slice A$ lifts to an adjunction fibred over $A$.
\end{cor}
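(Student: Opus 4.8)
The plan is to apply Lemma~\ref{lem:missed-lemma} directly, so the proof will be a one-line deduction. The key observation is that Proposition~\ref{prop:slice-smothering-2-functor} provides a canonical 2-functor $\ho_*(\qCat_\infty\slice A)\to\qCat_2\slice A$ which is a smothering 2-functor. Given any adjunction in $\qCat_2\slice A$, Lemma~\ref{lem:missed-lemma} tells us that it lifts to an adjunction in $\ho_*(\qCat_\infty\slice A)$, which is precisely what Definition~\ref{defn:fibred.adj} calls an adjunction fibred over $A$.

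The only thing worth spelling out is why this lift is genuinely an adjunction in the fibred sense rather than merely an adjunction whose underlying data, after applying the smothering 2-functor, agrees with the given one. But that is exactly the content of Lemma~\ref{lem:missed-lemma}: the lifted 2-cells $\eta$ and $\epsilon$ satisfy the triangle identities \emph{in} $\ho_*(\qCat_\infty\slice A)$, using the local conservativity of the smothering 2-functor to upgrade the relevant idempotent 2-cells to isomorphisms and hence (being idempotent isomorphisms) to identities. There is no real obstacle here; the conceptual work has all been done upstream in establishing that the comparison 2-functor is smothering and in proving the abstract lifting lemma.

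Concretely, the proof reads: ``By Proposition~\ref{prop:slice-smothering-2-functor} the canonical 2-functor $\ho_*(\qCat_\infty\slice A)\to\qCat_2\slice A$ is a smothering 2-functor, so the result is an immediate application of Lemma~\ref{lem:missed-lemma}.'' If one wishes to be slightly more generous to the reader, one could add that Lemma~\ref{lem:missed-lemma} moreover allows one to fix in advance lifts of the 0-cells, 1-cells, and either the unit or the counit—so in particular a fibred adjunction lifting a given adjunction in $\qCat_2\slice A$ may be chosen to have any prescribed fibred representatives for its functors and for one of its two 2-cells. This refinement costs nothing and is occasionally useful when chasing compatibilities in later sections.

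\begin{proof}
  By Proposition~\ref{prop:slice-smothering-2-functor} the canonical 2-functor $\ho_*(\qCat_\infty\slice A)\to\qCat_2\slice A$ is a smothering 2-functor, so the assertion follows immediately from Lemma~\ref{lem:missed-lemma}. In particular, given an adjunction in $\qCat_2\slice A$ and prescribed lifts in $\ho_*(\qCat_\infty\slice A)$ of its 0-cells, 1-cells, and either its unit or its counit, Lemma~\ref{lem:missed-lemma} supplies a lift of the remaining 2-cell that completes this data to a fibred adjunction.
\end{proof}
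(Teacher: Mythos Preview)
Your proof is correct and takes essentially the same approach as the paper: the paper's proof simply says ``Combine Proposition~\ref{prop:slice-smothering-2-functor} and Lemma~\ref{lem:missed-lemma}.'' Your additional remark about prescribing lifts of the 0-cells, 1-cells, and one of the two 2-cells is a harmless elaboration already contained in the statement of Lemma~\ref{lem:missed-lemma}.
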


\begin{proof}
  Combine Proposition~\ref{prop:slice-smothering-2-functor} and Lemma~\ref{lem:missed-lemma}.
\end{proof}

\begin{ex}\label{ex:fibred-technical-slice-adjunction}
Corollary \ref{cor:missed-lemma} allows us to lift the adjunction $\adjinline p_1 -| i : C -> B\comma \ell.$ of Lemma \ref{lem:technicalsliceadjunction} to a fibred adjunction over $C$ whose counit is an identity.
\end{ex}

\begin{ex}[fibred isofibration RARIs]\label{ex:isofib-section.fibred.adjunction} 
Lemma \ref{lem:isofibration-RARI} demonstrates that any right adjoint right inverse to an isofibration $f \colon B \tfib A$ can be modified to produce a RARI $f \dashv u$ with an identity counit. This latter adjunction provides us with an adjunction in $\qCat_2\slice A$ which we may lift into $\ho_*(\qCat_\infty\slice A)$ to give an adjunction
\begin{equation}\label{eq:fibred.terminal}
  \xymatrix@=1.5em{
    {A}\ar@/_1.2ex/[rr]_u\ar@{=}[dr] & {\bot} & 
    {B}\ar@/_1.2ex/[ll]_f\ar@{->>}[dl]^{f} \\
    & A &
  }
\end{equation}
which is fibred over $A$. In essence, this latter fibred adjunction expresses the fact that each of the fibres of the isofibration $f\colon B\tfib A$ has a terminal object.
\end{ex}

\begin{obs}\label{obs:isofib-section.fibred.adjunction}
   Applying the 2-functor $\hom'_A(p,{-})$ represented by an isofibration $p\colon E\tfib A$ to the fibred adjunction in~\eqref{eq:fibred.terminal} we obtain an adjunction
  \begin{equation*}
    \adjdisplay f\circ{-} -| u\circ{-} : 
    \hom'_A(p,\id_A) -> \hom'_A(p, f).
  \end{equation*}
  of hom-categories. Now the identity functor $\id_A$ is the 2-terminal object of the 2-category $\qCat_2\slice A$, so it follows that $\hom'_A(p,\id_A)\cong\catone$. Hence, the displayed adjunction amounts simply to the assertion that $up$ is a terminal object of the category $\hom'_A(p, f)$. Consequently, applying Lemma~\ref{lem:adj-ext-univ}, we discover that there exists a fibred adjunction of the form displayed in~\eqref{eq:fibred.terminal} if and only if for all isofibrations $p\colon E\tfib A$ the composite map $up\colon E\to B$ is a terminal object of the hom-category $\hom'_A(p,f)$.
\end{obs}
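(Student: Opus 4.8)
The plan is to translate the fibred adjunction \eqref{eq:fibred.terminal} into a family of ordinary adjunctions of hom-categories by homming in objects, and to recognise each resulting adjunction as a terminality statement because the relevant domain category collapses to $\catone$.

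First I would treat the forward implication. Given a fibred adjunction of the form \eqref{eq:fibred.terminal}, compose it with the smothering 2-functor $\ho_*(\qCat_\infty\slice A)\to\qCat_2\slice A$ of Proposition~\ref{prop:slice-smothering-2-functor}; since this acts identically on 0- and 1-cells and 2-functors preserve adjunctions, one obtains an adjunction $f\dashv u$ in $\qCat_2\slice A$ in which $u$ is still a section of $f$ and the counit is still the identity. For each isofibration $p\colon E\tfib A$, apply the representable 2-functor $\hom'_A(p,{-})=(\qCat_2\slice A)(p,{-})\colon\qCat_2\slice A\to\Cat$ to obtain the displayed adjunction $f\circ{-}\dashv u\circ{-}$. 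Reading off the pullback \eqref{eq:slice-hom-objects} defining $\hom'_A(p,q)$ with $q=\id_A$ shows $\hom'_A(p,\id_A)\cong\catone$, i.e.\ that $\id_A$ is the 2-terminal object of $\qCat_2\slice A$; hence $u\circ{-}$ is a functor $\catone\to\hom'_A(p,f)$ that names the object $up$, and the existence of a left adjoint to such a functor is precisely the assertion that $up$ is terminal in $\hom'_A(p,f)$.

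For the converse I would run this backwards, but working inside $\qCat_2\slice A$ so as to land on $\hom'_A(p,f)$ rather than on its close relative $\ho(\hom_A(p,f))$. Given a section $u$ of $f$ for which $up$ is terminal in $\hom'_A(p,f)$ for every isofibration $p\colon E\tfib A$, each such terminality statement says $f\circ{-}\dashv u\circ{-}$ is an adjunction of categories with identity counit, since the unique functor to $\catone$ is left adjoint to any functor naming a terminal object. Feeding the identity 2-cell $fu=\id_A$ together with these hom-wise adjunctions into Lemma~\ref{lem:adj-ext-univ}, applied in the 2-category $\qCat_2\slice A$, upgrades them to an honest adjunction $f\dashv u$ in $\qCat_2\slice A$ with identity counit; Corollary~\ref{cor:missed-lemma} (equivalently, Lemma~\ref{lem:missed-lemma} applied to the same smothering 2-functor, with the objects, the 1-cells $f$ and $u$, and the identity counit as previously-specified data, lifting only the unit) then promotes this to a fibred adjunction of the form \eqref{eq:fibred.terminal}.

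The bulk of the work here is bookkeeping rather than insight: the genuinely 2-categorical point --- that an adjunction from the 2-terminal object encodes a terminal object --- is immediate once $\hom'_A(p,\id_A)\cong\catone$. The only thing requiring care is tracking which 2-category ($\qCat_2\slice A$ versus $\ho_*(\qCat_\infty\slice A)$) each adjunction inhabits, and checking that the smothering 2-functor of Proposition~\ref{prop:slice-smothering-2-functor} transports exactly the data displayed in \eqref{eq:fibred.terminal} --- the section $u$ and the identity counit --- in both directions, so that ``of the form \eqref{eq:fibred.terminal}'' is neither weakened nor strengthened in the passage.
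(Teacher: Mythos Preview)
Your argument is correct and follows the same line as the paper's: apply the representable 2-functor, use that $\id_A$ is 2-terminal to collapse the adjunction to a terminality statement, and invoke Lemma~\ref{lem:adj-ext-univ} for the converse. The paper leaves implicit the passage between $\ho_*(\qCat_\infty\slice A)$ and $\qCat_2\slice A$ via the smothering 2-functor and the subsequent lifting via Corollary~\ref{cor:missed-lemma}, which you have spelled out explicitly; this added bookkeeping is appropriate but does not constitute a different approach.
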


A final example of a fibred adjunction describes the ``composition'' functor $A^{\Horn^{2,1}} \to A^\cattwo$ that fills a (2,1)-horn and then restricts to the missing face as the right and left adjoint, respectively, to the pair of functors that extend a 1-simplex into a composable pair by using the identities at its domain and codomain.

\begin{ex}\label{ex:comp.ident.adj}
  There exists a pair of adjunctions
  \begin{equation*}
    \xymatrix@C=10em@R=1ex{
      {{\Del^1}}\ar[r]|*+{\scriptstyle \face^1} & {{\Del^2}}
      \ar@/^2.5ex/[l]^{\degen^0}_{}="l" \ar@/_2.5ex/[l]_{\degen_1}^{}="u"
      \ar@{} "u";"l" |(0.2){\bot} |(0.8){\bot}
    }
  \end{equation*}
  of ordered sets, whose units and counits arise as the equalities $\degen^0\face^1 = \degen^1\face^1=\id_{\Del^1}$ and the inequalities $\face^1\degen^0 < \id_{[2]} < \face^1\degen^1$. Now if $A$ is a quasi-category, we may apply Proposition~\ref{prop:expadj} to construct the associated pair of adjunctions
  \begin{equation*}
    \xymatrix@C=10em{
      {A^{\Del^2}}
      \ar[r]|*+{\scriptstyle A^{\face^1}} &
      {A^{\Del^1}}
      \ar@/^2.5ex/[l]^{A^{\degen^1}}_{}="l" \ar@/_2.5ex/[l]_{A^{\degen_0}}^{}="u"
      \ar@{} "u";"l" |(0.2){\bot} |(0.8){\bot} 
    }
  \end{equation*}
  Here the upper adjunction has identity unit and the lower adjunction has identity counit. So it follows from Example~\ref{ex:isofib-section.fibred.adjunction} that this is a pair of adjunctions fibred over $A^{\Del^1}$ with respect to the projections $A^{\face^1}\colon A^{\Del^2}\tfib A^{\Del^1}$ and $\id_{A^{\Del^1}}\colon A^{\Del^1} \tfib A^{\Del^1}$. 

Because the horn inclusion $\Horn^{2,1}\inc\Del^2$ is a trivial cofibration in Joyal's model structure, the associated restriction isofibration $p\colon A^{\Del^2}\tfib A^{\Horn^{2,1}}$ is an equivalence of quasi-categories fibred over $A^{\Horn^{2,1}}$. By Proposition~\ref{prop:equivtoadjoint} (applied to $\qCat_2/A^{\Horn^{2,1}}$) and Corollary~\ref{cor:missed-lemma}, the fibred equivalence formed by $p$ and a chosen inverse $p'$ can be promoted to a pair of adjoint equivalences $p \dashv p' \dashv p$ fibred over $A^{\Horn^{2,1}}$. On account of the pushout diagram defining the (2,1)-horn,  $A^{\Horn^{2,1}}$ is isomorphic to the pullback:
\begin{equation*}
  \xymatrix@=2em{ \Horn^{2,1} \pbexcursion & \Del^1 \ar[l]_-{\face^2} & & 
    {A^{\Horn^{2,1}}}\pbexcursion
    \ar[r]^-{\pi_0}\ar[d]_-{\pi_1} & {A^\cattwo}\ar@{->>}[d]^-{p_1} \\ \Del^1 \ar[u]^{\face^0} & \Del^0 \ar[u]_{\face^0}\ar[l]^-{\face^1} & &
    {A^\cattwo}\ar@{->>}[r]_-{p_0} & A
  }
\end{equation*}

Now we may take the pushforward of the fibred adjunctions of the last two paragraphs along the isofibrations $(A^{\fbv{1}},A^{\fbv{0}})\colon A^{\Del^1}\tfib A\times A$ and $(A^{\fbv{2}},A^{\fbv{0}})\colon A^{\Horn^{2,1}}\tfib A\times A$ respectively to obtain adjunctions fibred over $A\times A$. Composing these we obtain a pair of adjunctions 
  \begin{equation}\label{eq:comp.ident.adj}
    \xymatrix@C=10em{
      *+[l]{A^{\Horn^{2,1}}\cong A^\cattwo\times_AA^\cattwo}
      \ar[r]|*+{\scriptstyle m} &
      {A^\cattwo}
      \ar@/^2.5ex/[l]^{i_1}_{}="l" \ar@/_2.5ex/[l]_{i_0}^{}="u"
      \ar@{} "u";"l" |(0.2){\bot} |(0.8){\bot} 
    }
  \end{equation}
  which are fibred over $A\times A$ with respect to the projections $(p_1,p_0)\colon A^\cattwo\tfib A\times A$ and $(p_1\pi_1,p_0\pi_0)\colon A^{\Horn^{2,1}}\tfib A\times A$. Here the upper adjunction has isomorphic unit and the lower adjunction has isomorphic counit. The functors $i_0$ and $i_1$ degenerate the domain and codomain respectively of a given 1-simplex to form a (2,1)-horn. The map $m$ is a ``composition'' functor.
\end{ex}


\section{Limits and colimits}\label{sec:limits}

In this section, we demonstrate that limits and colimits of individual diagrams in a quasi-category can be encoded as {\em absolute right and left liftings\/} in the 2-category $\qCat_2$. The proof that this definition is equivalent to the standard one makes use of the fact that absolute lifting diagrams in $\qCat_2$ can be detected by an equivalence of suitably defined comma quasi-categories. This observation, combined with Example~\ref{ex:adjasabslifting}, also supplies the proof of Proposition~\ref{prop:adjointequivconverse}, completing the unfinished business from the previous section. 

We begin with a general definition:

\setcounter{thm}{0}
\begin{defn}\label{defn:abs-right-lift} In a 2-category, an \emph{absolute right lifting diagram}   consists of the data \begin{equation}\label{eq:absRlifting}\xymatrix{ \ar@{}[dr]|(.7){\Downarrow\lambda} & B \ar[d]^f \\ C \ar[r]_g \ar[ur]^\ell & A}\end{equation} with the universal property that if we are given any 2-cell $\chi$ of the form depicted to the left of the following equality
\begin{equation}\label{eq:abs-lifting-property}
    \vcenter{\xymatrix{ X \ar[d]_c \ar[r]^b \ar@{}[dr]|{\Downarrow\chi} & B \ar[d]^f \\ C \ar[r]_g & A}} \mkern20mu = \mkern20mu \vcenter{\xymatrix{ X \ar[d]_c \ar[r]^b \ar@{}[dr]|(.3){\exists !\Downarrow}|(.7){\Downarrow\lambda} & B \ar[d]^f \\ C \ar[ur]|(.4)*+<2pt>{\scriptstyle\ell} \ar[r]_g & A}}
 \end{equation} 
 then it admits a unique factorisation of the form displayed to the right of that equality. When this condition holds for the diagram in~\eqref{eq:absRlifting} we say that it {\em displays $\ell$ as an absolute right lifting of $g$ through $f$}.
\end{defn}

\begin{ex}\label{ex:adjasabslifting} The counit of an adjunction $\adjinline f-|u:A->B.$ defines an absolute right lifting diagram 
  \begin{equation}\label{eq:adjasabslifting}
    \xymatrix{ \ar@{}[dr]|(.7){\Downarrow\epsilon} & B \ar[d]^f \\ A \ar[ur]^u \ar[r]_{\id_A} & A}  
  \end{equation}
  and, conversely, if this diagram displays $u$ as an absolute right lifting of the identity on its domain through $f$ then $f$ is left adjoint to $u$ with counit 2-cell $\epsilon$. 
\end{ex}

\begin{proof} 
This is a standard 2-categorical result. The 2-functor represented by $X$ carries an adjunction $f \dashv u$ to an adjunction whose counit has the universal property described in~\eqref{eq:abs-lifting-property} for the 2-cell \eqref{eq:adjasabslifting}.

Conversely, given an  absolute right lifting diagram~\eqref{eq:adjasabslifting}, we take this 2-cell to be the counit and define the unit by applying the universal property of this absolute right lifting to the identity 2-cell:
\begin{equation}\label{eq:unitdefn} 
  \vcenter{\xymatrix{ B \ar[r]^{\id_B} \ar[d]_f \ar@{}[dr]|{\Downarrow \id_f} & B \ar[d]^f \\ A \ar[r]_{\id_{A}} & A}} = \vcenter{\xymatrix{ B \ar[r]^{\id_B} \ar[d]_f \ar@{}[dr]|(.3){\Downarrow \eta}|(.7){\Downarrow\epsilon} & B \ar[d]^f \\ A \ar[r]_{\id_{A}} \ar[ur]|*+{\scriptstyle u} & A}} 
  \end{equation}
  This defining equation establishes one of the triangle identities. The other is obtained by pasting $\epsilon$ on the left of both of the 2-cells of \eqref{eq:unitdefn} and applying the uniqueness statement in the universal property of the absolute right lifting:
 \[  \vcenter{\xymatrix{ \ar@{}[dr]|(.7){\Downarrow\epsilon} & B \ar[r]^{\id_B} \ar[d]_f \ar@{}[dr]|{\Downarrow \id_f} & B \ar[d]^f \\ A \ar[r]_{\id_{A}} \ar[ur]^{u} & A \ar[r]_{\id_{A}} & A}} = \vcenter{\xymatrix{  \ar@{}[dr]|(.7){\Downarrow\epsilon}& B \ar[r]^{\id_B} \ar[d]_f \ar@{}[dr]|(.3){\Downarrow \eta}|(.7){\Downarrow\epsilon} & B \ar[d]^f \\ A \ar[r]_{\id_{A}} \ar[ur]^{u} &  A \ar[r]_{\id_{A}} \ar[ur]|*+{\scriptstyle u} & A}}\rightsquigarrow \vcenter{\xymatrix{ & B  \ar@{}[dl]|{\Downarrow\id_{u}}  \\ A \ar@/^2.25ex/[ur]^{u} \ar@/_2.25ex/[ur]_{u} &  }} = \vcenter{\xymatrix{  \ar@{}[dr]|(.7){\Downarrow\epsilon}& B \ar[r]^{\id_B} \ar[d]_f \ar@{}[dr]|(.3){\Downarrow \eta}& B\\ A \ar[r]_{\id_{A}} \ar[ur]^{u} &  A  \ar[ur]|*+{\scriptstyle u} & }}\qedhere \]
\end{proof}

\subsection{Absolute liftings and comma objects}

We now specialise to the 2-category $\qCat_2$. Our aim is to use its weak comma objects to re-express the universal property of absolute lifting diagrams and describe various procedures through which they may be detected.

Given any diagram in $\qCat_2$ of the form displayed in~\eqref{eq:absRlifting} in $\qCat_2$ we may form comma objects $B \comma \ell$ and $f \comma g$ with canonical comma cones:
\begin{equation}\label{eq:comma-cones}
    \vcenter{ \xymatrix{ B \comma \ell \ar[d]_{p_1} \ar[r]^-{p_0} \ar@{}[dr]|(.3){\Downarrow\phi} & B  & & f \comma g \ar[d]_{q_1} \ar[r]^-{q_0}  \ar@{}[dr]|{\Downarrow\psi} & B \ar[d]^f \\ C \ar[ur]_\ell & & &C \ar[r]_g & A }}
\end{equation}
Pasting the canonical cone associated with $B \comma \ell$ onto the triangle~\eqref{eq:absRlifting} we obtain a comma cone which induces a functor $w\colon B \comma \ell \to f \comma g$ by the 1-cell induction property of $f\comma g$. Recall this means that $w$ makes the following pasting equality hold
\begin{equation}\label{eq:w-def-prop}
  \vcenter{\xymatrix@=1.2em{
    & {B\comma\ell}\ar[dl]_{p_1}\ar[dr]^{p_0} & \\
    {C} \ar[dr]_{g}\ar[rr]|*+{\scriptstyle\ell} && {B}\ar[dl]^{f} \\
    & A &  
    \ar@{} "1,2";"3,2" |(0.3){\Leftarrow\phi} |(0.7){\Leftarrow\lambda}
  }}
  \mkern 20mu = \mkern20mu
  \vcenter{\xymatrix@=1.2em{
    & {B\comma\ell}\ar[d]^{w}\ar@/_1.5ex/[ddl]_{p_1}\ar@/^1.5ex/[ddr]^{p_0} & \\
    & {f\comma g}\ar[dl]^{q_1}\ar[dr]_{q_0} & \\
    {C}\ar[dr]_{g} & & {B}\ar[dl]^{f} \\
    & {A} & 
    \ar@{} "2,2";"4,2" |{\Leftarrow\psi}
  }}
\end{equation}
and in particular may be regarded as being a 1-cell in the slice 2-category $\qCat_2\slice(C\times B)$ from $(p_1,p_0)\colon f\comma g \tfib C\times B$ to $(q_1,q_0)\colon B\comma\ell\tfib C\times B$.

\begin{prop}\label{prop:absliftingtranslation} The data of \eqref{eq:absRlifting} defines an absolute right lifting in $\qCat_2$ if and only if the induced map $w\colon B \comma \ell \to f \comma g$ of~\eqref{eq:w-def-prop} is an equivalence.
\end{prop}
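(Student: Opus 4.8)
The plan is to translate the universal property of an absolute right lifting diagram into the language of weak comma objects, using the weak 2-universal property of $f \comma g$ (1-cell induction, 2-cell induction, and 2-cell conservativity) recorded in Observation~\ref{obs:unpacking-weak-comma-objects}, together with the characterisation of fibred equivalences from Corollary~\ref{cor:recog-fibred-equivs}. The key observation is that a cone $\chi \colon fb \To gc$ over the cospan $B \xrightarrow{f} A \xleftarrow{g} C$ with summit $X$ is exactly a 1-cell $X \to f \comma g$ over $C \times B$ (up to the non-uniqueness of 1-cell induction), while a factorisation of $\chi$ through $\lambda$ as in~\eqref{eq:abs-lifting-property} is exactly a 1-cell $X \to B \comma \ell$ over $C \times B$ lifting the given $b$, $c$; the map $w$ is precisely the operation that sends such a factorisation to the cone it factors.

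First I would prove the ``only if'' direction. Assuming~\eqref{eq:absRlifting} is an absolute right lifting diagram, I want to show $w \colon B \comma \ell \to f \comma g$ is a fibred equivalence over $C \times B$, for which by Corollary~\ref{cor:recog-fibred-equivs} it suffices to show $w$ is an equivalence in $\qCat_2$. I would construct an inverse $w' \colon f \comma g \to B \comma \ell$ by applying the absolute lifting property to the canonical comma cone $\psi \colon f q_0 \To g q_1$ of $f \comma g$ (with $X = f \comma g$), obtaining a 2-cell $f q_0 \To \ell q_1$ that factors $\psi$ through $\lambda$; this factorisation, pasted against the comma cone $\phi$ of $B \comma \ell$, is precisely the data that 1-cell-induces a functor $w' \colon f \comma g \to B \comma \ell$ over $C \times B$. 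That $w w' \cong \id_{f \comma g}$ over $C \times B$ follows because both functors 1-cell-induce the same cone $\psi$ (using the defining pasting equality of $w$ and of $w'$ together with the factorisation identity), so Lemma~\ref{lem:1cell-ind-uniqueness} applies. For $w' w \cong \id_{B \comma \ell}$ one argues slightly differently: both $w' w$ and $\id_{B \comma \ell}$ are maps over $C \times B$, and whiskering $\lambda$ with each produces 2-cells $f p_0 \To g p_1$ which, after factoring through $\lambda$, agree by the \emph{uniqueness} clause of the absolute lifting property; 2-cell induction and 2-cell conservativity for $B \comma \ell$ then upgrade this to an isomorphism $w' w \cong \id_{B \comma \ell}$ over $C \times B$. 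Hence $w$ is an equivalence in $\qCat_2\slice(C \times B)$, and a fortiori in $\qCat_2$.

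For the ``if'' direction, suppose $w$ is an equivalence; I would recover the absolute right lifting property of $\lambda$ directly. Given any 2-cell $\chi \colon fb \To gc$ with summit $X$, 1-cell induction for $f \comma g$ yields a functor $a \colon X \to f \comma g$ with $q_0 a = b$, $q_1 a = c$, $\psi a = \chi$. Choosing an equivalence inverse $w'$ to $w$ over $C \times B$ (which exists by Lemma~\ref{lem:proj-is-1-conservative} since $w$ is an equivalence commuting with the projections to $C \times B$), the composite $w' a \colon X \to B \comma \ell$ satisfies $p_0 (w' a) \cong b$ and $p_1 (w' a) \cong c$, and whiskering with the comma cone $\phi$ and with $\lambda$ produces the sought factorisation of $\chi$ through $\lambda$; one must correct by the isomorphisms $w w' \cong \id$ to arrange the 2-cell literally factors $\chi$. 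Uniqueness of the factorisation follows by a parallel argument: two factorisations give two 1-cells $X \to B \comma \ell$ over $C \times B$ inducing, via $w$, isomorphic cones over $f \comma g$, hence (applying $w'$ and using that $w' w \cong \id$ over $C \times B$) isomorphic 1-cells into $B \comma \ell$ over $C \times B$, which by 2-cell conservativity forces the two factorising 2-cells to be equal. The main obstacle I anticipate is bookkeeping the weakness: 1-cell induction is only unique up to isomorphism, and comma cones only satisfy \emph{weak} universal properties, so throughout one must carefully insert and cancel coherence 2-isomorphisms (tracking whiskerings with $\phi$, $\psi$, $\lambda$ and the equivalence data $w w' \cong \id$, $w' w \cong \id$) rather than relying on strict equalities; getting the uniqueness half of the absolute lifting property out of 2-cell conservativity is the most delicate point.
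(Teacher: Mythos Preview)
Your proposal is correct but takes a genuinely different route from the paper's proof. You argue directly: in the ``only if'' direction you build an explicit equivalence inverse $w'$ by applying the absolute lifting property to the universal comma cone $\psi$ of $f\comma g$, then verify $ww'\cong\id$ and $w'w\cong\id$ over $C\times B$ using Lemma~\ref{lem:1cell-ind-uniqueness} together with the uniqueness clause of the lifting property; in the ``if'' direction you chase a given square through $w'$ and back. The paper instead packages the universal property representably: it introduces set-valued functors $\sq_{\ell,B}$ and $\sq_{g,f}$ on the quotient category $(\pi^g_0)_*(\qCat_2\slice(C\times B))$ (Observation~\ref{obs:squares-set}, Lemma~\ref{lem:sq-as-a-functor}), observes that $\lambda$ is an absolute right lifting iff the pasting-with-$\lambda$ transformation $k^\lambda\colon\sq_{\ell,B}\Rightarrow\sq_{g,f}$ is a natural isomorphism, and then invokes the representability result of Lemma~\ref{lem:cpts-and-comma-2-cells} plus Yoneda to identify this with $w$ being an equivalence. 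Your approach is more elementary and self-contained; the paper's buys a systematic framework that is reused immediately afterward to prove the strengthening in Proposition~\ref{prop:absliftingtranslation2} and Lemma~\ref{lem:represented-nat-trans}, where one needs to know that \emph{every} fibred equivalence $B\comma\ell\simeq f\comma g$ arises from \emph{some} 2-cell $\lambda$. One small imprecision in your ``if'' direction: the step that two isomorphic 1-cells into $B\comma\ell$ over $C\times B$ whisker with $\phi$ to the \emph{same} 2-cell is not really 2-cell conservativity but rather the middle-four-interchange argument of Lemma~\ref{lem:sq-as-a-functor} (equivalently, the well-definedness of $\sq_{\ell,B}$ on isomorphism classes); once you invoke that, uniqueness of the factorisation falls out cleanly.
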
 
\begin{proof}
    For each pair of functors $b\colon X\to B$ and $c\colon X\to C$ as in~\eqref{eq:abs-lifting-property} observe that $\sq_{g,f}(c,b)$ (cf.\ Observation~\ref{obs:squares-set}) is simply the set of those 2-cells of the form depicted in the square on the left of the equality in~\eqref{eq:abs-lifting-property} and that $\sq_{\ell,B}(c,b)$ is the set of those 2-cells which inhabit the upper left triangle of the diagram to the right of that same equality. Define
  \begin{equation*}
    \xymatrix@C=8em{
      {\sq_{\ell,B}(c,b)}\ar[r]^{k^{\lambda}_{(c,b)}} &
      {\sq_{g,f}(c,b)}
    }
  \end{equation*}
  to be the function which takes each triangle in its domain and pastes it onto our candidate lifting diagram~\eqref{eq:absRlifting} to obtain a corresponding square as depicted in~\eqref{eq:abs-lifting-property}. This family of functions is natural in $(c,b)\colon X\to C\times B$ in the sense that they are the components of a natural transformation $k^\lambda$ between the functors
\[ \xymatrix{ (\pi^g_0)_*(\qCat_2\slice(C\times B))\op \ar@<1.5ex>[r]^-{\sq_{\ell,B}} \ar@<-1.5ex>[r]_-{\sq_{g,f}} \ar@{}[r]|-{\Downarrow k^\gamma} & \Set}\] 
of Lemma~\ref{lem:sq-as-a-functor}. By construction,  the triangle in~\eqref{eq:absRlifting} is an absolute right lifting if and only if $k^\lambda\colon \sq_{\ell,B}\Rightarrow \sq_{g,f}$ is a natural isomorphism.

Now   consider a commutative square of natural transformations
  \begin{equation*}
    \xymatrix@C=5em{
      {\pi^g_0(\hom'_{C\times B}(-,(p_1,p_0)))} 
      \ar[r]^{u\circ -}\ar[d]_{\cong} &
      {\pi^g_0(\hom'_{C\times B}(-,(q_1,q_0)))}
      \ar[d]^{\cong} \\
      {\sq_{\ell,B}}\ar[r]_{k} &
      {\sq_{g,f}}
    }
  \end{equation*}
  between presheaves on $(\pi^g_0)_*(\qCat_2\slice(C\times B))$, in which the vertical isomorphisms are those induced by the weakly universal comma cones of~\eqref{eq:comma-cones} as discussed in Lemma~\ref{lem:cpts-and-comma-2-cells}. Applying Yoneda's lemma and the definition of $(\pi^g_0)_*(\qCat_2\slice(C\times B))$, this square provides us with a canonical bijection between the set of natural transformations $k\colon\sq_{\ell,B}\Rightarrow\sq_{g,f}$ and the set of isomorphism classes of 1-cells
  \begin{equation}\label{eq:induced-u-from-nattrans-k}
    \xymatrix@=1em{
      {B\comma\ell}\ar@{->>}[dr]_(0.3){(p_1,p_0)}\ar[rr]^{u}
      && *+!L(0.5){f\comma g}\ar@{->>}[dl]^(0.3){(q_1,q_0)} \\
      & {C\times B}&
    }
  \end{equation}
  in $\qCat_2\slice(C\times B)$.  By the Yoneda lemma, $k\colon\sq_{\ell,B}\Rightarrow\sq_{g,f}$ is a natural isomorphism if and only if the corresponding $u\colon B\comma\ell\to f\comma g$ is an isomorphism in $(\pi^g_0)_*(\qCat_2\slice(C\times B))$. By Observation~\ref{obs:groupoid-components}, this holds if and only if $u$ is an equivalence in $\qCat_2\slice(C\times B)$. By Lemma~\ref{lem:proj-is-1-conservative},  this is the case if and only if $u$ is an equivalence in $\qCat_2$. 

In particular, the natural transformation $k^\lambda\colon\sq_{\ell,B}\Rightarrow\sq_{g,f}$ constructed from the 2-cell~\eqref{eq:absRlifting} corresponds  to the isomorphism class of those induced 1-cells $w\colon B\comma\ell\to f\comma g$ over $C\times B$ which satisfy the pasting identity displayed in~\eqref{eq:w-def-prop}. We have just shown that  the triangle in~\eqref{eq:absRlifting} is an absolute lifting diagram if and only if $k^\lambda\colon\sq_{\ell,B}\Rightarrow\sq_{g,f}$ is a natural isomorphism, which is the case  if and only if $w\colon B \comma \ell \to f \comma g$ is an equivalence. 
\end{proof}

\begin{rmk}
There is nothing in the proof of the Proposition \ref{prop:absliftingtranslation}, or in those of the results upon which it relies, which depends upon the vertex $X$ in~\eqref{eq:abs-lifting-property} being a quasi-category. The essential point here is that the space of maps out of any simplicial set $X$ taking values in a quasi-category is still a quasi-category. Consequently, we find that absolute lifting diagrams in $\qCat_2$ possess the factorisation property displayed in~\eqref{eq:abs-lifting-property} for 2-cells whose 0-cellular domains $X$ are general simplicial sets.  
\end{rmk}

For certain applications, it will be important to have a strengthened version of Proposition~\ref{prop:absliftingtranslation} which says that from {\em any\/} equivalence $B\comma \ell \simeq f \comma g$ fibred over $C\times B$ we may construct a 2-cell which displays $\ell$ as an absolute right lifting of $g$ through $f$. This result, Proposition~\ref{prop:absliftingtranslation2} below, proceeds directly from the following technical lemma:

\begin{lem}\label{lem:represented-nat-trans}
  For all natural transformations $k\colon\sq_{\ell,B}\Rightarrow\sq_{g,f}$ there exists a unique 2-cell $\lambda$ of the form depicted in~\eqref{eq:absRlifting} such that $k$ is equal to the natural transformation $k^\lambda$ defined by pasting a 2-cell in a triangle over $\ell$ with $\lambda$ to form a 2-cell in a square over $f$ and $g$.
\end{lem}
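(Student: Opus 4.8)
The plan is to use the Yoneda lemma in the category $(\pi^g_0)_*(\qCat_2\slice(C\times B))$ exactly as in the proof of Proposition~\ref{prop:absliftingtranslation}, combined with the representability of $\sq_{\ell,B}$ established in Lemma~\ref{lem:cpts-and-comma-2-cells}. Recall that the weakly universal comma cone over $\ell$ exhibits an isomorphism of functors $\pi^g_0(\hom'_{C\times B}(-,(p_1,p_0)))\cong\sq_{\ell,B}$ on $(\pi^g_0)_*(\qCat_2\slice(C\times B))\op$; in particular $\sq_{\ell,B}$ is representable, represented by the object $(p_1,p_0)\colon B\comma\ell\tfib C\times B$ with universal element $\phi\in\sq_{\ell,B}(p_1,p_0)$.

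First I would observe that, since $\sq_{\ell,B}$ is represented by $(p_1,p_0)$ with universal element $\phi$, the Yoneda lemma gives a bijection between natural transformations $k\colon\sq_{\ell,B}\Rightarrow\sq_{g,f}$ and elements of $\sq_{g,f}(p_1,p_0)$, sending $k$ to $k_{(p_1,p_0)}(\phi)$. Now an element of $\sq_{g,f}(p_1,p_0)$ is precisely a comma cone over the cospan $B\xrightarrow{f}A\xleftarrow{g}C$ with summit $B\comma\ell$ whose legs are $p_0\colon B\comma\ell\to B$ and $p_1\colon B\comma\ell\to C$. Next I would unpack what such a 2-cell amounts to: it is a 2-cell $fp_0\Rightarrow gp_1$, and — crucially — the universal element $\phi$ is itself a 2-cell $\ell p_1\Rightarrow p_0$ (the comma cone for $B\comma\ell$, oriented so that $\phi\colon\ell p_1\To p_0$ in the notation of~\eqref{eq:comma-cones}). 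So I want to match up elements of $\sq_{g,f}(p_1,p_0)$ with 2-cells $\lambda$ of the shape~\eqref{eq:absRlifting}, i.e.\ 2-cells $g\To f\ell$, under the operation of pasting $\lambda$ onto $\phi$.

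The key step is therefore to show that pasting with $\phi$ furnishes a bijection
\[
\{\lambda\colon g\Rightarrow f\ell\}\;\xrightarrow{\;\cong\;}\;\sq_{g,f}(p_1,p_0),\qquad \lambda\longmapsto (\lambda\ast p_1)\cdot(f\ast\phi),
\]
and moreover that under the Yoneda bijection above this is exactly the correspondence $\lambda\mapsto k^\lambda$ asserted in the statement (that is, the natural transformation associated to $(\lambda\ast p_1)\cdot(f\ast\phi)$ is precisely $k^\lambda$, which pastes an arbitrary triangle-2-cell over $\ell$ with $\lambda$). For surjectivity and injectivity of $\lambda\mapsto(\lambda\ast p_1)\cdot(f\ast\phi)$, I would invoke the weak 2-universal property of $B\comma\ell$ in reverse: a 2-cell $\psi'\in\sq_{g,f}(p_1,p_0)$ is a comma cone with legs $(p_1,p_0)$, and by fixing the legs to be $p_1,p_0$ one reduces $\sq_{g,f}(p_1,p_0)$ to 2-cells $fp_0\To gp_1$; since $\phi\colon\ell p_1\To p_0$ is, by the 1-cell and 2-cell induction properties of $B\comma\ell$ (equivalently by Lemma~\ref{lem:cpts-and-comma-2-cells} / the representability isomorphism), the \emph{universal} such data, precomposition with $f\ast\phi$ sets up a bijection between 2-cells $f\ell p_1\To gp_1$ that factor through $\phi$ — but here one must be careful, because not every 2-cell $fp_0\To gp_1$ arises by whiskering a 2-cell $g\To f\ell$ with $p_1$ along $\phi$ in a literal (strict) sense. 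The honest route is: by the representability isomorphism $\pi^g_0(\hom'_{C\times B}(-,(p_1,p_0)))\cong\sq_{\ell,B}$, the identity element of $\pi^g_0(\hom'_{C\times B}((p_1,p_0),(p_1,p_0)))$ corresponds to $\phi$; then a natural transformation $k\colon\sq_{\ell,B}\To\sq_{g,f}$ is, by Yoneda, determined by and freely specified by its value $k_{(p_1,p_0)}(\phi)\in\sq_{g,f}(p_1,p_0)$; and since $\sq_{g,f}(p_1,p_0)$ is the set of comma cones with fixed legs $(p_1,p_0)$, which is in bijection with 2-cells $\lambda\colon g\To f\ell$ by pasting onto the fixed cone $\phi$ (this pasting-onto-a-fixed-triangle is manifestly a bijection because a comma cone over $f,g$ with legs $p_0,p_1$ is exactly a 2-cell $fp_0\To gp_1$, and $\lambda\ast p_1$ ranges bijectively over 2-cells $f\ell p_1\To gp_1$... wait — this requires that whiskering by $p_1$ is a bijection, which it is not in general). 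So in fact the correct and cleanest argument is the Yoneda one at the level of natural transformations, not at the level of 2-cells: I would show directly that $\lambda\mapsto k^\lambda$ is the composite of (a) the bijection between 2-cells $\lambda\colon g\To f\ell$ and natural transformations $\hom'_{C\times B}(-,(p_1,p_0))\To\sq_{g,f}$ given by $\lambda\mapsto\big(a\mapsto (\text{paste }\phi a\text{ with }\lambda)\big)$, which is a bijection by the \emph{definition} of $k^\lambda$ and the fact that every object of $\qCat/(C\times B)$ lying over $(p_1,p_0)$ factors through $\phi$, with (b) the Yoneda bijection transporting $\hom'_{C\times B}(-,(p_1,p_0))$-natural-transformations to $\sq_{\ell,B}$-natural-transformations via the representability iso. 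Concretely: given $k\colon\sq_{\ell,B}\To\sq_{g,f}$, pull it back along the iso $\pi^g_0\hom'_{C\times B}(-,(p_1,p_0))\cong\sq_{\ell,B}$ to get $\tilde k$; evaluate $\tilde k$ at the component containing $\id_{B\comma\ell}$ to extract a single element $\tilde k(\id)\in\sq_{g,f}(p_1,p_0)$, which by the above identification of $\sq_{g,f}(p_1,p_0)$ with 2-cells $g\To f\ell$ yields the unique $\lambda$; naturality of $k$ then forces $k=k^\lambda$, and uniqueness of $\lambda$ follows from uniqueness in Yoneda.

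The main obstacle I anticipate is bookkeeping the orientations and the precise shape of the comma cones — in particular verifying that $\sq_{g,f}(p_1,p_0)$ genuinely is in natural bijection with $\{\lambda\colon g\To f\ell\}$ \emph{via pasting with the fixed cone $\phi$}, rather than merely abstractly. The resolution is to note that once the legs of a $\sq_{g,f}$-cone are pinned to $(p_1,p_0)$, the remaining datum is a 2-cell $fp_0\To gp_1$, and the cone $\phi\colon \ell p_1\To p_0$ together with $f$ gives $f\ell p_1\xRightarrow{f\phi} fp_0$, so pasting a $\lambda$ gives $g p_1\xRightarrow{\lambda p_1} f\ell p_1\xRightarrow{f\phi} fp_0$; but the point is NOT that this is surjective onto all of $\sq_{g,f}(p_1,p_0)$ directly, but rather that the \emph{natural transformation} it induces, $k^\lambda$, is determined by this value — and conversely every $k$ arises this way because $k$ is determined by its value at $\phi$ (Yoneda) and that value, being a cone with legs the canonical $p_0,p_1$, determines $k^\lambda$ for \emph{some} $\lambda$ precisely because $k^{(-)}$ and "evaluate at $\phi$" are mutually inverse on the relevant hom-set by the defining universal property of $B\comma\ell$. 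I would keep Lemma~\ref{lem:sq-as-a-functor} and Lemma~\ref{lem:cpts-and-comma-2-cells} close at hand throughout, and state the final bijection as an application of the (ordinary, enriched-in-$\Set$) Yoneda lemma to the representable functor $\sq_{\ell,B}$ on the category $(\pi^g_0)_*(\qCat_2\slice(C\times B))$.
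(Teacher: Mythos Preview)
Your Yoneda strategy is sound as a framework, but there is a genuine gap at the crucial step. You correctly observe that, by representability of $\sq_{\ell,B}$ with universal element $\phi\in\sq_{\ell,B}(p_1,p_0)$, natural transformations $k\colon\sq_{\ell,B}\Rightarrow\sq_{g,f}$ biject with elements $\mu\defeq k_{(p_1,p_0)}(\phi)\in\sq_{g,f}(p_1,p_0)$. The problem is your next move: you repeatedly invoke an ``identification of $\sq_{g,f}(p_1,p_0)$ with 2-cells $\lambda$'', but an element $\mu\in\sq_{g,f}(p_1,p_0)$ is a 2-cell $fp_0\Rightarrow gp_1$, not a 2-cell $f\ell\Rightarrow g$, and you give no construction taking $\mu$ back to some $\lambda$. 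You notice this yourself (``whiskering by $p_1$ is not a bijection in general''), and then the argument becomes circular: the claim that every $\mu$ arises from some $\lambda$ via pasting with $\phi$ is exactly what the lemma asserts.

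The paper supplies the missing inverse by bringing in the section $i\colon C\to B\comma\ell$ of Lemma~\ref{lem:technicalsliceadjunction} and its adjunction $p_1\dashv i$. Since $p_1 i=\id_C$ and $\phi i=\id_\ell$, one can define $\lambda\defeq k_{(\id_C,\ell)}(\id_\ell)$ directly as an element of $\sq_{g,f}(\id_C,\ell)=\{f\ell\Rightarrow g\}$; naturality of $k$ along $i$ then gives $\mu i=\lambda$. The substantive check is that $k^\lambda_{(p_1,p_0)}(\phi)=\mu$, i.e.\ that pasting $\phi$ with $\lambda$ recovers $\mu$, and this is a short pasting computation using the unit $\nu\colon\id_{B\comma\ell}\Rightarrow ip_1$ of $p_1\dashv i$ and its defining equations $p_0\nu=\phi$, $p_1\nu=\id_{p_1}$. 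Once that equality is established, Yoneda gives $k=k^\lambda$, and uniqueness follows from $\lambda=k^\lambda_{(\id_C,\ell)}(\id_\ell)$. (Incidentally, your orientations are reversed throughout: in the paper's conventions $\phi\colon p_0\Rightarrow\ell p_1$ and $\lambda\colon f\ell\Rightarrow g$.)
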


\begin{proof}
  A 2-cell in the triangle~\eqref{eq:absRlifting} is simply an element of $\sq_{g,f}(C,\ell)$, so we may construct our candidate 2-cell $\lambda$ from the natural transformation $k\colon\sq_{\ell,B}\Rightarrow\sq_{g,f}$ by applying it to the identity 2-cell  in $\sq_{\ell,B}(C,\ell)$; that is, we take $\lambda\defeq k_{(C,\ell)}(\id_\ell)$. 

  Lemma~\ref{lem:cpts-and-comma-2-cells} reveals that $\sq_{\ell,B}$ is a representable functor whose universal element is the 2-cell $\phi\in\sq_{\ell,B}(p_1,p_0)$ of the weakly universal cone~\eqref{eq:comma-cones} displaying $B\comma\ell$. So Yoneda's lemma tells us that in order to show that our original natural transformation $k$ is equal to $k^\lambda$ it is enough to check that they both map $\phi$ to the same element of $\sq_{g,f}(p_1,p_0)$.

To do this, first observe that the functor $i \colon C \to B\comma\ell$ defined in Lemma~\ref{lem:technicalsliceadjunction} can be regarded as a morphism in $(\pi^g_0)_*(\qCat_2\slice(C\times B))$. Its defining property, that $\phi i = \id_\ell$, may then be re-expressed as the equality $\sq_{\ell,B}(i)(\phi) = \id_\ell$ relating $\id_\ell\in\sq_{\ell,B}(C,\ell)$ and $\phi\in\sq_{\ell,B}(p_1,p_0)$. By naturality of $k$, this then allows us to obtain a similar relationship between the 2-cell $\lambda$  and the image $\mu\defeq k_{(p_1,p_0)}(\phi)$ of $\phi$ under $k$, as given by the following computation: $\sq_{g,f}(i)(k_{(p_1,p_0)}(\phi)) = k_{(C,\ell)}(\sq_{\ell,B}(i)(\phi)) = k_{(C,\ell)}(\id_\ell) = \lambda$. By the definition of the map $\sq_{g,f}(i)$, this relationship may be expressed as a pasting equality:
  \begin{equation}\label{eq:rel-mu-lambda}
    \vcenter{\xymatrix@=0.8em{
      & {C} \ar@{=}[dl]\ar[dr]^\ell & \\
      {C}\ar[dr]_g & {\scriptstyle\Leftarrow\lambda} &
      {B}\ar[dl]^f \\
      & {A} &
    }}
    \mkern20mu = \mkern20mu
    \vcenter{\xymatrix@=0.8em{
      & \save []+<0pt,1em>*+{C}\ar[d]^i\ar@{=}@/_1.5ex/[ddl]\ar@/^1.5ex/[ddr]^\ell \restore & \\
      & {B\comma\ell}\ar[dl]^{p_1}\ar[dr]_{p_0} & \\
      {C}\ar[dr]_g & {\scriptstyle\Leftarrow\mu} & 
      {B}\ar[dl]^f \\
      & {A} &
    }}
  \end{equation}

By definition, $k^\lambda$ acts on $\phi$ by pasting it to the 2-cell $\lambda$ as depicted in the diagram on the left hand side of the following computation:
  \begin{equation*}
    \vcenter{\xymatrix@=1em{
      & & {B\comma\ell}\ar[dl]_{p_1}
      \ar[dd]^{p_0}_{}="one"\\
      & {C} \ar@{=}[dl]\ar[dr]_(0.4)\ell 
      \ar@{} "one" |(0.6){\Leftarrow\phi} & \\
      {C}\ar[dr]_g & {\scriptstyle\Leftarrow\lambda} &
      {B}\ar[dl]^f \\
      & {A} &
    }}
    \mkern5mu = \mkern5mu
    \vcenter{\xymatrix@=0.8em{
      & & {B\comma\ell}\ar[dl]_{p_1}\ar[dddd]^{p_0} \\
      & {C}\ar[dd]^i\ar@/_1.5ex/@{=}[dddl]
      \ar@/^1.5ex/[dddr]^(0.3)\ell="one" 
      & \\ & & \\
      & {B\comma\ell}\ar[dl]^{p_1}\ar[dr]_{p_0} & \\
      {C}\ar[dr]_g & {\scriptstyle\Leftarrow\mu} & 
      {B}\ar[dl]^f \\
      & {A} &
      \ar@{}"1,3";"one"|(0.7){\Leftarrow\phi} 
    }}
    \mkern5mu = \mkern5mu
    \vcenter{\xymatrix@=0.8em{
      & & {B\comma\ell}\ar[dl]_{p_1}\ar[dddd]^{p_0} 
      \ar@{=}@/^1.5ex/[dddl]_{}="one"\\
      & {C}\ar[dd]^i\ar@/_1.5ex/@{=}[dddl]
      & \\ & & \\
      & {B\comma\ell}\ar[dl]^{p_1}\ar[dr]_{p_0} & \\
      {C}\ar[dr]_g & {\scriptstyle\Leftarrow\mu} & 
      {B}\ar[dl]^f \\
      & {A} &
      \ar@{}"2,2";"one"|(0.6){\Leftarrow\nu} 
    }}
    \mkern5mu = \mkern5mu
    \vcenter{\xymatrix@=0.8em{
      & {B\comma\ell}\ar[dl]_{p_1}\ar[dr]^{p_0} & \\
      {C}\ar[dr]_g & {\scriptstyle\Leftarrow\mu} & 
      {B}\ar[dl]^f \\
      & {A} &
    }}
  \end{equation*}
  To elaborate, the first step in this calculation is simply an application of the equality given in~\eqref{eq:rel-mu-lambda}. Its second step follows from the first of the defining properties of the unit $\nu\colon\id_{B\comma\ell}\Rightarrow i p_1$ of the adjunction $p_1\dashv i$ of Lemma~\ref{lem:technicalsliceadjunction}, those being that $p_0\nu=\phi$ and $p_1\nu=\id_{p_1}$. The third of these equalities follows on observing that the pasting depicted on its left is simply the horizontal composite of the 2-cells $\mu$ and $\nu$, which may be expressed as the vertical composite $qp_1\nu\cdot \mu$ in which the second factor is an identity by the second defining property of $\nu$. 
  
  In other words, this calculation demonstrates that $k^\lambda_{(p_1,p_0)}(\phi)=\mu$ which is in turn equal to $k_{(p_1,p_0)}(\phi)$, by definition. Consequently, Yoneda's lemma tells us that $k = k^\lambda$ as required. Finally, the fact that $\lambda$ is the unique 2-cell with the property that $k=k^\lambda$ follows immediately from the patent fact that $\lambda = k^\lambda_{(C,\ell)}(\id_\ell)$.
\end{proof}

As an immediate corollary, we have the following important result:

\begin{prop}\label{prop:absliftingtranslation2} Suppose we are given functors $f\colon B\to A$, $g\colon C\to A$, and $\ell\colon C\to B$ of quasi-categories. Then the construction depicted in~\eqref{eq:w-def-prop} provides us with a bijection between 2-cells of the form 
\begin{equation}\label{eq:absRlifting.2}
  \xymatrix{ \ar@{}[dr]|(.7){\Downarrow\lambda} & B \ar[d]^f \\ C \ar[r]_g \ar[ur]^\ell & A}
\end{equation}
and isomorphism classes of 1-cells
\begin{equation}\label{eq:induced-from-lambda}
  \xymatrix@=1em{
    {B\comma\ell}\ar@{->>}[dr]_(0.3){(p_1,p_0)}\ar[rr]^{w}
    && *+!L(0.5){f\comma g}\ar@{->>}[dl]^(0.3){(q_1,q_0)} \\
    & {C\times B}&
  }
\end{equation}
in $\qCat_2\slice(C\times B)$. Furthermore, this 2-cell $\lambda$ displays $\ell$ as an absolute right lifting of $g$ through $f$ if and only if any representative $w$ of the corresponding isomorphism class of functors is an equivalence.
\end{prop}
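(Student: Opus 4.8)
The statement is an almost immediate consequence of Lemma~\ref{lem:represented-nat-trans} together with the machinery assembled in the proof of Proposition~\ref{prop:absliftingtranslation}. The plan is to set up the two bijections explicitly and then identify their composite with the construction of~\eqref{eq:w-def-prop}. First I would recall that Lemma~\ref{lem:represented-nat-trans} provides a bijection between 2-cells $\lambda$ of the form~\eqref{eq:absRlifting.2} and natural transformations $k\colon\sq_{\ell,B}\Rightarrow\sq_{g,f}$, sending $\lambda$ to $k^\lambda$. Second, I would invoke the Yoneda-style argument already run in the proof of Proposition~\ref{prop:absliftingtranslation}: since Lemma~\ref{lem:cpts-and-comma-2-cells} exhibits $\sq_{\ell,B}$ and $\sq_{g,f}$ as the functors represented on $(\pi^g_0)_*(\qCat_2\slice(C\times B))$ by the objects $(p_1,p_0)\colon B\comma\ell\to C\times B$ and $(q_1,q_0)\colon f\comma g\to C\times B$ respectively, Yoneda's lemma furnishes a bijection between natural transformations $k\colon\sq_{\ell,B}\Rightarrow\sq_{g,f}$ and morphisms $B\comma\ell\to f\comma g$ in $(\pi^g_0)_*(\qCat_2\slice(C\times B))$, i.e.\ isomorphism classes in $\qCat_2\slice(C\times B)$ of 1-cells of the form~\eqref{eq:induced-from-lambda}.

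Composing these two bijections gives a bijection between 2-cells $\lambda$ and isomorphism classes of 1-cells $w$ over $C\times B$; the remaining task is to check that this composite bijection is precisely the assignment $\lambda\mapsto[w]$ described by the pasting equality~\eqref{eq:w-def-prop}. This is a direct unravelling of the Yoneda correspondence: the 1-cell corresponding to $k^\lambda$ is, by the Yoneda lemma, the image of the universal element $\phi\in\sq_{\ell,B}(p_1,p_0)$ under $k^\lambda_{(p_1,p_0)}$, and $k^\lambda$ acts by pasting $\phi$ onto $\lambda$ exactly as the left-hand side of~\eqref{eq:w-def-prop}; hence the induced 1-cell $w$ is characterised by making~\eqref{eq:w-def-prop} commute, which is the defining property of the map constructed there via 1-cell induction. (One should remark that by Lemma~\ref{lem:1cell-ind-uniqueness}, or equivalently by Observation~\ref{obs:1cell-ind-uniqueness-reloaded}, any two 1-cells over $C\times B$ satisfying~\eqref{eq:w-def-prop} are isomorphic over $C\times B$, so the isomorphism class $[w]$ is well defined and the correspondence is unambiguous.)

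For the final assertion, I would combine Lemma~\ref{lem:represented-nat-trans} with the computation already carried out in the proof of Proposition~\ref{prop:absliftingtranslation}. By the construction of $k^\lambda$, the 2-cell $\lambda$ displays $\ell$ as an absolute right lifting of $g$ through $f$ if and only if $k^\lambda\colon\sq_{\ell,B}\Rightarrow\sq_{g,f}$ is a natural isomorphism. Under the Yoneda correspondence a natural transformation of representable functors is a natural isomorphism precisely when the corresponding 1-cell is an isomorphism in $(\pi^g_0)_*(\qCat_2\slice(C\times B))$, which by Observation~\ref{obs:groupoid-components} happens if and only if $w$ is an equivalence in $\qCat_2\slice(C\times B)$, which by Lemma~\ref{lem:proj-is-1-conservative} happens if and only if $w$ is an equivalence in $\qCat_2$. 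Since any representative of the isomorphism class~\eqref{eq:induced-from-lambda} is an equivalence iff some (equivalently, any) such representative is, the stated equivalence follows.

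\textbf{Main obstacle.} There is no serious obstacle here; the result is essentially a repackaging. The only point requiring genuine care is the bookkeeping in the second paragraph — verifying that the composite of the Lemma~\ref{lem:represented-nat-trans} bijection with the Yoneda bijection really does agree on the nose with the 1-cell-induction recipe of~\eqref{eq:w-def-prop}, rather than merely agreeing up to the inevitable isomorphism-class ambiguity. This is exactly the content of the final computation in the proof of Lemma~\ref{lem:represented-nat-trans} (the chain of pasting diagrams showing $k^\lambda_{(p_1,p_0)}(\phi)=\mu$), so in the write-up I would cite that computation rather than reproduce it.
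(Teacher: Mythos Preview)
Your proposal is correct and follows essentially the same approach as the paper's proof, which simply cites Lemma~\ref{lem:represented-nat-trans} for the first bijection, the proof of Proposition~\ref{prop:absliftingtranslation} for the second, and Proposition~\ref{prop:absliftingtranslation} itself for the final clause. Your write-up is more explicit about why the composite bijection agrees with the 1-cell-induction recipe of~\eqref{eq:w-def-prop}, which is a welcome clarification but not a different idea.
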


\begin{proof}
Lemma \ref{lem:represented-nat-trans} provides a canonical bijection between 2-cells \eqref{eq:absRlifting.2} and natural transformations $\sq_{\ell,B} \To \sq_{g,f}$. 
The proof of Proposition \ref{prop:absliftingtranslation} establishes a canonical bijection between natural transformations $\sq_{\ell,B} \To \sq_{g,f}$ and isomorphism classes of 1-cells \eqref{eq:induced-from-lambda}. Proposition \ref{prop:absliftingtranslation} then concludes that $\lambda$ displays $\ell$ as an absolute right lifting of $g$ through $f$ if and only if any representative $w$ of the corresponding isomorphism class of functors is an equivalence.
\end{proof}

As a special case,  if $f\comma A$ and $B \comma u$ are equivalent over $A \times B$, then $f$ is left adjoint to $u$.

\begin{proof}[Proof of Proposition~\ref{prop:adjointequivconverse}]
If $f\comma A$ and $B \comma u$ are equivalent over $A \times B$, then Proposition \ref{prop:absliftingtranslation2} provides us with a corresponding 2-cell $\epsilon\colon fu\Rightarrow\id_A$, which displays $u$ as an absolute right lifting of $\id_A$ through $f$. By Example~\ref{ex:adjasabslifting}, this provides us with enough information to conclude that $f$ is left adjoint to $u$ with counit $\epsilon$. 
\end{proof}

  A second characterisation of absolute right liftings in $\qCat_2$ relates them to the possession of terminal objects by the fibres of $q_1\colon f\comma g\tfib C$. To explain this relationship, start by applying Observation~\ref{obs:1cell-ind-uniqueness-reloaded} to show that arbitrary pairs $(\ell,\lambda)$ as depicted in~\eqref{eq:absRlifting.2} correspond to isomorphism classes of functors
  \begin{equation*}
    \xymatrix@=0.8em{
      {C}\ar[dr]_-{(C,\ell)}\ar[rr]^{t}
      && *+!L(0.5){f\comma g}\ar@{->>}[dl]^-{(q_1,q_0)} \\
      & {C\times B} &
    }
  \end{equation*}
  over $C\times B$ defined by 1-cell induction
  \begin{equation}\label{eq:t-for-lim-def-prop}
    \vcenter{\xymatrix@=0.8em{
      & \save []+<0pt,1em>*+{C}\ar[d]^-{t}\ar@{=}@/_1.5ex/[ddl]
      \ar@/^1.5ex/[ddr]^{\ell}\restore & \\
      & {f\comma g}\ar[dr]_{q_0}\ar[dl]^{q_1} & \\
      {C}\ar[dr]_{g} & {\scriptstyle\Leftarrow\psi} &
      {B}\ar[dl]^{f} \\
      & {A} &
    }}
    \mkern20mu = \mkern20mu
    \vcenter{\xymatrix@=0.7em{
      & {C}\ar@{=}[dl]\ar[dr]^{\ell} & \\
      {C}\ar[dr]_{g} & {\scriptstyle\Leftarrow\lambda} & {B}\ar[dl]^{f} \\
      & {A} &
    }}
  \end{equation}
The following proposition relates the universal properties of pairs $(\ell,\lambda)$ and corresponding functors $t$.

\begin{prop}\label{prop:right.liftings.as.fibred.terminal.objects} The 2-cell $\lambda$ shown in~\eqref{eq:absRlifting.2} displays $\ell$ as an absolute right lifting of the functor $g$ through $f$ if and only if the induced functor $t\colon C\to f\comma g$  of \eqref{eq:t-for-lim-def-prop} features in a fibred adjunction:
  \begin{equation}\label{eq:fibred.terminal.2}
    \xymatrix@=1.2em{
      {C}\ar@{=}[dr]\ar@/_1.5ex/[rr]_-{t}^-{}="one"
      & & *+!L(0.5){f\comma g}\ar@{->>}[dl]^-{q_1}
      \ar@/_1.5ex/[ll]_-{q_1}^-{}="two" \\
      & {C} &
      \ar@{}"one";"two"|{\bot}
    }
  \end{equation}
that is, if and only if $t$ defines a right adjoint right inverse to the isofibration $q_1$.
\end{prop}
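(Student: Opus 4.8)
The plan is to reduce the proposition to the combination of Proposition~\ref{prop:absliftingtranslation2} and the ``strictification'' machinery for right adjoint right inverses developed in \S\ref{subsec:RARI}. First I would observe that, by Proposition~\ref{prop:absliftingtranslation2}, the $2$-cell $\lambda$ displays $\ell$ as an absolute right lifting of $g$ through $f$ if and only if the induced $1$-cell $w\colon B\comma\ell\to f\comma g$ over $C\times B$ is an equivalence. On the other hand, Lemma~\ref{lem:technicalsliceadjunction} (or its fibred form, Example~\ref{ex:fibred-technical-slice-adjunction}) tells us that the codomain projection $p_1\colon B\comma\ell\tfib C$ admits the right adjoint right inverse $i\colon C\to B\comma\ell$, fibred over $C$ with identity counit. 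Composing the induced $w$ with $i$ recovers, up to the canonical isomorphism of $1$-cell induction (Lemma~\ref{lem:1cell-ind-uniqueness}), precisely the functor $t\colon C\to f\comma g$ of~\eqref{eq:t-for-lim-def-prop}: both are induced by the same comma cone over $C\times B$, namely the pasting of $\lambda$ onto $\psi$ displayed in~\eqref{eq:t-for-lim-def-prop}.

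From here the ``only if'' direction is straightforward: if $w$ is an equivalence, then $t\simeq wi$ is the composite of the fibred RARI $i\dashv p_1$ (over $C$) with an equivalence over $C\times B$, hence over $C$. Pushing the fibred adjunction $i\dashv p_1$ forward along the composite $w$ and invoking Corollary~\ref{cor:missed-lemma} to transport adjunctions across the smothering $2$-functor $\ho_*(\qCat_\infty\slice C)\to\qCat_2\slice C$ yields a fibred adjunction $t\dashv q_1$ over $C$. More carefully: an equivalence $w$ over $C\times B$ restricts to an equivalence over $C$, which by Proposition~\ref{prop:equivtoadjoint} and Corollary~\ref{cor:missed-lemma} promotes to an adjoint equivalence fibred over $C$; composing this with the fibred adjunction of Example~\ref{ex:fibred-technical-slice-adjunction} via Proposition~\ref{prop:adj-comp} gives the desired fibred adjunction~\eqref{eq:fibred.terminal.2}, and since the counit of $i\dashv p_1$ is an identity and the counit of the adjoint equivalence is an isomorphism, after applying Lemma~\ref{lem:isofibration-RARI} (using Lemma~\ref{lem:representable-isofibration} to know $q_1$ is a representable isofibration) we may take $t$ to be a genuine RARI of $q_1$.

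For the ``if'' direction, suppose $t$ is a right adjoint right inverse to $q_1$, fibred over $C$. Then $q_1 t=\id_C$, and the RARI adjunction $t\dashv q_1$ gives, via Proposition~\ref{prop:adjointequiv}, a fibred equivalence $q_1\comma C\simeq t\comma(f\comma g)$ over $C\times(f\comma g)$ — but more usefully, the standard argument shows that $t\colon C\to f\comma g$ is then a fibred equivalence onto the ``full subobject'' it determines; concretely, I would argue that $t$ and $wi$ are both $1$-cells $C\to f\comma g$ over $C\times B$ induced by the comma cone $\lambda*\psi$, hence isomorphic over $C\times B$ by Lemma~\ref{lem:1cell-ind-uniqueness}, so it suffices to show that $wi$ being a fibred equivalence over $C$ forces $w$ to be an equivalence. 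Since $i\colon C\to B\comma\ell$ is itself an equivalence (being a section of $p_1$ whose associated unit $\nu$ is an isomorphism, as established in the proof of Lemma~\ref{lem:technicalsliceadjunction}), $w=wi\cdot i^{-1}$ is a composite of equivalences in $\qCat_2$, hence an equivalence; then Proposition~\ref{prop:absliftingtranslation2} delivers the absolute right lifting property of $\lambda$.

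\textbf{Main obstacle.}
The delicate point is the bookkeeping required to identify $wi$ with $t$ up to isomorphism over $C\times B$, and to ensure all equivalences and adjunctions in sight are genuinely \emph{fibred} over $C$ (i.e.\ live in $\ho_*(\qCat_\infty\slice C)$) rather than merely in $\qCat_2\slice C$. This is exactly the sort of place where one must invoke Corollary~\ref{cor:recog-fibred-equivs} to lift an equivalence over $C$ in $\qCat_2$ to a fibred equivalence, and Corollary~\ref{cor:missed-lemma} / Lemma~\ref{lem:missed-lemma} to do the same for adjunctions; keeping track of which counit is an identity versus merely an isomorphism (so that Lemma~\ref{lem:isofibration-RARI} applies and $t$ can be taken as a strict RARI) is the fiddly part. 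I expect no conceptual difficulty, only careful diagram-chasing, most of which can be deferred to the reader in the style of the surrounding lemmas.
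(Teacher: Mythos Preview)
Your ``only if'' direction is essentially the paper's argument: promote the equivalence $w\colon B\comma\ell\to f\comma g$ to a fibred adjoint equivalence over $C$, compose with the fibred adjunction $p_1\dashv i$ of Example~\ref{ex:fibred-technical-slice-adjunction}, and identify $wi$ with $t$ via Lemma~\ref{lem:1cell-ind-uniqueness}. (Small slip: the adjunction is $p_1\dashv i$, not $i\dashv p_1$.)

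The ``if'' direction, however, contains a genuine error. You assert that $i\colon C\to B\comma\ell$ is an equivalence because ``its associated unit $\nu$ is an isomorphism, as established in the proof of Lemma~\ref{lem:technicalsliceadjunction}''. That lemma shows only that $\nu i$ is an isomorphism, not $\nu$ itself. Indeed $\nu\colon\id_{B\comma\ell}\Rightarrow ip_1$ has $p_0\nu=\phi$, the comma cone of $B\comma\ell$, which is certainly not an isomorphism in general; by 2-cell conservativity $\nu$ is therefore not invertible, and $i$ is not an equivalence. Concretely, $B\comma\ell$ has as objects all arrows $b\to\ell c$ in $B$, not just the identities, so $i$ cannot be essentially surjective. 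Since neither $i$ nor $t$ is an equivalence (a RARI is merely a right adjoint section), your factorisation $w\simeq t\circ i^{-1}$ does not make sense, and there is no route from the hypothesis to the conclusion that $w$ is an equivalence along these lines.

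The paper's proof of the ``if'' direction avoids Proposition~\ref{prop:absliftingtranslation2} entirely and instead verifies the absolute lifting universal property~\eqref{eq:abs-lifting-property} directly. Given a square $\mu\colon fb\Rightarrow gc$, one induces $m\colon Y\to f\comma g$ by 1-cell induction and observes that factorisations $\tau\colon b\Rightarrow\ell c$ of $\mu$ through $\lambda$ correspond, via 2-cell induction on $f\comma g$, to 2-cells $\nu\colon m\Rightarrow tc$ over $C$. The fibred adjunction $q_1\dashv t$ then makes $tc$ terminal in the hom-category $\hom'_C(c,q_1)$, so such $\nu$ exists uniquely, and hence so does $\tau$.
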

\begin{proof}
First assume that the triangle in~\eqref{eq:absRlifting.2} is an absolute right lifting diagram and apply Proposition~\ref{prop:absliftingtranslation} to show that the associated functor $w\colon B\comma\ell\to f\comma g$ is a fibred equivalence with equivalence inverse $w'$. Applying Proposition~\ref{prop:equivtoadjoint} in $\qCat_2/(C\times B)$ and Corollary~\ref{cor:missed-lemma}, this may be promoted to a fibred adjoint equivalence $w'\dashv w$ over $C\times B$. Its pushforward along the projection $C\times B\tfib C$ is an adjoint equivalence fibred over $C$. 

Example~\ref{ex:fibred-technical-slice-adjunction} provides us with an adjunction $p_1\dashv i\colon C\to B\comma\ell$  also fibred over $C$. Composing these, we obtain an adjunction $p_1w'\dashv wi\colon C\to f\comma g$  again fibred over $C$. From the defining properties of $w$ and $i$, as described in~\eqref{eq:w-def-prop} and~\eqref{eq:technicalsliceadjunction}, it is clear that $wi$ is a 1-cell induced over $\psi$ by the comma cone $\lambda$, and so we may infer, by Observation~\ref{obs:1cell-ind-uniqueness-reloaded}, that it is isomorphic to $t$ over $C$. Furthermore, $w'$ is fibred over $C\times B$ so $p_1w' = q_1$, and the fibred adjunction $p_1w'\dashv wi$ reduces to a fibred adjunction $q_1\dashv t$ as required.

   For the converse, assume that we have a fibred adjunction of the form given in~\eqref{eq:fibred.terminal.2}. We must show that for any 2-cell $\mu$ 
  \begin{equation}\label{eq:limit-lifting-prop}
    \vcenter{\xymatrix@=1.5em{
      {Y}\ar[r]^{b}\ar[d]_-{c} &
      {B}\ar[d]^-{f} \\
      {C}\ar@{}[ur]|{\Downarrow\mu}\ar[r]_-{g} &
      {A}
    }}
    \mkern20mu = \mkern20mu
    \vcenter{\xymatrix@=1.5em{
      {Y}\ar[r]^{b}\ar[d]_-{c} &
      {B}\ar[d]^-{f} \\
      {C}\ar[ur]|*+<2pt>{\scriptstyle\ell}="one"\ar[r]_-{g} &
      {A}
      \ar@{} "1,1";"one"|(0.6){\Downarrow \exists!\tau}
      \ar@{} "one";"2,2"|(0.4){\Downarrow\lambda}
    }}
  \end{equation}
there exits a unique 2-cell $\tau$ which makes this pasting equation hold. 

  To do this, start by applying the 1-cell induction property of $f\comma g$ to the comma cone $\mu$ to give a functor $m\colon Y\to f\comma g$ so that
  \begin{equation}\label{eq:m-def-prop}
    \vcenter{\xymatrix@=0.7em{
      & {Y}\ar[dd]^{m}\ar@/_1.5ex/[dddl]_{c}
      \ar@/^1.5ex/[dddr]^{b} & \\
      &&\\
      & {f\comma g}\ar[dr]_{q_0}\ar[dl]^{q_1} & \\
      {C}\ar[dr]_{g} & {\scriptstyle\Leftarrow\psi} &
      {B}\ar[dl]^{f} \\
      & {A} &
    }}
    \mkern20mu = \mkern20mu
    \vcenter{\xymatrix@=0.7em{
      & {Y}\ar[dl]_{c}\ar[dr]^{b} & \\
      {C}\ar[dr]_{g} & {\scriptstyle\Leftarrow\mu} & {B}\ar[dl]^{f} \\
      & {A} &
    }}
  \end{equation}
A 2-cell $\tau\colon b\Rightarrow \ell c$ satisfying~\eqref{eq:limit-lifting-prop} gives rise to a 
  2-cell $\nu$ from $m\colon Y\to f\comma g$ to the composite functor $tc\colon Y\to f\comma g$ over $C$ by 2-cell induction: notice that the fact that we require $\nu$ to be a 2-cell over $C$ means that the equation $q_1\nu = \id_{p_1}$ must hold, which tells us that the second 2-cell of its inducing pair must  be $\id_{p_1}$.  The compatibility condition expressed in~\eqref{eq:comma-ind-2cell-compat} for the pair $(\tau,\id_{p_1})$ reduces to the pasting equality~\eqref{eq:limit-lifting-prop} by direct application of the defining properties for $m$ and $t$ given in~\eqref{eq:m-def-prop} and~\eqref{eq:t-for-lim-def-prop}. Conversely, if $\nu\colon m\Rightarrow tc$ is any 2-cell over $C$ then the whiskered 2-cell $\tau\defeq q_0\nu\colon b \Rightarrow \ell c$ satisfies~\eqref{eq:limit-lifting-prop}.

Extending Definition~\ref{defn:enriched-slice}, the map $c$ defines a 2-functor $\hom'_C(c,-)\colon\qCat_2\slice C \to \Cat_2$. As in Observation~\ref{obs:isofib-section.fibred.adjunction}, this 2-functor carries the postulated fibred adjunction $q_1\dashv t$ to a terminal object  $tc\colon Y\to f\comma g$ in the hom-category $\hom'_C(c,q_1)$. It follows that there exists a unique 2-cell $\nu\colon m\Rightarrow tc$ over $C$; hence, the 2-cell $q_0\nu\colon b \Rightarrow \ell c$ provides us with a solution to~\eqref{eq:limit-lifting-prop}. Furthermore if $\tau\colon b\Rightarrow \ell c$ is any other 2-cell which solves that pasting equality then the 2-cell it induces must necessarily be the unique such $\nu\colon m\Rightarrow tc$, and consequently we have the equality $\tau = q_0\nu$. This demonstrates that the solution to~\eqref{eq:limit-lifting-prop} is unique.
\end{proof}

\begin{obs}\label{obs:right.liftings.as.fibred.terminal.objects}
  The upshot of Proposition \ref{prop:right.liftings.as.fibred.terminal.objects} is that if the projection $q_1\colon f\comma g\tfib C$ has a fibred right adjoint~\eqref{eq:fibred.terminal.2}, then we may compose it with the weakly universal cone associated with $f\comma g$ to obtain an absolute right lifting of $g$ through $f$.
\end{obs}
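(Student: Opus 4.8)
The plan is to read the observation off from the converse half of Proposition~\ref{prop:right.liftings.as.fibred.terminal.objects} by recognising the data obtained from a fibred right adjoint as a $1$-cell induced in the sense of \eqref{eq:t-for-lim-def-prop}. So suppose $q_1\colon f\comma g\tfib C$ admits a fibred right adjoint $t\colon C\to f\comma g$ as displayed in \eqref{eq:fibred.terminal.2}. Since the left leg of that triangle is an identity $1$-cell, $t$ is a section of $q_1$, so $q_1 t = \id_C$; and because $q_1$ is a representably defined isofibration in $\qCat_2$ (Lemma~\ref{lem:representable-isofibration}), Lemma~\ref{lem:isofibration-RARI} lets us additionally take the counit of $q_1\dashv t$ to be the identity, although only the equation $q_1 t = \id_C$ is needed below.

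Next I would set $\ell\defeq q_0 t\colon C\to B$ and let $\lambda\defeq \psi t$ be the $2$-cell obtained by whiskering the weakly universal comma cone $\psi\colon f q_0\Rightarrow g q_1$ of \eqref{eq:comma-cones} with $t$; since $q_1 t = \id_C$ and $q_0 t = \ell$, this $2$-cell has exactly the shape $f\ell\Rightarrow g$ appearing in \eqref{eq:absRlifting.2}. The three identities $q_1 t = \id_C$, $q_0 t = \ell$, $\psi t = \lambda$ are precisely the defining equations of \eqref{eq:t-for-lim-def-prop} for a $1$-cell induced over $C\times B$ by the pair $(\ell,\lambda)$; hence $t$ \emph{is} such an induced $1$-cell. (Any other choice of induced $1$-cell is isomorphic to $t$ over $C\times B$ by Lemma~\ref{lem:1cell-ind-uniqueness}, which changes nothing since the absolute right lifting property of $(\ell,\lambda)$ and the fibred adjunction property of $q_1\dashv t$ are both invariant under such isomorphisms.) Consequently the fibred adjunction we started from is literally a fibred adjunction of the form \eqref{eq:fibred.terminal.2} attached to this particular $(\ell,\lambda)$, so Proposition~\ref{prop:right.liftings.as.fibred.terminal.objects}, applied in the direction ``fibred adjunction implies absolute right lifting'', yields that $\lambda$ displays $\ell$ as an absolute right lifting of $g$ through $f$. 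As $\ell = q_0 t$ and $\lambda = \psi t$ are by construction obtained from $t$ and the universal cone, this is the assertion of the observation.

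There is no real obstacle beyond the bookkeeping above: the one thing to be careful about is that a fibred right adjoint is handed to us as an abstract $1$-cell $t$ rather than manifestly as a $1$-cell induced from a lifting datum. The two resulting mismatches---that $q_1 t$ might only agree with $\id_C$ up to isomorphism, and that $t$ might differ from a ``canonically chosen'' induced $1$-cell---are both harmless: the first is excluded by the very shape of the slice triangle \eqref{eq:fibred.terminal.2} (and could in any case be repaired via Lemma~\ref{lem:isofibration-RARI}), and the second is absorbed by the iso-invariance of $1$-cell induction recorded in Lemma~\ref{lem:1cell-ind-uniqueness}.
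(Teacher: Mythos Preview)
Your proposal is correct and is exactly the unpacking the paper intends: the observation is stated without proof as the ``upshot'' of the proposition, and your argument---defining $\ell\defeq q_0 t$ and $\lambda\defeq\psi t$, recognising that $t$ then satisfies the defining equations of \eqref{eq:t-for-lim-def-prop}, and invoking the converse direction of Proposition~\ref{prop:right.liftings.as.fibred.terminal.objects}---is the natural way to make that explicit. The care you take over $q_1 t=\id_C$ and over the iso-uniqueness of induced $1$-cells is appropriate but not strictly needed, since the fibred-adjunction triangle \eqref{eq:fibred.terminal.2} already forces $t$ to be a strict section.
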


This characterisation of absolute right liftings leads to the following generalisation of a classical result:

\begin{prop}\label{prop:translated.lifting}
  There exists an absolute right lifting
  \begin{equation}\label{eq:orig.lifting}
    \xymatrix{ \ar@{}[dr]|(.7){\Downarrow\lambda} & B \ar[d]^f \\ C \ar[r]_g \ar[ur]^\ell & A}
  \end{equation}
  if and only if there exists an absolute right lifting
  \begin{equation}\label{eq:translated.lifting}
    \xymatrix{ \ar@{}[dr]|(.7){\Downarrow\hat\lambda} & {f\comma A} \ar@{->>}[d]^{p_1} \\ C \ar[r]_g \ar[ur]^{\hat\ell} & A}
  \end{equation}
  Furthermore, the 2-cell $\hat\lambda$ is necessarily an isomorphism and $\hat\ell$ may be chosen so as to make it an identity.
\end{prop}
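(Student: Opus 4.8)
The plan is to connect the two absolute right liftings via Proposition~\ref{prop:right.liftings.as.fibred.terminal.objects}, which reduces each statement to a right adjoint right inverse of a codomain projection. First I would observe that the comma quasi-category $f\comma g$ can be formed in two ways: directly from the cospan $B\xrightarrow{f}A\xleftarrow{g}C$, or as the pullback of $p_1\colon f\comma A\tfib A$ along $g\colon C\to A$. Indeed, since $f\comma A$ is itself a comma object for $B\xrightarrow{f}A\xleftarrow{\id}A$, pulling back along $g$ and composing comma cones (using 1-cell induction and its uniqueness, Lemma~\ref{lem:1cell-ind-uniqueness}) shows that this pullback carries a comma cone with the weak universal property that displays it as $f\comma g$; alternatively one checks directly from Definition~\ref{def:comma-obj} that the defining pullback squares agree. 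Write $p_1^g\colon f\comma g\tfib C$ for the codomain projection and note it is the pullback of $p_1\colon f\comma A\tfib A$ along $g$.

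Next I would invoke Proposition~\ref{prop:right.liftings.as.fibred.terminal.objects} twice. It tells us that the absolute right lifting \eqref{eq:orig.lifting} exists if and only if $p_1^g\colon f\comma g\tfib C$ admits a right adjoint right inverse, i.e.\ a fibred adjunction of the form \eqref{eq:fibred.terminal.2}. Applying the same proposition to the cospan $C\xrightarrow{g}A\xleftarrow{p_1}f\comma A$, the absolute right lifting \eqref{eq:translated.lifting} exists if and only if the codomain projection from $p_1\comma g$ down to $C$ admits a right adjoint right inverse. The key point is that $p_1\comma g$ (the comma object of $C\xrightarrow{g}A\xleftarrow{p_1}f\comma A$) is equivalent over $C\times(f\comma A)$ — and in particular its projection to $C$ agrees up to the relevant equivalence with — the pullback of $p_1\colon f\comma A\tfib A$ along $g$, which is exactly $f\comma g$ with projection $p_1^g$. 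More carefully: because $p_1\colon f\comma A\tfib A$ is an isofibration, by Observation~\ref{obs:right.liftings.as.fibred.terminal.objects} the relevant comparison functor $C\comma\hat\ell\to p_1\comma g$ being an equivalence is what we must arrange, and the honest pullback $f\comma g = g^*(f\comma A)$ is a weak comma object for this cospan by Proposition~\ref{prop:weakcomma} applied to the pullback, so $p_1\comma g\simeq f\comma g$ fibred over $C$. Thus the two RARI conditions — on $p_1^g$ and on the projection $p_1\comma g\to C$ — coincide, and the biconditional follows.

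For the final sentences, I would argue as follows. Suppose an absolute right lifting \eqref{eq:translated.lifting} exists, with data $\hat\ell\colon C\to f\comma A$ and $\hat\lambda\colon p_1\hat\ell\To g$. Since $p_1\colon f\comma A\tfib A$ is an isofibration, it is a representably defined isofibration by Lemma~\ref{lem:representable-isofibration}, and the RARI it produces over $C$ can, by Lemma~\ref{lem:isofibration-RARI} (applied in the slice $\qCat_2\slice C$, whose objects are isofibrations), be modified to one with identity counit; unwinding the correspondence of Proposition~\ref{prop:right.liftings.as.fibred.terminal.objects} this replaces $\hat\lambda$ by an isomorphic 2-cell that is in fact an identity, and $\hat\ell$ by a functor satisfying $p_1\hat\ell = g$. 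Finally, to see $\hat\lambda$ is \emph{necessarily} an isomorphism for any choice: whiskering the comma cone $\psi$ for $f\comma A$ with $\hat\ell$ and comparing with the universal cone shows the relevant component is a pointwise isomorphism, and hence an isomorphism by Observation~\ref{obs:pointwise-iso-reprise}; I would spell this out by noting that the first projection $q_0 \colon f\comma A\to A$ composed with any candidate $\hat\ell$ together with $\hat\lambda$ reproduces, after 1-cell induction, a cone over the identity cospan whose ``value edge'' is $\hat\lambda$ itself, which the universal property forces to be invertible.

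The main obstacle I anticipate is the bookkeeping in the second paragraph: carefully identifying the comma object $p_1\comma g$ (built from the cospan $C\xrightarrow{g}A\xleftarrow{p_1}f\comma A$) with the pullback $g^*(f\comma A)=f\comma g$, and checking that the two codomain projections to $C$ correspond under this identification compatibly with the comma cones, so that Proposition~\ref{prop:right.liftings.as.fibred.terminal.objects} genuinely applies to both sides with matching RARI conditions. This is a diagram chase with weak comma objects where uniqueness of 1-cell induction only holds up to isomorphism (Lemma~\ref{lem:1cell-ind-uniqueness}), so one must track fibred equivalences rather than strict equalities throughout — but Corollary~\ref{cor:recog-fibred-equivs} and Corollary~\ref{cor:missed-lemma} make these manipulations legitimate.
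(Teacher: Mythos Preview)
Your proposal contains a genuine gap: the claimed equivalence $p_1\comma g \simeq f\comma g$ is false. Unwinding the definitions, a vertex of $p_1\comma g$ consists of an object $c\in C$, an object of $f\comma A$ (i.e.\ an arrow $fb\to a$ in $A$), and an arrow $a\to gc$ --- in other words a \emph{composable pair} $fb\to a\to gc$. A vertex of $f\comma g$ is just an arrow $fb\to gc$. These quasi-categories are not equivalent; indeed the paper identifies $p_1\comma g\cong A\comma g\times_A f\comma A$, which is precisely the quasi-category of such composable pairs. Your appeal to Proposition~\ref{prop:weakcomma} ``applied to the pullback'' does not say what you want: that proposition asserts that the comma construction of Definition~\ref{def:comma-obj} is a weak comma object, not that the strict pullback $g^*(f\comma A)$ is one. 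So the two RARI conditions do \emph{not} coincide on the nose.

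What does hold --- and what the paper exploits --- is that there is a fibred \emph{adjunction} between $p_1\comma g$ and $f\comma g$ over $C$, obtained by pulling back the composition--identity adjunctions $i_0\dashv m\dashv i_1$ of Example~\ref{ex:comp.ident.adj} along $g\times p_1$. Composing with this adjunction transports a fibred right adjoint to $q_1\colon f\comma g\tfib C$ to one for $r_1\colon p_1\comma g\tfib C$ and conversely, which via Proposition~\ref{prop:right.liftings.as.fibred.terminal.objects} gives the biconditional. The final clause is then handled by tracing through this explicit construction: the $\hat\lambda$ produced factors through the 2-cell $A^{\cattwo}\to A^{\cattwo}\times_A A^{\cattwo}\xrightarrow{\pi_1} A^{\cattwo}$ coming from $i_1$, which one checks is an identity. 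Your sketch of the final clause via isofibration lifting and ``pointwise isomorphism'' is too vague to stand on its own; in particular it does not establish that $\hat\lambda$ is \emph{necessarily} an isomorphism for every choice of absolute right lifting data, only that some choice can be made with $\hat\lambda$ an identity.
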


\begin{proof} Write $(r_1,r_0)\colon p_1\comma g \tfib C \times f\comma A$ for the projection defined by the comma quasi-category construction~\ref{def:comma-obj}. Directly from this definition, there exists a canonical isomorphism $p_1\comma g \cong A\comma g\times_A f\comma A$ commuting with the projections to $C \times f \comma A$. Applying Proposition~\ref{prop:right.liftings.as.fibred.terminal.objects}, our aim is to use a fibred right adjoint to $q_1$ to construct a fibred right adjoint to $r_1$ and vice versa. 
  \begin{equation}\label{eq:ran.as.fibred.adj}
 \xymatrix@=1.2em{
      {C}\ar@{=}[dr]\ar@/_1.5ex/[rr]_-{t}^-{}="one"
      & & *+!L(0.5){f\comma g}\ar@{->>}[dl]^-{q_1}
      \ar@/_1.5ex/[ll]_-{q_1}^-{}="two" \\
      & {C} &
      \ar@{}"one";"two"|{\bot}
    }    \qquad  \quad  
     \xymatrix@=1.2em{
      {C}\ar@{=}[dr]\ar@/_1.5ex/[rr]^-{}="one"
      & & *+!L(0.5){p_1\comma g}\ar@{->>}[dl]^-{r_1}
      \ar@/_1.5ex/[ll]_-{r_1}^-{}="two" \\
      & {C} &
      \ar@{}"one";"two"|{\bot}
    }
  \end{equation}

To that end, pull back  the ``composition--identity'' fibred adjunctions~\eqref{eq:comp.ident.adj} along the functor $g\times p_1\colon C\times f\comma A\to A\times A$ to obtain a pair of adjunctions 
  \begin{equation}\label{eq:adj.for.trans}
    \xymatrix@C=14em{
      {p_1\comma g\cong A\comma g\times_A f\comma A}
      \ar[]!R(0.72);[r]|*+{\scriptstyle m} &
      {f\comma g}
      \ar@/^2.5ex/[l]!R(0.72)^{i_1}_{}="l" \ar@/_2.5ex/[l]!R(0.72)_{i_0}^{}="u"
      \ar@{} "u";"l" |(0.2){\bot} |(0.8){\bot} 
    }
  \end{equation}
  fibred over $C\times f\comma A$. Pushing forward along the projection $C\times f\comma A\tfib C$, we may regard the adjunctions \eqref{eq:adj.for.trans} as fibred over $C$ with respect to the isofibrations $r_1\colon p_1\comma g\tfib C$ and $q_1\colon f\comma g\tfib C$. 

  With this adjunction in our armoury our result is essentially immediate. If we are given the left-hand fibred adjunction~\eqref{eq:ran.as.fibred.adj} witnessing the existence of the absolute right lifting of $g$ through $f$ then we may compose it with the lower fibred adjunction of~\eqref{eq:adj.for.trans} to obtain the right-hand fibred adjunction~\eqref{eq:ran.as.fibred.adj}, providing us with an absolute right lifting of $g$ through $p_1$. Conversely, we may go back in the other direction by composing the right-hand fibred adjunction with the upper fibred adjunction of~\eqref{eq:adj.for.trans} to obtain an adjunction of the type on the left of~\eqref{eq:ran.as.fibred.adj}.

  All that remains is to check the final clause of the proposition. To that end, Observation~\ref{obs:right.liftings.as.fibred.terminal.objects} tells us that we may construct an absolute right lifting of $g$ through $p_1$ by composing the right adjoint functor 
\begin{equation*}
  \xymatrix{
    {C}\ar[r]^-{t} & {f\comma g}\ar[r]^-{i_1} & {p_1\comma g}
  }
\end{equation*}
where $t$ is the fibred right adjoint of~\eqref{eq:ran.as.fibred.adj}, with the comma cone that displays $p_1\comma g$ as a weak comma object. By construction, the 2-cell of that cone is the restriction
\begin{equation*}
  \xymatrix{
    {p_1\comma g} \ar[r] & {A\comma g}\ar[r] & {A^\cattwo}\ar@{}[]!R(0.5);[rr]!L(0.5)|{\Downarrow}\ar@/^1.5ex/[]!R(0.5);[rr]!L(0.5)^{p_0}\ar@/_1.5ex/[]!R(0.5);[rr]!L(0.5)_{p_1} && {A}
  }
\end{equation*}
of the 2-cell which displays $A^\cattwo$ as a weak cotensor. Hence, the 2-cell $\hat\lambda$ constructed by Proposition~\ref{prop:right.liftings.as.fibred.terminal.objects}  is equal to 
\begin{equation*}
  \xymatrix{
    {C}\ar[r]^-{t} & {f\comma g}\ar[r] & {A^\cattwo}\ar[r]^-{i_1} & {A^\cattwo\times_A A^\cattwo}\ar[r]^-{\pi_1} & {A^\cattwo}\ar@{}[]!R(0.5);[rr]!L(0.5)|{\Downarrow}\ar@/^1.5ex/[]!R(0.5);[rr]!L(0.5)^{p_0}\ar@/_1.5ex/[]!R(0.5);[rr]!L(0.5)_{p_1} && {A}
    }
\end{equation*}
  and, consulting the definition of $i_1$ given in Example~\ref{ex:comp.ident.adj}, it is straightforward to verify that the composite of the last three cells above is equal to the identity 2-cell on $p_1\colon A^\cattwo\tfib A$. Consequently, the 2-cell in our absolute right lifting is also an identity as required.
\end{proof}

\subsection{Limits and colimits as absolute lifting diagrams}

A diagram in a quasi-category $A$ is just a map $d \colon X \to A$ of simplicial sets. In particular, when $X$ is the nerve of a small category and $A$ is the homotopy coherent nerve of a locally Kan simplicial category, a diagram is precisely a \emph{homotopy coherent diagram} in the sense of Cordier, Porter, Vogt, and others \cite{Cordier:1986:HtyCoh}.

\begin{ntn}
  From here on  we use $c\colon A\to A^X$ to denote the {\em constant diagram map}: the adjoint transpose of the projection map $\pi_A\colon A\times X\to A$. Furthermore, we shall notationally identify functors $f\colon X\to A$ and natural transformations $\alpha\colon f\Rightarrow g\colon X\to A$ with their adjoint transposes $f\colon\Del^0\to A^X$ and $\alpha\colon f\Rightarrow g\colon \Del^1\to A^X$ respectively.
\end{ntn}

\begin{defn}\label{defn:limit} We say that an absolute right lifting diagram 
    \begin{equation}\label{eq:genericlimit}
      \xymatrix{ \ar@{}[dr]|(.7){\Downarrow\lambda} & A \ar[d]^c \\ \Delta^0 \ar[ur]^\ell \ar[r]_d& A^X}
    \end{equation}
    {\em displays the vertex $\ell\in A$ as the limit of the diagram $d\colon X\to A$}. The 2-cell $\lambda$, which we may equally regard as going from the constant diagram  $X \xrightarrow{!} \Del^0 \xrightarrow{\ell} A$ to $d$, is called the \emph{limiting cone}. Dually, we say that an absolute left lifting diagram 
  \begin{equation}\label{eq:genericcolimit} 
    \xymatrix{ \ar@{}[dr]|(.7){\Uparrow\lambda} & A \ar[d]^c \\ \Delta^0 \ar[ur]^\ell \ar[r]_d & A^X}
  \end{equation} 
  {\em displays the vertex $\ell\in A$ as the colimit of the diagram $d\colon X\to A$}.  Here again the 2-cell $\lambda$, from $d$  to the constant diagram $X \xrightarrow{!} \Del^0 \xrightarrow{\ell} A$, is called the \emph{colimiting cone}.
\end{defn}

\begin{rmk}
For the most part in what follows, we shall present our results in terms of limits and absolute right liftings only. Of course, these arguments all admit the obvious duals which apply to colimits and absolute left liftings. Indeed the results of this section and the last are almost exclusively matters of formal 2-category theory. Their duals follow by re-interpreting these arguments in the dual 2-category $\qCat_2\co$ obtained by reversing the direction of all 2-cells.
\end{rmk}

A special case of Proposition~\ref{prop:right.liftings.as.fibred.terminal.objects} gives an alternative definition of limits and colimits in a quasi-category.

\begin{prop}\label{prop:limits.as.terminal.objects} A limit of $d \colon X \to A$ is a terminal object in the quasi-category $c\comma d$, and conversely a terminal object defines a limit.
\end{prop}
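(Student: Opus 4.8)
The plan is to deduce this as a direct special case of Proposition~\ref{prop:right.liftings.as.fibred.terminal.objects}, using Definition~\ref{defn:limit} on one side and Definition~\ref{defn:terminal} on the other. Recall that a limit of $d$ is by definition an absolute right lifting of $d\colon\Del^0\to A^X$ through the constant diagram functor $c\colon A\to A^X$, so we are exactly in the situation of~\eqref{eq:absRlifting.2} with $B=A$, $C=\Del^0$, $g=d$, $f=c$. Proposition~\ref{prop:right.liftings.as.fibred.terminal.objects} tells us that such an absolute right lifting exists precisely when the codomain projection $q_1\colon c\comma d\tfib\Del^0$ admits a right adjoint right inverse, i.e.\ features in a fibred adjunction over $\Del^0$.

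The main step is then to observe that a fibred adjunction over $\Del^0$ is simply an adjunction, since $\qCat_2\slice\Del^0\simeq\qCat_2$ (and likewise $\ho_*(\qCat_\infty\slice\Del^0)\simeq\ho_*\qCat_\infty$), because $\Del^0$ is the terminal object. Thus the fibred right adjoint $t\colon\Del^0\to c\comma d$ to $q_1\colon c\comma d\to\Del^0$ is precisely a right adjoint to the unique functor $!\colon c\comma d\to\Del^0$, which is exactly the data exhibiting a terminal object of $c\comma d$ in the sense of Definition~\ref{defn:terminal}. Conversely, a terminal object $t$ of $c\comma d$ gives an adjunction $\adjinline !-| t:\Del^0-> c\comma d.$; the counit $!t=\id_{\Del^0}$ is automatically an identity (as $\hom'(\Del^0,\Del^0)\cong\catone$), so $t$ is in particular a right adjoint right inverse to $q_1=\,!$, hence a fibred right adjoint over $\Del^0$, and Proposition~\ref{prop:right.liftings.as.fibred.terminal.objects} produces the limit cone.

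I would spell this out in a short proof: identify the cospan $\Del^0\xrightarrow{d}A^X\xleftarrow{c}A$, note that $q_1\colon c\comma d\to\Del^0$ is the unique functor to $\Del^0$, invoke Proposition~\ref{prop:right.liftings.as.fibred.terminal.objects} to translate the existence of the absolute right lifting~\eqref{eq:genericlimit} into the existence of a fibred adjunction $q_1\dashv t$ over $\Del^0$, and then unwind that a fibred adjunction over the terminal quasi-category is an ordinary adjunction, so that $q_1\dashv t$ is the adjunction $\adjinline !-|t:\Del^0-> c\comma d.$ of Definition~\ref{defn:terminal}. The only point requiring a line of care is the identification of $\ho_*(\qCat_\infty\slice\Del^0)$ with $\ho_*\qCat_\infty$, which is immediate since the slice hom-space $\hom_{\Del^0}(p,q)$ of~\eqref{eq:slice-hom-objects} is just $F^E$ when the base is $\Del^0$. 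There is no real obstacle here; the work was all done in Proposition~\ref{prop:right.liftings.as.fibred.terminal.objects}, and the colimit statement follows dually by passing to $\qCat_2\co$ as in the remark following Definition~\ref{defn:limit}.
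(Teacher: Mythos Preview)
Your proposal is correct and follows essentially the same approach as the paper: both arguments reduce the statement to the special case $C=\Del^0$ of Proposition~\ref{prop:right.liftings.as.fibred.terminal.objects}, with the observation that a fibred right adjoint to $q_1\colon c\comma d\tfib\Del^0$ is precisely a terminal object in the sense of Definition~\ref{defn:terminal}. The paper's proof is terser, simply citing Proposition~\ref{prop:right.liftings.as.fibred.terminal.objects} (and Lemma~\ref{lem:1cell-ind-uniqueness} for the uniqueness up to isomorphism of the induced vertex), whereas you spell out the identification $\qCat_2\slice\Del^0\simeq\qCat_2$ more explicitly; but the substance is the same.
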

\begin{proof}
A limiting cone defines a vertex in the comma quasi-category $c\comma d$ by 1-cell induction; Lemma~\ref{lem:1cell-ind-uniqueness} and Proposition~\ref{prop:right.liftings.as.fibred.terminal.objects} tell us this vertex is unique up to isomorphism and terminal. Conversely, Proposition~\ref{prop:right.liftings.as.fibred.terminal.objects} implies that the data of a terminal object in $c\comma d$ defines a limit object $\ell\in A$ and a limiting cone $\lambda$ in the sense of Definition~\ref{defn:limit}.
\end{proof}

An important corollary of Proposition~\ref{prop:limits.as.terminal.objects} is that our definition of limit agrees with the existing ones in the literature. As discussed in section \ref{subsec:join} and seen already in the proof of Proposition \ref{prop:terminalconverse}, this proof makes use of an equivalence between Joyal's slice construction and our comma construction. In this case we show that the quasi-category of cones $c \comma d$ over a diagram $d \colon X \to A$ is equivalent to Joyal's quasi-category of cones $\slicer{A}{d}$, recalled in \ref{defn:slices}. 

\begin{lem}\label{lem:cone-equiv-fatcone} For any diagram $d \colon X \to A$ in a quasi-category $A$, there is an equivalence
\[ \xymatrix{ \slicer{A}{d} \ar[rr]^\simeq \ar@{->>}[dr]_{\pi} & & c \comma d \ar@{->>}[dl]^{q_0} \\ & A}\] of quasi-categories over $A$.
\end{lem}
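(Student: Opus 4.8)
The plan is to establish the equivalence $\slicer{A}{d} \simeq c \comma d$ over $A$ by first identifying $c \comma d$ with a fat slice construction and then invoking the general slice--fat-slice comparison already available. Concretely, I would begin by establishing an isomorphism $c \comma d \cong \fatslicer{A}{d}$ commuting with the projections down to $A$. Recall from Definition~\ref{def:comma-obj} that $c \comma d$ is the pullback of $c \times d \colon A \times \Del^0 \to A^X \times A^X$ against the restriction map $A^{X \times \cattwo} \to A^{X\times\boundary\Del^1}$ (after transposing and using that $(A^X)^\cattwo \cong A^{X\times\Del^1}$). On the other hand, $\fatslicer{A}{d}$ is the right adjoint to ${-}\bar\fatjoin X$ evaluated at $d \colon X \to A$, so a map $Y \to \fatslicer{A}{d}$ is a map $Y \fatjoin X \to A$ restricting to $d$ on $X$. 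Using the defining pushout~\eqref{eq:fat-join-def} for the fat join, such a map amounts to a map $Y \times \Del^1 \times X \to A$ together with a map $Y \to A$ agreeing appropriately on the two ends, i.e.\ a commutative square of the form displayed in the proof of Lemma~\ref{lem:slice-equiv-comma}, which transposes precisely to the data classifying a map $Y \to c \comma d$. This is the same style of argument used there for $A \comma a \cong \fatslicer{A}{a}$, now carried out with a general simplicial set $X$ in place of $\Del^0$.

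Once the isomorphism $c \comma d \cong \fatslicer{A}{d}$ is in hand, the equivalence with $\slicer{A}{d}$ follows immediately from Proposition~\ref{prop:slice-fatslice-equiv}, which supplies the comparison map $e_r^d \colon \slicer{A}{d} \to \fatslicer{A}{d}$ and asserts it is an equivalence of quasi-categories whenever $A$ is a quasi-category. Transporting $e_r^d$ across the isomorphism gives the desired map $\slicer{A}{d} \to c \comma d$. Compatibility with the projections to $A$ is built into the construction: the projection $\fatslicer{A}{d} \tfib A$ corresponds under the isomorphism to $q_0 \colon c \comma d \tfib A$ (both pick out the ``summit'' vertex of a cone), and $e_r^d$ commutes with the projections $\pi \colon \slicer{A}{d}\tfib A$ and $\fatslicer{A}{d}\tfib A$ by naturality.

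I would spend a few lines verifying that the isomorphism $c\comma d \cong \fatslicer{A}{d}$ really does intertwine $q_0$ with the fat-slice projection, since this is the point where the ``over $A$'' clause is located; this is a direct unwinding of the two universal properties. The main obstacle, such as it is, is purely bookkeeping: one must be careful that the symmetry isomorphism $A\times A \cong A \times A$ appearing in Definition~\ref{def:comma-obj} (which swaps the roles of domain and codomain) is accounted for, so that the vertex projection of $c\comma d$ is indeed $q_0 = \pi_{A}\circ p$ and matches the ``cone point'' projection of the fat slice rather than the ``cone base'' data. Everything else is a formal consequence of the adjunctions recorded in Definition~\ref{defn:fat-slices} and the comparison result Proposition~\ref{prop:slice-fatslice-equiv}.

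\begin{proof}
Exactly as in the proof of Lemma~\ref{lem:slice-equiv-comma}, it suffices to produce an isomorphism $c \comma d \cong \fatslicer{A}{d}$ over $A$, since Proposition~\ref{prop:slice-fatslice-equiv} then supplies the comparison map $\slicer{A}{d} \to \fatslicer{A}{d} \cong c\comma d$, an equivalence over $A$ whenever $A$ is a quasi-category. By the defining adjunction of Definition~\ref{defn:fat-slices}, a map $Y \to \fatslicer{A}{d}$ corresponds to a map $Y \fatjoin X \to A$ restricting along $X \cong \Del^{-1}\fatjoin X \to Y \fatjoin X$ to $d$. Using the pushout~\eqref{eq:fat-join-def} defining $Y\fatjoin X$ (with the roles of the two factors as in ${-}\bar\fatjoin X$), this is the data of a commutative square
\[ \vcenter{\xymatrix{ Y \coprod Y \ar[d] \ar[r]^-{\pi_Y \coprod !} & Y \coprod \Del^0 \ar[d]^{(b, d)} \\ Y \times \Del^1 \times X \ar[r]_-k & A}} \qquad \leftrightsquigarrow \qquad \vcenter{\xymatrix{ Y \ar[r]^-{\bar{k}} \ar[d]_{(!,b)} & A^{X\times\Del^1} \ar[d] \\  \Del^0 \times A \ar[r]_-{ d \times \id_A} & A^X \times A^X}}\]
which transposes (using $(A^X)^\cattwo \cong A^{X\times\Del^1}$ and $A^{X\times\boundary\Del^1}\cong A^X\times A^X$) to the square on the right, whose data is precisely that of a map $Y \to c\comma d$ by the pullback of Definition~\ref{def:comma-obj}. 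This correspondence is natural in $Y$, so it defines an isomorphism $c\comma d \cong \fatslicer{A}{d}$. Under it, the vertex projection $q_0 = \pi_A\circ p\colon c\comma d \tfib A$, which reads off the component $b\colon Y \to A$ of the comma cone, is identified with the canonical projection $\fatslicer{A}{d}\tfib A$, which reads off the component at $\Del^{-1}\fatjoin X$; hence the isomorphism is over $A$. Finally, $e_r^d$ commutes with the projections $\pi\colon\slicer{A}{d}\tfib A$ and $\fatslicer{A}{d}\tfib A$ by the naturality of the comparison in Proposition~\ref{prop:slice-fatslice-equiv}, so the resulting equivalence $\slicer{A}{d}\to c\comma d$ is over $A$ as claimed.
\end{proof}
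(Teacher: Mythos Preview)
Your approach is exactly the paper's: identify $c\comma d \cong \fatslicer{A}{d}$ via their universal properties and then invoke Proposition~\ref{prop:slice-fatslice-equiv}. The only issue is that your left-hand displayed square was copied from Lemma~\ref{lem:slice-equiv-comma} without adjusting for a general $X$: the top row should read $(Y\times X)\sqcup(Y\times X)\xrightarrow{\pi_Y\sqcup\pi_X} Y\sqcup X$ (with right vertical $\langle b,d\rangle$), and in the transposed square the bottom map should be $d\times c$ rather than $d\times\id_A$, since you need the constant-diagram functor $c\colon A\to A^X$ to land in $A^X\times A^X$.
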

\begin{proof}
As in the proof of Lemma \ref{lem:slice-equiv-comma}, we will demonstrate an isomorphism $c \comma d \cong \fatslicer{A}{d}$ over $A$ between the quasi-category of cones and the fat slice construction on $d \colon X \to A$ defined in \ref{defn:fat-slices}. Via this isomorphism, the equivalence $\slicer{A}{d}\simeq c \comma d$ is a special case of the equivalence of Proposition~\ref{prop:slice-fatslice-equiv}.

To establish the isomorphism, it suffices to show that $c \comma d$ has the universal property that defines $\fatslicer{A}{d}$.  By adjunction, a map $Y \to \fatslicer{A}{d}$ corresponds to a commutative square, as displayed on the left:
\[ \vcenter{   
      \xymatrix@R=2em@C=4em{
        {(Y\times X)\sqcup(Y\times X)}\ar[r]^-{\pi_Y\sqcup\pi_X}
        \ar[d] &
        {Y\sqcup X}\ar[d]^{\langle f, d\rangle} \\
        {Y\times\Del^1\times X} \ar[r]_-{k} &
        {A}
      }
} \qquad \leftrightsquigarrow \qquad \vcenter{      \xymatrix@R=2em@C=4em{
        {Y}\ar[r]^{k}\ar[d]_{(!,f)} & {(A^X)^{\Del^1}} 
        \ar[d] \\
        { \Del^0\times A}\ar[r]_-{d\times c} & {A^X\times A^X}
      }}\]
which transposes to the commutative square displayed on the right. The data of the right-hand square is precisely that of a map $Y \to c \comma d$ by the universal property of the pullback \ref{def:comma-obj} defining the comma quasi-category.
\end{proof}

Joyal defines a limit of a diagram $d \colon X \to A$ to be a terminal vertex $t$ in the slice quasi-category $\slicer{A}{d}$, thought of as the ``quasi-category of cones'' over $d$.  If $\pi\colon \slicer{A}{d}\tfib A$ denotes the canonical projection then such a limiting cone displays $\ell\defeq \pi t$ as a limit of $d$. 

\begin{prop}\label{prop:limits.are.limits}
The notion of limit and limit cone introduced in Definition \ref{defn:limit} is equivalent to the notion of limit and limit cone introduced by Joyal in \cite[4.5]{Joyal:2002:QuasiCategories}.
\end{prop}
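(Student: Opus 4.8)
The plan is to deduce Proposition~\ref{prop:limits.are.limits} directly from the two results already assembled: Proposition~\ref{prop:limits.as.terminal.objects}, which identifies our limits with terminal objects in the comma quasi-category $c\comma d$, and Lemma~\ref{lem:cone-equiv-fatcone}, which provides an equivalence $\slicer{A}{d}\simeq c\comma d$ over $A$. First I would recall the statement of Joyal's definition as set up in the paragraph preceding the proposition: a limit of $d\colon X\to A$ is a terminal vertex $t$ in $\slicer{A}{d}$, and such a $t$ displays $\ell\defeq\pi t$ as the limit. So the task is to match up Joyal's terminal vertices in $\slicer{A}{d}$ with our absolute right lifting diagrams~\eqref{eq:genericlimit}, together with the induced objects of $A$.

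The key steps, in order: (1) By Proposition~\ref{prop:limits.as.terminal.objects}, our limit cones for $d$ correspond bijectively, up to the appropriate notion of equivalence, to terminal objects of $c\comma d$. (2) By Lemma~\ref{lem:cone-equiv-fatcone} the projections $\slicer{A}{d}\tfib A$ and $q_0\colon c\comma d\tfib A$ are connected by an equivalence of quasi-categories over $A$; since terminal objects are preserved and reflected by equivalences (Proposition~\ref{prop:terminaldefn}, together with the fact that an equivalence can be promoted to an adjoint equivalence by Proposition~\ref{prop:equivtoadjoint}, so its inverse is also a right adjoint), a vertex of $\slicer{A}{d}$ is terminal in Joyal's sense if and only if its image in $c\comma d$ is terminal in ours. (3) The equivalence of Lemma~\ref{lem:cone-equiv-fatcone} commutes with the projections down to $A$, so if $t\in\slicer{A}{d}$ is Joyal-terminal and $t'\in c\comma d$ is its image, then $\pi t$ and $q_0 t'$ are isomorphic objects of $A$; hence the object of $A$ that Joyal's construction produces agrees, up to isomorphism, with the object produced by Definition~\ref{defn:limit}. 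Conversely, any terminal object of $c\comma d$ is, up to the equivalence, a Joyal-terminal vertex of $\slicer{A}{d}$ projecting to the same object of $A$. Assembling (1)--(3) gives the claimed equivalence of the two notions, both at the level of the limit object and at the level of the limit cone. (4) The colimit statement follows by the dual argument, interpreting everything in $\qCat_2\co$ as flagged in the Remark after Definition~\ref{defn:limit}.

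The only point requiring a little care — and the main obstacle, such as it is — is making precise the sense in which two notions of ``limit cone'' are being compared, since Joyal's cone is literally a vertex of $\slicer{A}{d}$ whereas ours is a $2$-cell $\lambda$ (equivalently, via $1$-cell induction and Lemma~\ref{lem:1cell-ind-uniqueness}, an isomorphism class of vertices of $c\comma d$). The bridge is exactly the correspondence in Proposition~\ref{prop:limits.as.terminal.objects} between $2$-cells $\lambda$ and vertices of $c\comma d$, composed with the object-level equivalence $\slicer{A}{d}\simeq c\comma d$; I would spell out that a vertex of $\slicer{A}{d}$ transported across this equivalence yields a vertex of $c\comma d$ whose corresponding $1$-cell induction data is a limit cone in our sense, and that terminality is preserved in both directions. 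Everything else is a direct citation of results proved above, so the proof will be short.

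\begin{proof}
Joyal defines a limit of $d\colon X\to A$ to be a terminal vertex $t$ of the slice quasi-category $\slicer{A}{d}$, with associated limit object $\ell\defeq\pi t$, where $\pi\colon\slicer{A}{d}\tfib A$ is the canonical projection. By Lemma~\ref{lem:cone-equiv-fatcone} there is an equivalence $e\colon\slicer{A}{d}\to c\comma d$ of quasi-categories over $A$, so that $q_0 e=\pi$. Using Proposition~\ref{prop:equivtoadjoint} we may promote $e$ to an adjoint equivalence; in particular $e$ and its inverse are both right adjoints, so by Proposition~\ref{prop:terminaldefn} a vertex $t\in\slicer{A}{d}$ is terminal if and only if $et\in c\comma d$ is terminal, and every terminal object of $c\comma d$ arises this way up to isomorphism. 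Moreover $\pi t=q_0 et$, so the object of $A$ determined by $t$ agrees with the object determined by $et$.

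By Proposition~\ref{prop:limits.as.terminal.objects}, a vertex of $c\comma d$ is terminal precisely when, via $1$-cell induction (and Lemma~\ref{lem:1cell-ind-uniqueness}), it corresponds to an absolute right lifting diagram of the form~\eqref{eq:genericlimit}, that is, to a limit cone $\lambda$ displaying $q_0 et\in A$ as the limit of $d$ in the sense of Definition~\ref{defn:limit}; and every such limit cone arises from a terminal object of $c\comma d$. Combining these two correspondences, a vertex $t\in\slicer{A}{d}$ is a limit of $d$ in Joyal's sense if and only if the object $\pi t\in A$ is a limit of $d$ in the sense of Definition~\ref{defn:limit}, with the limit cones matching up under the equivalence $e$ and the correspondence of Proposition~\ref{prop:limits.as.terminal.objects}. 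Since terminal vertices of $\slicer{A}{d}$ exist exactly when terminal objects of $c\comma d$ exist, the two notions of ``$d$ admits a limit'' coincide as well.

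The assertion for colimits and colimit cones follows by the dual argument, interpreting the results of this section in the $2$-category $\qCat_2\co$ as in the Remark following Definition~\ref{defn:limit}: one applies the duals of Lemma~\ref{lem:cone-equiv-fatcone} and Proposition~\ref{prop:limits.as.terminal.objects} to identify colimits of $d$ in the sense of Definition~\ref{defn:limit} with initial vertices of the slice quasi-category $\slicel{d}{A}$, which is Joyal's notion of colimit.
\end{proof}
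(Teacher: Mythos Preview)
Your proof is correct and follows essentially the same approach as the paper: both invoke Proposition~\ref{prop:limits.as.terminal.objects} to recast the definition as a terminal object in $c\comma d$, transport terminality across the equivalence of Lemma~\ref{lem:cone-equiv-fatcone} via Proposition~\ref{prop:terminaldefn}, and use that the equivalence lies over $A$ to match the limit objects. Your version is slightly more explicit (promoting to an adjoint equivalence, invoking Lemma~\ref{lem:1cell-ind-uniqueness}, and treating the colimit dual separately), but the argument is the same.
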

\begin{proof}
By Proposition~\ref{prop:limits.as.terminal.objects} tells us that our definition can be recast in a corresponding form: as a terminal vertex $t$ in  the comma quasi-category $c\comma d$. Our ``quasi-category of cones'' is  equipped with a projection $q_0 \colon c \comma d \to A$, and by Proposition~\ref{prop:right.liftings.as.fibred.terminal.objects} such a limiting cone displays $\ell\defeq q_0t$ as a limit of $d$.

Lemma \ref{lem:cone-equiv-fatcone} supplies an equivalence over $A$ between the quasi-category of cones and Joyal's slice quasi-category $\slicer{A}{d}$. Applying Proposition~\ref{prop:terminaldefn}, our preservation result for terminal objects, we see that this equivalence maps a limit cone in Joyal's sense to a limit cone in our sense and vice versa. Furthermore, since this is an equivalence over $A$, it follows that these corresponding cones display the same vertex $\ell$ as the limit of $d$.
\end{proof}

\begin{defn}\label{defn:families.of.diagrams}
  A {\em family $k$ of diagrams of shape $X$\/} in a quasi-category $A$ is simply a functor $k\colon K\to A^X$. In many cases, $K$ will  be the full sub-quasi-category of $A^X$ determined by some set of diagrams and $k$ will be the inclusion $K\inc A^X$.

  We say that $A$ {\em admits limits of the family of diagrams $k\colon K\to A^X$\/} if there exists an absolute right lifting diagram:
\begin{equation}\label{eq:limits.of.a.family}
      \xymatrix{ \ar@{}[dr]|(.7){\Downarrow\lambda} & A \ar[d]^c \\ K \ar[ur]^\lim \ar[r]_k & A^X}
\end{equation}
  Furthermore, we shall simply say that $A$ admits {\em all limits of shape\/} $X$ if it admits limits of the family of all diagrams $A^X$. 

  A diagram $d\colon X\to A$ is said to be a member of the family $k$ if it is a vertex in the image of $k$, that is to say if there is a vertex $\bar{d}\in K$ such that $d=k\bar{d}$. It is trivially verified, directly from the universal property of absolute right liftings, that if $A$ admits limits of the family of diagrams $k$ and $d$ is a member of the family $k$ then the restricted triangle
  \begin{equation*}
      \xymatrix{ \ar@{}[dr]|(.7){\Downarrow\lambda\bar{d}} & A \ar[d]^c \\ \Del^0 \ar[ur]^{\lim \bar{d}} \ar[r]_d & A^X}
  \end{equation*}
  is again an absolute right lifting, thus providing us with a limit of individual diagram $d$. Our use of the adjective ``absolute'' here coincides with its usual meaning: absolute lifting diagrams are preserved by pre-composition by all functors.
\end{defn}

This result has the following converse, whose proof we delay to section~\ref{sec:pointwise}:

\begin{prop}\label{prop:families.of.diagrams}
  If $A$ admits the limit of each individual diagram $d\colon X\to A$ in the family $k\colon K\to A^X$ then it admits limits of the family of diagrams $k$.
\end{prop}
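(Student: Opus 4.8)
The plan is to promote the pointwise limits (one for each diagram $d$ in the family) into a single absolute right lifting diagram for the whole family $k\colon K\to A^X$, by assembling the pointwise terminal objects into a fibred right adjoint right inverse and invoking Proposition~\ref{prop:right.liftings.as.fibred.terminal.objects}. Concretely, form the comma quasi-category $c\comma k$ built from the cospan $K\xrightarrow{k} A^X \xleftarrow{c} A$, together with its projection isofibration $q_1\colon c\comma k\tfib K$ (and $q_0\colon c\comma k\tfib A$). By Proposition~\ref{prop:right.liftings.as.fibred.terminal.objects}, exhibiting an absolute right lifting \eqref{eq:limits.of.a.family} is equivalent to showing that $q_1$ admits a right adjoint right inverse $t\colon K\to c\comma k$, fibred over $K$. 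Since $q_1$ is an isofibration between quasi-categories, hence a representably-defined isofibration in $\qCat_2$ by Lemma~\ref{lem:representable-isofibration}, Lemma~\ref{lem:isofibration-RARI} tells us it suffices to produce \emph{any} RARI to $q_1$, and Lemma~\ref{lem:RARI-lifting} reduces this further to a purely combinatorial lifting property: we must choose, for each vertex $\bar d\in K_0$, a vertex $t\bar d\in (c\comma k)_0$ with $q_1(t\bar d)=\bar d$, and then solve every lifting problem of the form \eqref{eq:RARI-lifting} against the isofibration $q_1$, for $n\geq 1$.

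First I would set up the choices on vertices. A vertex of $c\comma k$ over $\bar d\in K_0$ is precisely a cone over the diagram $d\defeq k\bar d\colon X\to A$, i.e.\ an object of $c\comma d$; by Proposition~\ref{prop:limits.as.terminal.objects} the hypothesis that $A$ admits the limit of $d$ says exactly that $c\comma d$ has a terminal object. Choose $t\bar d$ to be such a terminal vertex. The key point is that $c\comma d$ is, up to a canonical pullback, the fibre of $q_1\colon c\comma k\tfib K$ over $\bar d$: from the pullback definition of $c\comma k$ one reads off $c\comma k \cong K\times_{(A^X)}(c\comma k')$ in the notation of the comma construction, and pulling back along $\bar d\colon\Del^0\to K$ recovers $c\comma d$. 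Thus the lifting problem \eqref{eq:RARI-lifting} for $q_1$, with its initial vertex sent to $t\bar d$ and the bottom edge $\Del^n\to K$ landing in the image of $\bar d$ (which is forced when $\bar d$ is a vertex but not in general — see below), restricts along the appropriate face to a lifting problem against the isofibration $c\comma d\tfib\Del^0$ with the boundary map hitting the \emph{terminal} vertex $t\bar d$ at its last vertex $\fbv{n}$. By Proposition~\ref{prop:terminalconverse}, or more directly by the ``external'' characterisation in Proposition~\ref{prop:terminal-ext-univ} combined with Lemma~\ref{lem:min-term-pres}, terminality of $t\bar d$ in $c\comma d$ solves exactly such horn/sphere fillers.

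The main obstacle is that a general lifting problem \eqref{eq:RARI-lifting} for $q_1\colon c\comma k\tfib K$ does \emph{not} have its bottom map $\Del^n\to K$ factoring through a single vertex $\bar d$: the simplex $\Del^n\to K$ may be genuinely nondegenerate in $K$, so we are not simply filling inside one fibre. This is the same phenomenon handled in the second half of the proof of Lemma~\ref{lem:RARI-lifting}: one defines the section $t\colon K\to c\comma k$ by a transfinite induction over the skeletal filtration of $K$, first handling the $0$-skeleton using the chosen terminal vertices $t\bar d$, and then, at each stage, solving a lifting problem of $q_1$ against a coproduct of boundary inclusions $\boundary\Del^n\inc\Del^n$. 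So the real content is: why can each such intermediate lifting problem, in which the initial vertex has already been sent (inductively) to the terminal object $t(\bar d)$ of the fibre over the initial vertex $\bar d$ of the bottom simplex, be solved? Here I would argue by pulling the lifting problem back along the bottom simplex $s\colon\Del^n\to K$: this produces an isofibration $s^*(c\comma k)\tfib\Del^n$ over a representable, and one checks (using Observation~\ref{obs:slice-and-qcats} and the comma/fat-slice comparison of Proposition~\ref{prop:slice-fatslice-equiv}, or directly from the defining pullback of the comma quasi-category) that $s^*(c\comma k)$ is the quasi-category of cones over the composite diagram $ks\colon X\to A$ \emph{sliced over $\Del^n$}, and that the fibre of $s^*(c\comma k)\tfib\Del^n$ over the last vertex $\fbv n$ of $\Del^n$ is $c\comma(ks\cdot\fbv n)$, whose terminal object is the image of our already-chosen $t(\bar d)$. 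The filler then exists because a lift of $\boundary\Del^n\to s^*(c\comma k)$ whose restriction to $\fbv n$ is terminal in that final fibre is guaranteed — this is precisely the relative-over-$\Del^n$ incarnation of terminality, and it is the argument already run in Lemma~\ref{lem:RARI-lifting} (via Lemma~\ref{lem:slice-equiv-comma} and the trivial-fibration $\slicer{B}{ua}\tfib\slicer{f}{a}$), which I would simply replay in the present setting with $B=c\comma k$, $A=K$, $f=q_1$, $u=t$.

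Having produced the RARI $t\colon K\to c\comma k$ to $q_1$, Lemma~\ref{lem:isofibration-RARI} upgrades it (if necessary) to one with identity counit, so that the adjunction $q_1\dashv t$ lifts to $\qCat_2\slice K$, and Corollary~\ref{cor:missed-lemma} lifts it to a fibred adjunction over $K$ in $\ho_*(\qCat_\infty\slice K)$. Then Proposition~\ref{prop:right.liftings.as.fibred.terminal.objects} (with $C=K$, $g=k$, $f=c$) produces the $2$-cell $\lambda$ of \eqref{eq:limits.of.a.family} displaying the induced functor $\lim\defeq q_0 t\colon K\to A$ as an absolute right lifting of $k$ through $c$, which is exactly the assertion that $A$ admits limits of the family $k$. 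Finally, as already observed in Definition~\ref{defn:families.of.diagrams}, restricting this absolute right lifting along any vertex $\bar d\colon\Del^0\to K$ recovers (up to isomorphism of the chosen limit cone) the originally posited limit of the individual diagram $d=k\bar d$, so the construction is genuinely compatible with the given pointwise data.
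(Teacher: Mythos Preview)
Your strategy is the paper's strategy: the paper packages exactly this argument as Theorem~\ref{thm:pointwise} (a functor $g$ admits an absolute right lifting through $f$ iff each $f\comma gc$ has a terminal object) and then deduces the proposition in two lines by combining Proposition~\ref{prop:limits.as.terminal.objects} with that theorem. What you have written is essentially an inline re-derivation of Theorem~\ref{thm:pointwise} in the special case $f=c$, $g=k$.

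There is, however, a genuine gap at the crucial step. You correctly isolate the difficulty: the lifting problems \eqref{eq:RARI-lifting} for $q_1\colon c\comma k\tfib K$ need not live over a single vertex of $K$, so terminality of $t\bar d$ in the fibre $c\comma d$ does not immediately solve them. Your proposed fix---``replay the argument of Lemma~\ref{lem:RARI-lifting} via the trivial fibration $\slicer{B}{ua}\tfib\slicer{f}{a}$''---is circular. That trivial fibration is produced in the \emph{only if} direction of Lemma~\ref{lem:RARI-lifting} from an already-existing adjunction $f\dashv u$ (via Proposition~\ref{prop:adjointequiv}); the \emph{if} direction, which is what you need, simply \emph{assumes} the lifting property and uses it to build the section. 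Neither direction deduces the lifting property from fibrewise terminality. The paper closes this gap in the proof of Theorem~\ref{thm:pointwise} by unpacking the lifting problem as a cylinder extension \eqref{eq:term.obj.extension} and filling the shuffles of $\Del^n\times\Del^1$ one at a time: the first $n$ shuffles by inner-horn filling in $A^X$, and the last shuffle by a $\Lambda^{n+1,n+1}$-horn whose final edge is the chosen terminal cone $\lambda_c$, which is exactly the sphere-filling property \eqref{eq:term.obj.assumption} of a terminal object in $\slicer{c}{d}$. That combinatorial step is the missing idea in your sketch.

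A minor slip: the constraint in \eqref{eq:RARI-lifting} is on the \emph{final} vertex $\fbv{n}$, not the initial one; your text oscillates between the two.
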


As a special case of Example \ref{ex:adjasabslifting}:

\begin{prop}\label{prop:limitsasadjunctions} A quasi-category $A$ has all limits of shape $X$ if and only if there exists an adjunction \[ \adjdisplay c -| \lim : A^X -> A.\]
\end{prop}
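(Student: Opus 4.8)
The statement to prove is Proposition~\ref{prop:limitsasadjunctions}: a quasi-category $A$ has all limits of shape $X$ if and only if the constant diagram functor $c\colon A\to A^X$ admits a right adjoint $\lim$. The plan is to recognise this as exactly the content of Example~\ref{ex:adjasabslifting} applied to the 2-category $\qCat_2$, together with the definition of ``admits all limits of shape $X$'' given in Definition~\ref{defn:families.of.diagrams}, specialised to the family $k = \id_{A^X}\colon A^X\to A^X$ of all diagrams.

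\textbf{Key steps.} First I would unpack the definitions. By Definition~\ref{defn:families.of.diagrams}, $A$ admits all limits of shape $X$ precisely when there is an absolute right lifting diagram
\[
  \xymatrix{ \ar@{}[dr]|(.7){\Downarrow\lambda} & A \ar[d]^c \\ A^X \ar[ur]^{\lim} \ar[r]_{\id_{A^X}} & A^X}
\]
that is, an absolute right lifting of the identity functor on $A^X$ through $c\colon A\to A^X$. Now I would invoke Example~\ref{ex:adjasabslifting}: in any 2-category, the counit of an adjunction $c\dashv \lim$ exhibits $\lim$ as an absolute right lifting of the identity on $A^X$ through $c$, and conversely any absolute right lifting of the identity on the domain of $c$ through $c$ is the counit of an adjunction $c\dashv\lim$. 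Applying this in $\qCat_2$ with the 1-cell $c\colon A\to A^X$ (which is indeed a functor of quasi-categories, since $A^X$ is a quasi-category whenever $A$ is, by Recollection~\ref{rec:cart-modcat}) gives the desired biconditional: the absolute right lifting diagram above exists if and only if $c$ is left adjoint to some functor $\lim\colon A^X\to A$ in $\qCat_2$, i.e.\ if and only if there is an adjunction $\adjinline c -| \lim : A^X -> A.$ of quasi-categories.

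\textbf{Remaining care.} The only point needing a word of justification is the compatibility of the two notions of ``absolute right lifting'' in play: Example~\ref{ex:adjasabslifting} speaks of an absolute right lifting of $\id_A$ (the identity on the \emph{domain} of $f$) through $f\colon B\to A$, whereas Definition~\ref{defn:families.of.diagrams} with $k=\id_{A^X}$ asks for an absolute right lifting of $\id_{A^X}$ through $c\colon A\to A^X$ — here the roles of ``$A$'' and ``$B$'' in Example~\ref{ex:adjasabslifting} are played by $A^X$ and $A$ respectively, so $f = c$, $B = A$, $A_{\mathrm{ex}} = A^X$, and $\id_{A_{\mathrm{ex}}} = \id_{A^X}$, which matches exactly. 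With this identification the two formulations coincide on the nose, and there is nothing further to check. I anticipate no real obstacle: this proposition is a direct corollary, and the proof is a two-line dereference of Example~\ref{ex:adjasabslifting}. If one wished to be slightly more self-contained one could also observe that this is the instance of Proposition~\ref{prop:families.of.diagrams} and the fact that ``limits of the family of all diagrams'' packages into a right adjoint, but routing through Example~\ref{ex:adjasabslifting} is the cleanest path.

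\begin{proof}
By Definition~\ref{defn:families.of.diagrams}, $A$ admits all limits of shape $X$ if and only if there is an absolute right lifting of the identity functor $\id_{A^X}$ through the constant diagram functor $c\colon A\to A^X$; note $A^X$ is a quasi-category by Recollection~\ref{rec:cart-modcat}, so this is a diagram in $\qCat_2$. Example~\ref{ex:adjasabslifting}, applied to the functor $c\colon A\to A^X$ in the 2-category $\qCat_2$, shows that such an absolute right lifting of the identity on the codomain $A^X$ through $c$ exists if and only if $c$ admits a right adjoint $\lim\colon A^X\to A$. Thus $A$ has all limits of shape $X$ if and only if there is an adjunction $\adjinline c -| \lim : A^X -> A.$.
\end{proof}
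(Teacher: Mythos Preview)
Your proposal is correct and is exactly the paper's approach: the paper simply introduces this proposition with the sentence ``As a special case of Example~\ref{ex:adjasabslifting}:'' and gives no further proof. Your unpacking of how Definition~\ref{defn:families.of.diagrams} with $k=\id_{A^X}$ matches the hypotheses of Example~\ref{ex:adjasabslifting} is precisely the intended dereference.
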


A key advantage of our 2-categorical definition of (co)limits in any quasi-category is that it permits us to use standard 2-categorical arguments to give easy proofs of the expected categorical theorems.

\begin{prop}\label{prop:RAPL} Right adjoints preserve limits.
\end{prop}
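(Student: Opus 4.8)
The plan is to prove the statement in the form: given an adjunction $\adjinline f -| u : A -> B.$ and a diagram $d\colon X\to A$ with limit $\ell\in A$ and limit cone $\lambda\colon c\ell\Rightarrow d$ in the sense of Definition~\ref{defn:limit}, the object $u\ell$ together with the cone obtained from $\lambda$ by post-composing with $u$ exhibits $u\ell$ as the limit of $ud\colon X\to B$. Replacing $\Del^0$ by an arbitrary test quasi-category $K$ and $d$ by a family $k\colon K\to A^X$, the same argument yields the version for families of Definition~\ref{defn:families.of.diagrams}. This will be a purely formal deduction from the 2-categorical structure already developed, requiring no new facts about quasi-categories.

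First I would record the ingredients. The constant-diagram functor assembles into a 2-natural transformation from the identity 2-functor to $(-)^X$, so that $g^X c = cg$ for every functor $g$ and $\alpha^X c = c\alpha$ for every 2-cell $\alpha$; in particular $u^X c_A = c_B u$, $f^X c_B = c_A f$, and $\epsilon^X c_A = c_A\epsilon$, $\eta^X c_B = c_B\eta$. By Proposition~\ref{prop:expadj} the adjunction $f^X\dashv u^X$ of quasi-categories is the image of $f\dashv u$ under the 2-functor $(-)^X$, so its unit and counit are $\eta^X$ and $\epsilon^X$. Together these identities say precisely that the pair $(c_A,c_B)$ is a morphism of adjunctions from $f\dashv u$ to $f^X\dashv u^X$. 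Using $u^X c_A = c_B u$, the whiskered 2-cell $u^X\lambda\colon u^X c_A\ell\Rightarrow u^X d$ reads as a cone $c_B(u\ell)\Rightarrow u^X d$; this is the candidate limit cone, and the goal is to check it is an absolute right lifting of $u^X d$ through $c_B$ in the sense of Definition~\ref{defn:abs-right-lift}.

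Then I would verify the universal property directly. Given any 2-cell $\chi\colon c_B b\Rightarrow u^X d$ with $b\colon Y\to B$, transpose $\chi$ across $f^X\dashv u^X$ to a cone $\hat\chi\colon c_A(fb)\Rightarrow d$ on $d$ with summit $fb$ (using $f^X c_B = c_A f$), apply the absolute right lifting property of $\lambda$ to obtain the unique $\hat\tau\colon fb\Rightarrow\ell$ with $\lambda\cdot c_A\hat\tau = \hat\chi$, and transpose $\hat\tau$ back across $f\dashv u$ to a 2-cell $\tau\colon b\Rightarrow u\ell$. The claim is that $\tau$ is the unique factorisation of $\chi$ through the candidate cone. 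Existence amounts to transposing the composite $u^X\lambda\cdot c_B\tau$ across $f^X\dashv u^X$ and rewriting, via naturality of $\epsilon^X$ and the 2-naturality identities above, until it becomes $\lambda\cdot c_A\hat\tau = \hat\chi$, the transpose of $\chi$; since transposition is a bijection this forces $u^X\lambda\cdot c_B\tau = \chi$. Uniqueness follows because any competing factorisation transposes to a 2-cell $fb\Rightarrow\ell$ satisfying the same equation, hence coincides with $\hat\tau$ by the uniqueness clause of the absolute right lifting, hence has the same transpose $\tau$.

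The step I expect to require the most care is not conceptual but clerical: keeping track of which of the two transposition bijections (across $f\dashv u$ or across $f^X\dashv u^X$) is in play at each stage, and verifying that they intertwine correctly through the morphism-of-adjunctions identities so that the chain of rewrites in the existence check is legitimate; a triangle-identity computation of the shape already used in Lemma~\ref{lem:adjunction.from.isos} is needed to identify $\epsilon\ell\cdot f\tau$ with $\hat\tau$. A reader who prefers a more structural formulation can instead note that the comma construction sends the morphism of adjunctions $(c_A,c_B)$ to an adjunction between the cone quasi-categories $c_A\comma d$ and $c_B\comma u^X d$, and then invoke Propositions~\ref{prop:limits.as.terminal.objects} and~\ref{prop:terminaldefn}, namely that right adjoints preserve terminal objects.
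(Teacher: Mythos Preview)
Your proposal is correct and follows essentially the same approach as the paper: transpose the given cone across the exponentiated adjunction $f^X\dashv u^X$, invoke the absolute right lifting property of $\lambda$ to obtain a unique factorisation, then transpose that factorisation back across $f\dashv u$. The paper presents the verification via explicit pasting diagrams (composing with $f$ and $\epsilon^X$, then with $u$ and $\eta$, and closing with the triangle identity for $f^X\dashv u^X$), works with a family $k\colon K\to A^X$ from the outset, and leaves uniqueness to the reader; your bookkeeping via the morphism-of-adjunctions identities $u^Xc_A=c_Bu$, $\epsilon^Xc_A=c_A\epsilon$, etc., is exactly what underlies those pasting manipulations.
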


Our proof will closely follow the classical one. Given a diagram $d\colon X \to A$ and a right adjoint $u \colon A \to B$ to some functor $f$, a cone with summit $b$ over $ud$ transposes to a cone with summit $fb$ over $d$, which factorises uniquely through the limit cone. This factorisation transposes back across the adjunction to show that the image of the limit cone under $u$ defines a limit over $ud$.

\begin{proof}
Suppose that $A$ admits limits of a family of diagrams $k\colon K\to A^X$ as witnessed by an absolute right lifting diagram~\eqref{eq:limits.of.a.family}. Given an adjunction $f \dashv u$, and hence by Proposition \ref{prop:expadj} an adjunction $f^X \dashv u^X$, we must show that \[\xymatrix{ \ar@{}[dr]|(.7){\Downarrow\lambda} & A \ar[d]^-{c} \ar[r]^u & B \ar[d]^-{c} \\ K \ar[ur]^\lim \ar[r]_k& A^X \ar[r]_{u^X} & B^X}\] is an absolute right lifting diagram. Given a square
\[\xymatrix{ Y \ar[d]_{a} \ar[rr]^b \ar@{}[drr]|{\Downarrow\chi} & & B \ar[d]^-{c} \\ K \ar[r]_-{k} & A^X \ar[r]_{u^X} & B^X} \] we first transpose across the adjunction, by composing with $f$ and the counit. 
\[\vcenter{\xymatrix{ Y \ar[d]_-{a} \ar[rr]^b \ar@{}[drr]|{\Downarrow\chi} & & B \ar[d]^-{c} \ar[r]^f & A \ar[d]^-{c}  \\ K \ar[r]_-{k} & A^X \ar@{=}@/_3.5ex/[rr]^{\Downarrow\epsilon^X} \ar[r]^{u^X} & B^X \ar[r]^{f^X} & A^X}} = \vcenter{\xymatrix{ Y \ar@{}[drr]|(.3){\exists !\Downarrow\zeta}|(.7){\Downarrow\lambda} \ar[d]_-{a} \ar[r]^b & B \ar[r]^f & A \ar[d]^-{c} \\ K \ar[urr]_(0.4){\lim} \ar[rr]_{k} & & A^X}} \] Applying the universal property of the absolute right lifting diagram~\eqref{eq:limits.of.a.family} produces a factorisation $\zeta$, which may then be transposed back across the adjunction by composing with $u$ and the unit.
\[  \vcenter{\xymatrix{ Y \ar@{}[drr]|(.3){\exists !\Downarrow\zeta}|(.7){\Downarrow\lambda} \ar[d]_-{a} \ar[r]^b & B \ar@{=}@/^3.5ex/[rr]_{\Downarrow\eta} \ar[r]|f & A \ar[d]^-{c} \ar[r]_u & B \ar[d]^-{c} \\ K \ar[urr]_(0.4){\lim} \ar[rr]_-{k} & & A^X \ar[r]_{u^X} & B^X}}= \vcenter{\xymatrix{ Y \ar[d]_-{a} \ar[rr]^b \ar@{}[drr]|{\Downarrow\chi} & & B \ar[d]^-{c} \ar@{=}@/^3.5ex/[rr]_{\Downarrow\eta} \ar[r]_f & A \ar[d]^-{c} \ar[r]_u & B \ar[d]^-{c}  \\ K \ar[r]_-{k} & A^X \ar@{=}@/_3.5ex/[rr]^{\Downarrow\epsilon^X} \ar[r]^{u^X} & B^X \ar[r]^{f^X} & A^X \ar[r]_{u^X} & B^X}}  \] \[ = \vcenter{\xymatrix{ Y \ar[d]_-{a} \ar[rr]^b \ar@{}[drr]|{\Downarrow\chi} & & B \ar[d]^-{c} \ar@{=}@/^3.5ex/[rr]  &  & B \ar[d]^-{c}  \\ K \ar[r]_-{k} & A^X \ar@{=}@/_3.5ex/[rr]^{\Downarrow\epsilon^X} \ar[r]^{u^X} & B^X \ar[r]|{f^X}  \ar@{=}@/^3.5ex/[rr]_{\Downarrow\eta^X}& A^X \ar[r]_{u^X} & B^X}}  = \vcenter{\xymatrix{ Y \ar[d]_-{a} \ar[rr]^b \ar@{}[drr]|{\Downarrow\chi} & & B \ar[d]^-{c} \\ K \ar[r]_-{k} & A^X \ar[r]_{u^X} & B^X}}\] Here the second equality is immediate from the definition of $\eta^X$ and the third is by the triangle identity defining the adjunction $f^X \dashv u^X$. The pasted composite of $\zeta$ and $\eta$ is the desired factorisation of $\chi$ through $\lambda$. 

The proof that this factorisation is unique, which again parallels the classical argument, is left to the reader: the essential point is that the transposes are unique.
\end{proof}

\begin{cor}\label{cor:equivprescolim} Equivalences preserve limits and colimits.
\end{cor}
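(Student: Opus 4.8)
The plan is to derive this as an immediate consequence of Proposition~\ref{prop:RAPL} and Proposition~\ref{prop:equivtoadjoint}, exactly along the lines of the classical argument. Given an equivalence $w \colon A \to B$ of quasi-categories, Proposition~\ref{prop:equivtoadjoint} promotes $w$ to an adjoint equivalence in which $w$ is the right adjoint. In particular $w$ is a right adjoint, so Proposition~\ref{prop:RAPL} applies: post-composition with $w$ (and with $w^X$) carries any absolute right lifting diagram of the form~\eqref{eq:genericlimit}, displaying $\ell \in A$ as the limit of a diagram $d \colon X \to A$, to an absolute right lifting diagram displaying $w\ell \in B$ as the limit of the transposed diagram $w d \colon X \to B$. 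Hence $w$ preserves limits.

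For colimits, one runs the dual argument. Proposition~\ref{prop:equivtoadjoint} equally promotes $w$ to an adjoint equivalence in which $w$ is now the \emph{left} adjoint. Reinterpreting the proof of Proposition~\ref{prop:RAPL} in the dual 2-category $\qCat_2\co$---where absolute right liftings become absolute left liftings and right adjoints become left adjoints---yields the statement that left adjoints preserve colimits. Applying this to $w$ shows that $w$ carries any absolute left lifting diagram~\eqref{eq:genericcolimit} to one of the same form, so $w$ preserves colimits as well.

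The argument presents no real obstacle: all of the content is supplied by the two cited propositions, and the only point requiring care is the routine bookkeeping of transposing between a diagram $X \to A$ and the corresponding functor $\Del^0 \to A^X$, together with the canonical isomorphism $w^X \circ c \cong c \circ w$ relating the constant-diagram functors, which ensures that the composite diagram produced by Proposition~\ref{prop:RAPL} is again of the shape~\eqref{eq:genericlimit}.
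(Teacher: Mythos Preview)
Your proposal is correct and takes essentially the same approach as the paper, which simply says the result follows immediately from Propositions~\ref{prop:RAPL} and~\ref{prop:equivtoadjoint}. You have merely spelled out the dualisation for colimits and the routine identification of $w^X \circ c$ with $c \circ w$ that the paper leaves implicit.
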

\begin{proof} This follows immediately from Propositions \ref{prop:RAPL} and \ref{prop:equivtoadjoint}.
\end{proof}

\begin{obs}\label{obs:transpose-abs-lifting}
  Under the 2-adjunction $-\times Y\dashv (-)^Y$ triangles of the form 
  \begin{equation}\label{eq:untransposed}
    \xymatrix{
      & {B}\ar[d]^-{f} \\
      {K\times Y} \ar[ur]^{\ell} \ar[r]_-{k} 
      & {A} \ar@{}[ul]|(0.35){\Downarrow \lambda}
    }
  \end{equation}
  correspond to transposed diagrams:
  \begin{equation}\label{eq:transposed}
    \xymatrix{
      & {B^Y}\ar[d]^-{f^Y} \\
      {K} \ar[ur]^{\hat\ell} \ar[r]_-{\hat{k}} 
      & {A^Y} \ar@{}[ul]|(0.35){\Downarrow \hat\lambda}
    }
  \end{equation}
  Furthermore, if the first of these triangles is an absolute right lifting then so is the second one. To prove this, we must show that we can uniquely factorise the 2-cell in a square \[ \xymatrix{ Z \ar[d]_-{u} \ar[r]^-{v} \ar@{}[dr]|{\Downarrow\alpha} & B^Y \ar[d]^-{f^Y} \\ K \ar[r]_-{\hat{k}} & A^Y} \] through the 2-cell $\hat\lambda$ in~\eqref{eq:transposed}. Transposing that square under the 2-adjunction, we obtain the square on the left of the following diagram: \[ \vcenter{\xymatrix{ Z \times Y \ar[d]_-{\tilde{u}} \ar[r]^-{\tilde{v}} \ar@{}[dr]|{\Downarrow \tilde{\alpha}} & B \ar[d]^-{f} \\ K\times Y \ar[r]_-{k} & A}} = \vcenter{\xymatrix{ Z \times Y \ar[d]_-{\tilde{u}} \ar[r]^-{\tilde{v}} \ar@{}[dr]|(.3){\exists !\Downarrow}|(.7){\Downarrow\lambda} & B \ar[d]^-{f} \\ K\times Y \ar[r]_-{k} \ar[ur]_(0.4){\ell} & A}}\] The unique factorisation on the right arises from the universal property of the absolute lifting diagram~\eqref{eq:untransposed}, and its transpose provides the desired unique factorisation of $\alpha$.
\end{obs}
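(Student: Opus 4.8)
The plan is to reduce the whole statement to the 2-adjunction $-\times Y\dashv(-)^Y$, whose existence on $\qCat_2$ (and on $\sSet_2$, when the base objects are allowed to be arbitrary simplicial sets) is recorded in Proposition~\ref{prop:qcat2closed} and Remark~\ref{rmk:exp2functor}. The first assertion, that triangles~\eqref{eq:untransposed} correspond to triangles~\eqref{eq:transposed}, is then purely formal: a triangle~\eqref{eq:untransposed} is a choice of 1-cells $f$, $k$, $\ell$ together with a 2-cell $\lambda\colon f\ell\To k$ in the hom-category $\qCat_2(K\times Y,A)$, and the transposition isomorphism $\qCat_2(K\times Y,A)\cong\qCat_2(K,A^Y)$ carries this bijectively onto a triangle~\eqref{eq:transposed} once one knows that the transpose of $f\ell$ is $f^Y\hat\ell$. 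The latter is an instance of the naturality of transposition in the target variable (with $f^Y=(-)^Y$ applied to $f$), so no work is needed beyond unwinding notation.

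For the second assertion I would verify the universal property of Definition~\ref{defn:abs-right-lift} for~\eqref{eq:transposed} by the transposition argument already indicated beneath the statement. Given a test 2-cell $\alpha\colon f^Y v\To\hat k u$ in a square with summit $Z$, transpose the entire square to obtain a square with summit $Z\times Y$, legs $\widetilde v\colon Z\times Y\to B$ and $u\times Y\colon Z\times Y\to K\times Y$, and 2-cell $\widetilde\alpha\colon f\widetilde v\To k(u\times Y)$; here one uses that transposition is natural in the domain variable with respect to $-\times Y$ (so that the transpose of $\hat k u$ is $k(u\times Y)$) as well as in the target variable. The hypothesis that~\eqref{eq:untransposed} is an absolute right lifting produces a unique $\widetilde\zeta\colon\widetilde v\To\ell(u\times Y)$ with $\lambda(u\times Y)\cdot f\widetilde\zeta=\widetilde\alpha$, and transposing $\widetilde\zeta$ back yields a $\zeta\colon v\To\hat\ell u$ whose transposed pasting identity is precisely $\hat\lambda u\cdot f^Y\zeta=\alpha$. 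Uniqueness of $\zeta$ follows from uniqueness of $\widetilde\zeta$ together with the bijectivity of transposition.

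The only genuine content — and the step I expect to be the main obstacle, if only as bookkeeping — is checking that transposition intertwines the pasting operations: that left whiskering by $f$ downstairs corresponds to left whiskering by $f^Y$ upstairs, that right whiskering by $u\times Y$ downstairs corresponds to right whiskering by $u$ upstairs, and that vertical composites of 2-cells transpose to vertical composites. Each of these is an instance of the 2-functoriality of $(-)^Y$ together with the 2-naturality of the unit and counit of $-\times Y\dashv(-)^Y$; once one works with the genuine 2-adjunction rather than merely its underlying ordinary adjunction, every such identity is automatic. I would therefore isolate this compatibility into one preliminary remark about 2-adjunctions and let the body of the proof be the single transposition diagram. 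As in the remark following Proposition~\ref{prop:absliftingtranslation}, nothing in this argument forces $Z$ or $K$ to be quasi-categories, so the same proof delivers the statement for test objects that are arbitrary simplicial sets.
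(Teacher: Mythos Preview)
Your proposal is correct and follows essentially the same route as the paper: the argument in the observation is precisely the transposition you describe---transpose the test square to $Z\times Y$, apply the absolute right lifting property of~\eqref{eq:untransposed}, and transpose the unique factorisation back. Your additional remark that the compatibility of transposition with pasting is an automatic consequence of the 2-adjunction $-\times Y\dashv(-)^Y$ (rather than something to be verified by hand) is exactly the spirit in which the paper leaves this implicit.
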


\begin{prop}[pointwise limits in functor quasi-categories]\label{prop:pointwise-limits-in-functor-quasi-categories}
  If a quasi-category $A$ admits limits of the family of diagrams $k\colon K\to A^X$ of shape $X$ then the functor quasi-category $A^Y$ admits limits of the corresponding family of diagrams $k^Y\colon K^Y\to (A^X)^Y\cong(A^Y)^X$ of shape $X$.
\end{prop}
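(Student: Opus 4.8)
The plan is to deduce this from the invariance of absolute right lifting diagrams under cotensoring, which is exactly the content of Observation~\ref{obs:transpose-abs-lifting}. By hypothesis, the fact that $A$ admits limits of the family $k\colon K\to A^X$ is witnessed by an absolute right lifting diagram
\begin{equation*}
  \xymatrix{ \ar@{}[dr]|(.7){\Downarrow\lambda} & A \ar[d]^c \\ K \ar[ur]^\lim \ar[r]_k & A^X}
\end{equation*}
in $\qCat_2$. First I would apply the cotensor $2$-functor $(-)^Y\colon\qCat_2\to\qCat_2$ (Remark~\ref{rmk:exp2functor}) to this triangle, obtaining a triangle
\begin{equation*}
  \xymatrix{ \ar@{}[dr]|(.7){\Downarrow\lambda^Y} & A^Y \ar[d]^{c^Y} \\ K^Y \ar[ur]^{\lim^Y} \ar[r]_{k^Y} & (A^X)^Y}
\end{equation*}
which, by the second half of Observation~\ref{obs:transpose-abs-lifting}, is again an absolute right lifting diagram.

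The remaining work is purely a bookkeeping matter of identifying this diagram with one of the shape required by Definition~\ref{defn:families.of.diagrams} for the quasi-category $A^Y$ and shape $X$. The key step is the canonical isomorphism $(A^X)^Y\cong (A^Y)^X$ coming from the cartesian closure of $\qCat_2$ (Proposition~\ref{prop:qcat2closed}), which is $2$-natural, hence transports the absolute right lifting property along it. Under this isomorphism I must check two compatibilities: first, that $c^Y\colon A^Y\to (A^X)^Y$ corresponds to the constant diagram functor $c\colon A^Y\to (A^Y)^X$, which follows since both are transposes of the appropriate projection $A^Y\times X\to A^Y$; and second, that $k^Y\colon K^Y\to (A^X)^Y$ corresponds to the family $k^Y\colon K^Y\to (A^Y)^X$ as named in the statement (a mild abuse of notation, since the same symbol denotes two functors related by the closure isomorphism). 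Both verifications are routine manipulations with adjoint transposes and the naturality of the exponential isomorphisms.

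The main (very minor) obstacle is simply making the notational identifications precise: the statement writes $k^Y\colon K^Y\to (A^X)^Y\cong(A^Y)^X$, already signalling that the ``family of diagrams of shape $X$ in $A^Y$'' is obtained by composing the evident $k^Y$ with the closure isomorphism, so there is nothing deep to prove once Observation~\ref{obs:transpose-abs-lifting} is invoked. I would therefore write the proof as: apply $(-)^Y$ to the absolute right lifting diagram exhibiting the limits of $k$, invoke Observation~\ref{obs:transpose-abs-lifting} to conclude the result is again an absolute right lifting, and then observe that under the canonical $2$-natural isomorphism $(A^X)^Y\cong(A^Y)^X$ this is precisely the absolute right lifting diagram exhibiting $A^Y$ as admitting limits of the family $k^Y$ of shape $X$.
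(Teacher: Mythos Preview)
Your approach is correct and essentially the same as the paper's. The only difference is presentational: the paper makes explicit the step you elide, namely that one first precomposes the given absolute right lifting with the evaluation map $\ev\colon K^Y\times Y\to K$ (absolute liftings being stable under precomposition) and only then invokes Observation~\ref{obs:transpose-abs-lifting} to transpose. As stated, that observation takes a triangle with domain of the form $K\times Y$ and produces one with domain $K$; it does not literally say that the 2-functor $(-)^Y$ preserves absolute right liftings. Of course the result of applying $(-)^Y$ to the original triangle \emph{is} the transpose of its precomposition with $\ev$, so your diagram and the paper's coincide, but you should spell out this intermediate step so that your citation of the observation matches its hypotheses.
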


\begin{proof}
  On precomposing the absolute right lifting that displays the limits of the family $k\colon K\to A^X$ \eqref{eq:limits.of.a.family} by the evaluation map $\ev\colon K^Y\times Y\to K$, we obtain an absolute right lifting diagram whose adjoint transpose under the 2-adjunction ${-}\times Y\dashv (-)^Y$ is the triangle
  \begin{equation*}
      \xymatrix{ \ar@{}[dr]|(.7){\Downarrow\lambda^Y} & A^Y \ar[d]^-{c^Y} \\ K^Y \ar[ur]^{\lim^Y} \ar[r]_-{k^Y} & (A^X)^Y}
\end{equation*}
  By the last observation, this is again an absolute right lifting diagram which, on composition with the canonical isomorphism $(A^X)^Y\cong(A^Y)^X$, displays $\lim^Y$ as the family of limits required in the statement.
\end{proof}

Proposition \ref{prop:limitsasadjunctions} tells us that if $A$ has all limits of shape $X$, then there is a functor $\lim \colon A^X \to A$ that is right adjoint to the constant functor $c \colon A \to A^X$. In ordinary category theory we often deploy another adjunction related to the existence of limits of shape $X$, this being the restriction--right Kan extension adjunction between diagrams of shape $X$ and diagrams whose shape is that of a cone over $X$.

The shape of a cone over a diagram of shape $X$ is given by the simplicial set $\Del^0\join X$, defined using Joyal's join construction of Definition~\ref{defn:join-dec}.

\begin{prop}\label{prop:ran.adj.limits} A quasi-category $A$ admits limits of the family of diagrams $k\colon K\to A^X$ of shape $X$ if and only if there exists an absolute right lifting diagram
\begin{equation*}
  \xymatrix{
    & *+[r]{A^{\Del^0\join X}}\ar@{->>}[d]^-{\res} \\
    {K} \ar[ur]^{\ran} \ar[r]_-{k} 
    & *+[r]{A^X} \ar@{}[ul]|(0.3){\Downarrow\lambda}
  }
\end{equation*}
in which  $\res$  is the restriction isofibration given by pre-composition with the inclusion $X\inc \Del^0\join X$. Furthermore, when these equivalent conditions hold $\lambda$ is necessarily an isomorphism and, indeed, we may choose $\ran$ so that $\lambda$ is an identity.
\end{prop}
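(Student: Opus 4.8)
The plan is to apply Proposition~\ref{prop:translated.lifting} to the cospan $K\xrightarrow{k}A^X\xleftarrow{c}A$. By Definition~\ref{defn:families.of.diagrams}, $A$ admits limits of the family $k$ exactly when there is an absolute right lifting of $k$ through $c$, and Proposition~\ref{prop:translated.lifting} asserts that this holds if and only if there is an absolute right lifting of $k$ through the projection $p_1\colon c\comma A^X\tfib A^X$ of the comma quasi-category of $c$ against $\id_{A^X}$; moreover the lifting $2$-cell obtained this way is necessarily an isomorphism and may be taken to be an identity. Thus the whole statement will follow once we show that $p_1\colon c\comma A^X\tfib A^X$ may be replaced, up to fibred equivalence over $A^X$, by the restriction isofibration $\res\colon A^{\Del^0\join X}\tfib A^X$, and that absolute right liftings transport along such a replacement together with the clause about the $2$-cell.

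First I would identify $c\comma A^X$ with $A^{\Del^0\fatjoin X}$ over $A^X$, with $p_1$ corresponding to restriction along $X\inc\Del^0\fatjoin X$. This is the ``family'' analogue of Lemma~\ref{lem:cone-equiv-fatcone} and is proved the same way: $\Del^0\fatjoin X$ is by Definition~\ref{def:fat-join} the relevant pushout built from $X\times\Del^1$, so $A^{(-)}$ carries this pushout to the pullback over $A^X\times A^X$ that defines $c\comma A^X$ in Definition~\ref{def:comma-obj}; equivalently one checks directly that a map $Y\to c\comma A^X$ is the same as a map $(\Del^0\fatjoin X)\times Y\to A$. Next, the comparison map $s\colon\Del^0\fatjoin X\to\Del^0\join X$ of Proposition~\ref{prop:join-fatjoin-equiv} is a weak equivalence in the Joyal model structure and commutes on the nose with the inclusions of $X$, so by Proposition~\ref{prop:equivsareequivs2} the functor $A^s\colon A^{\Del^0\join X}\to A^{\Del^0\fatjoin X}$ is an equivalence in $\qCat_2$ lying strictly over $A^X$ (it intertwines $\res$ and the fat restriction isofibration). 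Corollary~\ref{cor:recog-fibred-equivs} then promotes $A^s$ to an equivalence fibred over $A^X$.

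It remains to transport the absolute right lifting of $k$ through $p_1\cong\res^{\fatjoin}$ across this fibred equivalence to one through $\res$. I would record the elementary sub-lemma that if $f=f'\circ e$ with $e$ a fibred equivalence over the base (so $e$ and a chosen fibred inverse $e'$ lie strictly over the base and the witnessing $2$-isomorphisms whisker to identities with the maps to the base), then $(\ell,\lambda)$ is an absolute right lifting of $g$ through $f'$ if and only if $(e'\ell,\lambda)$ is one through $f$ --- here $\lambda$ makes sense unchanged because $\res(e'\ell)=\res^{\fatjoin}\ell$, and the required factorisations are obtained by conjugating those upstairs by the structural $2$-isomorphisms and using that these whisker to identities. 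Applying this with $f'=\res^{\fatjoin}$, $f=\res$, $e=A^s$ finishes the argument, the final clause (the $2$-cell is necessarily an isomorphism, and $\ran$ may be chosen so that it is an identity) being inherited verbatim from the corresponding clause of Proposition~\ref{prop:translated.lifting}.

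The step I expect to require the most care is the passage from the fat join to the ordinary join: since the Joyal model structure is not right proper one cannot simply pull the weak equivalence $s$ back along the comma projections, so it is essential to phrase the comparison of $p_1$ with $\res$ through the fibred-equivalence calculus of \S\ref{subsec:slice-2cats-of-qcats} (Corollary~\ref{cor:recog-fibred-equivs}) rather than by a naive homotopy-invariance argument, and to keep careful track of the handedness of the comma construction against the base and apex inclusions $X,\Del^0\inc\Del^0\join X$.
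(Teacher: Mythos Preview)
Your proposal is correct and follows essentially the same route as the paper: identify $c\comma A^X\cong A^{\Del^0\fatjoin X}$ over $A^X$, invoke the Joyal weak equivalence $\Del^0\fatjoin X\to\Del^0\join X$ to obtain an equivalence $A^{\Del^0\join X}\simeq c\comma A^X$ commuting with the projections to $A^X$, and then reduce to Proposition~\ref{prop:translated.lifting}. The paper compresses your transport sub-lemma into the single phrase ``it is now easily checked that a triangle of the form given in the statement is an absolute right lifting if and only if the rearranged triangle has that property,'' but your more explicit formulation via fibred equivalences and Corollary~\ref{cor:recog-fibred-equivs} is a perfectly good way to justify that clause.
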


\begin{proof}
By Proposition~\ref{prop:join-fatjoin-equiv}, the canonical comparison $\Del^0\fatjoin X\to\Del^0\join X$ is a weak equivalence in Joyal's model structure. So if $A$ is a quasi-category, it follows, by Proposition~\ref{prop:equivsareequivs2}, that the associated pre-composition functor $A^{\Del^0\join X}\to A^{\Del^0\fatjoin X}$ is an equivalence of quasi-categories. Now the contravariant exponential functor $A^{({-})}\colon \sSet\op\to \qCat$ carries colimits to limits so it is immediate, from Definition~\ref{eq:fat-join-def}, that we have a pullback \[ \xymatrix{ A^{\Delta^0\fatjoin X} \pbexcursion \ar[r] \ar[d] & A^{X \times \Delta^1} \ar[d] \\ A \times A^X\cong A^{\Delta^0 \sqcup X}  \ar[r] & A^{X \sqcup X} \cong A^X \times A^X}\] from which we see that $A^{\Delta^0\fatjoin X}$ is isomorphic to the comma quasi-category $c\comma A^X$. It is now easily checked that a triangle of the form given in the statement is an absolute right lifting if and only if the following rearranged triangle 
\begin{equation*}
  \vcenter{\xymatrix{
      & {c\comma A^X} \ar@{->>}[d]^-{p_1} \\
      {K} \ar[ur]^{\ran} \ar[r]_-{k} 
      & {A^X} \ar@{}[ul]|(0.3){\Downarrow\lambda}
  }} \mkern20mu \defeq \mkern20mu
  \vcenter{\xymatrix{
    & *+[r]{A^{\Del^0\join X}}\ar@{->>}[d]^-{\res}\ar[r]^-{\sim} &  {c\comma A^X} \ar@{->>}[dl]^{p_1}\\
    {K} \ar[ur]^{\ran} \ar[r]_-{k} 
    & *+[r]{A^X} \ar@{}[ul]|(0.3){\Downarrow\lambda} &
  }}
\end{equation*}
has that property; now the current result is merely a special case of Proposition~\ref{prop:translated.lifting}.
\end{proof}

\begin{cor}\label{cor:ran.adj.limits} A quasi-category $A$ admits all limits of shape $X$ if and only if the restriction functor associated with the inclusion $X\inc \Del^0\join X$ has a fibred right adjoint \[ 
  \xymatrix@=1.5em{
    {A^X}\ar@/_1.2ex/[rr]_-\ran\ar@{=}[dr] \ar@{}[rr]|*{\bot} && 
    *+!L(0.5){A^{\Del^0\join X}}\ar@/_1.2ex/[ll]_-\res \ar@{->>}[dl]^{\res} \\
    & A^X &
  }
\] 
\end{cor}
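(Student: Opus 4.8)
The plan is to deduce Corollary~\ref{cor:ran.adj.limits} from Proposition~\ref{prop:ran.adj.limits} together with the characterisation of absolute right liftings as fibred adjunctions, exactly as we deduced Proposition~\ref{prop:limitsasadjunctions} from Example~\ref{ex:adjasabslifting}. First I would observe that ``$A$ admits all limits of shape $X$'' means, by Definition~\ref{defn:families.of.diagrams}, that $A$ admits limits of the family of \emph{all} diagrams, i.e.\ of the identity functor $\id_{A^X}\colon A^X\to A^X$ regarded as a family $k\colon K\to A^X$ with $K=A^X$. So I would apply Proposition~\ref{prop:ran.adj.limits} in the special case $k=\id_{A^X}$: this tells us that $A$ admits all limits of shape $X$ if and only if there exists an absolute right lifting diagram
\[
  \xymatrix{
    & *+[r]{A^{\Del^0\join X}}\ar@{->>}[d]^-{\res} \\
    {A^X} \ar[ur]^{\ran} \ar[r]_-{\id}
    & *+[r]{A^X} \ar@{}[ul]|(0.3){\Downarrow\lambda}
  }
\]
of $\id_{A^X}$ through the restriction isofibration $\res\colon A^{\Del^0\join X}\tfib A^X$, and moreover that $\ran$ may be chosen so that $\lambda$ is an identity 2-cell.

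Next I would invoke Example~\ref{ex:adjasabslifting}: an absolute right lifting of an identity $\id_{A^X}$ through a functor $\res$ is precisely the datum of a right adjoint to $\res$, with the absolute-lifting 2-cell serving as the counit of the adjunction $\res\dashv\ran$. Since Proposition~\ref{prop:ran.adj.limits} moreover allows us to take $\lambda$ to be an identity, this right adjoint has identity counit; equivalently $\ran$ is a section of $\res$, so $\ran$ is a right adjoint right inverse to $\res$. Now $\res\colon A^{\Del^0\join X}\tfib A^X$ is an isofibration (it is obtained by pre-composition with the monomorphism $X\inc\Del^0\join X$, cf.\ Recollection~\ref{rec:cart-modcat}), hence a representable isofibration in $\qCat_2$ by Lemma~\ref{lem:representable-isofibration}. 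The adjunction $\res\dashv\ran$ therefore lives in the slice 2-category $\qCat_2\slice A^X$ over $A^X$ (with $\res$ as the structure map on the domain and $\id_{A^X}$ as the structure map on the codomain, the triangle identities and the identity counit being compatible with the projections to $A^X$), and by Corollary~\ref{cor:missed-lemma} it lifts to an adjunction fibred over $A^X$, i.e.\ to an adjunction in $\ho_*(\qCat_\infty\slice A^X)$, which is exactly the fibred adjunction $\res\dashv\ran$ displayed in the statement.

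Conversely, a fibred right adjoint $\ran$ to $\res$ over $A^X$ is in particular a right adjoint to $\res$ in $\qCat_2$ (push forward along, or rather project from, the slice), whose counit — being a 2-cell fibred over $A^X$ whose whiskered composite with $\res$ is an identity — displays, via Example~\ref{ex:adjasabslifting}, an absolute right lifting of $\id_{A^X}$ through $\res$; Proposition~\ref{prop:ran.adj.limits} then returns the fact that $A$ admits all limits of shape $X$. This gives both implications. There is no real obstacle here: the only point requiring minor care is the bookkeeping that the adjunction $\res\dashv\ran$ supplied by Example~\ref{ex:adjasabslifting} and Proposition~\ref{prop:ran.adj.limits} is genuinely an adjunction \emph{in} $\qCat_2\slice A^X$ — which follows because $\res$ is the structure map, $\id_{A^X}$ is the structure map, $\res\ran=\id$, and the (identity) counit and the unit both whisker with $\res$ to identities — after which Corollary~\ref{cor:missed-lemma} does the work of promoting it to a fibred adjunction.
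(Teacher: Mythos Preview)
Your proposal is correct and follows essentially the same route as the paper: specialise Proposition~\ref{prop:ran.adj.limits} to $k=\id_{A^X}$, convert the absolute right lifting of the identity through $\res$ into an adjunction $\res\dashv\ran$ with identity counit via Example~\ref{ex:adjasabslifting}, and then lift this RARI to a fibred adjunction using Corollary~\ref{cor:missed-lemma}. The paper's proof is simply terser, packaging the last two steps into a single reference to Example~\ref{ex:isofib-section.fibred.adjunction}, and it leaves the (easy) converse implicit.
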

\begin{proof}
Since the restriction functor $A^{\Del^0\join X} \tfib A^X$ is an isofibration, we may follow Example~\ref{ex:isofib-section.fibred.adjunction} and pick its right adjoint so that the counit of the adjunction $\res \dashv\ran$ is an identity. By Corollary~\ref{cor:missed-lemma}, this adjunction lifts to an adjunction fibred over $A^X$.
\end{proof}

As an application of some significant classical interest, we may use Proposition~\ref{prop:ran.adj.limits} to construct a loops--suspension adjunction in any pointed quasi-category admitting certain pullbacks and pushouts.

\begin{defn}[pointed quasi-categories]
    A \emph{zero object} in a quasi-category is an object in there that is both initial and terminal. We say that a quasi-category $A$ is {\em pointed\/} if it has a zero object and write $*\in A$ for that object. We call the counit $\rho\colon *!\Rightarrow\id_A$ of the adjunction $\adjinline * -| ! : A -> \Del^0.$ the {\em family of points\/} of the objects of $A$ and call the unit $\xi\colon \id_A\Rightarrow *!$ of the adjunction $\adjinline ! -| * : A -> \Del^0.$ the {\em family of co-points\/} of the objects of $A$.
\end{defn}

\begin{ntn}[pushout and pullback diagrams]\label{ntn:pb.po.joins}
  We shall adopt the following notation for certain important diagram shapes which arise naturally as simplicial subsets of the square $\Del^1\times\Del^1$:
  \begin{itemize}
    \item $\pbshape$ will denote the simplicial subset $(\Del^1\times\Del^{\fbv{1}})\cup(\Del^{\fbv{1}}\times\Del^1)$, and
    \item $\poshape$  will denote the simplicial subset $(\Del^1\times\Del^{\fbv{0}})\cup(\Del^{\fbv{0}}\times\Del^1)$.
  \end{itemize}
  Of course, $\pbshape$ and $\poshape$ are the shapes of pullback and pushout diagrams,   isomorphic to the horns $\Horn^{2,2}$ and $\Horn^{2,0}$ respectively.    The joins $\Del^0\join\pbshape$ and $\poshape\join\Del^0$ are each isomorphic to the square $\Del^1\times\Del^1$. These isomorphisms identify the canonical inclusions of those joins with the corresponding subset inclusions $\pbshape\inc\Del^1\times\Del^1$ and $\poshape\inc\Del^1\times\Del^1$ respectively.
\end{ntn}

\begin{defn}[pushouts and pullbacks in quasi-categories]
  A {\em pullback\/} in a quasi-category is a limit of a diagram of shape $\pbshape$. Dually a {\em pushout\/} in a quasi-category is a colimit of a diagram of shape $\poshape$. 
\end{defn}

\begin{obs}\label{obs:loops.diag.fam}
The family of points of a pointed quasi-category $A$ may be represented by a simplicial map $\rho\colon A\to A^\cattwo$. Now the pullback diagram shape $\pbshape$ may be represented as a glueing of two copies of $\cattwo$ identified at their initial vertex, so it follows that $A^\pbshape$ may be constructed as a pullback of two copies of $A^\cattwo$ along the projection $p_1\colon A^\cattwo\tfib A$. Consequently, two copies of $\rho$ give rise to a functor $\bar\rho\colon A\to A^\pbshape$. This functor maps each object $a$ of $A$ to a pushout diagram with outer vertices $*$, inner vertex $a$, and maps two copies of the component of $\rho$ at $a$. Dually we may define a corresponding functor $\bar\xi\colon A\to A^\poshape$ using two copies of the family of co-points.
\end{obs}

\begin{defn}[loop spaces and suspensions]\label{defn:loop.susp}
  We say that a pointed quasi-category $A$ admits the construction of {\em loop spaces\/} if it admits limits of the family of diagrams $\bar\rho\colon A\to A^\pbshape$.   Dually, we say that $A$ admits the construction of {\em suspensions\/} if  it admits colimits of the family of diagrams $\bar\xi\colon A\to A^\poshape$. These constructions, when they exist, are displayed by absolute right and left liftings
\begin{equation*}
  \xymatrix{ 
    \ar@{}[dr]|(.7){\Downarrow} & A \ar[d]^c \\ 
    A \ar[ur]^\Omega \ar[r]_{\bar\rho} & A^\pbshape
  }
  \mkern80mu
  \xymatrix{ 
    \ar@{}[dr]|(.7){\Uparrow} & A \ar[d]^c \\ 
    A \ar[ur]^\Sigma \ar[r]_{\bar\xi} & A^\poshape
  }
\end{equation*}
in which $\Omega$ is called the {\em loop space functor\/} and $\Sigma$ is called the {\em suspension functor}. Of course, if $A$ admits all pullbacks (resp.\ pushouts) then, as a special case, it admits the construction of loop spaces (resp. suspensions).
\end{defn}

\begin{ex}
  In the quasi-category of spaces, which we construct by applying the homotopy coherent nerve to the simplicially enriched category of Kan complexes, pushouts and pullbacks are constructed by taking classical homotopy pushouts and pullbacks. The quasi-category of pointed spaces is simply the slice under $\Del^0$ and its pushouts and pullbacks may be computed as in the quasi-category of spaces. It follows, therefore, that the loop space and suspension constructions in this quasi-category coincide with the usual notions in classical homotopy theory.
\end{ex}

The following proposition promotes our classical intuition about the relationship between loop and suspension constructions to a genuine adjunction of quasi-categories. To keep our proof as simple and transparent as possible, we choose to assume that the quasi-category here admits all pushouts and pullbacks, leaving it to the reader to generalise this result to one in which we only assume the existence of loop spaces and suspensions.

\begin{prop}\label{prop:loops-suspension} Suppose that $A$ is a pointed quasi-category which admits all pushouts and pullbacks. Then $A$ has a loops--suspension adjunction \[ \adjdisplay \Sigma -| \Omega: A -> A.\]
\end{prop}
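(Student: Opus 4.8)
The plan is to build the loops--suspension adjunction as a composite of adjunctions, each of which we already have tools to produce. Write $\pbshape\inc\Del^1\times\Del^1$ and $\poshape\inc\Del^1\times\Del^1$ for the pullback and pushout horn inclusions, recalling from Notation~\ref{ntn:pb.po.joins} that $\Del^0\join\pbshape\cong\Del^1\times\Del^1\cong\poshape\join\Del^0$. First I would use Corollary~\ref{cor:ran.adj.limits} (and its dual): since $A$ admits all pullbacks it admits all limits of shape $\pbshape$, so the restriction isofibration $\res\colon A^{\Del^1\times\Del^1}\cong A^{\Del^0\join\pbshape}\tfib A^{\pbshape}$ has a fibred right adjoint $\ran$ over $A^{\pbshape}$, with identity counit; dually, since $A$ admits all pushouts, the restriction isofibration $\res\colon A^{\Del^1\times\Del^1}\cong A^{\poshape\join\Del^0}\tfib A^{\poshape}$ has a fibred left adjoint $\mathrm{lan}$ over $A^{\poshape}$, with identity unit.

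Next I would relate these functor quasi-categories to $A$ itself. By Observation~\ref{obs:loops.diag.fam} the family of points gives a functor $\bar\rho\colon A\to A^\pbshape$ and the family of co-points gives $\bar\xi\colon A\to A^\poshape$; by definition $\Omega$ is the absolute right lifting of $\bar\rho$ through $c\colon A\to A^\pbshape$ and $\Sigma$ is the absolute left lifting of $\bar\xi$ through $c\colon A\to A^\poshape$. The key geometric input is that the inclusion $\poshape\inc\Del^1\times\Del^1$ factors through the inclusion $\pbshape\inc\Del^1\times\Del^1$ only ``after passing through the cospan'': concretely, a diagram $\Del^1\times\Del^1\to A$ restricts on $\poshape$ to a pushout cocone and on $\pbshape$ to a pullback cone, and the point of the loops--suspension adjunction is that the composite
\[
  \xymatrix@C=3em{
    {A}\ar[r]^-{\bar\xi} & {A^\poshape}\ar[r]^-{\mathrm{lan}} & {A^{\Del^1\times\Del^1}}\ar[r]^-{\res} & {A^\pbshape}
  }
\]
is the functor whose absolute right lifting through $c$ computes $\Omega$ of the suspension, while the unit and counit 2-cells are supplied for free by the adjunctions above. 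I would make this precise by exhibiting $\Sigma$ as the composite of $\bar\xi\colon A\to A^\poshape$ with the left adjoint $\mathrm{lan}$ followed by the appropriate evaluation/restriction functor $A^{\Del^1\times\Del^1}\to A$ picking out the ``new'' terminal vertex of the square, and dually for $\Omega$; the crucial bookkeeping is that the vertex of $\Del^1\times\Del^1\cong\poshape\join\Del^0$ adjoined by the join is exactly the vertex of $\Del^1\times\Del^1\cong\Del^0\join\pbshape$ deleted by the other restriction, so that $\mathrm{ev}\circ\mathrm{lan}\colon A^\poshape\to A$ is left adjoint and $\mathrm{ev}\circ\mathrm{ran}\colon A^\pbshape\to A$ is right adjoint with matching data.

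Having set this up, the adjunction $\Sigma\dashv\Omega$ follows by composing: Proposition~\ref{prop:adj-comp} lets us string together the adjunction $\res\dashv\ran$ over $A^\pbshape$ (pushed forward and restricted appropriately), the adjunction $\mathrm{lan}\dashv\res$ over $A^\poshape$, and the adjunctions expressing $\bar\xi$, $\bar\rho$ as half of the data of $\Sigma$, $\Omega$; alternatively, and more cleanly, I would argue via Example~\ref{ex:adjasabslifting} and Proposition~\ref{prop:right.liftings.as.fibred.terminal.objects} that the composite functor $A\to A^\pbshape$ factoring through $\Sigma$ and $\mathrm{lan}$ has an absolute right lifting through $c$ given by $\Omega\Sigma$, and that the resulting counit $\Sigma\Omega\To\id_A$ and unit $\id_A\To\Omega\Sigma$ (extracted from the already-constructed 2-cells, using that $\qCat_2$ has few strict but enough weak 2-limits, and that idempotent isomorphisms are identities as in Remark~\ref{rmk:idempotent-isomorphisms}) satisfy the triangle identities. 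The main obstacle I expect is the combinatorial identification of the join inclusions with the subset inclusions of $\Del^1\times\Del^1$ in a way compatible with the \emph{evaluation at the adjoined vertex} maps --- i.e.\ checking that the vertex freely adjoined by $\poshape\join\Del^0$ is carried by $\mathrm{lan}$ to the genuine pushout and that restricting the resulting square along $\pbshape\inc\Del^1\times\Del^1$ and then taking $\ran$ recovers the same vertex $A$ is evaluating at --- but this is exactly the content of Notation~\ref{ntn:pb.po.joins} together with Corollary~\ref{cor:ran.adj.limits}, so it is bookkeeping rather than a genuinely new difficulty. The remaining steps are pure formal 2-category theory and can be left to the reader or dispatched by the results of this section and the last.
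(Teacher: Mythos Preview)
Your setup is right---starting from the $\res\dashv\ran$ and $\lan\dashv\res$ adjunctions supplied by Corollary~\ref{cor:ran.adj.limits} and its dual---but the gluing step has a genuine gap. To obtain $\Sigma\dashv\Omega$ by composing adjunctions you need every functor in the chain to be one half of an adjoint pair, and the maps $\bar\rho\colon A\to A^\pbshape$ and $\bar\xi\colon A\to A^\poshape$ are not: they are neither left nor right adjoint to anything in sight. Your phrase ``the adjunctions expressing $\bar\xi$, $\bar\rho$ as half of the data of $\Sigma$, $\Omega$'' papers over this; $\Sigma$ and $\Omega$ are defined as absolute liftings of $\bar\xi$ and $\bar\rho$ through the constant-diagram functor, not as adjoints to them, so Proposition~\ref{prop:adj-comp} does not apply. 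Your alternative via Example~\ref{ex:adjasabslifting} founders on the same point: to extract a counit $\Sigma\Omega\Rightarrow\id_A$ you would need to compare $\res\circ\lan\circ\bar\xi\circ\Omega$ with $\bar\rho$, and nothing you have written forces these to agree.

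The missing idea, which the paper supplies, is to pin the off-diagonal vertices at the zero object. One pushes the two Kan-extension adjunctions forward so that both are fibred over $A\times A$ (via restriction to the vertices $(1,0)$ and $(0,1)$ of the square), composes them, and then pulls the composite back along $(*,*)\colon\Del^0\to A\times A$ to obtain an adjunction $\overline\Sigma\dashv\overline\Omega$ between the sub-quasi-categories $A^\poshape_*$ and $A^\pbshape_*$ of diagrams whose outer vertices are $*$. The payoff is that $\bar\rho\colon A\to A^\pbshape_*$ and $\bar\xi\colon A\to A^\poshape_*$ are now \emph{equivalences}: for instance $A^\pbshape_*$ is the pullback of two copies of ${*}\comma A$ over $A$, and $p_1\colon {*}\comma A\tfib A$ is a trivial fibration since $*$ is initial. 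Promoting these equivalences to adjoint equivalences via Proposition~\ref{prop:equivtoadjoint} closes the chain, and then one checks that the resulting right and left adjoints are isomorphic to $\Omega$ and $\Sigma$. This pinning step is not the bookkeeping you anticipated---it is precisely what turns $\bar\rho$ and $\bar\xi$ into adjoints at all.
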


\begin{proof}
  By Corollary~\ref{cor:ran.adj.limits} and the ruminations of~\ref{ntn:pb.po.joins}, the hypothesis that $A$ has pullbacks and pushouts implies that there are adjunctions
\begin{equation}\label{eq:pullback.pushout.adj}    
  \xymatrix@R=0em@!C=8em{
    {A^\pbshape}
    \ar@/_0.55pc/[r]!L(0.5)_-{\ran} 
    \ar@{}[r]!L(0.5)|-{\displaystyle\bot} 
    \ar@{<-}@/^0.55pc/[r]!L(0.5)^-{\res} & 
    {A^{\Delta^1\times\Delta^1}} & 
    {A^\poshape}
    \ar@/_0.55pc/[l]!R(0.5)_-{\lan}
    \ar@{<-}@/^0.55pc/[l]!R(0.5)^-{\res}  
    \ar@{}[l]!R(0.5)|-{\displaystyle\bot}
  }
\end{equation}
which are fibred over $A^\pbshape$ and $A^\poshape$, respectively. Now the inclusion of $\Del^0\sqcup\Del^0$ into $\Del^1\times\Del^1$ which picks out the vertices $(1,0)$ and $(0,1)$ factorises through each of the subsets $\pbshape$ and $\poshape$ and therefore induces restriction isofibrations $A^\pbshape\tfib A\times A$ and $A^\poshape\tfib A\times A$. So we may push forward our fibred adjunctions along these isofibrations to obtain a composable pair of adjunctions fibred over $A\times A$. Composing these and pulling back  along $(*,*)\colon \Del^0\to A\times A$, we obtain an adjunction
\begin{equation}\label{eq:loop.susp.var}
  \adjdisplay \overline\Sigma -| \overline\Omega : A^\pbshape_* -> A^\poshape_*.
\end{equation}
where $A^\pbshape_*\subseteq A^\pbshape$ and $A^\poshape_*\subseteq A^\poshape$ are the sub-quasi-categories of pullback and pushout diagrams whose outer vertices are pinned at the zero object $*$. 

The family of points $\rho\colon A\to A^\cattwo$ discussed in Observation~\ref{obs:loops.diag.fam} factorises through the sub-quasi-category $*\comma A\subseteq A^\cattwo$; hence,  the family of diagrams $\bar\rho\colon A\to A^\pbshape$ for the loop space construction also factorises through $A^\pbshape_*\subseteq A^\pbshape$. Furthermore, it is clear that the pullback expressing $A^\pbshape$ in terms of two copies of $A^\cattwo$ restricts to the pullback expressing $A^\pbshape_*$ in terms of two copies of $*\comma A$ in the following diagram:
\begin{equation*}
  \xymatrix@=1.5em{ 
    A\ar[dr]|*+{\scriptstyle\bar\rho}\ar@/^1.5ex/[drr]^\rho
    \ar@/_1.5ex/[ddr]_\rho &&\\
    & A^\pbshape_* \pbexcursion \ar@{->>}[d]
    \ar@{->>}[r] & {*\comma A} \ar@{->>}[d]^-{p_1} \\ 
    & {* \comma A} \ar@{->>}[r]_-{p_1} & A
  }
\end{equation*}

We claim that each functor in this diagram is an equivalence. To show this start by observing that the initiality of $*$ in $A$ implies that the isofibration $p_1$ is an equivalence, as is its right inverse $\rho$ by the 2-of-3 property. Trivial fibrations are stable under pullback, so the two projections from $A^\pbshape_*$ are equivalences, as is $\bar\rho$ by the 2-of-3 property. Observe also that the functor which restricts each pullback diagram to its inner vertex is an isofibration left inverse to $\bar\rho$ and so, by the 2-of-3 property, it too is an equivalence. The dual argument shows that the family of diagrams $\bar\xi\colon A\to A^\poshape$ for the suspension construction also factorises through $A^\poshape_*\subseteq A^\poshape$ to give an equivalence $\bar\xi\colon A\to A^\poshape_*$ with left inverse the isofibration that restricts each pullback diagram to its inner vertex.

  Now we may promote the equivalences $\bar\rho$ and $\bar\xi$  to adjoint equivalences and compose them with the adjunction~\eqref{eq:loop.susp.var}. The right adjoint in this composite adjunction is equal to the composite $\xymatrix@1{{A}\ar[r]^-{\bar\rho} & {A^\pbshape}\ar[r]^-{\ran} & {A^{\Del^1\times\Del^1}}\ar[r]^-{\res} & {A}}$ in which the last map is the restriction functor associated with the inclusion of $\Del^0$ as the vertex $(0,0)$ of $\Del^1\times\Del^1$. The composite of these last two functors is the pullback functor $\lim\colon A^\pbshape\to A$, so pre-composing it with $\bar\rho\colon A\to A^\pbshape$ produces a functor which picks out limits of the diagrams in the family $\bar\rho$. This must therefore be isomorphic to the loop space functor $\Omega$ by Definition~\ref{defn:loop.susp}. A dual argument demonstrates that the left adjoint in the composite adjunction is isomorphic to the suspension functor $\Sigma$, thus completing the verification that the adjunction we have constructed is the one asked for in the statement.
\end{proof}

\subsection{Geometric realisations of simplicial objects}

A classical result from simplicial homotopy theory states that if a simplicial object admits an augmentation together with a splitting, also called a contracting homotopy or simply ``extra degeneracies'', then the augmentation is homotopy equivalent to its geometric realisation. More precisely, the augmented simplicial object, a diagram of shape $\Del+\op$, defines a colimit cone over the restriction of this diagram to $\Del\op$. 

In this section, we import these ideas into the quasi-categorical context, proving that if a simplicial object in a quasi-category admits an augmentation and a splitting then the augmentation is its quasi-categorical colimit.
Again, the result is not new (cf.~\cite[6.1.3.16]{Lurie:2009fk}), but our proof closely mirrors the classical one (see, e.g.,~\cite{Meyer:84ba}). Specifically, we show that the structure of the contracting homotopies define an absolute left extension diagram in $\Cat$. Furthermore, this universal property is witnessed equationally and so is preserved by any 2-functor.  Dual remarks apply to cosimplicial objects admitting a coaugmentation and a splitting.

The first step is to describe the shape of a split simplicial object. There are two choices, distinguished by whether we choose a ``forwards'' or ``backwards'' contracting homotopy. The corresponding categories are opposites. Let $\Del[t]$ and $\Del[b]$ denote the subcategories of $\Del$ consisting of those maps that preserve the top or bottom element respectively in each ordinal. There is an inclusion  $[0]\oplus -\colon \Del+ \inc \Del[b]$ which freely adjoins a bottom element. Note the degree shift: this functor sends the initial object $[-1] \in \Del+$ to the zero object $[0]\in\Del[b]$. 

A simplicial object is \emph{augmented} if it admits an extension to $\Del+\op$ and \emph{split} if it admits a further extension to $\Del[t] \cong \Del[b]\op$. Evaluating at $[0] \in \Del[t]$ yields the augmentation. Restriction along the inclusion $\Del\op \inc \Del+\op\inc \Del[t]$ yields the original diagram. We will prove:

\begin{thm}\label{thm:splitgeorealizations} For any quasi-category $B$, the canonical diagram \[ \xymatrix{ \ar@{}[dr]|(.7){\Uparrow} & B \ar[d]^c \\ B^{\Del[t]} \ar[ur]^{\ev_0} \ar[r]_{\res} & B^{\Del\op}}\] is an absolute left lifting diagram. Hence, given any simplicial object admitting an augmentation and a splitting, the augmented simplicial object defines a colimit cone over the original simplicial object. Furthermore, such colimits are preserved by any functor.
\end{thm}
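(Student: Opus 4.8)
The plan is to reduce everything to a purely 2-categorical statement about $\Cat$ that is then transported into $\qCat_2$ by a representable 2-functor, exactly as the introduction to this subsection advertises. The key observation is that the inclusions $\Del\op \inc \Del+\op \inc \Del[t]$ fit into an \emph{absolute left extension diagram in $\Cat$}: the functor $\ev_{[0]} \colon \Cat(\Del[t], -) \Rightarrow (-)$, evaluation at $[0]$, should be left adjoint to restriction along $\Del\op \inc \Del[t]$ in the relevant sense, and moreover this adjointness is witnessed by explicit unit and counit 2-cells built from the natural transformations encoding the splitting (the ``extra degeneracy'' maps $[0]\oplus(-)$ and the comparison with $\id$). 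Concretely, I would first exhibit, in $\Cat$, a 2-cell \[ \xymatrix{ \ar@{}[dr]|(.7){\Uparrow} & \catone \ar[d]^{c} \\ \Del[t]\op \ar[ur]^{[0]} \ar[r]_{j} & \Del\op }\] (where $j$ is the inclusion and $[0]$ picks out the zero object) and check that it is an \emph{absolute left lifting} diagram in $\Cat_2$, i.e.\ that the canonical comparison 1-cell is an equivalence — in fact an isomorphism — of the relevant hom-categories. This is the combinatorial heart of the matter and is where the classical ``extra degeneracies'' argument lives: one must verify that for any category $\scat{D}$, a cocone under $j \colon \Del\op \to \Del\op$ valued in $\scat{D}$ extends uniquely along $[0]$, which is precisely the statement that $\Del[t]\op$ is the ``free cocone'' on $\Del\op$ relative to this data. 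I expect this to be the main obstacle, though it is entirely formal once the right adjunctions in $\Del+$ (cf.\ Observation \ref{cohadj:obs:delta.adj.duality} and the elementary adjunctions in $\Del+$) are assembled.

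Once that absolute left lifting diagram in $\Cat_2$ is in hand, the rest is automatic. First I would observe that $B^{(-)} \colon \sSet_2\op \to \qCat_2$ is a 2-functor (Remark \ref{rmk:exp2functor}), so applying it to the diagram in $\Cat$ above — reading $B^{\catone} = B$, $B^{\Del\op}$, and $B^{\Del[t]\op}$, with $\res$ and $\ev_0$ the images of $j$ and $[0]$ — would give us a 2-cell in $\qCat_2$. Here I must be slightly careful: a 2-functor preserves \emph{absolute} lifting diagrams only when the lifting property is witnessed equationally, i.e.\ when the universal property is subsumed by an honest adjunction. That is exactly why the introduction stresses that the $\Cat$-level universal property ``is witnessed equationally'': the absolute left lifting $\ev_0 \dashv \res$ upstairs is (the counit of) a genuine adjunction $[0]\text{-}\!\comma\!\text{-}$ in $\Cat_2$, so $B^{(-)}$, being a 2-functor, carries this adjunction — and with it the absolute left lifting property, via the dual of Example \ref{thm:adjasabslifting} — to an adjunction, hence an absolute left lifting, in $\qCat_2$. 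Thus the displayed diagram \[ \xymatrix{ \ar@{}[dr]|(.7){\Uparrow} & B \ar[d]^c \\ B^{\Del[t]} \ar[ur]^{\ev_0} \ar[r]_{\res} & B^{\Del\op}}\] is an absolute left lifting diagram.

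The consequences then follow by unwinding Definition \ref{defn:limit} (in its dual, colimit form). A split augmented simplicial object in $B$ is a vertex $\sigma \colon \Del^0 \to B^{\Del[t]}$; restricting along $\res$ gives the underlying simplicial object $d \colon \Del^0 \to B^{\Del\op}$, and the component of the 2-cell $\Uparrow$ at $\sigma$ is a colimit cone from $d$ to the constant diagram at $\ev_0 \sigma$ — this is the claim that the augmented simplicial object defines a colimit cone over the original simplicial object, using the standard fact (immediate from the universal property of absolute lifting diagrams, spelled out in Definition \ref{defn:families.of.diagrams}) that precomposing an absolute lifting diagram by any 1-cell — here $\sigma$ — yields another absolute lifting diagram. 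Finally, preservation by functors is the statement that ``absolute'' means absolute: for any functor $g \colon B \to B'$ of quasi-categories, postcomposition is a 2-functor, so it carries the absolute left lifting diagram for $B$ to one for $B'$, compatibly with $\res$ and $\ev_0$; concretely, applying $g$ to a colimiting cone over a split augmented simplicial object yields the corresponding colimiting cone in $B'$. No choices or homotopy-coherence subtleties intervene because every step is 2-functorial and every universal property in play is equational.
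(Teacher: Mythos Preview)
Your overall strategy---reduce to a configuration in $\Cat_2$ and transport via the 2-functor $B^{(-)}$---matches the paper's, but the execution has a genuine gap. The difficulty is that 2-functors do not in general preserve absolute lifting diagrams, since these are defined by a universal property rather than equationally. You acknowledge this and assert that the lifting here is ``(the counit of) a genuine adjunction,'' citing Example~\ref{ex:adjasabslifting}. But that example only covers the case where the bottom map of the triangle is an identity; here the bottom map is $\res$, not $\id$, so the lifting is \emph{not} simply the counit of a single adjunction. The equational witness the paper actually uses is Lemma~\ref{lem:doms2catlemma}: an adjunction $f\dashv u$ in a slice 2-category under an object $C$, together with a further left adjoint to the slicing 1-cell $b\colon C\to B$, yields an absolute left lifting of $f$ through $a$. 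The paper verifies in $\Cat$ the two required adjunctions---$!\dashv[0]\colon\catone\to\Del[t]$ coming from terminality of $[0]$, and a left adjoint to the inclusion $\Del\op\hookrightarrow\Del[t]$ constructed via the interval representation---transports \emph{these adjunctions} to $\qCat_2$ via $B^{(-)}$ (adjunctions, unlike absolute liftings, are preserved by any 2-functor), and only then applies Lemma~\ref{lem:doms2catlemma} in $\qCat_2$. Your plan to first prove the absolute lifting in $\Cat_2$ and then transport it cannot be completed without isolating exactly this equational data, at which point the detour is redundant. Relatedly, the $\Cat$-level triangle you write does not type-check: $B^{(-)}$ is contravariant on 1-cells, so the arrows in $\Cat$ run $\Del\op\to\catone$, $\catone\to\Del[t]$, $\Del\op\to\Del[t]$, and there is no ``constant functor'' $c\colon\catone\to\Del\op$.

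Your final preservation argument also has a gap. ``Postcomposition is a 2-functor'' does not name any 2-functor carrying the given triangle to an absolute lifting. The paper's argument is different: by bifunctoriality of the internal hom, pasting the naturality square for $(g,g^{\Del\op})$ onto the right of the lifting triangle for $B$ yields the same 2-cell as \emph{precomposing} the lifting triangle for the target quasi-category with $g^{\Del[t]}$, and absolute liftings are stable under precomposition by definition.
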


Our proof uses a 2-categorical lemma.

\begin{lem}\label{lem:doms2catlemma} Suppose given an adjunction in a slice 2-category $C\slice\tcat{C}$
\[\vcenter{\xymatrix@R=30pt{ & C \ar[dl]|b_{\rotatebox{45}{$\labelstyle\perp$}} \ar[dr]^a & \\ \ar@/^3ex/@{-->}[ur]^c B \ar@/^1ex/[rr]^f  \ar@{}[rr]|\perp & & A \ar@/^1ex/[ll]^u }}\] If $b$ admits a left adjoint $c$ in $\tcat{C}$ with unit $\iota$, then the 2-cell $f\iota \colon f \Rightarrow fbc=ac$ exhibits $c$ as an absolute left lifting of $f$ through $a$.
\end{lem}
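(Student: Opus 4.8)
The plan is to verify the absolute left lifting property of the 2-cell $f\iota \colon f \To ac$ directly, using the two given adjunctions and the usual ``conjugation'' calculus that relates them. Recall that we have $c \dashv b$ in $\tcat{C}$ with unit $\iota \colon \id_B \To bc$ and some counit $\varepsilon \colon cb \To \id_C$, and $f \dashv u$ in $C\slice\tcat{C}$, which in particular means $af = b$ and $au = \id_A$ (reading off the slice structure) together with a unit $\eta \colon \id_B \To uf$ and counit $\delta \colon fu \To \id_A$ satisfying the triangle identities, and where the slice condition forces $a\eta$ and $a\delta$ to be identities. Since $ac = fbc$ is how I want to read the target, note $a c = (af)(c)$ only makes sense once we observe $f b c$ is a 1-cell $B \to A$; the point is that $f \iota$ has domain $f$ and codomain $f b c = a c$, using $af = b$ dualized appropriately. (The first thing I would do is carefully fix which way the slice 1-cells point and rewrite all composites so that ``$ac$'' literally names $fbc$.)

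Next, given a test 2-cell $\chi \colon f' \To a g$ with $f' \colon X \to A$ and $g \colon X \to C$ arbitrary, I must produce a unique $\zeta \colon f' \To c g$ with $(a\zeta) \cdot (\text{appropriate whiskering of }\iota) = \chi$, i.e. with $f\iota g \cdot \zeta$... — more precisely the factorization equation of Definition~\ref{defn:abs-right-lift} read in the dual 2-category. The candidate is built in two stages: first transpose $\chi$ across $f \dashv u$ to get a 2-cell into $B$, then transpose across $c \dashv b$ to land in $C$. Concretely, from $\chi \colon f' \To ag = fbg$ I form $u\chi \cdot \eta f' \colon f' \To uf' \To ufbg$, but I want something into $B$; the cleaner route is: since $af = b$, a 2-cell $f' \To fbg$ is (via $f \dashv u$ and $af = b$, $au = \id$) in bijection with a 2-cell $uf' \To bg$, and then via $c \dashv b$ with a 2-cell $cuf' \To g$ — but I actually want a 2-cell $f' \To cg$. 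So the correct bijections to chain are: $\tcat{C}(X)(f', cg) \cong $ (by $c \dashv b$, applied ``on the left'' — here I need $b$ to be a \emph{left} adjoint to use it this way, which it is, with left adjoint... no). Let me restate: I use that $\hom(-, cg)$ and $\hom(-, fbg)$ are both representable-in-$X$ functors and exhibit a natural iso between them using the two adjunctions, exactly mirroring the classical proof that a left adjoint is an absolute left lifting of the identity (Example~\ref{ex:adjasabslifting}) combined with the slice adjunction. The key identity making this work is that the slice condition $a\eta = \id$ and the triangle identities for both adjunctions force the two round-trip composites to be identities.

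The cleanest way to organize this, and the route I would actually take, is to invoke the 2-categorical Yoneda-style principle already used repeatedly in the paper (e.g. in Lemma~\ref{lem:adj-ext-univ} and Proposition~\ref{prop:right.liftings.as.fibred.terminal.objects}): it suffices to show that for every object $X$, the functor $\tcat{C}(X, a) \colon \tcat{C}(X,A) \to \tcat{C}(X,C)$... — no, I want lifting \emph{through} $a$, so I should show $\tcat{C}(X,c) \colon \tcat{C}(X,B) \to \tcat{C}(X,C)$ is an absolute left lifting of $\tcat{C}(X,f)$ through $\tcat{C}(X,a)$ in $\Cat$, naturally in $X$, and then conclude by the 2-Yoneda lemma (exactly as in the converse direction of Lemma~\ref{lem:adj-ext-univ}). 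Applying $\tcat{C}(X,-)$ to everything: downstairs in $\Cat$ we have an honest adjunction $\tcat{C}(X,f) \dashv \tcat{C}(X,u)$ over the slice, an honest adjunction $\tcat{C}(X,c) \dashv \tcat{C}(X,b)$, and we must show in $\Cat$ that $\tcat{C}(X,f)\tcat{C}(X,\iota)$ exhibits $\tcat{C}(X,c)$ as absolute left lifting. This is now the purely 1-categorical statement: given $P \dashv Q \colon \cat{A} \to \cat{B}$ with unit $\iota$ and $R \dashv S$ making $\cat{B}$ live over $\cat{A}$ with $SP = Q$, $SR = \id$ etc., the composite exhibits... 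This 1-categorical fact is the genuine mathematical content and is a short diagram chase with the two adjunctions' units/counits and the slice equations; I expect \textbf{this 1-categorical diagram chase to be the main obstacle}, though ``obstacle'' overstates it — it is routine but requires care in bookkeeping which unit/counit is which and in using the slice conditions ($a\delta = \id$, $a\eta = \id$) at exactly the right spots. Everything else (the 2-Yoneda reduction, the dualization to absolute \emph{left} lifting) is formal and already exemplified in the paper. I would write it by: (1) reducing to $\Cat$ via 2-Yoneda; (2) in $\Cat$, defining the factorization $\zeta$ of a test 2-cell $\chi$ by transposing twice; (3) checking the factorization equation using the triangle identities and slice conditions; (4) checking uniqueness of $\zeta$ by the same transposition being bijective; (5) transporting back up via 2-Yoneda to get the 2-cell $\zeta$ in $\tcat{C}$ and its triangle identity.
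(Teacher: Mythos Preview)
Your core strategy --- transpose a test 2-cell across both adjunctions to produce the factorization, then check uniqueness by transposing back --- is exactly what the paper does. The 2-Yoneda reduction you propose is unnecessary: the pasting-diagram manipulations that constitute your ``1-categorical diagram chase'' are valid in any 2-category, so the paper simply performs them directly in $\tcat{C}$ without passing through $\Cat$.

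That said, your setup contains a confusion that would derail the computation if you tried to execute it. The 2-category $C\slice\tcat{C}$ is the slice \emph{under} $C$: its objects are 1-cells out of $C$, here $b\colon C\to B$ and $a\colon C\to A$. A 1-cell from $b$ to $a$ is thus $f\colon B\to A$ with $fb=a$, and likewise $ua=b$. Your equations ``$af=b$'' and ``$au=\id_A$'' do not typecheck (you cannot postcompose $a\colon C\to A$ with $f\colon B\to A$). Correspondingly, the 2-cell conditions coming from the slice are that whiskering the unit and counit of $f\dashv u$ with the \emph{structure maps} gives identities: $\eta b = \id_b$ and $\epsilon a = \id_a$. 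Your ``$a\eta$'' and ``$a\delta$'' are again ill-typed. This matters because these two identities are precisely what the paper invokes to collapse the pasting diagrams --- $\epsilon a = \id_a$ in the existence step and $\eta b = \id_b$ in the uniqueness step --- so getting them wrong would leave you unable to finish the chase.

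Once the slice conditions are corrected, the paper's argument is short and explicit: given $\chi\colon fx\To ay$, paste on $\eta$ (using $ua=b$ and $\epsilon a=\id_a$) and then the counit $\nu$ of $c\dashv b$ to produce the factorizing 2-cell; for uniqueness, paste the candidate factorization with $f\iota$, then $\eta$ and $\nu$, and apply the triangle identities for both adjunctions together with $\eta b=\id_b$ to recover it. No representability step is needed.
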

\begin{proof}
Let $\nu$ be the counit of $c \dashv b$, and write $\eta$ and $\epsilon$ for the unit and counit of the adjunction $f\dashv u$; because this adjunction is under $C$ we have $\epsilon a = \id_a$ and $\eta b =\id_b$. Any 2-cell $\chi$ of the form displayed below factorises through $f\iota$ as follows
\[\xymatrix{ X \ar[d]_x \ar[r]^y \ar@{}[dr]|{\Uparrow\chi} & C \ar[d]^a \ar@{}[dr]|{\displaystyle =}  & X \ar[d]_x \ar[r]^y \ar@{}[dr]|{\Uparrow\chi} & C \ar[d]^a   \ar@{}[dr]|{\displaystyle =}   &X \ar@{}[dr]|{\Uparrow\chi} \ar[d]_x \ar[r]^y & C \ar[d]_a \ar[r]^a \ar[dr]|b & A   \ar@{}[dr]|{\displaystyle =} & X \ar[d]_x \ar[r]^y \ar@{}[dr]|{\Uparrow\chi} & C \ar[d]_a \ar[dr]^b \ar@{=}[rr] & \ar@{}[d]|{\Uparrow\nu} &  C \ar[d]^a  \\  B \ar[r]_f & A & B \ar[r]^f \ar@{=}[dr] & A \ar[d]|u \ar@{}[dl]|(.3){\Uparrow\eta} \ar@{=}[dr]  &  B \ar[r]^f \ar@{=}@/_3.5ex/[rr]^{\Uparrow\eta} & A \ar[r]^u & B \ar[u]_f &  B \ar[r]^f \ar@{=}@/_3.5ex/[rr]^{\Uparrow\eta} & A \ar[r]^u  & B \ar[ur]^c \ar[r]_f & A \ar@{}[ul]|(.3){\Uparrow f\iota} \\ & & & B \ar[r]_f \ar@{}[ur]|(.3){\Uparrow\epsilon} & A }\]
using a triangle identity for each adjunction and the fact that $\epsilon a = \id_a$. Such factorisations are unique because the 2-cell $\zeta$ can be recovered from the pasted composite with $f\iota$: \[\xymatrix{  X \ar[d]_x \ar[r]^y \ar@{}[dr]|(.3){\Uparrow\zeta}|(.7){\Uparrow f\iota} & C \ar[d]_a \ar[dr]^b \ar@{=}[rr] & \ar@{}[d]|{\Uparrow\nu} &  C  \\ B \ar[ur]|c \ar[r]^f \ar@{=}@/_3.5ex/[rr]^{\Uparrow\eta} & A \ar[r]^u & B \ar[ur]^c & {\displaystyle =} } \xymatrix{ X \ar[d]_x \ar[r]^y \ar@{}[dr]|(.3){\Uparrow\zeta}|(.7){\Uparrow\iota} & C \ar[d]|b \ar[dr]|a \ar[drr]^b \ar@{=}[rr]& & C \ar@{}[dl]|(.4){\Uparrow\nu} \\B \ar[ur]|c \ar@{=}[r] & B  \ar[r]_f \ar@{=}@/_3.5ex/[rr]^{\Uparrow\eta} & A \ar[r]_u & B \ar[u]_c} = \xymatrix{ X \ar[d]_x \ar[r]^y \ar@{}[dr]|(.3){\Uparrow\zeta}|(.7){\Uparrow\iota} & C \ar[d]|b \ar@{}[dr]|(.3){\Uparrow\nu} \ar@{=}[r] & C    \\  B \ar[ur]|c \ar@{=}[r] & B \ar[ur]_c & {\displaystyle =}  }
\xymatrix{ X \ar[d]_x \ar[r]^y \ar@{}[dr]|(.3){\Uparrow\zeta} & C \\ B \ar[ur]_c & }\qedhere
\] 
\end{proof}

\begin{proof}[Proof of Theorem \ref{thm:splitgeorealizations}] The inclusion $\Del\op \hookrightarrow \Del[t]$ admits a left adjoint. One way to define it is to present  $\Del\op$ via the ``interval representation'': after employing a degree shift $[n] \mapsto [n+1]$, $\Del\op$ is the subcategory of $\Del+$ consisting of ordinals with distinct top and bottom elements and maps that preserve these. Most generally, we might think of the interval representation as the diagonal composite functor in the pullback diagram \[\xymatrix{ \Del+\op \pbexcursion \ar[d] \ar[r] & \Del[t] \ar@{_(->}[d] \\ \Del[b] \ar@{^(->}[r] & \Del+}\] The arrows $\Del[b] \leftarrow \Del+\op \to \Del[t]$ extend the category indexing augmented simplicial objects by introducing extra maps that define ``extra degeneracies'' either on the left or on the right. The restricted functor $\Del\op \to \Del[t]$ is the inclusion described above. It has a left adjoint: a map $\alpha \colon [k] \to [n+1]$ in $\Del[t]$ is given by a map $[n] \to [k]$ in $\Del$ that sends $i\in [n]$, thought of as a ``gap'' between adjacent elements in $[n+1]$, to the minimal $j \in [k]$ so that $\alpha(j) = i+1$. 

 For any quasi-category $B$, the 2-functor $B^{(-)} \colon \Cat_2\op \to \qCat_2$ carries the
adjoint functors 
\[\xymatrix@R=30pt{ & \catone \ar[dl]^{\labelstyle[0]} & \\ \Del[t] \ar@/^1ex/[rr] \ar@{}[rr]|\perp  \ar@/^2.5ex/[ur]^{!} \ar@{}[ur]^*-{\rotatebox{45}{$\labelstyle\perp$}} & & \Del\op \ar@/^/[ll] \ar[ul]_{!}}\]
to an adjunction in the slice 2-category $B\slice\qCat_2$
\[\xymatrix@R=30pt{ & B \ar[dl]^{\labelstyle c} \ar[dr]^c & \\ B^{\Del[t]} \ar@/^1ex/[rr]^{\res} \ar@{}[rr]|\perp  \ar@/^2.5ex/[ur]^{\ev_0} \ar@{}[ur]^*-{\rotatebox{45}{$\labelstyle\perp$}} & & B^{\Del\op} \ar@/^/[ll] }\] The 2-cell defined by whiskering $\res$ with the unit of $\ev_0\dashv c$ is the 2-cell $\res \Rightarrow c \cdot \ev_0$ obtained by applying the 2-functor $B^{-}$ to the unique 2-cell
\[\xymatrix{ \Del\op \ar@{^(->}[rr] \ar[dr]_{!} & \ar@{}[d]|(.35){\Downarrow} & \Del[t] \\ & \catone \ar[ur]_{[0]} & } \] that exists because $[0] \in \Del[t]$ is terminal. The result now follows from Lemma \ref{lem:doms2catlemma}.

It remains only to prove the last statement. Given any functor $f \colon B \to A$, the diagrams \[ \vcenter{\xymatrix{ \ar@{}[dr]|(.7){\Uparrow} & B \ar[d]^c \ar[r]^f & A \ar[d]^c \\ B^{\Del[t]} \ar[ur]^{\ev_0} \ar[r]_{\res} & B^{\Del\op} \ar[r]_{f^{\Del\op}} & A^{\Del\op}}} = \vcenter{ \xymatrix{ &  \ar@{}[dr]|(.7){\Uparrow} & B \ar[d]^c \\ B^{\Del[t]} \ar[r]_{f^{\Del[t]}} & A^{\Del[t]} \ar[ur]^{\ev_0} \ar[r]_{\res} & A^{\Del\op}}}\] coincide by bifunctoriality of the internal hom 2-functor in $\qCat_2$. In particular, the left-hand side inherits the universal property of the right-hand side.
\end{proof}

\begin{ex} Theorem \ref{thm:splitgeorealizations} can be used to prove that any object in the quasi-category of algebras associated to a coherent monad is a homotopy colimit of a canonical simplicial object of free algebras. See \cite{RiehlVerity:2012hc} and \cite{RiehlVerity:2013cp}.
\end{ex}


\section{Pointwise universal properties}\label{sec:pointwise}

We have seen that limits and adjunctions can be encoded as absolute lifting diagrams in $\qCat_2$. In this section, we prove a theorem that allows such diagrams to be identified in practice: we show that absolute left or right lifting diagrams can be defined ``pointwise''  by specifying initial or terminal objects, respectively, in the appropriate comma or slice quasi-categories; the definition of Joyal's slice quasi-categories is recalled in \ref{defn:slices}.

 We conclude by proving a corollary of this result: that simplicial Quillen adjunctions between simplicial model categories are adjunctions of quasi-categories.  Adjunctions in homotopical contexts are commonly presented as Quillen adjunctions,  which can be replaced  by adjunctions of this type in good set-theoretical cases \cite{RezkSchwedeShipley:2001ss}. This result implies that such adjunctions can be imported into the quasi-categorical context.

\subsection{Pointwise absolute lifting}

Immediately from Definition \ref{defn:families.of.diagrams}, absolute lifting diagrams are preserved by pre-composition by all functors and, in particular, under evaluation at a vertex in the domain quasi-category.

\begin{defn}[pointwise universal property of absoluting lifting diagrams]\label{defn:pointwise-abs-lifting}
If the left-hand diagram
  \begin{equation}\label{eq:pointwise-absRlifting}
    \vcenter{\xymatrix{ \ar@{}[dr]|(.7){\Downarrow\lambda} & B \ar[d]^f \\ C \ar[r]_g \ar[ur]^\ell & A}} \qquad \rightsquigarrow \qquad   \vcenter{\xymatrix{ \ar@{}[dr]|(.7){\Downarrow\lambda c} & B \ar[d]^f \\ \Del^0 \ar[r]_{gc} \ar[ur]^{\ell c} & A}}
  \end{equation}
  is an absolute lifting diagram and $c$ is an object of $C$ then pre-composition by the functor $c\colon\Del^0\to C$ gives a 2-cell $\lambda c\colon f\ell c\Rightarrow gc$ which displays $\ell c\colon\Del^0\to B$ as an absolute right lifting of $gc\colon\Del^0\to A$ through $f\colon B\to A$. The family of absolute lifting diagrams as displayed on the right encode  the \emph{pointwise universal property} of the absolute lifting diagram displayed on the left.
\end{defn}

A special case of Proposition \ref{prop:right.liftings.as.fibred.terminal.objects} provides an alternate characterisation of a pointwise absolute lifting property:

\begin{lem}\label{lem:pointwise-terminal}
Given functors $g \colon C \to A$ and $f \colon B \to A$ the data of a pointwise absolute right lifting diagram at a vertex $c \in C$ is equally the data of a terminal object in the comma or slice quasi-categories $f \comma gc \simeq \slicer{f}{gc}$.
\end{lem}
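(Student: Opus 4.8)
The statement is essentially a packaging of earlier results, so the proof should be short. The plan is to recall that, by Definition~\ref{defn:pointwise-abs-lifting}, a pointwise absolute right lifting diagram at $c\in C$ is a $2$-cell $\lambda c\colon f\ell c\Rightarrow gc$ displaying $\ell c\colon\Del^0\to B$ as an absolute right lifting of $gc\colon\Del^0\to A$ through $f$. Then I would simply invoke Proposition~\ref{prop:right.liftings.as.fibred.terminal.objects}, or more precisely its specialisation when the base of the lifting is the terminal quasi-category, together with Proposition~\ref{prop:limits.as.terminal.objects}-style reasoning.

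First I would set up the comma quasi-category $f\comma gc$, which by Definition~\ref{def:comma-obj} is the pullback of $A^\cattwo\to A\times A$ along $gc\times f\colon \Del^0\times B\to A\times A$, with its isofibration $q_1\colon f\comma gc\tfib\Del^0$ and weakly universal comma cone $\psi$. By Proposition~\ref{prop:right.liftings.as.fibred.terminal.objects} applied with $C$ replaced by $\Del^0$, the $2$-cell $\lambda c$ displays $\ell c$ as an absolute right lifting of $gc$ through $f$ if and only if the induced functor $t\colon\Del^0\to f\comma gc$ of~\eqref{eq:t-for-lim-def-prop} is a right adjoint right inverse to $q_1\colon f\comma gc\tfib\Del^0$. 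But a right adjoint right inverse to the unique functor $f\comma gc\to\Del^0$ is exactly an adjunction $\adjinline !-| t:\Del^0 -> f\comma gc.$, which by Definition~\ref{defn:terminal} is precisely the data of a terminal object $t$ in the quasi-category $f\comma gc$. Conversely, any terminal object of $f\comma gc$ yields such a fibred adjunction, hence (running the equivalence of Proposition~\ref{prop:right.liftings.as.fibred.terminal.objects} backwards) a $2$-cell $\lambda c$ exhibiting the pointwise lifting property.

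Finally, the equivalence $f\comma gc\simeq\slicer{f}{gc}$ over $\Del^0$ (equivalently, the equivalence of quasi-categories) is precisely the content of Lemma~\ref{lem:slice-equiv-comma} applied to the vertex $gc\in A$, pulled back along $f\colon B\to A$; since terminal objects transport along equivalences by Proposition~\ref{prop:terminaldefn}, a terminal object in $f\comma gc$ is the same data, up to the stated equivalence, as a terminal object in $\slicer{f}{gc}$. Assembling these observations gives the asserted bijective correspondence between pointwise absolute right lifting data at $c$ and terminal objects in $f\comma gc\simeq\slicer{f}{gc}$.

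I do not anticipate a serious obstacle here: the only mild subtlety is bookkeeping the $\Del^0$-indexed special case of Proposition~\ref{prop:right.liftings.as.fibred.terminal.objects} and noting that, over the terminal object, a fibred right adjoint right inverse degenerates to an ordinary terminal object, together with checking that the induced functor $t$ of~\eqref{eq:t-for-lim-def-prop} is exactly the $1$-cell picking out the would-be terminal vertex $\ell c$ paired with the cone $\lambda c$. Everything else is a direct citation of results already established in the excerpt.
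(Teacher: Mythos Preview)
Your proposal is correct and follows exactly the approach the paper takes: the paper states this lemma without a formal proof, introducing it as ``a special case of Proposition~\ref{prop:right.liftings.as.fibred.terminal.objects}'' and immediately after citing Lemma~\ref{lem:slice-equiv-comma} for the equivalence $f\comma gc\simeq\slicer{f}{gc}$. Your write-up simply unpacks this, observing that over $\Del^0$ a fibred right adjoint right inverse to $q_1$ is precisely a terminal object (Definition~\ref{defn:terminal}) and invoking Proposition~\ref{prop:terminaldefn} to transport it across the equivalence.
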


Lemma~\ref{lem:slice-equiv-comma} supplies an equivalence $f \comma gc \simeq \slicer{f}{gc}$ along which we may transport terminal objects. Lemma \ref{lem:pointwise-terminal} demonstrates that if $g$ admits an absolute right lifting through $f$, then $f\comma gc \simeq \slicer{f}{gc}$ has a terminal object, for each vertex $c$ in the domain of $g$. In fact, these terminal objects suffice to demonstrate the existence of an absolute right lifting:

\begin{thm}\label{thm:pointwise} The functor $g\colon C\to A$ admits an absolute right lifting through the functor $f\colon B\to A$ if and only if for all objects $c$ of $C$ the quasi-category $f\comma gc\simeq\slicer{f}{gc}$ has a terminal object.
\end{thm}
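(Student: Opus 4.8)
The forward implication is already essentially recorded in Lemma~\ref{lem:pointwise-terminal}: if $g$ admits an absolute right lifting $\ell$ through $f$, then for each object $c$ of $C$ the pointwise lifting diagram at $c$ (Definition~\ref{defn:pointwise-abs-lifting}) exhibits $\ell c$ as an absolute right lifting of $gc$ through $f$, and by Lemma~\ref{lem:pointwise-terminal} this is the data of a terminal object in $f\comma gc\simeq\slicer{f}{gc}$. So the content is the converse. The strategy is to reduce the problem, via Proposition~\ref{prop:right.liftings.as.fibred.terminal.objects}, to producing a right adjoint right inverse to the isofibration $q_1\colon f\comma g\tfib C$, and then to verify this RARI exists by checking the lifting property of Lemma~\ref{lem:RARI-lifting} using the hypothesised pointwise terminal objects.

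More precisely, here is the order I would carry things out. First, form the comma quasi-category $f\comma g$ with its canonical projection isofibration $q_1\colon f\comma g\tfib C$ (Lemma~\ref{lem:comma-obj}). For each object $c$ of $C$, Observation~\ref{obs:fibred-pullback} identifies the pullback of $q_1$ along $c\colon\Del^0\to C$ with the comma quasi-category $f\comma gc$; by hypothesis this fibre has a terminal object, which I will call $t_c$, giving a vertex $t_c\in (f\comma g)_0$ with $q_1 t_c = c$. Second, I would invoke Lemma~\ref{lem:RARI-lifting}: to show $q_1$ admits a right adjoint right inverse it suffices to show that the chosen vertices $t_c$ can be filled against boundary inclusions $\boundary\Del^n\inc\Del^n$ ($n\geq 1$) lying over $q_1$. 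The key point is that such a lifting problem against $q_1$, with the final vertex pinned at $t_c$, transposes (via the pullback defining $f\comma g$ and the equivalence $f\comma gc\simeq\slicer{f}{gc}$ of Lemma~\ref{lem:slice-equiv-comma}) into a lifting problem expressing precisely that $t_c$ is terminal in $f\comma gc$ in Joyal's sense --- here using Proposition~\ref{prop:terminalconverse}, which equates Joyal's notion of terminal vertex with ours, and Proposition~\ref{prop:terminaldefn}, which transports terminality across the equivalence $f\comma gc\simeq\slicer{f}{gc}$. Since the relevant boundary of the lifting problem lies entirely in the fibre over $c$ (the map to $C$ being constant at $c$ once we restrict to the appropriate face), this is exactly the outer-horn/boundary filling condition guaranteed by terminality of $t_c$. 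Third, having produced a RARI $t\colon C\to f\comma g$ to $q_1$, I apply Proposition~\ref{prop:right.liftings.as.fibred.terminal.objects} in the reverse direction: the fibred adjunction $q_1\dashv t$ over $C$ yields a $2$-cell $\lambda$ displaying some $\ell\colon C\to B$ (namely $\ell\defeq q_0 t$) as an absolute right lifting of $g$ through $f$.

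The main obstacle I anticipate is the second step --- carefully matching the shape of the lifting problems of Lemma~\ref{lem:RARI-lifting} against $q_1\colon f\comma g\tfib C$ with the boundary-filling characterisation of terminal objects in the fibres. One has to be attentive to two things: the degree shift in the correspondence $\boundary\Del^{n-1}\join\Del^0\cong\Horn^{n,n}$ that appears in Joyal's slice description (as in Observation~\ref{obs:universal-property-of-epsilon}), and the fact that a lifting problem for $q_1$ with $n\geq 1$, once its final vertex is constrained to be $t_c$, genuinely factors through a single fibre $f\comma gc$ rather than spreading across several fibres --- this is where one uses that $\sk_0 C\inc C$ is built from such boundary inclusions and that $q_1$ restricted over each simplex reduces to the fibre. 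Everything else is a routine assembly of previously established facts; in particular, the equivalences $f\comma gc\simeq\slicer{f}{gc}$, the preservation of terminal objects along equivalences, and the RARI machinery are all available off the shelf. Alternatively, one could bypass Lemma~\ref{lem:RARI-lifting} and instead build the section $t\colon C\to f\comma g$ and the requisite $2$-cell directly by the same kind of skeletal induction used in Example~\ref{ex:terminaldefn}, but routing through Lemma~\ref{lem:RARI-lifting} keeps the combinatorics packaged.
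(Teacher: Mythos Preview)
Your overall architecture is exactly the paper's: reduce to producing a right adjoint right inverse to $q_1\colon f\comma g\tfib C$ via the lifting criterion of Lemma~\ref{lem:RARI-lifting}, then apply Proposition~\ref{prop:right.liftings.as.fibred.terminal.objects}. The forward implication and the final two steps are fine.

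The gap is in your second step, and it is precisely the obstacle you flagged but did not resolve. The lifting problem of Lemma~\ref{lem:RARI-lifting} for $q_1$ does \emph{not} live in a single fibre: the bottom map $\Del^n\to C$ is an arbitrary $n$-simplex $\sigma$ whose last vertex is $c$, and the sphere $\boundary\Del^n\to f\comma g$ lies over $\sigma\vert_{\boundary\Del^n}$, not over the constant map at $c$. Unpacking the pullback that defines $f\comma g$, such a lifting problem is the data of a sphere $b\colon\boundary\Del^n\to B$ together with a partial cylinder
\[
\boundary\Del^n\times\Del^1\cup\Del^n\times\Del^{\fbv{1}}\longrightarrow A
\]
whose $\Del^{\fbv{0}}$-end is $fb$, whose $\Del^{\fbv{1}}$-end is $g\sigma$, and whose $\fbv{n,0}$--$\fbv{n,1}$ edge is the chosen terminal arrow $\lambda_c\colon f(t_c)\to gc$; one must extend simultaneously to $\Del^n\to B$ and $\Del^n\times\Del^1\to A$. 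By contrast, terminality of $t_c$ in $\slicer{f}{gc}$ only gives fillers for $\Horn^{n+1,n+1}$-horns in $A$ whose last vertex is the single point $gc$, compatibly with sphere fillers in $B$. These are different shapes: your proposed transposition to the fibre replaces $g\sigma$ by the constant simplex at $gc$, which is not what the problem demands.

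The paper closes this gap with a short combinatorial bridge: it fills the cylinder $\Del^n\times\Del^1$ shuffle by shuffle, using inner-horn fillers in $A$ for all shuffles except the last, and then uses the terminal-object horn filler (the $\Horn^{n+1,n+1}$-filler with final edge $\lambda_c$) for the final shuffle, simultaneously producing the required $\Del^n\to B$. This is the content of the passage between \eqref{eq:term.obj.assumption} and \eqref{eq:term.obj.lifting}. Once you insert this argument, your proof becomes the paper's proof; without it, the reduction to ``terminality in the fibre'' is not valid as stated.
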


\begin{proof}[Proof of Theorem \ref{thm:pointwise}]
Suppose each $\slicer{f}{gc}$ has a terminal object $\lambda_c \colon fb \to gc$, i.e., suppose we can fill any sphere $\partial\Delta^n \to \slicer{f}{gc}$ with $n \geq 1$ whose final vertex is $\lambda_c$. Unpacking the definition, we have assumed that we can solve any lifting problem 
\begin{equation}\label{eq:term.obj.assumption} \vcenter{\xymatrix{\boundary \Delta^n \ar[d] \ar[r] & \slicer{f}{gc} \\ \Delta^n \ar@{-->}[ur]}}\qquad\leftrightsquigarrow \qquad \vcenter{\xymatrix@C=25pt@R=30pt@!0{ \boundary\Delta^n \ar[rrrr] \ar[dd] \ar[dr] & && & B \ar[dr]^f \\ & \Lambda^{n+1,n+1} \ar[dd] \ar[rrrr] & & & & A \\ \Delta^n \ar[dr]_{\face^{n+1}} \ar@{-->}[uurrrr] \\ & \Delta^{n+1} \ar@{-->}[uurrrr]}}\end{equation} 
in $\qCat^\cattwo$ for which the $\fbv{n,n+1}$ edge of the $\Lambda^{n+1,n+1}$-horn in $A$ is $\lambda_c$.

It follows that we can solve any extension problem 
\begin{equation}\label{eq:term.obj.extension}\xymatrix@C=70pt@R=30pt@!0{  \boundary\Delta^n \times \Delta^{\fbv{0}} \ar[rr] \ar[dd] \ar[dr] &  & B \ar[dr]^f \\ &  \boundary\Delta^n \times \Delta^1\cup \Delta^n \times \Delta^{\fbv{1}}  \ar[dd] \ar[rr] &  & A \\ \Delta^n \times \Delta^{\fbv{0}} \ar[dr] \ar@{-->}[uurr]|\hole \\ & \Delta^n \times \Delta^1 \ar@{-->}[uurr]}\end{equation} for which the image of the edge between the vertices $(n,0)$ and $(n,1)$ is $\lambda_c$: The filler is constructed by inductively choosing images for the shuffles of $\Delta^n \times \Delta^1$  starting from the filled end of the specified cylinder. The images for all but the last shuffle are defined by filling the obvious inner horns in $A$. The final shuffle is attached by filling a $\Lambda^{n+1,n+1}$-horn in $A$ precisely of the form \eqref{eq:term.obj.assumption}.

We are interested in extension problems \eqref{eq:term.obj.extension} where the $n$-simplex in $A$ given as one end of the cylinder is in the image of some specified $n$-simplex of $C$ under $g$; these are precisely the data specified by a lifting problem \begin{equation}\label{eq:term.obj.lifting} \xymatrix{\Delta^0 \ar[r]_-{\fbv{n}} \ar@/^2ex/[rr]^{\lambda_c} & \boundary\Delta^n \ar[d] \ar[r] & f \comma g \ar[d]^{q_1} \\ &  \Delta^n \ar@{-->}[ur] \ar[r] & C}\end{equation} in which case the extension of \eqref{eq:term.obj.extension} provides a solution. We have just shown that any lifting problem \eqref{eq:term.obj.lifting} in which the final vertex of the sphere maps to a terminal object $\lambda_c \in \slicer{f}{gc}$ has a solution. By Lemma \ref{lem:RARI-lifting}, this tells us that $q_1 \colon f \comma g \to C$ admits a right adjoint right inverse $t \colon C \to f \comma g$, which by Proposition \ref{prop:right.liftings.as.fibred.terminal.objects} encodes the data of an absolute right lifting diagram, as displayed on the bottom right.
\[ 
    \vcenter{\xymatrix@=0.8em{
      & \save []+<0pt,1em>*+{C}\ar[d]^-{t}\ar@{=}@/_1.5ex/[ddl]
      \ar@/^1.5ex/[ddr]^{\ell}\restore & \\
      & {f\comma g}\ar[dr]_{q_0}\ar[dl]^{q_1} & \\
      {C}\ar[dr]_{g} & {\scriptstyle\Leftarrow\psi} &
      {B}\ar[dl]^{f} \\
      & {A} &
    }}
    \mkern20mu = \mkern20mu
    \vcenter{\xymatrix@=0.7em{
      & {C}\ar@{=}[dl]\ar[dr]^{\ell} & \\
      {C}\ar[dr]_{g} & {\scriptstyle\Leftarrow\lambda} & {B}\ar[dl]^{f} \\
      & {A} &
    }}
\]
\end{proof}

Theorem \ref{thm:pointwise} provides a useful criterion for the existence of absolute lifting diagrams. The following corollary supplies the corresponding detection result, identifying when a candidate lifting diagram has the desired universal property. The lifting property implies that each of its fibres admit terminal objects, a definition that will be introduced in the next section.

\begin{cor}\label{cor:pointwise}
  A triangle 
  \[    \xymatrix{ \ar@{}[dr]|(.7){\Downarrow\lambda} & B \ar[d]^f \\ C \ar[r]_g \ar[ur]^\ell & A}\] displays $\ell$ as an absolute right lifting of $g$ through $f$ if and only if it has that property pointwise.
\end{cor}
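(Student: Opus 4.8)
The statement is an immediate corollary of Theorem~\ref{thm:pointwise} together with Proposition~\ref{prop:right.liftings.as.fibred.terminal.objects}, so the plan is simply to assemble these two results. The ``only if'' direction is already recorded in Definition~\ref{defn:pointwise-abs-lifting}: absolute lifting diagrams are preserved by pre-composition, so if $\lambda$ displays $\ell$ as an absolute right lifting of $g$ through $f$, then for each object $c$ of $C$ the whiskered 2-cell $\lambda c$ displays $\ell c$ as an absolute right lifting of $gc$ through $f$, which by Lemma~\ref{lem:pointwise-terminal} is precisely the assertion that $f\comma gc\simeq\slicer{f}{gc}$ has a terminal object. Thus the pointwise property holds.

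For the ``if'' direction, suppose the triangle has the pointwise property, meaning that for each object $c$ of $C$ the comma quasi-category $f\comma gc$ has a terminal object. Then Theorem~\ref{thm:pointwise} supplies \emph{some} absolute right lifting of $g$ through $f$; concretely, its proof produces a right adjoint right inverse $t\colon C\to f\comma g$ to the isofibration $q_1\colon f\comma g\tfib C$, and hence by Proposition~\ref{prop:right.liftings.as.fibred.terminal.objects} a 2-cell $\lambda'\colon f\ell'\To g$ displaying $\ell'$ as an absolute right lifting. It remains to promote this existence statement to the \emph{detection} statement: that the originally given candidate $\lambda$ already has the universal property. The natural route is to compare $\lambda$ with $\lambda'$ by tracking them through Proposition~\ref{prop:absliftingtranslation2}, which sets up a bijection between 2-cells of the form~\eqref{eq:absRlifting.2} and isomorphism classes of 1-cells $w\colon B\comma\ell\to f\comma g$ over $C\times B$, under which $\lambda$ is an absolute right lifting precisely when $w$ is an equivalence. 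One applies this to the candidate $\lambda$ to obtain $w\colon B\comma\ell\to f\comma g$ over $C\times B$, and observes that the pointwise hypothesis is exactly the statement that each fibre of $q_1\colon f\comma g\tfib C$ has a terminal object; since the codomain projection $B\comma\ell\to C$ also has this property (its fibres are slices, hence have terminal objects, by Example~\ref{ex:slice-terminal} or Lemma~\ref{lem:technicalsliceadjunction} applied fibrewise), the map $w$ is a functor between isofibrations over $C$ preserving these fibred terminal objects, and one checks fibrewise that it is an equivalence. Invoking Corollary~\ref{cor:recog-fibred-equivs} (a fibrewise equivalence over $C\times B$ is an equivalence) then gives that $w$ is an equivalence, whence $\lambda$ displays $\ell$ as an absolute right lifting.

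Alternatively, and perhaps more cleanly, one can avoid re-checking $w$ fibrewise by arguing purely at the level of $q_1$: by Proposition~\ref{prop:right.liftings.as.fibred.terminal.objects}, $\lambda$ displays $\ell$ as an absolute right lifting if and only if the induced functor $t\colon C\to f\comma g$ of~\eqref{eq:t-for-lim-def-prop} is a right adjoint right inverse to $q_1$; by Lemma~\ref{lem:RARI-lifting} this is equivalent to a lifting property against the boundary inclusions that is, after transposing, exactly the assertion that each value $t(c)$ is a terminal object of the fibre of $q_1$ over $c$, i.e.\ of $f\comma gc$. But $t(c)$ is, by construction~\eqref{eq:t-for-lim-def-prop}, the vertex of $f\comma gc$ induced by the pointwise cone $\lambda c\colon f\ell c\To gc$ via 1-cell induction, so the pointwise hypothesis tells us precisely that this vertex is terminal.

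\textbf{Main obstacle.} The only real subtlety is bookkeeping: one must be careful that the vertex $t(c)$ of $f\comma gc$ produced by 1-cell induction from $\lambda c$ is genuinely the one being asserted terminal, rather than merely \emph{some} terminal vertex; Lemma~\ref{lem:1cell-ind-uniqueness} (1-cell induction is unique up to isomorphism) handles this, since terminality is invariant under isomorphism. No new ideas beyond those already developed are needed; the proof is a matter of correctly citing Theorem~\ref{thm:pointwise}, Proposition~\ref{prop:right.liftings.as.fibred.terminal.objects}, and Lemma~\ref{lem:RARI-lifting}, and noting the compatibility of the induced data.
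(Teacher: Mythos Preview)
Your ``only if'' direction and the opening of the ``if'' direction (applying Theorem~\ref{thm:pointwise} to produce \emph{some} absolute right lifting $(\ell',\lambda')$) match the paper exactly. The divergence is in how you then pass from existence to the detection statement for the given $(\ell,\lambda)$.

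Both of your proposed routes have citation gaps. In the first, Corollary~\ref{cor:recog-fibred-equivs} does not say that fibrewise equivalences are equivalences; it says that a map over $A$ which happens to be an equivalence in $\qCat_2$ is automatically a fibred equivalence. The fibrewise-to-global direction you want is not established anywhere in the paper. In the second, Lemma~\ref{lem:RARI-lifting} characterises when an isofibration \emph{admits} a right adjoint right inverse, not when a \emph{given} section $t$ is one: its converse direction builds a section extending the chosen terminal vertices, but that section need not coincide with the $t$ you induced from $\lambda$. You could patch this by rerunning the lemma's unit construction with your $t$ already in hand, or by comparing $t$ to the constructed RARI, but either way there is a genuine step you have not written down.

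The paper's argument is more direct and sidesteps both issues. Having constructed $(\ell',\lambda')$, it uses the universal property of the absolute right lifting $\lambda'$ to factor $\lambda$, obtaining a unique $\tau\colon\ell\Rightarrow\ell'$ with $\lambda'\cdot f\tau=\lambda$. For each object $c$ this restricts to $\lambda' c\cdot f(\tau c)=\lambda c$; since both $\lambda c$ (by the pointwise hypothesis) and $\lambda' c$ (by restricting the absolute right lifting $\lambda'$) display absolute right liftings of $gc$ through $f$, the comparison $\tau c$ must be an isomorphism. Observation~\ref{obs:pointwise-iso-reprise} then promotes the pointwise isomorphism $\tau$ to a genuine 2-isomorphism, so $(\ell,\lambda)$ is isomorphic to an absolute right lifting and hence is one itself. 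Note that this pointwise-isomorphism criterion is doing the essential work here, and it is precisely the missing ingredient that would also be needed to repair either of your approaches.
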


\begin{proof}
Necessity of the pointwise absolute lifting property of Definition \ref{defn:pointwise-abs-lifting} is immediate. Conversely, the assumed pointwise lifting tells us, in particular, that for each object $c$ in $C$ the slice quasi-category $f\comma gc\simeq\slicer{f}{gc}$ has a terminal object. Consequently, we may apply Theorem \ref{thm:pointwise} to construct a functor $\ell'\colon C\to A$ and 2-cell $\lambda'\colon f\ell'\Rightarrow g$ which displays $\ell'$ as an absolute right lifting of $g$ through $f$. 
  
The universal property of $(\ell',\lambda')$ applied to the triangle $(\ell,\lambda)$ provides us with a unique 2-cell $\tau\colon\ell\Rightarrow\ell'$ with the defining property that $\lambda'\cdot f\tau =\lambda$. Now both of the 2-cells $\lambda$ and $\lambda'$ possess the pointwise lifting property, the first by assumption and the second by construction. In other words, for all objects $c$ in $C$ the 2-cell $\lambda c\colon f\ell c\Rightarrow g c$ (respectively $\lambda' c\colon f\ell' c\Rightarrow g c$) displays $\ell c$ (respectively $\ell' c$) as an absolute right lifting of $g c$ through $f$ for all objects $c$ of $C$. Furthermore, the defining property of $\tau$ whiskers to tell us that $\lambda' c\cdot f(\tau c)=\lambda c$, so since $\lambda c$ and $\lambda' c$ both possess the absolute right lifting property it follows that $\tau c$ is an isomorphism. Applying Observation~\ref{obs:pointwise-iso-reprise}, we find that $\tau\colon\ell\Rightarrow\ell'$ is an isomorphism and thus that the given triangle is isomorphic to the absolute right lifting that we constructed and is thus itself an absolute right lifting.
\end{proof}

Proposition \ref{prop:families.of.diagrams}, which states that a quasi-category admits limits of a family of diagrams of a fixed shape if and only if it admits limits of each individual diagram in the family, is a special case of Theorem \ref{thm:pointwise}.

\begin{proof}[Proof of Proposition~\ref{prop:families.of.diagrams}]
If $A$ admits limits of each diagram in a family $k \colon K \to A^X$, then Proposition~\ref{prop:limits.are.limits} implies that for each vertex $\overline{d} \in K$, $\slicer{c}{k\overline{d}}$ has a terminal object. By Theorem~\ref{thm:pointwise}, it follows that $k$ admits an absolute right lifting along $c \colon A \to A^X$, i.e., $A$ admits limits of the family of diagrams $k \colon K \to A^X$.
\end{proof}



\subsection{Simplicial Quillen adjunctions are adjunctions of quasi-categories}

Now we use Theorem \ref{thm:pointwise} to prove the assertions made in Example \ref{ex:simp.quillen.adj}: namely that any simplicial Quillen adjunction between simplicial model categories descends to an adjunction of quasi-categories. Another proof of this result is given in \cite[5.2.4.6]{Lurie:2009fk}. 

Recall that the quasi-category associated to a simplicial model category $\lcat{A}$ is defined by restricting to the full simplicial subcategory $\lcat{A}_{cf}$ of fibrant-cofibrant objects and then applying the homotopy coherent nerve $\nrvhc\colon \sSet\text{-}\Cat \to \sSet$. 

\begin{thm}\label{thm:simplicial-Quillen-adjunction} A simplicial Quillen adjunction \[\adjdisplay f -| u : \lcat{A} -> \lcat{B}.\] between simplicial model categories gives rise to an adjunction between the quasi-categories $\nrvhc\lcat{A}_{cf}$ and $\nrvhc\lcat{B}_{cf}$.
\end{thm}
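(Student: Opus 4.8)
The plan is to use the pointwise criterion of Theorem~\ref{thm:pointwise} to construct the right adjoint directly, rather than trying to descend the point-set adjoint $u$ (which fails because $u$ need not preserve the full subcategory of fibrant-cofibrant objects). Write $A \defeq \nrvhc\lcat{A}_{cf}$ and $B \defeq \nrvhc\lcat{B}_{cf}$, and let $\bar f \colon B \to A$ denote the functor of quasi-categories obtained by applying $\nrvhc$ to the left adjoint $f$, which is simplicially enriched and (since it is left Quillen between simplicial model categories in which we are restricting to fibrant-cofibrant objects, after composing with a fibrant replacement) does preserve fibrant-cofibrant objects up to the usual cofibrant-replacement dance; I will need to be a little careful here, but the upshot is a well-defined functor $\bar f \colon B \to A$ whose underlying point-set behaviour is $f$ followed by fibrant replacement. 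By Example~\ref{ex:adjasabslifting}, it suffices to exhibit an absolute right lifting of $\id_A$ through $\bar f$; by Theorem~\ref{thm:pointwise}, this in turn reduces to showing that for every object $a$ of $A$ (that is, every fibrant-cofibrant $a \in \lcat{A}$) the slice quasi-category $\slicer{\bar f}{a} \simeq \bar f \comma a$ has a terminal object.

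The heart of the argument is therefore to identify a terminal object in $\slicer{\bar f}{a}$. The candidate is the object determined by the derived unit map: take $ua$ in $\lcat{B}_{cf}$ to be a cofibrant replacement of $ua$ (where $u$ is the point-set right adjoint, applied to the fibrant object $a$), and let $\epsilon_a \colon \bar f(ua) \to a$ be the corresponding derived counit, which lives in $\slicer{\bar f}{a}$. To show this vertex is terminal in Joyal's sense, I must fill every sphere $\boundary\Del^n \to \slicer{\bar f}{a}$ ($n \geq 1$) whose final vertex is $\epsilon_a$. Unwinding the definition of the slice as in Observation~\ref{obs:universal-property-of-epsilon}, such a lifting problem translates into a lifting problem in $\lcat{A}$ and $\lcat{B}$ against a boundary inclusion $\boundary\Del^{n} \inc \Del^{n}$ in the relevant hom-spaces; here I will use that the hom-spaces of $\lcat{B}_{cf}$ have the correct homotopy type, that the simplicial adjunction $f \dashv u$ gives a natural isomorphism of hom-spaces $\lcat{A}(fb, a) \cong \lcat{B}(b, ua)$, and that cofibrant-fibrant replacement induces weak equivalences on these mapping spaces. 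The required fillers then exist because the relevant maps of Kan complexes are acyclic fibrations (or can be replaced by such), which is exactly where the Quillen adjunction hypothesis is used.

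The main obstacle I anticipate is the bookkeeping needed to make ``$\bar f$'' a genuine functor of quasi-categories and to control the homotopy types of the mapping spaces under the successive cofibrant and fibrant replacements: neither adjoint lands in $\lcat{A}_{cf}$ or $\lcat{B}_{cf}$ on the nose, so one must work with the full simplicial subcategories and invoke that $\nrvhc$ sends simplicial functors which are homotopically fully faithful (inducing weak equivalences on hom-spaces) to equivalences, or more precisely set up an auxiliary simplicial category interpolating between $\lcat{A}$ and $\lcat{A}_{cf}$. Concretely, I would (i) fix functorial cofibrant and fibrant replacement functors on $\lcat{A}$ and $\lcat{B}$ compatible with the simplicial enrichment; (ii) define $\bar f$ on $\lcat{B}_{cf}$ by $b \mapsto R f b$ and verify simplicial functoriality; (iii) for fixed $a$, compute $\bar f \comma a$ up to fibred equivalence over $B$ using Lemma~\ref{lem:slices-quillen}, Proposition~\ref{prop:slice-fatslice-equiv} and the hom-space isomorphism from the Quillen adjunction, reducing to the statement that a certain map of Kan complexes $\lcat{B}(b, Qua) \to \lcat{A}(\bar f b, a)$ is a weak equivalence for all $b$; (iv) conclude via Theorem~\ref{thm:pointwise} that $\bar f$ admits an absolute right lifting of $\id_A$, hence a right adjoint by Example~\ref{ex:adjasabslifting}. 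Step (iii) — verifying the mapping-space comparison is a weak equivalence using only the Quillen adjunction and the properties of (co)fibrant replacement — is the technical crux, though it is essentially the classical computation of derived mapping spaces and should go through as in the references.
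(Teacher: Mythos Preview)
Your high-level strategy is right and matches the paper: use Theorem~\ref{thm:pointwise} to build the right adjoint by exhibiting, for each fibrant-cofibrant $a$, a terminal object in the relevant slice, that terminal object being induced by a cofibrant replacement of $ua$. The paper also reduces to a lifting problem solved using that $q_*\colon \lcat{B}(b,t)\to\lcat{B}(b,ua)$ is a trivial fibration between Kan complexes.

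The genuine gap is in your definition of $\bar f$. The left Quillen functor $f$ preserves cofibrant but not fibrant objects, so there is no simplicial functor $\lcat{B}_{cf}\to\lcat{A}_{cf}$ to which you can apply $\nrvhc$. Your fix, ``functorial fibrant replacement compatible with the simplicial enrichment'', is not available in a general simplicial model category: functorial factorisations are typically only functorial at the level of underlying categories, not simplicially enriched functors. Without a simplicially enriched fibrant replacement you cannot produce $\bar f$ as $\nrvhc$ of anything, and your step (ii) fails before you reach the pointwise argument.

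The paper's key idea, which you are missing, is to avoid defining $\bar f$ or $\bar u$ directly. Instead it forms the collage simplicial categories $\coll(f,\lcat{A})$ and $\coll(\lcat{B},u)$, which contain $\lcat{A}$ and $\lcat{B}$ as full subcategories with $\coll(\lcat{B},u)(b,a)=\lcat{B}(b,ua)$; the point-set adjunction is encoded by the isomorphism $\coll(f,\lcat{A})\cong\coll(\lcat{B},u)$ under $\lcat{B}\sqcup\lcat{A}$. Restricting to fibrant-cofibrant objects and applying $\nrvhc$ yields an honest cospan $\nrvhc\lcat{B}_{cf}\xrightarrow{i}\nrvhc\coll(\lcat{B},u)_{cf}\xleftarrow{j}\nrvhc\lcat{A}_{cf}$ of quasi-categories, with $i$ and $j$ just nerves of full inclusions. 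The functor $\bar u$ is then \emph{produced} as an absolute right lifting of $j$ through $i$ via Theorem~\ref{thm:pointwise}, and dually for $\bar f$; Propositions~\ref{prop:absliftingtranslation2} and~\ref{prop:adjointequivconverse} then give the adjunction $\bar f\dashv\bar u$. This sidesteps the need for any simplicially enriched replacement, and the slice $\slicer{i}{ja}$ one must analyse has hom-spaces literally equal to $\lcat{B}(b,ua)$, making the terminal-object verification clean.
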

\begin{proof}
We introduce a pair of simplicial categories  $\coll(f,\lcat{A})$ and $\coll(\lcat{B},u)$, with $\lcat{B}$ and $\lcat{A}$ as full subcategories that are jointly surjective on objects. Declare the hom-spaces from $a \in \lcat{A}$ to $b \in \lcat{B}$ to be empty and define \[ \coll(f,\lcat{A})(b,a) \defeq \lcat{A}(fb,a) \qquad \coll(\lcat{B},u)(b,a) \defeq \lcat{B}(b,ua).\] The simplicial adjunction $f\dashv u$ is encoded in the proposition that the simplicial categories $\coll(f,\lcat{A})$ and $\coll(\lcat{B},u)$ are isomorphic under $\lcat{B} \coprod \lcat{A}$.

Now write $\coll(f,\lcat{A})_{cf} \cong \coll(\lcat{B},u)_{cf}$ for the full simplicial sub-categories spanned by the fibrant-cofibrant objects of $\lcat{A}$ and $\lcat{B}$.  Via these restrictions, we obtain a diagram \[ \lcat{B}_{cf} \hookrightarrow \coll(f,\lcat{A})_{cf} \cong \coll(\lcat{B},u)_{cf} \hookleftarrow \lcat{A}_{cf}\] of locally Kan simplicial categories. Applying the homotopy coherent nerve, we have a pair of isomorphic cospans in $\qCat_2$: \[\xymatrix{ \ar@{}[dr]|(.7){\Uparrow\psi} & \nrvhc\lcat{A}_{cf} \ar[d] & &  \ar@{}[dr]|(.7){\Downarrow\beta} & \nrvhc\lcat{B}_{cf} \ar[d]^i  \\ \nrvhc\lcat{B}_{cf} \ar@{-->}[ur]^{\overline{f}} \ar[r] & \nrvhc\coll(f,\lcat{A})_{cf} & & \nrvhc\lcat{A}_{cf} \ar@{-->}[ur]^{\overline{u}} \ar[r]_-j &\nrvhc\coll(\lcat{B},u)_{cf} }\] Our objective is to define an absolute left lifting $(\overline{f}, \psi)$ and an absolute right lifting $(\overline{u},\beta)$. Proposition \ref{prop:absliftingtranslation2} and its dual then provides a fibred equivalence \[ \overline{f} \comma \nrvhc\lcat{A}_{cf} \simeq \nrvhc\lcat{B}_{cf} \comma \overline{u}\] over $\nrvhc\lcat{A}_{cf} \times \nrvhc\lcat{B}_{cf}$, which by Proposition \ref{prop:adjointequivconverse} implies that $\adjinline \overline{f} -| \overline{u} : \nrvhc\lcat{A}_{cf} -> \nrvhc\lcat{B}_{cf}.$ is an adjunction of quasi-categories.

The arguments building the absolute right lifting diagram $(\overline{u},\beta)$ and the absolute left lifting diagram $(\overline{f},\psi)$ are entirely dual. Interpreting the statement of Theorem \ref{thm:pointwise} in this context, we are asked to produce, for each fibrant-cofibrant object $a \in \lcat{A}$, a terminal object in $\slicer{i}{ja}$, defined to be the pullback of the slice  quasi-category $\slicer{(\nrvhc\coll(\lcat{B},u)_{cf})}{a}$ along the natural inclusion $i \colon N\lcat{B}_{cf} \to N\coll(\lcat{B},u)_{cf}$. To that end, choose a  cofibrant replacement $q \colon t \to ua$ in the model category $\lcat{B}$ such that the map $q$ is a trivial fibration. It follows that whenever $b \in \lcat{B}$ is cofibrant, the natural map $q_*\colon \lcat{B}(b,t) \to \lcat{B}(b,ua)$ is a trivial fibration between Kan complexes. We claim that $q$ is terminal in $\slicer{i}{ja}$.

Let $\gC$ denote the left adjoint to the homotopy coherent nerve. Unpacking the definition, an $n$-simplex in $\slicer{i}{ja}$ is \[\xymatrix{  \Delta^n \ar[d]_{\face^{n+1}} \ar[r] & \nrvhc\lcat{B}_{cf} \ar[d]  & & \gC\Delta^n\ar[d]_{\face^{n+1}} \ar[r] & \lcat{B}_{cf} \ar[d]^i\\ \Delta^n\join\Delta^0\cong  \Delta^{n+1} \ar[r] & \nrvhc\coll(\lcat{B},u)_{cf} & \leftrightsquigarrow & \gC\Delta^{n+1} \ar[r] & \coll(\lcat{B},u)_{cf} \\ \Delta^{\fbv{n+1}} \ar[u] \ar[ur]_a  &  & & \catone \ar[u]^{\mathrm{last}} \ar[ur]_a } \]  The vertex $q \in \nrvhc\coll(\lcat{B},u)_{cf}$ is terminal if and only if we can extend any diagram of simplicial functors 
\begin{equation}\label{eq:simp.adj.extension}\xymatrix@C=30pt@R=35pt@!0{ \gC\boundary\Delta^n \ar[rrrr] \ar[dd] \ar[dr] & && & \lcat{B}_{cf} \ar[dr] \\ & \gC\Lambda^{n+1,n+1} \ar[dd] \ar[rrrr] & & & & \coll(\lcat{B},u)_{cf} \\ \gC\Delta^n \ar[dr]_{\face^{n+1}} \ar@{-->}[uurrrr] \\ & \gC\Delta^{n+1} \ar@{-->}[uurrrr]}\end{equation} in which the unique vertex in the hom-space between the objects $n$ and $n+1$ in the simplicial category $\gC\Lambda^{n+1,n+1}$ is mapped to $q \in \lcat{B}(t,ua)$. 

The simplicial categories $\gC\Lambda^{n+1,n+1}$ and $\gC\Delta^{n+1}$ have objects $0,\ldots, n+1$ and all but two of the same hom-spaces, the only exceptions being the hom-spaces from $0$ to $n$ and to $n+1$. We have $\gC\Delta^{n+1}(0,n) \cong (\Delta^1)^{n-1}$ and $\gC\Delta^{n+1}(0,n+1)\cong(\Delta^1)^n$, while $\gC\Lambda^{n+1,n+1}(0,n) \cong \boundary(\Delta^1)^{n-1}$ and $\gC\Lambda^{n+1,n+1}(0,n+1)$ is the open box $B \hookrightarrow (\Delta^1)^n$ with the interior of the $n$-cube and one face removed \cite[1.1.5.10]{Lurie:2009fk} and \cite[16.5.10]{Riehl:2014kx}. In this way, writing $b \in \lcat{B}$ for the image of the object 0, the extension problem \eqref{eq:simp.adj.extension} in the category of simplicial categories reduces to an extension problem
\[\xymatrix@C=30pt@R=35pt@!0{ \boundary(\Delta^1)^{n-1} \ar[rrrr] \ar@{ >->}[dd] \ar[dr] & && & \lcat{B}(b,t) \ar@{->>}[dr]^{q_*}_(.4){\rotatebox{135}{$\labelstyle\sim$}} \\ & B \ar@{ >->}[dd]^{\rotatebox{90}{$\labelstyle\sim$}} \ar[rrrr] & & & & \lcat{B}(b,ua) \\ (\Delta^1)^{n-1} \ar[dr] \ar@{-->}[uurrrr] \\ & (\Delta^1)^n \ar@{-->}[uurrrr]}\] in the category of simplicial sets. For the reader's convenience, we have used the standard decorations to mark cofibrations, fibrations, and weak equivalences in Quillen's model structure on simplicial sets.

The extension \eqref{eq:simp.adj.extension} may be achieved by first  extending along the map $B \hookrightarrow (\Delta^1)^n$ in the Kan complex $\lcat{B}(b,ua)$. This chooses an image under the map $q_*$ for the $(n-1)$-cube missing from the box $B$. An $(n-1)$-cube in $\lcat{B}(b,t)$ with this image can be found by lifting the cofibration $\boundary(\Delta^1)^{n-1} \hookrightarrow (\Delta^1)^{n-1}$ against the trivial fibration $q_*$.
\end{proof}


\bibliographystyle{abbrv}
\bibliography{index}


\end{document}